\pgfplotsset{compat=1.15}
\newtheorem{theorem}{Theorem}[section]
\newtheorem{corollary}[theorem]{Corollary}
\newtheorem{lemma}[theorem]{Lemma}
\newtheorem{proposition}[theorem]{Proposition}
\theoremstyle{definition}
\newtheorem{definition}[theorem]{Definition}
\newcommand{\R}{\mathbb{R}}
\newcommand{\Z}{\mathbb{Z}}
\newcommand{\N}{\mathbb{N}}
\newcommand{\B}{\mathcal{B}}
\newcommand{\C}{\mathbb{C}}
\newcommand{\D}{\mathcal{D}}
\newcommand{\E}{\mathcal{E}}
\newcommand{\J}{\mathcal{J}}
\begin{document}

\title[Maxwell equations with partially anisotropic permittivity]{Strichartz estimates for Maxwell equations in media: The partially anisotropic case}
\author{Robert Schippa}
\email{robert.schippa@kit.edu}
\address{Fakult\"{a}t f\"{u}r Mathematik, Karlsruher Institut f\"{u}r Technologie, Englerstrasse 2, 76131 Karlsruhe, Germany}
\keywords{Maxwell equations, Strichartz estimates, quasilinear wave equations, rough coefficients, Kerr nonlinearity}
\subjclass[2020]{Primary: 35L45, 35B65, Secondary: 35Q61.}

\begin{abstract}
We prove Strichartz estimates for solutions to Maxwell equations in three dimensions with rough permittivities, which have less than three different eigenvalues. To this end, Maxwell equations are conjugated to half-wave equations in phase space. We use the Strichartz estimates in a known combination with energy estimates to show the new well-posedness results for quasilinear Maxwell equations.
\end{abstract}

\maketitle

\section{Introduction}

In the following Maxwell equations in media in three spatial dimensions, the physically most relevant case (cf. \cite{FeynmanLeightonSands1964, LandauLifschitz1990}), are analyzed. These describe the propagation of \emph{electric and magnetic fields} $(\mathcal{E},\mathcal{B}): \R \times \R^3 \rightarrow \R^3 \times \R^3$, and \emph{displacement and magnetizing fields} $(\mathcal{D},\mathcal{H}): \R \times \R^3 \rightarrow \R^3 \times \R^3$. The system of equations is given by
\begin{equation}
\label{eq:Maxwell3d}
\left\{
 \begin{alignedat}{2}
\partial_t \mathcal{D} &= \nabla \times \mathcal{H} - \J_e, \quad \nabla \cdot \D &&= \rho_e, \\
\partial_t \B &= - \nabla \times \E - \J_m, \quad \nabla \cdot \B &&= \rho_m, \\
\D(0,\cdot) &= \D_0, \quad \qquad \quad \quad \B(0,\cdot) &&= \B_0.
\end{alignedat} \right.
\end{equation}
$(\rho_e,\rho_m): \R \times \R^3 \to \R \times \R$ denote \emph{electric and magnetic charges} and $(\J_e,\J_m): \R \times \R^3 \to \R^3 \times \R^3$ \emph{electric and magnetic currents}. There is no physical evidence for the existence of magnetic charges or magnetic currents, but we include them to highlight a key aspect of the analysis.

The notations follow the previous work \cite{SchippaSchnaubelt2021} on Maxwell equations in two spatial dimensions. We denote space-time coordinates $x=(x^0,x^1,\ldots,x^n) = (t,x^\prime) \in \R \times \R^n$ and the dual variables in Fourier space by $\xi = (\xi^0,\xi^1,\ldots,\xi^n) = (\tau,\xi^\prime) \in \R \times \R^n$.\\
In this work we supplement Maxwell equations with time-instantaneous material laws, relating $\E$ with $\D$ and $\mathcal{H}$ with $\B$:
\begin{equation}
\label{eq:MaterialLaws}
\begin{split}
\D(x) &= \varepsilon(x) \E(x), \quad \varepsilon: \R \times \R^3 \rightarrow \R^{3 \times 3}, \\
\B(x) &= \mu(x) \mathcal{H}(x), \quad \mu: \R \times \R^3 \rightarrow \R^{3 \times 3}. 
\end{split}
\end{equation}
$\varepsilon$ is referred to as \emph{permittivity}, and $\mu$ is referred to as \emph{permeability}. In some cases we shall assume that $\mu \equiv 1$, which means that the considered material is magnetically isotropic. This is a common assumption in nonlinear optics (cf. \cite{MoloneyNewell2004}). Like in the preceding work \cite{SchippaSchnaubelt2021}, we want to describe the propagation in possibly anisotropic and inhomogeneous media. We suppose that $\varepsilon$, $\mu$ are matrix-valued function $\varepsilon, \mu: \R \times \R^3 \rightarrow \R^{3 \times 3}$ with $\Lambda_1, \Lambda_2 > 0$ such that for any $\xi^\prime \in \R^3$ and $x \in \R \times \R^3$
\begin{equation}
\label{eq:UniformEllipticity}
\Lambda_1 |\xi^\prime|^2 \leq \sum_{i,j=1}^3 \kappa^{ij}(x) \xi_i^\prime \xi_j^\prime \leq \Lambda_2 |\xi^\prime|^2, \quad \kappa^{ij}(x) = \kappa^{ji}(x), \quad \kappa \in \{ \varepsilon, \mu \}.
\end{equation}
The case of diagonal $\varepsilon = \text{diag}(\varepsilon_1,\varepsilon_2,\varepsilon_3)$, $\mu = 1_{3 \times 3}$ covers the physically relevant case 
\begin{equation}
\varepsilon(\E) = (1 + |\E|^2) 1_{3 \times 3}
\end{equation}
of the Kerr nonlinearity. The permittivity depends on the electric field itself. We denote
\begin{equation}
\label{eq:RoughSymbolMaxwell}
\mathcal{C}(D) = 
\begin{pmatrix}
0 & -\partial_3 & \partial_2 \\
 \partial_3 & 0 & -\partial_1 \\
-\partial_2 & \partial_1 & 0
\end{pmatrix}
, \quad
P(x,D) = 
\begin{pmatrix}
\partial_t 1_{3 \times 3} & -\mathcal{C}(D) \mu^{-1} \\
\mathcal{C}(D) \varepsilon^{-1} & \partial_t 1_{3 \times 3}
\end{pmatrix}
.
\end{equation}
\eqref{eq:Maxwell3d} becomes
\begin{equation}
\label{eq:Maxwell3dConcise}
P(x,D) \begin{pmatrix} \D \\ \mathcal{H} \end{pmatrix} = - \begin{pmatrix}
\J_e \\ \J_m
\end{pmatrix}
, \quad 
\begin{cases}
 \nabla \cdot \D = \rho_e, \\
 \nabla \cdot \B = \rho_m.
 \end{cases}
\end{equation}

Like for the two-dimensional Maxwell equations covered in \cite{SchippaSchnaubelt2021}, we make use of the FBI transform and analyze the equation in phase space. $P(x,D)$ is conjugated to half-wave equations whose dispersive properties depend on the number of different eigenvalues of $\varepsilon$. This was previously analyzed in the constant-coefficient case by Lucente--Ziliotti \cite{LucenteZiliotti2000} and Liess \cite{Liess1991}; see also \cite{Schippa2022,MandelSchippa2022}. It was proved that for $\varepsilon(x) \equiv \varepsilon$ satisfying \eqref{eq:UniformEllipticity}, solutions to \eqref{eq:Maxwell3dConcise} with $\varepsilon$ having less than three different eigenvalues and $\mu \equiv 1$ decay like solutions to the three-dimensional wave equation. However, if $\varepsilon$ has three different eigenvalues, the decay is weakened to the decay of the two-dimensional wave equation. The fully anisotropic case will be considered separately in \cite{SchippaSchnaubeltFullyAnisotropic}. Presently, we prove the first result for variable rough, possibly anisotropic coefficients. Dumas--Sueur \cite{DumasSueur2012} previously showed Strichartz estimates for smooth scalar coefficients. In the much easier two-dimensional case the eigenvalues of the symbol are always separated in phase space
\begin{equation*}
i (\xi_0, \xi_0 - \| \xi' \|_\varepsilon, \xi_0 + \| \xi' \|_\varepsilon) \text{ for } \| \xi' \|_\varepsilon \sim 1.
\end{equation*}
$\| \xi' \|_{\varepsilon(x)}$ denotes a norm which depends on $\varepsilon(x)$. This separation of the eigenvalues is no longer the case in three dimensions. Roughly speaking, in the isotropic case, the characteristic set is a sphere with multiplicity two and in the partially anisotropic case $\varepsilon(x) = (\varepsilon_1(x),\varepsilon_2(x),\varepsilon_2(x))$, $\varepsilon_1(x) \neq \varepsilon_2(x)$, $\mu = 1_{3 \times 3}$ the characteristic set is described by two ellipsoids intersecting at exactly two points.
The characteristic sets in the partially anisotropic case for constant coefficients were analyzed in detail for the time-harmonic equations in \cite{Schippa2022}. The fact that the ellipsoids are intersecting requires a careful choice of eigenvectors, already in the constant-coefficient case, such that the corresponding Fourier multipliers are $L^p$-bounded.

 It turns out that in the fully anisotropic case $\varepsilon = \text{diag}(\varepsilon_1,\varepsilon_2,\varepsilon_3)$ with $\varepsilon_1 \neq \varepsilon_2 \neq \varepsilon_3 \neq \varepsilon_1$, $\mu = 1_{3 \times 3}$, the characteristic set ceases to be smooth and becomes the Fresnel wave surface with conical singularities.
This is classical and was already pointed out by Darboux \cite{Darboux1993}. The curvature properties were quantified more precisely in \cite{MandelSchippa2022} (see also \cite{Liess1991}). We summarize the properties of the characteristic surface depending on the number of different eigenvalues in Section \ref{subsection:CharacteristicSet}.

\medskip

Below $|D|^\alpha$ and $|D'|^\alpha$ denote Fourier multipliers:
\begin{equation*}
(|D|^\alpha u )\widehat (\xi) = |\xi|^\alpha \hat{u}(\xi), \quad (|D^\prime|^\alpha u) \widehat (\xi) = |\xi^\prime|^\alpha \hat{u}(\xi),
\end{equation*}
and $(\rho,p,q,d)$ is referred to as Strichartz admissible if $d \in \Z_{\geq 2}$, $\rho = d \big( \frac{1}{2}- \frac{1}{q} \big) - \frac{1}{p}$, $p,q \geq 2$, $\frac{2}{p} + \frac{d-1}{q} \leq \frac{d-1}{2}$, and $(p,q,d) \neq (2,\infty,3)$.
We denote the space-time Lebesgue norm of a function $u: \R \times \R^d \to \R$ for $1 \leq p,q < \infty$ by
\begin{equation*}
\| u \|_{L_t^p L_{x'}^q} := \big( \int_{\R} \big( \int_{\R^d} |u(t,x')|^q dx' \big)^{\frac{p}{q}} dt \big)^{\frac{1}{p}}
\end{equation*}
with the usual modifications if $p=\infty$ or $q=\infty$. We recall the following results about Strichartz estimates for wave equations. The sharp range, i.e., global-in-time Strichartz estimates
\begin{equation*}
\| |D'|^{1-\rho} u \|_{L_t^p(\R;L_{x'}^q(\R^d))} \lesssim \| u_0 \|_{\dot{H}^1(\R^d)} + \| u_1 \|_{L^2(\R^d)}
\end{equation*}
for solutions to the Euclidean wave equation
\begin{equation*}
\left\{ \begin{array}{cl}
\partial_t^2 u - \Delta u &= 0, \quad (t,x) \in \R \times \R^d, \; d \geq 2, \\
u(0,\cdot) &= u_0 \in \dot{H}^1(\R^d), \quad \dot{u}(0,\cdot) = u_1 \in L^2(\R^d)
\end{array} \right.
\end{equation*}
with $(\rho,p,q,d)$ Strichartz admissible was covered by Keel--Tao \cite{KeelTao1998}. First results for rough coefficients are due to H. Smith \cite{Smith1998} until Tataru proved the sharp range in a series of papers (cf. \cite{Tataru2000,Tataru2001,Tataru2002}); see also Bahouri--Chemin \cite{BahouriChemin1999} and Klainerman \cite{Klainerman2001}. Tataru recovered the Euclidean Strichartz estimates for $C^2$-coefficients (cf. \cite{Tataru2001}) locally in time and also for coefficients $\| \partial^2 g^{ij} \|_{L^1_t L_{x'}^\infty} < \infty$ (cf. \cite{Tataru2002}). Strichartz estimates for less regular coefficients require additional derivative loss, if one does not impose additional symmetry assumptions on the coefficients as shown in counterexamples by Smith--Tataru \cite{SmithTataru2005}. The Strichartz estimates for coefficients with $\| \partial g \|_{L_t^2 L_{x'}^\infty}< \infty$ can be used to show local well-posedness results for quasilinear wave equations, which improve on the energy method. In the isotropic case we can recover Strichartz estimates for scalar wave equations with rough coefficients.
\begin{theorem}[$C^2$-Strichartz estimates in the isotropic case]
\label{thm:IsotropicStrichartz}
Let $\varepsilon_1, \mu_1 \in C^2(\R \times \R^3;\R)$ and suppose that $\varepsilon = \varepsilon_1 1_{3 \times 3} : \R \times \R^3 \to \R^{3 \times 3}$, $\mu = \mu_1 1_{3 \times 3}: \R \times \R^3 \to \R^{3 \times 3}$ satisfy \eqref{eq:UniformEllipticity}. Let $u= (\D,\mathcal{H}): \R \times \R^3 \to \R^{3 \times 3}$ with $\nabla \cdot \D = \rho_e$ and $\nabla \cdot \mathcal{H} = \rho_m$ and $P$ as in \eqref{eq:RoughSymbolMaxwell}. Denote $\rho_{em} = (\rho_e,\rho_m)$.

If $\| \partial^2 \varepsilon_1 \|_{L^\infty_x} \leq \nu^4$, $\| \partial^2 \mu_1 \|_{L^\infty_x} \leq \nu^4$, then the following estimate holds:
\begin{equation}
\label{eq:StrichartzIsotropicC2}
\| |D|^{-\rho} u \|_{L^p_t L_{x'}^q} \lesssim \nu \| u \|_{L^2_{x}} + \nu^{-1} \| P u \|_{L^2_x} + \| |D|^{-\frac{1}{2}} \rho_{em} \|_{L^2_x}
\end{equation}
provided that the right hand-side is finite and $(\rho,p,q,3)$ is Strichartz admissible.
\end{theorem}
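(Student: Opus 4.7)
The plan is to use the isotropy to reduce the Maxwell operator $P(x,D)$ to a scalar wave operator with speed $c(x) = (\varepsilon_1(x)\mu_1(x))^{-1/2} \in C^2$, and then invoke Tataru's sharp Strichartz estimate for wave equations with $C^2$ coefficients \cite{Tataru2001}. Indeed, in the isotropic case the principal symbol of $P$ has characteristic polynomial $\tau^2(\tau^2 - c(x)^2|\xi'|^2)^2$, so the nontrivial eigenvalues are those of the scalar wave operator $\partial_t^2 - c(x)^2 \Delta$ (each with multiplicity two), while the double zero eigenvalue corresponds to the longitudinal (curl-free) modes pinned down by the divergence constraints.

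\textbf{Step 1 (Longitudinal part).} I would perform a Helmholtz decomposition $u = u_\ell + u_t$ with $\nabla \times u_\ell = 0$ and $\nabla \cdot u_t = 0$. The constraints $\nabla \cdot \D = \rho_e$, $\nabla \cdot \mathcal{H} = \rho_m$ identify $u_\ell = -\nabla(-\Delta)^{-1}\rho_{em}$ modulo commutators of the projector with the $C^2$ coefficients $\varepsilon_1,\mu_1$. Sobolev embedding in space together with the classical $\dot{H}^{1/2-1/p}(\R) \hookrightarrow L^p_t$ embedding in time (valid for $p<\infty$, with the $p=\infty$ endpoint handled by the energy inequality) yields
\begin{equation*}
\||D|^{-\rho} u_\ell\|_{L^p_t L^q_{x'}} \lesssim \||D|^{-1/2}\rho_{em}\|_{L^2_x}
\end{equation*}
for Strichartz-admissible $(\rho,p,q,3)$, the arithmetic $\rho + 1/2 = 3(1/2 - 1/q) + (1/2 - 1/p)$ being exactly the admissibility relation; low-frequency and commutator remainders are absorbed into $\nu\|u\|_{L^2_x}$.

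\textbf{Step 2 (Transverse part via Tataru).} On the divergence-free subspace, applying $\nabla\times$ to the Maxwell system and using $\nabla\times\nabla\times u_t = -\Delta u_t$ yields
\begin{equation*}
(\partial_t^2 - c(x)^2 \Delta)u_t = F,
\end{equation*}
where $F$ is a sum of $Pu$, first derivatives of $u$ contracted against $\partial(\varepsilon_1,\mu_1)$, and gradients of $\rho_{em}$; under the hypothesis $\|\partial^2\varepsilon_1\|_{L^\infty_x},\|\partial^2\mu_1\|_{L^\infty_x} \leq \nu^4$ each such term is controlled in $L^2_x$ by the right-hand side of \eqref{eq:StrichartzIsotropicC2}. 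Tataru's $C^2$-Strichartz estimate applied to the scalar wave operator $\partial_t^2 - c(x)^2\Delta$ then gives
\begin{equation*}
\||D|^{-\rho} u_t\|_{L^p_t L^q_{x'}} \lesssim \nu\|u_t\|_{L^2_x} + \nu^{-1}\|F\|_{L^2_x},
\end{equation*}
and combining with Step 1 produces the claimed bound.

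\textbf{Main obstacle.} The main technical difficulty lies in the bookkeeping of the commutators produced when the Helmholtz projector and the dyadic frequency cutoffs used to rescale to unit scale are moved past the variable coefficients $\varepsilon_1,\mu_1$: since these are $C^2$, the commutators are zero-order pseudodifferential operators with $C^1$ symbols, hence $L^2_x$-bounded and perturbative relative to the $\nu$-weighted right-hand side. The factor $\nu$ is the standard Tataru loss, produced by the rescaling $x'\mapsto\nu^{-2}x'$ that converts the hypothesis $\|\partial^2\kappa\|_{L^\infty}\leq\nu^4$ into the unit-scale $C^2$ hypothesis of \cite{Tataru2001}.
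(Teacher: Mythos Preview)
Your strategy---split off the longitudinal part via the divergence constraints and reduce the transverse part to a scalar wave equation---is sound in outline and differs from the paper's route, which stays at the first-order level throughout. The paper diagonalizes the $6\times 6$ principal symbol directly by constructing explicit eigenvectors for the double eigenvalue $\xi_0 \pm c(x)|\xi'|$; since no global smooth frame of such eigenvectors exists on $\mathbb{S}^2$ (hairy ball), this forces an additional microlocalization to three charts $\{|\xi_i^*|\gtrsim 1\}$, after which Proposition~\ref{prop:HalfWaveEstimates} is applied to the four resulting half-wave components. Your second-order reduction sidesteps the eigenvector issue entirely because $\partial_t^2 - c^2\Delta$ acts componentwise; conversely, the paper's first-order diagonalization is what carries over to the partially anisotropic case in Section~\ref{section:PartiallyAnisotropicCase}, where no clean scalar second-order equation is available.

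There is, however, a derivative-counting inconsistency in your Step~2. Differentiating the first block of the Maxwell system in time and substituting the second produces $(\partial_t^2 - c^2\Delta)\mathcal{D} = F$ with $F$ built from $\partial_t(Pu)_1$, $\nabla\times(\mu_1^{-1}(Pu)_2)$, terms of the form $(\partial\varepsilon_1,\partial\mu_1)\cdot\partial u$, and $c^2\nabla\rho_e$---that is, \emph{first derivatives} of $Pu$ appear, not $Pu$ itself. Correspondingly, the second-order Strichartz estimate matching \eqref{eq:StrichartzIsotropicC2} must carry an extra $|D|^{-1}$ on the forcing: at frequency $\lambda$ one has $\lambda^{-\rho}\|S_\lambda v\|_{L^pL^q} \lesssim \|S_\lambda v\|_{L^2} + \lambda^{-1}\|(\partial_t^2 - c^2\Delta)S_\lambda v\|_{L^2}$, since the second-order operator sits one order above the half-wave operator of Proposition~\ref{prop:HalfWaveEstimates}. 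Your two misstatements happen to cancel---$\nu^{-1}\||D|^{-1}\partial(Pu)\|_{L^2} \sim \nu^{-1}\|Pu\|_{L^2}$---so the argument can be salvaged, but as written both the content of $F$ and the form of Tataru's estimate are off by one derivative and should be corrected.
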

The theorem states that in case of small charges the dispersive properties of wave equations are recovered. Like in the two-dimensional case, note that on the one hand, if
\begin{equation}
\label{eq:LargeCharge}
\| \rho_e \|_{\dot{H}_{x'}^{-\frac{1}{2}}}  \sim \| \D \|_{\dot{H}_{x'}^{\frac{1}{2}}}, \quad \| \rho_m \|_{\dot{H}_{x'}^{-\frac{1}{2}}} \sim \| \B \|_{\dot{H}_{x'}^{\frac{1}{2}}},
\end{equation}
\eqref{eq:StrichartzIsotropicC2} follows from Sobolev embedding. Moreover, we can find stationary solutions $\D = \nabla \varphi$ and $\mathcal{H}=0$ for $\varepsilon = 1_{3 \times 3}$, which would clearly violate \eqref{eq:StrichartzIsotropicC2} 
when omitting the contribution of the charges on the right-hand side in \eqref{eq:StrichartzIsotropicC2}.

Corresponding Strichartz estimates with additional derivative loss under weaker regularity assumptions on $\varepsilon$ and $\mu$ follow by standard means (cf. \cite{Tataru2001, SchippaSchnaubelt2021}).
In the following, for $\lambda \in 2^{\mathbb{Z}}$ we denote Littlewood-Paley projections by
\begin{align*}
(S_\lambda f) \widehat (\xi) = \beta(\lambda^{-1} \| \xi \|) \hat{f}(\xi), \quad (S_\lambda^\prime f) \widehat (\xi) = \beta^\prime(\lambda^{-1} \| \xi' \|) \hat{f}(\xi),
\end{align*}
where $\beta: \R \to \R_{\geq 0}$ denotes a radial function, $\text{supp}(\beta) \subseteq B(0,4) \backslash B(0,1/2)$, which satisfies
\begin{equation}
\label{eq:LPBeta}
\sum_{\lambda \in 2^{\Z}} \beta(\lambda x) = 1 \text{ for } x \neq 0.
\end{equation}

We have the following for $C^s$-coefficients:
\begin{theorem}[$C^s$-Strichartz estimates in the isotropic case]
\label{thm:IsotropicStrichartzEstimatesCsCoefficients}
Let $0<s<2$, $\varepsilon_1,\mu_1 \in C^s(\R \times \R^3;\R)$, $\varepsilon = \text{diag}(\varepsilon_1,\varepsilon_1,\varepsilon_1), \quad \mu = \text{diag}(\mu_1,\mu_1,\mu_1) : \R \times \R^3 \to \R^{3 \times 3}$ be matrix-valued functions with coefficients in $C^s$, $0<s<2$, satisfying \eqref{eq:UniformEllipticity}. Let $u=(\D,\mathcal{H}): \R \times \R^3 \to \R^3 \times \R^3$ with $\nabla \cdot \D = \rho_e$ and $\nabla \cdot \B = \rho_m$, $P$ as in \eqref{eq:RoughSymbolMaxwell}, and write $\rho_{em} = (\rho_e,\rho_m)$. Then, the estimate holds
\begin{equation}
\label{eq:StrichartzEstimatesCscoefficients}
\| |D|^{-\rho-\frac{\sigma}{2}} u \|_{L_t^p L_{x'}^q} \lesssim \nu \| u \|_{L_x^2} + \nu^{-1} \| P u \|_{\dot{H}_x^{-\sigma}} + \| |D|^{-\frac{1}{2}-\frac{\sigma}{2}} \rho_{em} \|_{L_{x}^2} 
\end{equation}
provided that the right hand-side is finite, $(\rho,p,q,3)$ is Strichartz admissible,
\begin{equation*}
\sigma = \frac{2-s}{2+s}, \text{ and } \| (\varepsilon_1,\mu_1) \|_{\dot{C}^s} \leq \nu^4.
\end{equation*}
\end{theorem}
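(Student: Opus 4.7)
The proof passes from $C^s$ to $C^2$ coefficients by Littlewood--Paley localization and regularization of the coefficients at a dyadic scale tailored to the input frequency, reducing the estimate to Theorem \ref{thm:IsotropicStrichartz}. First, decompose $u = \sum_{\lambda \in 2^{\Z}} S_\lambda u$ and prove a frequency-localized estimate for each $u_\lambda := S_\lambda u$; low frequencies $\lambda \lesssim 1$ are absorbed by the charge term via Sobolev embedding on unit-frequency blocks, so the main work concerns $\lambda \geq 1$. The commutator $[S_\lambda, P]$ applied to $u$ will be handled by the same mollification argument used below, contributing only lower-order error.

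For each $\lambda \geq 1$, I would regularize $\varepsilon_1, \mu_1$ by space-time convolution with a smooth mollifier at scale $\lambda^{-\theta}$, where $\theta := 2/(2+s) = 2\sigma/(2-s)$, producing smoothed coefficients $\varepsilon_1^{(\lambda)}, \mu_1^{(\lambda)}$ and the associated operator $P^{(\lambda)}$ as in \eqref{eq:RoughSymbolMaxwell}. Standard mollification estimates based on $\|(\varepsilon_1,\mu_1)\|_{\dot{C}^s} \leq \nu^4$ yield
\begin{equation*}
\|\partial^2 \varepsilon_1^{(\lambda)}\|_{L^\infty_x} + \|\partial^2 \mu_1^{(\lambda)}\|_{L^\infty_x} \lesssim \nu^4 \lambda^{\theta(2-s)} = \nu^4 \lambda^{2\sigma},
\end{equation*}
\begin{equation*}
\|\varepsilon_1 - \varepsilon_1^{(\lambda)}\|_{L^\infty_x} + \|\mu_1 - \mu_1^{(\lambda)}\|_{L^\infty_x} \lesssim \nu^4 \lambda^{-\theta s}.
\end{equation*}
Setting $\nu_\lambda := \nu \lambda^{\sigma/2}$, the first bound verifies the $C^2$-hypothesis $\|\partial^2 \varepsilon_1^{(\lambda)}\|_{L^\infty} \leq \nu_\lambda^4$, so Theorem \ref{thm:IsotropicStrichartz} applied to $u_\lambda$ under $P^{(\lambda)}$ gives
\begin{equation*}
\||D|^{-\rho} u_\lambda\|_{L^p_t L^q_{x'}} \lesssim \nu \lambda^{\sigma/2} \|u_\lambda\|_{L^2_x} + \nu^{-1} \lambda^{-\sigma/2}\|P^{(\lambda)} u_\lambda\|_{L^2_x} + \|\,|D|^{-1/2} \rho_{em,\lambda}\|_{L^2_x}.
\end{equation*}

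Writing $P^{(\lambda)} u_\lambda = P u_\lambda - (P - P^{(\lambda)}) u_\lambda$ and using that $P - P^{(\lambda)}$ acts as multiplication by $O(\nu^4 \lambda^{-\theta s})$ coefficients composed with the first-order operator $\mathcal{C}(D)$, which contributes a factor $\lambda$ on $u_\lambda$, produces an error $\|(P - P^{(\lambda)})u_\lambda\|_{L^2_x} \lesssim \nu^4 \lambda^{1-\theta s} \|u_\lambda\|_{L^2_x}$. The exact algebraic identity secured by the choice of $\theta$,
\begin{equation*}
1 - \frac{\sigma}{2} - \theta s = \frac{\sigma}{2} \quad \text{with} \quad \sigma = \frac{2-s}{2+s},\ \theta = \frac{2}{2+s},
\end{equation*}
converts the error contribution to $\nu^3 \lambda^{\sigma/2} \|u_\lambda\|_{L^2_x}$, so that after multiplying the displayed inequality by $\lambda^{-\sigma/2}$ (to match the target $\||D|^{-\rho-\sigma/2}u_\lambda\|$ on the left), the error becomes $\nu^3 \|u_\lambda\|_{L^2_x}$, absorbed into the leading $\nu\|u_\lambda\|_{L^2_x}$ term in the regime $\nu \lesssim 1$ implicit in the smallness constant.

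Finally, I would dyadically sum in $\lambda$, using the Littlewood--Paley square function estimate $\|v\|_{L^p_t L^q_{x'}} \lesssim \bigl(\sum_\lambda \|S_\lambda v\|_{L^p_t L^q_{x'}}^2\bigr)^{1/2}$ (valid for $p,q \in [2,\infty)$) on the left, and $\ell^2$-orthogonality together with the $\dot{H}^{-\sigma}$ identity on the right, to turn the individual $\lambda$-bounds into \eqref{eq:StrichartzEstimatesCscoefficients}. The main obstacle is the sharp bookkeeping of the mollification exponent: the choice $\theta = 2/(2+s)$ is the unique one that simultaneously forces the smoothed coefficients into the $C^2$-regime of Theorem \ref{thm:IsotropicStrichartz} and produces an error free of positive powers of $\lambda$ after accounting for the $\sigma/2$ derivative loss, which is precisely what makes dyadic summability possible. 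The Strichartz-admissible range $(\rho, p, q, 3)$ is inherited from Theorem \ref{thm:IsotropicStrichartz}.
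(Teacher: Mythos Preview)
Your strategy---frequency-localize, regularize the coefficients at scale $\lambda^{-2/(2+s)}$, invoke Theorem~\ref{thm:IsotropicStrichartz} with $\nu_\lambda=\nu\lambda^{\sigma/2}$, and sum---is exactly the ``standard means'' the paper defers to (Tataru~2001, Schippa--Schnaubelt~2021), and your verification of the exponent identity $1-\tfrac{\sigma}{2}-\theta s=\tfrac{\sigma}{2}$ is correct.

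There is, however, a genuine gap in the error step. You claim $\|(P-P^{(\lambda)})u_\lambda\|_{L^2}\lesssim \nu^4\lambda^{1-\theta s}\|u_\lambda\|_{L^2}$ on the grounds that $\mathcal{C}(D)$ ``contributes a factor $\lambda$ on $u_\lambda$''. But $\mathcal{C}(D)$ sits \emph{outside} the multiplication: the relevant term is $\nabla\times\bigl[(\varepsilon^{-1}-(\varepsilon^{(\lambda)})^{-1})S_\lambda u\bigr]$, and the coefficient difference has an unbounded high-frequency tail coming from $\varepsilon^{-1}$. A paraproduct decomposition shows that the high--low piece $\sum_{M\gg\lambda}(S_M\varepsilon^{-1})S_\lambda u$, after $\nabla\times$, contributes $\bigl(\sum_{M\gg\lambda}M^{2(1-s)}\bigr)^{1/2}\|u_\lambda\|_{L^2}$, which diverges for $s\le 1$. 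In that regime $\|(P-P^{(\lambda)})u_\lambda\|_{L^2}$ need not even be finite, and neither is $\|Pu_\lambda\|_{L^2}$ in general, so the intermediate splitting $P^{(\lambda)}u_\lambda=Pu_\lambda-(P-P^{(\lambda)})u_\lambda$ breaks down. The remedy (this is what Tataru's argument actually does) is \emph{not} to pass through $Pu_\lambda$ but to estimate the combined object $P^{(\lambda)}S_\lambda u-\tilde S_\lambda Pu$ directly, via a square-function commutator bound of the form $\sum_\lambda \lambda^{-2\sigma}\|P^{(\lambda)}S_\lambda u-\tilde S_\lambda Pu\|_{L^2}^2\lesssim \|u\|_{L^2}^2$; this merges your vaguely-stated commutator $[S_\lambda,P]$ with the truncation error and is what makes the argument go through for all $0<s<2$.

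A minor point: the error term comes out as $\nu^3\|u_\lambda\|_{L^2}$, which is not dominated by $\nu\|u_\lambda\|_{L^2}$ under an assumption ``$\nu\lesssim 1$'' (no such assumption is in force). The correct move, as in Section~\ref{subsection:ReductionsC2Isotropic}, is to rescale at the outset to $\nu=1$, after which both terms coincide.
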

 Moreover, by the arguments from \cite{Tataru2002,SchippaSchnaubelt2021}, Strichartz estimates for coefficients $\partial_x^2 \varepsilon \in L_t^1 L_{x'}^\infty$ (cf. \cite[Theorem~1.3]{SchippaSchnaubelt2021}) and the inhomogeneous equation (cf. \cite[Theorem~1.5]{SchippaSchnaubelt2021}) are proved. We have the following theorem, which is important to treat quasilinear equations. 
\begin{theorem}
\label{thm:StrichartzEstimatesL1LinfCoefficientsIsotropic}
Let $\varepsilon_1,\mu_1 \in C^1(\R \times \R^3; \R)$, and $\varepsilon = \text{diag}(\varepsilon_1,\varepsilon_1,\varepsilon_1) : \R \times \R^3 \to \R^{3 \times 3}$, $\mu = \text{diag}(\mu_1,\mu_1,\mu_1): \R \times \R^3 \to \R^{3 \times 3}$ be matrix-valued functions, which satisfy \eqref{eq:UniformEllipticity} and $\partial_x^2 \varepsilon \in L_t^1 L_{x'}^\infty$, $\partial^2_x \mu \in L^1_t L^\infty_{x'}$. Let $u$, $P$, $\rho_{em}$ be as in Theorem \ref{thm:IsotropicStrichartz}, and $(\rho,p,q,3)$ be Strichartz admissible. Then, the following estimate holds
\begin{equation}
\label{eq:StrichartzEstimatesL1LinfCoefficients}
\begin{split}
\| |D'|^{-\rho} u \|_{L^p(0,T;L_{x'}^q)} &\lesssim \nu^{\frac{1}{p}} \| u \|_{L_t^\infty L_{x'}^2} + \nu^{-\frac{1}{p'}} \| P(x,D) u \|_{L_t^1 L_{x'}^2} \\
&\quad + T^{\frac{1}{p}} ( \| |D'|^{-1+\frac{1}{p}} \rho_{em}(0) \|_{L_{x'}^2(\R^3)} + \| |D'|^{-1+\frac{1}{p}} \partial_t \rho_{em} \|_{L_t^1 L_{x'}^2}),
\end{split}
\end{equation}
whenever the right hand-side is finite, provided that $\nu \geq 1$, and $T \| \partial_x^2 \varepsilon \|_{L_t^1 L_{x'}^\infty} + T \| \partial_x^2 \mu \|_{L_t^1 L_{x'}^\infty} \leq \nu^2$.
\end{theorem}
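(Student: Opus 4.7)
The plan is to reduce the $L^1_t L^\infty_{x'}$ hypothesis on $\partial^2_x(\varepsilon,\mu)$ to the $C^2$ hypothesis of Theorem \ref{thm:IsotropicStrichartz} by partitioning the time interval and then replacing the coefficients on each piece by a time-mollification at a matching scale. This is the strategy introduced by Tataru \cite{Tataru2002} and carried out in the two-dimensional Maxwell setting in \cite[Theorem~1.3]{SchippaSchnaubelt2021}. I expect only minor changes in the three-dimensional case since Theorem \ref{thm:IsotropicStrichartz} already supplies the required $C^2$-estimate; the additional structure of the three-dimensional characteristic set (sphere with multiplicity two) is already absorbed into that theorem.

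Concretely, I would partition $[0,T] = \bigcup_{k=1}^N I_k$ into finitely many subintervals $I_k = [t_{k-1},t_k]$ such that on each piece
\begin{equation*}
\int_{I_k} \bigl( \|\partial^2_x \varepsilon\|_{L^\infty_{x'}} + \|\partial^2_x \mu\|_{L^\infty_{x'}} \bigr) dt \leq c
\end{equation*}
for a small absolute constant $c$. By the hypothesis $T\|\partial^2_x(\varepsilon,\mu)\|_{L^1_t L^\infty_{x'}} \leq \nu^2$, this is achievable with $N \lesssim \nu^2$. On each $I_k$ I would replace $\varepsilon,\mu$ by time-mollifications $\varepsilon^{(k)}, \mu^{(k)}$ at scale $\delta_k \sim |I_k|$, extended past $I_k$ preserving bounds, so that the $C^2$-norm of $\varepsilon^{(k)}, \mu^{(k)}$ is controlled by a suitable $\tilde\nu_k^4$. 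Writing $P^{(k)}$ for the corresponding operator, $u$ then solves $P^{(k)} u = P u - R_k u$ globally on $\R \times \R^3$, where $R_k$ is a first-order operator whose coefficients are $\partial_x(\varepsilon - \varepsilon^{(k)})$. Theorem \ref{thm:IsotropicStrichartz} then applies on each $I_k$; after converting the global $L^2_x$-norms to the interval-localized norms $L^\infty_t L^2_{x'}(I_k)$ and $L^1_t L^2_{x'}(I_k)$ via H\"older in time and balancing $\tilde\nu_k,\tilde\nu_k^{-1}$ against $|I_k|^{\pm 1/2}$, one obtains a local version of \eqref{eq:StrichartzEstimatesL1LinfCoefficients} on $I_k$.

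Summing $\|u\|_{L^p_t L^q_{x'}(0,T)}^p = \sum_k \|u\|_{L^p_t L^q_{x'}(I_k)}^p$ and optimizing in $\tilde\nu_k$ against the bound $N\lesssim \nu^2$ produces the exponents $\nu^{1/p}, \nu^{-1/p'}$ on the right-hand side of \eqref{eq:StrichartzEstimatesL1LinfCoefficients}. The charge term is handled via $\rho_{em}(t) = \rho_{em}(0) + \int_0^t \partial_t \rho_{em}(s)\, ds$, which gives $L^\infty_t L^2_{x'}$-control; a subsequent H\"older in time provides the factor $T^{1/p}$ multiplying the $|D'|^{-1+1/p}$-weighted norms, and the exponent $-1+\tfrac{1}{p}$ (rather than $-\tfrac{1}{2}$ as in \eqref{eq:StrichartzIsotropicC2}) is exactly what one gets after passing from space-time $|D|^{-1/2}$ to the corresponding spatial fractional derivative along the integration in time. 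The main obstacle will be the simultaneous control of the perturbation $R_k u$, which must be absorbed into $\|P u\|_{L^1_t L^2_{x'}}$ without spoiling the $\nu$-dependence, and the tracking of the divergence constraint $\nabla\cdot\D=\rho_e$ through the regularization of $\varepsilon$; this latter point is what forces the charge term on the right-hand side to appear in its specific $T^{1/p}$-weighted form.
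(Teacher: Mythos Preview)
Your time-partition strategy is in the right spirit, but the paper takes a different and more direct route. Rather than reducing to the $C^2$ result of Theorem~\ref{thm:IsotropicStrichartz} via mollification, the paper carries out the same localization steps as in the $C^2$ case (reduction to $\nu=1$, localization to the unit cube, reduction to dyadic estimates, truncation of coefficients at frequency $\lambda^{1/2}$, all cited from \cite[Subsection~3.4]{SchippaSchnaubelt2021}) to arrive directly at the dyadic estimate
\begin{equation*}
\lambda^{-\rho} \| S_\lambda u \|_{L^p L^q} \lesssim \| S_\lambda u \|_{L^\infty L^2} + \| P_\lambda S_\lambda u \|_{L^1 L^2} + \lambda^{-1+\frac{1}{p}} \| S_\lambda \rho_{em} \|_{L^\infty_t L^2_{x'}}.
\end{equation*}
This is then proved by the \emph{same} diagonalization as in Section~\ref{subsection:IsotropicDiagonalization}, after which the $L^1 L^\infty$ half-wave estimate \eqref{eq:StrichartzEstimatesHalfWaveEquationL1LinfCoefficients} of Proposition~\ref{prop:HalfWaveEstimates} is applied componentwise to the non-degenerate components. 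The degenerate components are handled by Sobolev embedding in $x'$ plus H\"older in time, which is exactly what produces the exponent $-1+\tfrac{1}{p}$ on the charge; the fundamental theorem of calculus then converts $\|S'_\lambda\rho_{em}\|_{L^\infty_t L^2_{x'}}$ into the $\rho_{em}(0)$ and $\partial_t\rho_{em}$ terms. The point is that Proposition~\ref{prop:HalfWaveEstimates} already contains the $L^1L^\infty$ endpoint, so there is no need to pass through Theorem~\ref{thm:IsotropicStrichartz}.

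Your route, as written, has a gap: the conversion of $\tilde\nu_k^{-1}\|P^{(k)}u\|_{L^2_x(I_k)}$ to a bound involving $\|Pu\|_{L^1_t L^2_{x'}(I_k)}$ cannot be done ``via H\"older in time'', since H\"older goes the wrong way ($\|f\|_{L^1_t}\le |I_k|^{1/2}\|f\|_{L^2_t}$, not the reverse). This is exactly why the paper invokes Duhamel's formula in the reductions, as flagged after the statement of the theorem. Likewise, your explanation for the shift in the charge exponent from $-\tfrac12$ to $-1+\tfrac1p$ is not correct: it does not come from converting the space-time $|D|^{-1/2}$ to a spatial weight, but from estimating the stationary components $|D'|^{-1}S_\lambda\rho_{em}$ directly by Sobolev embedding in $x'$ and H\"older in $t$ at the dyadic level. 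Your approach can presumably be repaired along these lines, but it is more circuitous than the paper's, which exploits that the diagonalization is indifferent to the regularity class and that the half-wave black box already covers the $L^1L^\infty$ case.
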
 
 The reason for additional terms $\| |D'|^{-1+\frac{1}{p}} \partial_t \rho \|_{L_t^1 L_{x'}^2}$ compared to \eqref{eq:StrichartzIsotropicC2} is that we use Duhamel's formula in the reductions. For applying the estimates to solve quasilinear equations, $L_t^\infty L_{x'}^2$- and $L_t^1 L_{x'}^2$-norms are to be preferred. We further have to reduce the regularity of $\varepsilon$ to control $\| \partial \varepsilon \|_{L^p L^\infty}$ for energy estimates. We denote homogeneous Besov spaces by $\dot{B}^{pqr}_s$ with norm
 \begin{equation*}
 \| u \|^r_{\dot{B}^{pqr}_s} = \sum_{\lambda \in 2^{\Z}} \lambda^{rs} \| S_\lambda u \|^r_{L_t^p L_{x'}^q}
 \end{equation*}
 with the obvious modification for $r = \infty$. For the coefficients of $\varepsilon$, we use the microlocalizable scale of space (cf. \cite{Tataru2002,SchippaSchnaubelt2021,Taylor1991}):
 \begin{equation*}
 \| v \|_{\mathcal{X}^s} = \sup_{\lambda \in 2^{\Z}} \lambda^s \| S_\lambda v \|_{L_t^1 L_{x'}^\infty}.
 \end{equation*}
 \begin{theorem}
 \label{thm:StrichartzEstimatesXsCoefficients}
 Let $\varepsilon=\text{diag}(\varepsilon_1,\varepsilon_1,\varepsilon_1), \; \mu = \text{diag}(\mu_1,\mu_1,\mu_1) \in \mathcal{X}^s$, $0<s<2$, and $u=(\D,\mathcal{H})$, $(\rho,p,q,3)$, and $\sigma$ as in the assumptions of Theorem \ref{thm:IsotropicStrichartzEstimatesCsCoefficients}. Then, the following estimate holds:
 \begin{equation}
\label{eq:StrichartzEstimatesXsCoefficients}
\begin{split}
\| |D|^{-\rho - \frac{\sigma}{p}} u \|_{\dot{B}^{pq \infty}_0} &\lesssim \nu^{\frac{1}{p}} \| u \|_{L_T^\infty L_{x'}^2} + \nu^{-\frac{1}{p'}} \| |D|^{-\sigma} P u \|_{L_t^1 L_{x'}^2}  \\
&\quad + T^{\frac{1}{p}} ( \| |D'|^{-1+\frac{1}{p}-\frac{\sigma}{p}} \rho_{e m} \|_{L_t^\infty L_{x'}^2} + \| |D'|^{-1+\frac{1}{p}-\frac{\sigma}{p}} \partial_t \rho_{e m} \|_{L_t^1 L_{x'}^2} ) 
 \end{split}
 \end{equation}
 for all $u$ compactly supported in $[0,T]$, and $\nu$, $T$ verifying
 \begin{equation*}
 T^s \| (\varepsilon_1, \mu_1) \|^2_{\mathcal{X}^s} \lesssim \nu^{2+s}.
 \end{equation*}
 \end{theorem}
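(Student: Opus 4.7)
The plan is to reduce Theorem \ref{thm:StrichartzEstimatesXsCoefficients} to Theorem \ref{thm:StrichartzEstimatesL1LinfCoefficientsIsotropic} applied frequency-by-frequency after regularizing the coefficients at a dyadic scale $\mu_\lambda<\lambda$ tied to the output frequency~$\lambda$. This is the Tataru truncation scheme from \cite{Tataru2002}, adapted to Maxwell equations as in \cite{SchippaSchnaubelt2021}, with the dyadic outputs finally collected in $\dot B^{pq\infty}_0$ by taking $\sup_{\lambda\in 2^{\Z}}$. Concretely, I would decompose $u=\sum_\lambda S_\lambda u$ and, for each $\lambda$, introduce $P_{\mu_\lambda}$ obtained by replacing $\varepsilon_1,\mu_1$ by their low-frequency truncations $S_{\leq\mu_\lambda}\varepsilon_1$, $S_{\leq\mu_\lambda}\mu_1$. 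Writing
$$P_{\mu_\lambda}(S_\lambda u)=S_\lambda(Pu)+[P,S_\lambda]u+(P_{\mu_\lambda}-P)S_\lambda u,$$
the $\mathcal{X}^s$-bound together with Bernstein yields $\|\partial_x^2 S_{\leq\mu_\lambda}\varepsilon_1\|_{L^1_tL^\infty_{x'}}\lesssim\mu_\lambda^{2-s}\|\varepsilon_1\|_{\mathcal{X}^s}$, so Theorem \ref{thm:StrichartzEstimatesL1LinfCoefficientsIsotropic} applies to $P_{\mu_\lambda}$ on $[0,T]$ with an effective parameter $\tilde\nu_\lambda=\nu\lambda^\sigma$, provided $T\mu_\lambda^{2-s}\|\varepsilon_1\|_{\mathcal{X}^s}\lesssim\tilde\nu_\lambda^2$.

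The decisive choice is $\mu_\lambda\sim\lambda^{2/(2+s)}$ (up to a factor depending on $\nu$, $T$, and $\|(\varepsilon_1,\mu_1)\|_{\mathcal{X}^s}$), which makes $\mu_\lambda^{2-s}\sim\lambda^{2\sigma}$. The truncation error $(P_{\mu_\lambda}-P)S_\lambda u$ then satisfies
$$\|(P_{\mu_\lambda}-P)S_\lambda u\|_{L^1_tL^2_{x'}}\lesssim\mu_\lambda^{-s}\lambda\,\|\varepsilon_1\|_{\mathcal{X}^s}\|S_\lambda u\|_{L^\infty_tL^2_{x'}}\sim\lambda^\sigma\|\varepsilon_1\|_{\mathcal{X}^s}\|S_\lambda u\|_{L^\infty L^2},$$
and the same bound holds for the paraproduct pieces of $[P,S_\lambda]u$. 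Upper-bounding $\mu_\lambda$ via the $C^2$-hypothesis and lower-bounding it via the requirement $\tilde\nu_\lambda^{-1/p'}\cdot(\text{error})\lesssim\tilde\nu_\lambda^{1/p}\|S_\lambda u\|_{L^\infty L^2}$ both reduce, after elimination of $\mu_\lambda$, to the single smallness hypothesis $T^s\|(\varepsilon_1,\mu_1)\|^2_{\mathcal{X}^s}\lesssim\nu^{2+s}$ of the theorem. Theorem \ref{thm:StrichartzEstimatesL1LinfCoefficientsIsotropic} then gives, at each frequency,
$$\lambda^{-\rho}\|S_\lambda u\|_{L^p_tL^q_{x'}}\lesssim\tilde\nu_\lambda^{1/p}\|S_\lambda u\|_{L^\infty_tL^2_{x'}}+\tilde\nu_\lambda^{-1/p'}\|S_\lambda Pu\|_{L^1_tL^2_{x'}}+T^{1/p}(\text{charge terms}),$$
and multiplying by $\lambda^{-\sigma/p}$ together with the identities $\tilde\nu_\lambda^{1/p}\lambda^{-\sigma/p}=\nu^{1/p}$ and $\tilde\nu_\lambda^{-1/p'}\lambda^{-\sigma/p}=\nu^{-1/p'}\lambda^{-\sigma}$ produces exactly the weights appearing in \eqref{eq:StrichartzEstimatesXsCoefficients}.

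Finally, I would take $\sup_\lambda$ to pass to the $\dot B^{pq\infty}_0$-norm on the left-hand side, noting that the hypotheses are uniform in $\lambda$ so no dyadic summation is required. The longitudinal component of $S_\lambda u$, which is not dispersed by the half-wave reduction that underlies Theorem \ref{thm:StrichartzEstimatesL1LinfCoefficientsIsotropic}, is controlled directly by the divergence constraints $\nabla\cdot\D=\rho_e$, $\nabla\cdot\B=\rho_m$ at each dyadic scale exactly as in the proofs of Theorems \ref{thm:IsotropicStrichartz}--\ref{thm:StrichartzEstimatesL1LinfCoefficientsIsotropic}, which is the origin of the two charge terms in \eqref{eq:StrichartzEstimatesXsCoefficients}. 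The compact support of $u$ in $[0,T]$ is used to convert Duhamel terms from $L^\infty_tL^2_{x'}$ into $L^2_{x}$.

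The hardest part of the argument is the paraproduct bookkeeping for the commutator $[P,S_\lambda]u$ and the coefficient-truncation error $(P_{\mu_\lambda}-P)S_\lambda u$: each piece has to be absorbed into exactly the right term on the right-hand side of \eqref{eq:StrichartzEstimatesXsCoefficients}---some into the $\nu^{1/p}\|u\|_{L^\infty L^2}$-term, others into the $\nu^{-1/p'}\||D|^{-\sigma}Pu\|_{L^1 L^2}$-term, and the leftover paraproducts of the form $S_{\sim\lambda}\varepsilon\cdot\nabla S_{\ll\lambda}u$ into the charge part via the divergence constraints---with sharp $\lambda$-exponents, so that the scaling $\mu_\lambda\sim\lambda^{2/(2+s)}$ and $\tilde\nu_\lambda=\nu\lambda^\sigma$ closes the estimate under a single condition on $T$, $\nu$ and $\|(\varepsilon_1,\mu_1)\|_{\mathcal{X}^s}$.
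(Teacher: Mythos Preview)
Your proposal is correct and follows precisely the approach the paper indicates: the paper does not spell out a proof of Theorem~\ref{thm:StrichartzEstimatesXsCoefficients} but states that it is obtained ``by the arguments from \cite{Tataru2002,SchippaSchnaubelt2021}'', i.e., exactly the frequency-dependent coefficient truncation at scale $\mu_\lambda\sim\lambda^{2/(2+s)}$ with effective parameter $\tilde\nu_\lambda=\nu\lambda^\sigma$, reduction to Theorem~\ref{thm:StrichartzEstimatesL1LinfCoefficientsIsotropic} at each dyadic scale, and assembly via $\sup_\lambda$ into $\dot B^{pq\infty}_0$. One small caveat: the high--high paraproduct piece $S_{\sim\lambda}\varepsilon\cdot\nabla S_{\ll\lambda}u$ is absorbed into the $\nu^{1/p}\|u\|_{L^\infty L^2}$-term (via the $\mathcal X^s$-bound), not into the charge terms; the charge contributions arise solely from the degenerate eigenvalue $\xi_0$ in the diagonalization, as you note elsewhere.
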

Further inhomogeneous Strichartz estimates are proved by similar means as in \cite{SchippaSchnaubelt2021}, which is omitted here. In the partially anisotropic case a diagonalization is still possible, but error terms arising from the composition of pseudo-differential operators presently only allow to prove inferior estimates. We show the following:
 \begin{theorem}
 \label{thm:PartiallyAnisotropicStrichartz}
 Let $\varepsilon = (\varepsilon_1,\varepsilon_2,\varepsilon_2): \R \times \R^3 \to \R^{3 \times 3}$ satisfy \eqref{eq:UniformEllipticity}, let $u = (\D, \mathcal{H})$, $\rho_{em} = (\rho_e,\rho_m)$, and $P$ be as in \eqref{eq:Maxwell3dConcise}. Let $T >0$ and $\delta > 0$. 
 \begin{itemize} 
  \item If $\partial \varepsilon \in L_T^\infty L_{x'}^\infty$, then the following estimate holds:
 \begin{equation}
 \label{eq:StrichartzPartiallyAnisotropicLipschitz}
\begin{split}
\| \langle D' \rangle^{-\rho - \frac{1}{3p} - \delta} u \|_{L^p(0,T;L^q(\R^3))} &\lesssim_{T,\delta} \| u_0 \|_{L^2(\R^3)} + \| P u \|_{L_T^1 L_{x'}^2} \\
&\quad + \| \langle D' \rangle^{-\frac{2}{3}} \rho_{em}(0) \|_{L^2_{x'}} + \| \langle D' \rangle^{-\frac{2}{3}} \partial_t \rho_{em} \|_{L_T^1 L_{x'}^2}.
\end{split} 
 \end{equation}
 \item If $\partial \varepsilon \in L_T^2 L_{x'}^\infty$, then the following estimate holds:
\begin{equation}
\label{eq:StrichartzPartiallyAnisotropicL2Lipschitz}
\begin{split}
\| \langle D' \rangle^{-\rho - \frac{1}{2p} - \delta} u \|_{L^p(0,T;L^q(\R^3))} &\lesssim_{T,\delta} \| u_0 \|_{L^2(\R^3)} + \| P u \|_{L_T^1 L_{x'}^2} \\
&\quad + \| \langle D' \rangle^{-\frac{3}{4}} \rho_{em}(0) \|_{L^2_{x'}} + \| \langle D' \rangle^{-\frac{3}{4}} \partial_t \rho_{em} \|_{L_T^1 L_{x'}^2}.
\end{split}
\end{equation} 
 \end{itemize}
 \end{theorem}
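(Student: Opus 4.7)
The plan is to mimic the frequency-localized conjugation scheme used for Theorems \ref{thm:IsotropicStrichartz}--\ref{thm:StrichartzEstimatesXsCoefficients}, but for a principal symbol whose characteristic set is the union of two smooth ellipsoids meeting at a pair of antipodal points on the distinguished axis determined by the simple eigenvalue $\varepsilon_1$. After applying $S_\lambda$ and working at a fixed dyadic frequency $\lambda$, I would first isolate the charge contributions by the divergence conditions: writing the longitudinal part of $(\D,\B)$ as $\nabla \Delta^{-1}(\rho_e,\rho_m)$, this gradient piece is controlled by $\langle D' \rangle^{-2/3}\rho_{em}$ (resp.\ $\langle D' \rangle^{-3/4}\rho_{em}$) together with the corresponding $\partial_t\rho_{em}$ terms arising through Duhamel, via Sobolev embedding, matching the structure of the charge terms already present in the isotropic theorems.

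On the divergence-free remainder I would diagonalize $P(x,D)$ microlocally. Outside a conic neighbourhood of the distinguished axis the two pairs of non-zero eigenvalues $\pm a_1(x,\xi'),\pm a_2(x,\xi')$ of the principal symbol are smoothly separated, and the eigenvector construction from \cite{Schippa2022,MandelSchippa2022} provides $L^p$-bounded projections that conjugate $P$ to a diagonal half-wave system $\partial_t - i a_j(x,D)$ modulo pseudo-differential commutator errors. Inside a cone of aperture $\lambda^{-\alpha}$ around the crossing axis no such diagonalization with bounded operators is available, and there I would fall back on the Bernstein/Sobolev bound at frequency $\lambda$, losing a volume factor on the order of $\lambda^{-2\alpha/q'}$. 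The exponent $\alpha$ is fixed by balancing this Bernstein loss against the conjugation error, which in turn is controlled by the regularity of $\varepsilon$; the scaling yields $\alpha = 1/3$ in the $L_T^\infty L_{x'}^\infty$ case and, after one application of Cauchy--Schwarz in $t$, $\alpha = 1/2$ in the $L_T^2 L_{x'}^\infty$ case, producing the $1/(3p)$ and $1/(2p)$ losses in \eqref{eq:StrichartzPartiallyAnisotropicLipschitz} and \eqref{eq:StrichartzPartiallyAnisotropicL2Lipschitz} respectively.

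For each smooth half-wave factor away from the crossing I would invoke the variable-coefficient Strichartz machinery of Theorem \ref{thm:StrichartzEstimatesL1LinfCoefficientsIsotropic} (and its $C^s$-variant used for Theorem \ref{thm:StrichartzEstimatesXsCoefficients}): each sheet is a smooth convex ellipsoid with two non-vanishing principal curvatures, so the wave-type Strichartz range applies. Summing the dyadic pieces then converts the $\ell^\infty$-Besov output of the frequency-localized estimates into $L^p_t L^q_{x'}$ at the price of the arbitrarily small exponent $\delta$ recorded in the statement, and the bound $\lesssim_{T,\delta}$ absorbs the time factors produced by Duhamel and by the $L^\infty_T$--$L^1_T$ conversion for the commutator terms. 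The hard part is the second step: handling the crossing of the two ellipsoids, where the standard symbol calculus breaks down. Both the choice of the conical cut-off scale and the bookkeeping of commutator errors with only Lipschitz (resp.\ $L_T^2 L_{x'}^\infty$-Lipschitz) control on $\partial \varepsilon$ require the careful eigenvector constructions from \cite{Schippa2022,MandelSchippa2022} to be transported from the constant-coefficient setting to the variable-coefficient one; this is where the inferior losses compared with the isotropic theorems originate.
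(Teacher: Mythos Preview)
Your high-level strategy --- conical excision near the $\xi_1$-axis, Bernstein inside the cone, diagonalization to half-wave equations outside, then Proposition~\ref{prop:HalfWaveEstimates} --- is exactly the route the paper takes. Two points need correction.

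First, your scales are off in the $L_T^2 L_{x'}^\infty$ case. The paper cuts at $|(\xi_2,\xi_3)|\sim\lambda^{3/4}$ (aperture $\lambda^{-1/4}$, not $\lambda^{-1/2}$) and truncates the coefficients at $\lambda^{1/2}$; the conjugation symbols then lie in $S^0_{3/4,1/2}$, and one exploits the improved bound $\|\partial\varepsilon_{\le\lambda^{1/2}}\|_{L^\infty_x}\lesssim\lambda^{1/4}\|\partial\varepsilon\|_{L^2_t L^\infty_{x'}}$ to gain an extra $\lambda^{-1/4}$ in every composition error. For Lipschitz coefficients the cut is at $|(\xi_2,\xi_3)|\sim\lambda^{2/3+\varepsilon}$ with coefficient truncation at $\lambda^{2/3}$, giving symbols in $S^0_{2/3+\varepsilon,2/3}$; here your $\alpha=1/3$ is correct. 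The balance is not simply Bernstein-versus-commutator: one also needs the symbol class to satisfy $\rho>\delta$ strictly so that Calder\'on--Vaillancourt applies, which forces the $+\varepsilon$ and ultimately the $\delta$-loss in the theorem.

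Second, and more importantly, you assert that the eigenvector construction yields $L^p$-bounded projectors outside the cone. In the constant-coefficient case this is H\"ormander--Mikhlin, but for variable coefficients the conjugation operators $\mathcal{M}_\lambda$, $\mathcal{N}_\lambda$ have symbols in $S^0_{\rho,\delta}$ with $\rho<1$, and such operators are \emph{not} $L^pL^q$-bounded in general. The paper gets $L^2$-bounds from Calder\'on--Vaillancourt, but for $L^pL^q$ it must write each matrix entry of $\mathcal{M}_\lambda$ as a composition of two-variable Riesz transforms $\partial_j/D_{23}$, operators amenable to Lemma~\ref{lem:LpLqBoundsPseudos}, and multiplications --- modulo error terms that gain $\lambda^{-1/2}$ (resp.\ $\lambda^{-2/3}$) and are then handled by Sobolev embedding back into $L^2$. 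This decomposition (recorded in the Appendix) is the technical heart of the argument and is what actually produces the losses $1/(2p)$ and $1/(3p)$; the Bernstein estimate in the excised cone is tuned to match it, not the other way around.
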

 Moreover, the method of proof recovers the estimates from Theorem \ref{thm:IsotropicStrichartz} for $\varepsilon_1(x) = e_1(t,x_1)$ and $\varepsilon_2(x) = e_2(t,x_1)$. In this case the problematic error terms, which arise from composing pseudo-differential operators in the general case, vanish. We have the following:
\begin{theorem}[$C^2$-Strichartz estimates in the structured partially anisotropic case]
\label{thm:StructuredStrichartzPartiallyAnisotropic}
Let $\varepsilon = \text{diag}(\varepsilon_1,\varepsilon_2,\varepsilon_2): \R \times \R^3 \to \R^{3 \times 3}$ satisfy \eqref{eq:UniformEllipticity} and for $i=1,2$ suppose that
\begin{equation*}
\varepsilon_i(x_0,x') = e_i(x_0,x_1) \text{ with } e_i \in C^2(\R \times \R; \R).
\end{equation*}
Let $u=(\mathcal{D},\mathcal{H}): \R \times \R^3 \to \R^3 \times \R^3$ and set $\mu = 1_{3 \times 3}$. If $\| \partial^2 \varepsilon \|_{L^\infty_x} \leq \nu^4$, then the following estimate holds:
\begin{equation*}
\| |D|^{-\rho} u \|_{L_t^p L_{x'}^q} \lesssim \nu \| u \|_{L^2_x} + \nu^{-1} \| P u \|_{L^2_x} + \| |D|^{-\frac{1}{2}} \rho_{em} \|_{L^2_x}
\end{equation*}
provided that the right hand-side is finite and $(\rho,p,q,3)$ is Strichartz admissible.
\end{theorem}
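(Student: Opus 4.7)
The plan is to revisit the phase-space diagonalization used for Theorem \ref{thm:PartiallyAnisotropicStrichartz} and to exploit that, under the structural assumption $\varepsilon_i(x) = e_i(x_0, x_1)$, the composition error terms that forced the $\delta$-derivative loss in the general case vanish identically. Once this is done, the system decouples into half-wave equations with $C^2$ coefficients whose characteristic sheets are smooth and of nondegenerate curvature, so that the $C^2$-Strichartz estimates underlying Theorem \ref{thm:IsotropicStrichartz} can be applied without loss.

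First I would analyze the characteristic set: for $\varepsilon = \text{diag}(\varepsilon_1, \varepsilon_2, \varepsilon_2)$ and $\mu = 1_{3 \times 3}$ it splits into an ordinary sheet (the sphere $|\xi'|^2 = \varepsilon_2(x)\tau^2$) and an extraordinary ellipsoidal sheet; the two sheets meet tangentially at the two poles along the optical axis $\pm e_1 \cdot \R$, whose direction is independent of $x$. Away from a conic neighborhood of the optical axis I would choose the ordinary eigenvector $V_o(\xi') = \xi' \times e_1 / |\xi' \times e_1|$, which does not depend on $x$ nor on $\xi_1$, the extraordinary eigenvector $V_e(x_0, x_1, \xi')$, and the longitudinal projector $\Pi_L$, the last being absorbed through the divergence conditions $\nabla \cdot \D = \rho_e$, $\nabla \cdot \B = \rho_m$.

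Next I would conjugate $P(x, D)$ by the Weyl quantizations of these projectors. In the composition formula $a \# b = ab + \tfrac{1}{2i}\{a,b\} + \mathrm{l.o.t.}$ the subprincipal correction is the Poisson bracket
\[ \{\Pi_\bullet, \mathcal{C}(\xi')\varepsilon^{-1}(x)\} = \sum_{j=0}^{3} \bigl( \partial_{\xi_j}\Pi_\bullet \cdot \partial_{x_j}(\mathcal{C}(\xi')\varepsilon^{-1}) - \partial_{x_j}\Pi_\bullet \cdot \partial_{\xi_j}(\mathcal{C}(\xi')\varepsilon^{-1}) \bigr). \]
Since $\varepsilon$ (and hence $\Pi_\bullet$) is independent of $x_2, x_3$, all terms with $j = 2, 3$ vanish; these are precisely the terms in which the angular derivatives $\partial_{\xi_2}\Pi_\bullet, \partial_{\xi_3}\Pi_\bullet$ that blow up at the optical axis, and which caused the loss in the general case, are multiplied by $\partial_x \varepsilon$. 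The remaining contributions ($j = 0, 1$) either vanish, because the principal symbol is independent of $\xi_0 = \tau$, or combine $\partial_{x_1}\varepsilon$ with the non-singular $\partial_{\xi_1}\Pi_\bullet$, producing only a zero-order operator bounded on $L^2_x$ under the hypothesis $\|\partial^2 \varepsilon\|_{L^\infty_x} \leq \nu^4$.

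After diagonalization the system decouples into half-wave equations
\[ \partial_t v_\pm^\bullet + i\lambda_\pm^\bullet(x_0, x_1, D') v_\pm^\bullet = R v + (\text{source}), \]
where $R$ is bounded on $L^2_x$ and the symbols $\lambda_\pm^\bullet$ depend on $x$ only through $(x_0, x_1)$ and hence are $C^2$ in $x$. The ordinary mode is a scalar wave equation with $C^2$ speed $1/\sqrt{\varepsilon_2}$ to which Theorem \ref{thm:IsotropicStrichartz} applies directly. For the extraordinary mode the characteristic ellipsoid has uniformly nondegenerate Gaussian curvature away from the optical axis, and Tataru's $C^2$-Strichartz estimates give the full wave-type Strichartz range. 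Combining the two mode contributions with the longitudinal piece gives the stated inequality, with the $\||D|^{-\frac{1}{2}}\rho_{em}\|_{L^2_x}$ term arising from $\Pi_L$. The main obstacle I anticipate is the microlocal patch near the optical axis $\xi_2 = \xi_3 = 0$, where $V_o, V_e$ are not smooth; there the argument should proceed by a phase-space cutoff and absorb the problematic piece using the divergence constraints together with the codimension-two smallness of the axis in $\xi'$-space, essentially as in the two-dimensional analysis of \cite{SchippaSchnaubelt2021}.
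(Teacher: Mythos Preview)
Your proposal is essentially the paper's own argument: the key observation that under the structural hypothesis $\varepsilon_i = e_i(x_0,x_1)$ every contribution in the Kohn--Nirenberg (or Weyl) expansion carrying a factor $\partial_{x_2}$ or $\partial_{x_3}$ vanishes, so that only the benign $\partial_{\xi_0},\partial_{\xi_1}$-derivatives of the conjugation symbols --- which gain a full factor $\lambda^{-1}$ rather than $\lambda^{-1/2}$ --- survive, placing the composition error in $OPS^0_{1/2,1/2}$ and hence $L^2$-bounded by Calder\'on--Vaillancourt.

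One small correction on the near-axis patch: the paper does not invoke the divergence constraints there. The conic neighbourhood $\{\,|(\xi_2,\xi_3)|\lesssim\lambda^{1/2}\,\}\cap\{\|\xi'\|\sim\lambda\}$ has Fourier volume $\lesssim\lambda^{2}$, so a direct Bernstein inequality already gives the sharp Strichartz bound for that piece without any diagonalization. The divergence conditions $\nabla\cdot\mathcal D=\rho_e$, $\nabla\cdot\mathcal B=\rho_m$ are used only to control the two longitudinal modes (eigenvalue $i\xi_0$) after diagonalization on the complementary region $\{|(\xi_2,\xi_3)|\gg\lambda^{1/2}\}$, exactly as in the isotropic case. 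Keeping these two mechanisms separate makes the argument cleaner and avoids the vagueness in your last paragraph.
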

 Strichartz estimates for less regular coefficients like in Theorems \ref{thm:IsotropicStrichartzEstimatesCsCoefficients} and \ref{thm:StrichartzEstimatesL1LinfCoefficientsIsotropic} hold for $C^s$-coefficients or $\partial^2 \varepsilon \in L_T^1 L_{x'}^\infty$ under structural assumptions.

As in \cite{SchippaSchnaubelt2021}, after conjugation of $P(x,D)$ the key ingredient in the proof of Strichartz estimates are estimates for the half-wave equations. We use the following result, shown in \cite{SchippaSchnaubelt2021}:
\begin{proposition}[{\cite[Proposition~1.8]{SchippaSchnaubelt2021}}]
\label{prop:HalfWaveEstimates}
Let $\lambda \in 2^{\mathbb{N}_0},$ $\lambda \gg 1,$ and  $d \geq 2$. Assume $\varepsilon = \varepsilon^{ij}(x)$ satisfies $\varepsilon^{ij} \in C^2$, $\| \partial^2_x \varepsilon \|_{L^\infty} \leq 1$, and \eqref{eq:UniformEllipticity}. Let $Q(x,D)$ denote the pseudo-differential operator with symbol 
\begin{equation*}
Q(x,\xi) = - \xi_0 + \big( \varepsilon^{ij}_{\lambda^\frac12} (x) \xi_i \xi_j \big)^{1/2}.
\end{equation*}
Moreever, let $u:\R \times \R^d \to \R$ decay rapidly outside the unit cube and $(\rho,p,q,d)$ be Strichartz admissible. Then, we find the estimates 
\begin{equation}
\label{eq:StrichartzEstimatesHalfWaveEquationC2Coefficients}
\lambda^{-\rho} \Vert S_\lambda u \Vert_{L^p L^q} \lesssim \Vert S_\lambda u \Vert_{L^2} + \Vert Q(x,D) S_\lambda u \Vert_{L^2}
\end{equation}
to hold with an implicit constant uniform in $\lambda$. For Lipschitz coefficients $\varepsilon^{ij}$ 
with $\| \partial^2_x \varepsilon \|_{L^1 L^\infty} \leq 1$, we obtain
\begin{equation}
\label{eq:StrichartzEstimatesHalfWaveEquationL1LinfCoefficients}
\lambda^{-\rho} \Vert S_\lambda u \Vert_{L^p L^q} \lesssim \Vert S_\lambda u \Vert_{L^\infty L^2} + \Vert Q(x,D) S_\lambda u \Vert_{ L^2}.
\end{equation}
\end{proposition}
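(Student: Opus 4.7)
The plan is to reduce Proposition~\ref{prop:HalfWaveEstimates} to the constant-coefficient dispersive estimate for the half-wave operator by means of an FBI/wave-packet parametrix, following the scheme of Tataru. Because $S_\lambda u$ is spectrally localized at frequency $\lambda$ and the coefficients $\varepsilon^{ij}_{\lambda^{1/2}}$ are already regularized at scale $\lambda^{-1/2}$, the symbol $q(x,\xi) = -\xi_0 + (\varepsilon^{ij}_{\lambda^{1/2}}(x)\xi_i \xi_j)^{1/2}$ is smooth on the relevant frequency annulus. I would first rescale $x \mapsto \lambda x$, $t \mapsto \lambda t$, reducing matters to a half-wave equation at frequency $1$ on a space-time cube of size $\lambda^{1/2}$, where the rescaled coefficients now satisfy $\|\partial^2 \tilde\varepsilon\|_{L^\infty} \lesssim \lambda^{-1/2}$ in the $C^2$ case, and an analogous bound in the $L^1 L^\infty$ case. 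The goal is to prove the estimate on this unit-frequency scale and then undo the rescaling.

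Next, I would conjugate $Q(x,D)$ to phase space via the FBI transform $T$, so that $T Q T^*$ acts, modulo acceptable errors, as multiplication by $q(x,\xi)$ plus a transport along the Hamilton flow of $q$. On the large cube, the bicharacteristic flow remains uniformly close to a straight-line flow at velocity $\nabla_\xi q$ over time intervals of length $O(1)$: this is where the regularization at scale $\lambda^{-1/2}$ is essential. A parametrix for $S_\lambda u$ can then be written as a wave-packet superposition indexed by phase-space points, each packet being transported along the Hamilton flow of $q$. The Strichartz bound for $S_\lambda u$ reduces to an $L^p L^q$ bound for a single packet, together with an almost-orthogonality (\textit{TT}$^*$) argument.

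For the packet bound I would use the fact that the level set $\{q(x,\xi)=0\}$ at frozen $x$ is a uniformly convex sphere-like surface, with two non-vanishing principal curvatures by the ellipticity assumption~\eqref{eq:UniformEllipticity}. Stationary phase then gives the standard $|t|^{-(d-1)/2}$ dispersive decay for the constant-coefficient half-wave propagator with frozen coefficients, and a Keel--Tao $TT^*$ argument produces the full Strichartz range \eqref{eq:StrichartzEstimatesHalfWaveEquationC2Coefficients}. Summing the unit-cube estimates and rescaling back recovers the $\lambda^{-\rho}$ loss.

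The main obstacle is controlling the error terms $R = T Q T^* - (q + \text{transport})$ arising from the non-smoothness of $\varepsilon$: they must be bounded in $L^2$ uniformly on the time scale over which the parametrix is constructed. In the $C^2$ case with $\|\partial^2 \varepsilon\|_{L^\infty}\le 1$ this is standard after regularization. In the Lipschitz case $\|\partial^2 \varepsilon\|_{L^1 L^\infty}\le 1$ the control of $R$ must be done after integration in time, which forces the right-hand side in \eqref{eq:StrichartzEstimatesHalfWaveEquationL1LinfCoefficients} to carry an $L^\infty L^2$ norm of $S_\lambda u$ (compatible with Duhamel) instead of an $L^2$ norm, and the almost-orthogonality summation has to be adapted to time-dependent symbol bounds; this is precisely the point at which the two parts of the proposition diverge.
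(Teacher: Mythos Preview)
This proposition is not proved in the present paper: it is quoted verbatim from \cite[Proposition~1.8]{SchippaSchnaubelt2021} and used here as a black box. So there is no ``paper's own proof'' to compare your attempt against.

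That said, your outline is essentially the Tataru scheme \cite{Tataru2001,Tataru2002} on which the cited result is based: FBI/wave-packet conjugation, control of the Hamilton flow on the $\lambda^{1/2}$-truncated symbol, frozen-coefficient dispersion from the uniform convexity of $\{(\varepsilon^{ij}\xi_i\xi_j)^{1/2}=1\}$, and a $TT^*$ argument. The distinction you draw between the $C^2$ and $L^1L^\infty$ cases is the correct one. One small slip: after rescaling $x\mapsto \lambda x$ to unit frequency, the support becomes a cube of size $\lambda$, not $\lambda^{1/2}$; the scale $\lambda^{1/2}$ enters instead through the coefficient regularization, which is what makes the error terms in the FBI conjugation (Theorem~\ref{thm:ApproximationTheorem}) acceptable. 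Beyond that, your sketch matches the strategy of the cited reference.
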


We want to use the Strichartz estimates to improve the local well-posedness for quasilinear Maxwell equations:
\begin{equation}
\label{eq:QuasilinearMaxwellKerrNonlinearity}
\left\{ \begin{array}{cl}
P(x,D) (\D,\mathcal{H}) &= 0, \quad \nabla \cdot \D = \nabla \cdot \mathcal{H} = 0, \\
(\D,\mathcal{H})(0) &\in H^s(\R^3;\R)^6,
\end{array} \right.
\end{equation}
where $\varepsilon^{-1}(\D) = \psi(|\D|^2) 1_{3 \times 3}$, and $\psi: \R_{\geq 0} \to \R_{\geq 1}$ is a smooth monotone increasing function with $\psi(0)=1$. This covers the Kerr nonlinearity $\varepsilon = (1+|\E|^2)1_{3 \times 3}$. The energy method (cf. \cite{IfrimTataru2020}) yields local well-posedness for $s>5/2$. We also refer to Spitz's works \cite{SpitzPhdThesis2017,Spitz2019}, where Maxwell equations with Kerr nonlinearity were proved to be locally well-posed in $H^3(\Omega)$ on domains with suitable boundary conditions. 
We compute
\begin{equation*}
\begin{split}
\partial_t (\psi(|\D|^2) \D) &= \psi(|\D|^2) \partial_t \D + (2 \psi'(|\D|^2) \D \otimes \D) \partial_t \D =: \tilde{\psi}_1(\D) \partial_t \D, \\
\nabla \times (\psi(|\D|^2) \D) &=  [ \psi(|\D|^2) \nabla \times + (2 \psi'(|\D|^2) (\D \otimes (\D \times \nabla))^t)] \D =: \tilde{\psi}_2(\D) \D.
\end{split}
\end{equation*}
After a diagonalization in phase space, we shall see that $\tilde{\psi}_1(\D)$ and $\tilde{\psi}_2(\D)$ have at most two different eigenvalues.

Passing to the second order systen yields the system of wave equations:
\begin{equation}
\label{eq:KerrSecondOrderSystem}
\left\{ \begin{array}{cl}
\partial_t^2 \D &= - \nabla \times (\tilde{\psi}_2(\D) \nabla \times \D), \quad \nabla \cdot \D = \nabla \cdot \mathcal{H} = 0, \\
\partial_t^2 \mathcal{H} &= - \nabla \times (\tilde{\psi}_1(\D) \nabla \times \mathcal{H}). 
\end{array} \right.
\end{equation}

We shall first consider the simplified Kerr system, which is obtained by replacing $\tilde{\psi}_i$ with $\psi(|\D|^2)$:
\begin{equation}
\label{eq:SimplifiedKerr}
\partial_t^2 \D = - \nabla \times (\psi(|\D|^2) \nabla \times \D), \quad \nabla \cdot \D = 0.
\end{equation}
In this case we can apply the Strichartz estimates for isotropic permittivity to prove the following:
\begin{theorem}[Local well-posedness for the simplified Kerr system]
\label{thm:LocalWellposednessSimplifiedKerr}
\eqref{eq:SimplifiedKerr} is locally well-posed for $s>\frac{13}{6}$.
\end{theorem}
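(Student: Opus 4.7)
I use a Bahouri--Chemin--Tataru type bootstrap: combine energy estimates with the isotropic Strichartz estimates of Theorems \ref{thm:IsotropicStrichartzEstimatesCsCoefficients}--\ref{thm:StrichartzEstimatesXsCoefficients} to drop below the Ifrim--Tataru energy threshold $s > 5/2$ \cite{IfrimTataru2020}. Since $\varepsilon^{-1}(\D) = \psi(|\D|^2) 1_{3 \times 3}$ is scalar and $\mu = 1_{3 \times 3}$, the simplified Kerr system has genuinely isotropic principal part, so the full strength of those theorems is available. Set up a Picard iteration: given $\D^{(n)}$ with $\nabla \cdot \D^{(n)} = 0$, let $\D^{(n+1)}$ solve the linear Maxwell system of the form \eqref{eq:RoughSymbolMaxwell} with frozen isotropic permittivity $\varepsilon^{(n)} = \psi(|\D^{(n)}|^2) 1_{3 \times 3}$, $\mu = 1_{3 \times 3}$, and the same Cauchy data; divergence-freeness is propagated by the flow.

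Standard commutator estimates for the linear system then yield the energy inequality
\[
\| \D^{(n+1)} \|_{L^\infty_T H^s} \lesssim \| \D_0 \|_{H^s} \exp\bigl( C \| \partial \varepsilon^{(n)} \|_{L^1_T L^\infty_{x'}} \bigr),
\]
and Moser product estimates together with the Sobolev embedding $H^s \hookrightarrow L^\infty$ (for $s > 3/2$) reduce the coefficient norm on the right to $\| \partial \D^{(n)} \|_{L^1_T L^\infty_{x'}}$ up to a prefactor depending polynomially on $\| \D^{(n)} \|_{L^\infty_T H^s}$.

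The crucial step is therefore to establish $\| \partial \D^{(n)} \|_{L^1_T L^\infty_{x'}} \lesssim T^{\delta_0} \| \D^{(n)} \|_{L^\infty_T H^s}$ for all $s > 13/6$. Moser estimates and Bernstein inequalities show $\varepsilon^{(n-1)} \in \mathcal{X}^{s_0}$ with $s_0 = s - 3/2 - \delta_1$, so Theorem \ref{thm:StrichartzEstimatesXsCoefficients} applies to each Littlewood--Paley block $S_\mu \D^{(n)}$. Converting the resulting frequency-localized Besov bound to the target $L^1_T L^\infty_{x'}$-norm via Bernstein in space ($L^q_{x'} \hookrightarrow L^\infty_{x'}$ at cost $\mu^{3/q}$) and H\"older in time ($L^p_T \hookrightarrow L^1_T$ at cost $T^{1/p'}$), then summing over $\mu$, gives the desired bound exactly when $s > 5/2 - (1-\sigma)/p$ with $\sigma = (2-s_0)/(2+s_0)$. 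Optimizing over Strichartz-admissible $(p,q)$ subject to $2/p + 2/q \leq 1$ and $(p,q,3) \neq (2,\infty,3)$ identifies the limit as $s > 13/6$; a self-consistent improvement step, using Strichartz on $\D^{(n)}$ itself to upgrade the $\mathcal{X}^{s_0}$-bound on $\varepsilon^{(n-1)}$ beyond the naive Sobolev loss, is needed to reach exactly this threshold.

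For $T = T(\| \D_0 \|_{H^s})$ sufficiently small, the two estimates trap $\{\D^{(n)}\}$ in a fixed ball of $L^\infty_T H^s$ intersected with the relevant Strichartz space; a difference estimate at one order lower regularity yields a contraction in $L^\infty_T H^{s-1}$, hence convergence to a unique solution $\D$ of \eqref{eq:SimplifiedKerr} which preserves $\nabla \cdot \D = 0$, together with continuous dependence on the data. The main obstacle is the self-consistent Strichartz bootstrap: since the coefficient $\varepsilon^{(n)} = \psi(|\D^{(n)}|^2)$ is only as smooth as $\D^{(n)}$ itself, the regularity parameter $s_0$ feeds back into the derivative loss $\sigma/p$ of Theorem \ref{thm:StrichartzEstimatesXsCoefficients}; combined with the forbidden three-dimensional endpoint $(p,q)=(2,\infty)$ and the Bernstein loss $3/q$, this interplay pins the threshold at $13/6$ rather than at Tataru's sharp value $s > 2$ for smooth-coefficient wave equations.
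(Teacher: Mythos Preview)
Your iteration is set up for the wrong equation. The simplified Kerr system \eqref{eq:SimplifiedKerr} is the \emph{second-order} model
\[
\partial_t^2 \D = -\nabla \times \bigl(\psi(|\D|^2)\,\nabla \times \D\bigr), \qquad \nabla \cdot \D = 0,
\]
obtained by replacing $\tilde\psi_i$ with $\psi(|\D|^2)$ in \eqref{eq:KerrSecondOrderSystem}; it is \emph{not} the second-order reformulation of the first-order Maxwell system \eqref{eq:RoughSymbolMaxwell} with Kerr permittivity. If your iterates $(\D^{(n+1)},\mathcal H^{(n+1)})$ solve \eqref{eq:RoughSymbolMaxwell} with frozen $\varepsilon^{(n)}=\psi(|\D^{(n)}|^2)\,1_{3\times3}$ and converge, the limit obeys $\partial_t^2\D = -\nabla\times\nabla\times(\psi(|\D|^2)\D) = -\nabla\times(\psi(|\D|^2)\nabla\times\D) - \nabla\times(\nabla\psi(|\D|^2)\times\D)$, which carries an extra term absent from \eqref{eq:SimplifiedKerr}. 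Consequently Theorems \ref{thm:IsotropicStrichartzEstimatesCsCoefficients}--\ref{thm:StrichartzEstimatesXsCoefficients}, written for the first-order operator $P$ of \eqref{eq:RoughSymbolMaxwell}, do not apply here. The paper instead proves a dedicated Strichartz estimate (Theorem~\ref{thm:StrichartzEstimatesSimplifiedKerr}) for the second-order operator $\partial_t^2 + \nabla\times(\varepsilon\,\nabla\times\,\cdot)$ via its own diagonalization of the principal symbol $-\xi_0^2 + \varepsilon(x)[\|\xi'\|^2 1_{3\times3}-\xi'\otimes\xi']$ in Section~\ref{subsection:SimplifiedKerr}.

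A secondary difference: the paper does not run a Picard contraction. It follows the Ifrim--Tataru scheme \cite{IfrimTataru2020}: energy estimates \eqref{eq:EnergyEstimateSimplifiedKerr} for an energy built from $\langle\langle D'\rangle^{s-1}\nabla\times u,\varepsilon(u)\langle D'\rangle^{s-1}\nabla\times u\rangle$, an $L^2$-Lipschitz bound \eqref{eq:LipschitzBound} for differences of solutions (proved via a bilinear interpolation argument in the Strichartz norm, not merely energy), and then continuous dependence via frequency envelopes. Your contraction in $L^\infty_T H^{s-1}$ would yield uniqueness and continuity only in the weaker norm; the frequency-envelope step is precisely what upgrades this to continuity of the data-to-solution map in $H^s$.
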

We remark that we could likewise treat the system
\begin{equation*}
\left\{ \begin{array}{cl}
\partial_t^2 \D &= - \nabla \times (\psi(|\D|^2) \nabla \times \D), \quad \nabla \cdot \D = 0, \\
\partial_t^2 \mathcal{H} &= - \nabla \times (\psi(|\D|^2) \nabla \times \mathcal{H}, \quad \nabla \cdot \mathcal{H} =0.
\end{array} \right.
\end{equation*}
with the additional estimates for $\mathcal{H}$ being carried out in similar spirit.
\vspace*{0.3cm}

In the case of partially anisotropic permittivity, we can use the Strichartz estimates from Theorem \ref{thm:PartiallyAnisotropicStrichartz} directly:
\begin{theorem}[Local well-posedness for Maxwell equations with partially anisotropic permittivity]
\label{thm:PartiallyAnisotropicWellposedness}
Let $\varepsilon^{-1} = \text{diag}(\psi(|\D_1|^2),1,1)$ with $\psi: \R_{\geq 0} \to \R_{\geq 1}$ smooth,  monotone increasing, and $\psi(0) = 1$. Then, the Maxwell system
\begin{equation*}
\left\{ \begin{array}{cl}
\partial_t \D &= \nabla \times \mathcal{H}, \quad \nabla \cdot \D = \nabla \cdot \mathcal{H} = 0, \\
\partial_t \mathcal{H} &= - \nabla \times (\varepsilon^{-1} \D), \; (\D(0),\mathcal{H}(0)) \in H^s(\R^3;\R^6)
\end{array} \right.
\end{equation*}
is locally well-posed for $s>9/4$.
\end{theorem}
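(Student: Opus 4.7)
My plan is to run the standard Picard iteration with bootstrap, driven by the Strichartz estimate \eqref{eq:StrichartzPartiallyAnisotropicL2Lipschitz}. Construct iterates $(\mathcal{D}^{(n+1)}, \mathcal{H}^{(n+1)})$ by freezing the coefficient at $\varepsilon^{(n)} = \text{diag}(\psi(|\mathcal{D}_1^{(n)}|^2)^{-1}, 1, 1)$ and solving the resulting linear Maxwell system. Because $\nabla\cdot$ is preserved by the linearized flow and the initial data are divergence-free, the $\rho_{em}$ charge terms on the right-hand side of \eqref{eq:StrichartzPartiallyAnisotropicL2Lipschitz} vanish throughout the iteration. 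The two quantities to be controlled uniformly in $n$ are $\|(\mathcal{D}^{(n)}, \mathcal{H}^{(n)})\|_{L^\infty_T H^s}$ and the Strichartz-type norm $\|\partial \mathcal{D}^{(n)}\|_{L^2_T L^\infty_{x'}}$.

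The first is obtained from an energy estimate. Since $\mu \equiv 1_{3\times 3}$ and $\varepsilon^{(n)}$ is symmetric positive definite, the natural symmetrizer $\int \varepsilon^{(n)} \mathcal{E}\cdot\mathcal{E} + |\mathcal{H}|^2\,dx$ is coercive, and standard Kato--Ponce/Moser commutator estimates at the $H^s$ level give
\begin{equation*}
\|(\mathcal{D}^{(n+1)}, \mathcal{H}^{(n+1)})\|_{L^\infty_T H^s} \lesssim \|(\mathcal{D}_0, \mathcal{H}_0)\|_{H^s} \exp\bigl(C\|\partial \varepsilon^{(n)}\|_{L^1_T L^\infty_{x'}}\bigr),
\end{equation*}
and by the chain rule and Cauchy--Schwarz the exponent is bounded by $CT^{1/2}\|\partial \mathcal{D}^{(n)}\|_{L^2_T L^\infty_{x'}}$ times a polynomial in $\|\mathcal{D}^{(n)}\|_{L^\infty_{t,x'}} \lesssim \|\mathcal{D}^{(n)}\|_{H^s}$ (valid for $s > 3/2$).

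The second comes from the Strichartz estimate. Applying \eqref{eq:StrichartzPartiallyAnisotropicL2Lipschitz} dyadically to $S_\lambda \mathcal{D}^{(n+1)}$---the hypothesis $\partial \varepsilon^{(n)} \in L^2_T L^\infty_{x'}$ holds by the bootstrap---and then using Sobolev embedding $W^{3/q+\delta',q}_{x'} \hookrightarrow L^\infty_{x'}$ yields
\begin{equation*}
\|\partial \mathcal{D}^{(n+1)}\|_{L^p_T L^\infty_{x'}} \lesssim \bigl\|\langle D'\rangle^{1 + 3/q + \rho + \frac{1}{2p} + \delta''}\mathcal{D}^{(n+1)}\bigr\|_{L^\infty_T L^2_{x'}}.
\end{equation*}
Substituting the admissibility relation $\rho = 3/2 - 3/q - 1/p$ collapses the Sobolev exponent to $5/2 - 1/(2p) + \delta''$. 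Since Strichartz admissibility in $d=3$ forces $p > 2$ strictly (the endpoint $(2,\infty,3)$ being excluded), this threshold can be driven down to $9/4$ but not below; hence any $s > 9/4$ permits a choice of $p$ close to $2$ and $\delta'' > 0$ small that closes the estimate. Finally, Hölder in time converts the $L^p_T L^\infty$ bound into the $L^2_T L^\infty$ bound required for the energy step.

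Combining the two estimates on a sufficiently small time interval produces uniform bounds on $\|(\mathcal{D}^{(n)}, \mathcal{H}^{(n)})\|_{L^\infty_T H^s}$ and $\|\partial \mathcal{D}^{(n)}\|_{L^2_T L^\infty_{x'}}$, after which a standard contraction argument in a lower-regularity norm (e.g., $L^\infty_T L^2_{x'}$ together with the Strichartz norm) yields existence, uniqueness, and continuous dependence. The main obstacle is the derivative loss of order $\frac{1}{2p}$ that is built into \eqref{eq:StrichartzPartiallyAnisotropicL2Lipschitz} and originates from the intersecting ellipsoids of the partially anisotropic characteristic surface; this loss is precisely what prevents reaching the isotropic Kerr threshold $s > 2$ and pins the result at $s > 9/4$.
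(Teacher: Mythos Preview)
Your derivative count is correct and identifies the right Strichartz input \eqref{eq:StrichartzPartiallyAnisotropicL2Lipschitz}, but the Picard scheme as written does not close. The problem is that when you freeze $\varepsilon^{(n)}=\varepsilon(\mathcal{D}^{(n)})$ in the \emph{divergence-form} operator, the linearized system carries a genuine zeroth-order term: expanding $\nabla\times\big((\varepsilon^{(n)})^{-1}\mathcal{D}^{(n+1)}\big)$ produces, in addition to $(\varepsilon^{(n)})^{-1}\nabla\times\mathcal{D}^{(n+1)}$, a contribution $(\partial\psi^{(n)})\,\mathcal{D}_1^{(n+1)}$ with coefficient $\partial\psi^{(n)}=2\psi'\,\mathcal{D}_1^{(n)}\partial\mathcal{D}_1^{(n)}$. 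When you then apply either your energy estimate or the Strichartz estimate to $\langle D'\rangle^{s}u^{(n+1)}$, the commutator (respectively the forcing) contains $\langle D'\rangle^{s}\big((\partial\psi^{(n)})\,\mathcal{D}_1^{(n+1)}\big)$, and the fractional Leibniz rule forces a factor $\|\partial\psi^{(n)}\|_{H^{s}}\sim\|\mathcal{D}^{(n)}\|_{H^{s+1}}$. This one-derivative loss destroys the bootstrap at $s>9/4$; the ``standard Kato--Ponce/Moser'' step you invoke does not go through for this linearization.

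The paper's proof avoids this by exploiting a quasilinear cancellation that your linearization breaks: for the \emph{actual} solution, $\partial_j\big(\psi(|\mathcal{D}_1|^2)\,\mathcal{D}_1\big)=\big(\psi+2\psi'\mathcal{D}_1^2\big)\partial_j\mathcal{D}_1$, so the system is purely first-order, $\partial_t u=\mathcal{A}^j(u)\partial_j u$, with the \emph{modified} permittivity $\tilde\varepsilon^{-1}=\mathrm{diag}(\psi+2\psi'\mathcal{D}_1^2,1,1)$ and no zeroth-order term. The paper therefore (i) proves the $H^s$ energy estimate directly for the quasilinear solution using the symmetrizer $C(u)=\mathrm{diag}(\tilde\varepsilon^{-1}(u),1)$, (ii) recasts the Strichartz estimate for the divergence-form operator $\tilde P$ (with coefficient $\tilde\varepsilon$) as one for the non-divergence operator $Q=\partial_t-\mathcal{A}^j(u)\partial_j$, and (iii) applies it to $w=\langle D'\rangle^{\alpha+1}u$; since $Qu=0$ exactly, only the commutator $[\mathcal{A}^j(u),\langle D'\rangle^{\alpha+1}]\partial_j u$ remains, and Kato--Ponce closes with $\|u\|_{H^s}$ alone. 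Continuous dependence in $C_T H^s$ is then obtained not by a contraction argument but via $L^2$-Lipschitz bounds for differences of solutions combined with frequency envelopes, following the Ifrim--Tataru scheme. If you wish to keep an iteration, you must iterate the non-divergence form $\partial_t u^{(n+1)}=\mathcal{A}^j(u^{(n)})\partial_j u^{(n+1)}$ (with $\tilde\varepsilon(u^{(n)})$) rather than the divergence-form Maxwell system; the scheme you wrote cannot be repaired at $s>9/4$.
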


In the two-dimensional case we have shown that the derivative loss for Strichartz estimates with rough coefficients is sharp (cf. \cite[Section~7]{SchippaSchnaubelt2021}). In the three dimensional case we do not have an example showing sharpness. However, the fact that the derivative loss in the isotropic case matches the loss for second order hyperbolic operators indicates sharpness of the Strichartz estimates in the isotropic case.

\vspace{0.5cm}
\emph{Outline of the paper.} In Section \ref{section:Preliminaries} we introduce further notations and recall well-known bounds for pseudo-differential operators and the FBI transform. In Section \ref{section:StrichartzEstimatesIsotropic}, we point out how standard localization arguments reduce Theorems \ref{thm:IsotropicStrichartz} and \ref{thm:StrichartzEstimatesL1LinfCoefficientsIsotropic} to a dyadic estimate with frequency truncated coefficients. Then, the symbol is diagonalized to two degenerate and four non-degenerate half wave equations after an additional localization in phase space. We see that the divergence conditions ameliorate the contribution of the degenerate components as in the two-dimensional case. The estimates for the non-degenerate half-wave equations for $\varepsilon$ having less than three eigenvalues are provided by Proposition \ref{prop:HalfWaveEstimates}. In Section \ref{section:PartiallyAnisotropicCase} we show the Strichartz estimates in Theorem \ref{thm:PartiallyAnisotropicStrichartz} and \ref{thm:StructuredStrichartzPartiallyAnisotropic} for partially anisotropic permittivities with rough coefficients. In Section \ref{section:LocalWellposednessQuasilinearMaxwell} we consider quasilinear Maxwell equations and prove Theorems \ref{thm:LocalWellposednessSimplifiedKerr} and \ref{thm:PartiallyAnisotropicWellposedness}.

\section{Preliminaries}
\label{section:Preliminaries}
In this section we collect basic facts about pseudo-differential operators and the FBI transform to be used in the sequel. 
\subsection{Pseudo-differential operators with rough symbols}
In the following we clarify the quantization and recall the composition formulae for pseudo-differential operators presently considered. We refer to \cite{Hoermander2007,Taylor1974,Taylor1991} for further reading.

Recall the standard H\"ormander class of symbols:
\begin{equation*}
S^m_{\rho,\delta} = \{ a \in C^\infty(\R^m \times \R^m) : |\partial_x^\alpha \partial_\xi^\beta a| \lesssim (1+|\xi|)^{m-\rho |\beta| + |\alpha| \delta} \}
\end{equation*}
for $m \in \R$, $0 \leq \delta \leq \rho \leq 1$. In the following we obtain pseudo-differential operators via the quantization:
\begin{equation*}
a(x,D) f = (2 \pi)^{-m} \int_{\R^m} e^{i x.\xi} a(x,\xi) \hat{f}(\xi) d\xi.
\end{equation*}
The $L^p$-boundedness of $a(x,D)$ with $a \in S^0_{1,\delta}$, $0 \leq \delta < 1$ is standard (cf. \cite[Section~0.11]{Taylor1991}). In the present context of rough coefficients, we shall also consider symbols which are rough in the spatial variable. After a Littlewood-Paley decomposition and a paradifferential decomposition, we can reduce to H\"ormander symbols. We record the following quantification of $L^p L^q$-boundedness for symbols, which are smooth and compactly supported in the fiber variable and possibly rough in the spatial variable:
\begin{lemma}[{\cite[Lemma~2.3]{SchippaSchnaubelt2021}}]
\label{lem:LpLqBoundsPseudos}
Let $1 \leq p,q \leq \infty$ and $a \in C^s_x C^\infty_c(\R^m \times \R^m)$ with $a(x,\xi) = 0$ for $\xi \notin B(0,2)$. Suppose that
\begin{equation*}
\sup_{x \in \R^m} \sum_{0 \leq |\alpha| \leq m+1} \| D_\xi^\alpha a(x,\cdot) \|_{L^1_\xi} \leq C.
\end{equation*}
Then, we find the following estimate to hold:
\begin{equation*}
\| a(x,D) f \|_{L^p L^q} \lesssim C \| f \|_{L^p L^q}.
\end{equation*}
\end{lemma}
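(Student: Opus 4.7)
The plan is to reduce the operator bound to a uniform-in-$x$ estimate on the Schwartz kernel of $a(x,D)$ and then close by a Young-type convolution argument. Using Fourier inversion I would first write
\[
(a(x,D) f)(x) = \int_{\R^m} K(x, x-y) f(y) \, dy, \qquad K(x,z) := (2\pi)^{-m} \int_{\R^m} e^{i z \cdot \xi} a(x,\xi) \, d\xi,
\]
so that the whole task reduces to a pointwise control of $K(x,\cdot)$ together with a simple convolution bound.

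The kernel estimate is obtained by integration by parts in $\xi$: for any multi-index $\alpha$ with $|\alpha| \leq m+1$,
\[
z^\alpha K(x,z) = (2\pi)^{-m} \int_{\R^m} e^{i z \cdot \xi} (i D_\xi)^\alpha a(x,\xi) \, d\xi.
\]
Since $a(x,\cdot)$ is supported in $B(0,2)$ and $\sum_{|\alpha| \leq m+1} \| D_\xi^\alpha a(x,\cdot) \|_{L^1_\xi} \leq C$ by hypothesis, this yields $|K(x,z)| \lesssim C(1+|z|)^{-m-1}$ with an implicit constant independent of $x$. This is the step where the precise assumption on the symbol is used sharply.

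Feeding the kernel bound back gives the pointwise majorization $|a(x,D) f(x)| \lesssim C (G * |f|)(x)$ with $G(z) := (1+|z|)^{-m-1} \in L^1(\R^m)$, so Young's inequality for mixed-norm Lebesgue spaces, which provides $\|G * h\|_{L^p L^q} \leq \|G\|_{L^1} \|h\|_{L^p L^q}$ for every $1 \leq p, q \leq \infty$ and every splitting of the $m$ variables underlying the $L^p L^q$ norm, concludes the proof. The main obstacle is really just bookkeeping: the kernel $K$ depends on $x$ in its first slot as well, but the uniform-in-$x$ decay bound lets the $x$-free majorant $G$ take over, and in particular the $C^s_x$ regularity never enters quantitatively — mere measurability in $x$ would already suffice for the argument above.
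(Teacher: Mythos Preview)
Your argument is correct and is the standard route to this estimate: write the operator via its Schwartz kernel, integrate by parts in $\xi$ up to order $m+1$ to obtain the uniform decay $|K(x,z)|\lesssim C(1+|z|)^{-m-1}$, dominate pointwise by a convolution with an $L^1$ function, and conclude by Minkowski/Young on mixed-norm spaces. The paper does not give its own proof of this lemma---it is simply quoted from \cite{SchippaSchnaubelt2021}---and the argument you give is exactly the canonical one expected for such a statement.
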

We recall the Kohn--Nirenberg theorem on symbol composition. Denote
\begin{equation*}
\partial_x^\alpha = \partial_{x_1}^{\alpha_1} \ldots \partial_{x_m}^{\alpha_m} \text{ and } D_\xi^\alpha = \partial_\xi^\alpha / (i^{|\alpha|})
\end{equation*}
for $\alpha \in \N_0^m$.
\begin{theorem}[{\cite[Proposition~0.3C]{Taylor1991}}]
\label{thm:KohnNirenberg}
Let $m_1,m_2 \in \R$, $0 \leq \delta_i < \rho_i \leq 1$ for $i=1,2$. Given $P(x,\xi) \in S^{m_1}_{\rho_1,\delta_1}$, $Q(x,\xi) \in S^{m_2}_{\rho_2,\delta_2}$, suppose that
\begin{equation*}
0 \leq \delta_2 < \rho \leq 1 \text{ with } \rho = \min(\rho_1,\rho_2).
\end{equation*}
Then, $(P \circ Q)(x,D) \in OPS^{m_1+m_2}_{\rho,\delta}$ with $\delta = \max(\delta_1,\delta_2)$, and $P(x,D) \circ Q(x,D)$ satisfies the asymptotic expansion
\begin{equation}
\label{eq:ExpansionPseudoComposition}
(P \circ Q)(x,D) = \sum_\alpha \frac{1}{\alpha !} (D_\xi^\alpha P \partial_x^\alpha Q)(x,D) +R,
\end{equation}
where $R: \mathcal{S}' \to C^\infty$ is smoothing.
\end{theorem}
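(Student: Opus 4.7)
The plan is to follow the classical oscillatory-integral proof for composition of pseudo-differential operators, as in \cite{Taylor1991,Hoermander2007}. Writing $Q(x,D)$ via Fourier inversion and then applying $P(x,D)$ yields
\begin{equation*}
P(x,D)Q(x,D)u(x) = (2\pi)^{-2m}\iiint e^{i(x-y)\cdot \xi + iy\cdot \eta} P(x,\xi) Q(y,\eta) \hat u(\eta)\,dy\,d\xi\,d\eta.
\end{equation*}
After the change of variables $z=x-y$, $\zeta = \xi-\eta$, this shows that $P(x,D)\circ Q(x,D) = \mathrm{Op}(r)$ with
\begin{equation*}
r(x,\eta) = (2\pi)^{-m}\iint e^{-iz\cdot \zeta} P(x,\eta+\zeta)\, Q(x+z,\eta)\,dz\,d\zeta,
\end{equation*}
the integral being interpreted as an iterated oscillatory integral.

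The leading-term extraction comes from Taylor expanding $Q(x+z,\eta)$ in $z$ (or equivalently $P(x,\eta+\zeta)$ in $\zeta$), to order $N$:
\begin{equation*}
Q(x+z,\eta) = \sum_{|\alpha|<N} \frac{z^\alpha}{\alpha!}\partial_x^\alpha Q(x,\eta) + R_N(x,\eta,z).
\end{equation*}
Using the identity $z^\alpha e^{-iz\cdot\zeta} = (-D_\zeta)^\alpha e^{-iz\cdot\zeta}$, integrating by parts in $\zeta$, and then applying Fourier inversion in the $(z,\zeta)$ pair yields exactly
\begin{equation*}
\sum_{|\alpha|<N} \frac{1}{\alpha!}(D_\xi^\alpha P)(x,\eta)\,\partial_x^\alpha Q(x,\eta),
\end{equation*}
which is the truncation of the announced asymptotic expansion.

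The main obstacle is the remainder estimate. For each multi-index $\alpha$ appearing at order $N$ the remainder piece is an oscillatory integral of the form
\begin{equation*}
r_N^{(\alpha)}(x,\eta) = c_\alpha \int_0^1 (1-t)^{N-1}\!\iint e^{-iz\cdot \zeta}\, z^\alpha D_\xi^\alpha P(x,\eta+\zeta)\, \partial_x^\alpha Q(x+tz,\eta)\,dz\,d\zeta\,dt.
\end{equation*}
To show $r_N^{(\alpha)} \in S^{m_1+m_2-(\rho-\delta_2)N}_{\rho,\delta}$ one alternates integration by parts with the differential operators $(1-\Delta_\zeta)^M$ and $(1+|\zeta|^2)^{-M}(1-\Delta_z)^M$, each pair gaining a factor $\langle\zeta\rangle^{-2M}$ respectively $\langle z\rangle^{-2M}$. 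Here the hypothesis $\delta_2<\rho=\min(\rho_1,\rho_2)$ is essential: integration by parts in $z$ produces $\partial_x^{\alpha'}Q$ factors of order $m_2+\delta_2|\alpha'|$, while integration by parts in $\zeta$ produces $\partial_\xi^{\alpha'}P$ factors which gain $\rho_1|\alpha'|$ on the fiber scale; the net symbolic order of each successive term decreases at rate $\rho-\delta_2>0$, so the remainder actually gains derivatives as $N\to\infty$. Combining this with Peetre's inequality $\langle \eta+\zeta\rangle^{m_1}\lesssim \langle \eta\rangle^{m_1}\langle \zeta\rangle^{|m_1|}$ to absorb growth in $\zeta$, and verifying the bound survives spatial/fiber differentiation of the parameters $(x,\eta)$, yields the symbol estimates for $r_N^{(\alpha)}$.

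Finally, the full symbol $r$ is asymptotic to $\sum_\alpha \frac{1}{\alpha!}D_\xi^\alpha P \,\partial_x^\alpha Q$ in the sense of Hörmander: by the Borel summation procedure for mixed symbol classes, there is a symbol $\tilde r\in S^{m_1+m_2}_{\rho,\delta}$ realizing this asymptotic series, and the difference $r-\tilde r$ lies in $\bigcap_{N}S^{m_1+m_2-(\rho-\delta_2)N}_{\rho,\delta} = S^{-\infty}$, i.e.\ $R=\mathrm{Op}(r-\tilde r)$ maps $\mathcal S'\to C^\infty$. This finishes the proof; the only delicate step is, as indicated, the oscillatory-integral bookkeeping that tracks how each integration by parts trades fiber smoothness of $P$ against spatial smoothness of $Q$, which is where the strict inequality $\delta_2<\rho$ is used.
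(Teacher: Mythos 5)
Your proposal is correct and is essentially the standard oscillatory-integral proof of the Kohn--Nirenberg composition theorem; the paper does not prove this statement itself but cites it from Taylor's book, where precisely this argument (oscillatory-integral representation of the composed symbol, Taylor expansion, integration by parts trading $\langle\zeta\rangle$- against $\langle z\rangle$-decay under $\delta_2<\rho$, and Borel summation for the remainder) is carried out. So there is nothing to compare beyond noting that your sketch matches the cited proof.
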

Lemma \ref{lem:LpLqBoundsPseudos} quantifies the $L^p L^q$-bounds for the expansion \eqref{eq:ExpansionPseudoComposition} (see \cite[Section~2]{SchippaSchnaubelt2021}). From truncating the expansion to
\begin{equation*}
(P \circ Q)(x,D) = \sum_{|\alpha| \leq N} \frac{1}{\alpha !} (D_\xi^\alpha P \partial_x^\alpha Q)(x,D) + R_N(x,D),
\end{equation*}
we can find error bounds for $R_N$ decaying in $\lambda$. This can be proved again by Lemma \ref{lem:LpLqBoundsPseudos}. We recall the Calderon--Vaillancourt theorem (cf. \cite{CalderonVaillancourt1972,Taylor1974}) to bound $OPS^0_{\rho,\rho}$. The following quantification is due to Kato \cite{Kato1976}:
\begin{theorem}[Calderon--Vaillancourt]
Let $0 \leq \rho < 1$ and $a(x,\xi) \in S^0_{\rho,\rho}(\R^{2d})$ with
\begin{equation*}
|\partial_x^\alpha \partial_\xi^\beta a(x,\xi)| \leq C (1+|\xi|)^{\rho(|\alpha|-|\beta|)}
\end{equation*}
for $|\alpha| \leq \lfloor \frac{d}{2} \rfloor +1$, $|\beta| \leq \lfloor \frac{d}{2} \rfloor + 2$. Then,
\begin{equation*}
\| a(x,D) \|_{L^2 \to L^2} \lesssim_{\rho,d} C.
\end{equation*} 
\end{theorem}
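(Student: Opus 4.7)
The plan is to use the Cotlar--Stein almost orthogonality lemma applied to a phase-space decomposition adapted to the symbol class $S^0_{\rho,\rho}$, with the delicate derivative count tracking due to Kato.

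The starting point is to dyadically decompose $a$ in the fiber variable and partition phase space at each dyadic frequency scale $|\xi| \sim \lambda$ into tiles of size $\lambda^{-\rho} \times \lambda^\rho$ via smooth partitions of unity centered at lattice points $(x_j, \xi_k) \in \lambda^{-\rho}\Z^d \times \lambda^\rho\Z^d$. The hypotheses of the theorem are exactly the bounds that, under the symplectic rescaling of each tile to the unit box, render each rescaled piece $\tilde a_{\lambda, j, k}$ a smooth symbol with derivatives bounded by a constant multiple of $C$ uniformly in $(\lambda, j, k)$. In particular, each $T_{\lambda,j,k} = a_{\lambda,j,k}(x,D)$ is $L^2$-bounded with norm $\lesssim C$.

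The verification of Cotlar--Stein almost orthogonality then reduces to estimating the composite kernels of $T_{\lambda,j,k} T_{\lambda,j',k'}^*$ and $T_{\lambda,j,k}^* T_{\lambda,j',k'}$. A direct computation shows that the first composite has kernel
\[
\int e^{i(x-x') \cdot \xi}\, a_{\lambda,j,k}(x,\xi) \overline{a_{\lambda,j',k'}(x',\xi)}\, d\xi,
\]
which vanishes unless the $\xi$-supports overlap (i.e.\ $|k-k'|$ is bounded), and on the nonzero band produces decay in $|j-j'|$ via integration by parts in $\xi$ using $\lfloor d/2 \rfloor + 2$ derivatives of the symbol; the second composite has kernel vanishing unless $|j-j'|$ is bounded and produces decay in $|k-k'|$ via spatial integration by parts using $\lfloor d/2 \rfloor + 1$ derivatives. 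The asymmetry between $+1$ and $+2$ arises because one spatial integration by parts consumes an additional frequency derivative in the composite. Taking square roots and applying Cotlar--Stein yields $\|T_\lambda\|_{L^2 \to L^2} \lesssim C$ uniformly in $\lambda$, and summing dyadically (using a final Cotlar--Stein step on a finitely overlapping family) completes the argument.

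The main obstacle is the sharp derivative accounting, which is Kato's contribution over the original Calderón--Vaillancourt argument. Getting $\lfloor d/2 \rfloor + 1$ spatial and $\lfloor d/2 \rfloor + 2$ frequency derivatives to suffice --- just past the Sobolev-type threshold $d/2$ --- requires careful optimization of when and where integrations by parts are performed on the composite kernels, and is what distinguishes the sharp result from cruder Schur-test bounds that would demand closer to $d$ derivatives in each variable.
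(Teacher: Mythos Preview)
The paper does not prove this theorem; it is stated as a recalled fact with attribution to Calder\'on--Vaillancourt \cite{CalderonVaillancourt1972} and Taylor \cite{Taylor1974}, with the sharp derivative count attributed to Kato \cite{Kato1976}. There is therefore no ``paper's own proof'' to compare against.

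Your Cotlar--Stein plan is a correct and standard route to the result, and the phase-space tiling at scale $\lambda^{-\rho}\times\lambda^{\rho}$ is the right decomposition for $S^0_{\rho,\rho}$. A few remarks on the sketch: the explanation for the asymmetry between $\lfloor d/2\rfloor+1$ $x$-derivatives and $\lfloor d/2\rfloor+2$ $\xi$-derivatives is plausible but a bit loose as written; in Kato's argument the extra $\xi$-derivative arises more precisely from controlling the Schur-type row/column sums of the composite kernels after rescaling, and one should be explicit about why the integrations by parts give summable decay (i.e.\ better than $\ell^1$ in the lattice index) with exactly these derivative counts. Also, the closing step ``summing dyadically using a final Cotlar--Stein step'' deserves one more sentence: the dyadic pieces $T_\lambda$ are not disjointly supported in frequency on the output side, so one either uses a Littlewood--Paley square-function argument or checks the off-diagonal Cotlar--Stein bounds between different $\lambda$'s directly. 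None of these are genuine gaps, just places where the sketch would need to be filled in to match Kato's sharp count.
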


\subsection{The FBI transform} We shall make use of the FBI transform to conjugate the evolution to phase space (cf. \cite{Delort1992,Tataru2002}). For $\lambda \in 2^{\Z}$, we define the FBI transform of $f \in L^1(\R^m; \C)$ by
\begin{equation*}
\begin{split}
T_\lambda f(z) &= C_m \lambda^{\frac{3m}{4}} \int_{\R^m} e^{-\frac{\lambda}{2}(z-y)^2} f(y) dy, \qquad z = x- i \xi \in T^* \R^m \equiv \R^{2m}, \\
C_m &= 2^{-\frac{m}{2}} \pi^{-\frac{3m}{4}}.
\end{split}
\end{equation*}
The FBI transform is an isometric mapping $T_\lambda : L^2(\R^m) \to L^2_\Phi(T^* \R^m)$ with $\Phi(z) = e^{-\lambda \xi^2}$. The range of $T_\lambda$ are holomorphic functions, thus there are many inversion formulae. One is given by the adjoint in $L^2_\Phi$:
\begin{equation*}
T_\lambda^* F(y) = C_m \lambda^{\frac{3m}{4}} \int_{\R^{2m}} e^{-\frac{\lambda}{2}(\bar{z}-y)^2} \Phi(z) F(z) dx d\xi.
\end{equation*}
By decomposing a function into coherent states, the FBI transform allows us to find an approximate conjugate of pseudo-differential operators. Let $s \geq 0$, $a(x,\xi) \in C^s_x C^\infty_c$ be smooth and compactly supported in $\xi$. We assume that
\begin{equation*}
a(x,\xi) = 0 \text{ for } \xi \notin B(0,2).
\end{equation*}
Let $a_\lambda(x,\xi) = a(x,\xi/\lambda)$ denote the scaled symbol and $A_\lambda = a_\lambda(x,D)$ be the corresponding pseudo-differential operator. We have the following asymptotic for analytic symbols:
\begin{equation*}
T_\lambda A_\lambda(x,D) \approx \sum_{\alpha,\beta} (\partial_\xi - \lambda \xi)^\alpha \frac{\partial_x^\alpha \partial_\xi^\beta a(x,\xi)}{|\alpha|! |\beta|! (-i \lambda)^{|\alpha|} \lambda^{|\beta|}} \big( \frac{1}{i} \partial_x - \lambda \xi)^\beta T_\lambda.
\end{equation*}
We consider truncations of the asymptotic expansion. For $s \leq 1$, we let
\begin{equation*}
\tilde{a}^s_\lambda = a,
\end{equation*}
and for $1 < s \leq 2$, let
\begin{equation*}
\tilde{a}^s_\lambda = a + \frac{1}{-i\lambda} a_x(\partial_\xi - \lambda \xi) + \frac{1}{\lambda} a_\xi (\frac{1}{i} \partial_x - \lambda \xi) = a + \frac{2}{\lambda} (\bar{\partial} a)(\partial - i \lambda \xi)
\end{equation*}
with $\partial = \frac{1}{2} (\partial_x + i \partial_\xi)$ and $\bar{\partial} = \frac{1}{2} (\partial_x - i \partial_\xi)$. We define the remainder
\begin{equation*}
R^s_{\lambda,a} = T_\lambda A_\lambda  - \tilde{a}^s_\lambda T_\lambda.
\end{equation*}
Tataru \cite{Tataru2000,Tataru2001} proved the following approximation result:
\begin{theorem}[{\cite[Theorem~5,~p.~393]{Tataru2001}}]
\label{thm:ApproximationTheorem}
Let $0<s\leq 2$, and $a \in C^s_x C^\infty_c$. Then,
\begin{equation*}
\begin{split}
\| R^s_{\lambda,a} \|_{L^2 \to L^2_\Phi} &\lesssim \lambda^{-\frac{s}{2}}, \\
\| (\partial_\xi - \lambda \xi) R^s_{\lambda,a} \|_{L^2 \to L^2_\Phi} &\lesssim \lambda^{\frac{1}{2}-\frac{s}{2}}.
\end{split}
\end{equation*}
Moreover, if $a \in X^1 C^\infty_c$ with $X^1 = \{ f \in L_t^2 L_{x'}^\infty \, : \, \partial f \in L_t^2 L_{x'}^\infty \}$, then
\begin{equation*}
\| R^1_{\lambda,a} \|_{L^\infty L^2 \to L^2_\Phi} \lesssim \lambda^{-\frac{1}{2}}.
\end{equation*}
\end{theorem}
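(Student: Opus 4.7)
The plan is to prove the approximation theorem by computing the Schwartz kernel of $T_\lambda A_\lambda T_\lambda^\ast$ explicitly, recognizing $\tilde{a}^s_\lambda$ as the first few terms of a Taylor expansion of $a$ at the center of the coherent state, and then bounding the remainder via kernel estimates together with a paradifferential decomposition of $a$ in the spatial variable.

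First, since $T_\lambda$ is an isometry, $f = T_\lambda^\ast T_\lambda f$, so I would write
\begin{equation*}
T_\lambda A_\lambda f(z) = \int K_{\lambda,a}(z,w) (T_\lambda f)(w) \Phi(w) \, dx_w \, d\xi_w,
\end{equation*}
with $K_{\lambda,a}(z,w) = C_m^2 \lambda^{3m/2} \iint e^{-\frac{\lambda}{2}(z-x)^2} e^{i(x-y)\cdot\xi} a(x,\xi/\lambda) e^{-\frac{\lambda}{2}(\bar w - y)^2}\, dx\,dy\,d\xi$. The $y$-integration produces a delta in $\xi$, after which the resulting Gaussian kernel in $(x,w)$ localizes the effective support to $|x - \frac{z+\bar w}{2}| \lesssim \lambda^{-1/2}$ and forces the fiber variable to lie near $\xi = -\lambda\, \mathrm{Im}\,(\frac{z+\bar w}{2}) \in \lambda\cdot\mathrm{supp}_\xi a$. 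Collapsing these Gaussian integrals and using the antiholomorphic behavior of $e^{-\lambda(z-x)^2/2}$ yields a representation of $K_{\lambda,a}(z,w)$ as the FBI transform of $a$ (in suitable variables) times a rapidly decaying off-diagonal Gaussian $e^{-c\lambda|z-w|^2}$.

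Next, I would Taylor expand $a(x,\xi)$ around $(x_0,\xi_0) = (\mathrm{Re}\, z, -\mathrm{Im}\, z)$. The zeroth-order term reproduces $a \cdot T_\lambda$; the first-order terms, after integration by parts against the Gaussian (which exchanges $x-x_0$ for $\lambda^{-1}(\partial_\xi - \lambda\xi)$ acting on the FBI side, and similarly $\xi - \xi_0$ for $\lambda^{-1}(\frac{1}{i}\partial_x - \lambda\xi)$), recover exactly the correction terms defining $\tilde{a}^s_\lambda$ for $1 < s \le 2$. The contribution of the Taylor remainder is then measured against the Gaussian weight of scale $\lambda^{-1/2}$ in $x$, which contributes $\lambda^{-s/2}\|a\|_{C^s_x}$ if we can absorb $|x-x_0|^s$ by the $C^s$ norm.

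The rigorous estimate of the remainder is the main obstacle, since $a$ is only $C^s$ in $x$. I would use a Littlewood–Paley decomposition $a = \sum_\mu \Delta_\mu a$ in the spatial variable: for low frequencies $\mu \lesssim \lambda^{1/2}$ the symbol is smooth enough to justify the $s$-th order Taylor expansion, giving a remainder bounded by $\mu^{s-\lceil s\rceil}\lambda^{-s/2}$ after summing in $\mu$; for high frequencies $\mu \gtrsim \lambda^{1/2}$ one does not Taylor-expand but estimates $\Delta_\mu a$ directly in $L^\infty$ by $\mu^{-s}\|a\|_{C^s}$, and the kernel bound $e^{-c\lambda|z-w|^2}$ controls the resulting operator by Schur's test with a gain of $\mu^{-s} = (\mu^{-1})^s \lesssim \lambda^{-s/2}$. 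Summation in $\mu$ then yields $\|R^s_{\lambda,a}\|_{L^2\to L^2_\Phi}\lesssim \lambda^{-s/2}$. The bound involving $\partial_\xi - \lambda\xi$ uses the identity $[\partial_\xi - \lambda\xi, T_\lambda] = 0$ on the image of $T_\lambda$ and the observation that this operator extracts a factor of $\lambda^{1/2}|x-x_0|$ from the Gaussian kernel, costing exactly $\lambda^{1/2}$. For the final $X^1 \to L^\infty L^2$ bound, I would replace the $L^2_t L^\infty_{x'}$ analysis by applying the $s=1$ kernel estimate pointwise in $t$ and exploiting $\partial a \in L^2_t L^\infty_{x'}$ to close via Cauchy–Schwarz in time, which costs the $\lambda^{-1/2}$ rather than $\lambda^{-1}$ one would hope for from $C^1$ data.
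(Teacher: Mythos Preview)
The paper does not prove this theorem: it is quoted verbatim from Tataru's work \cite{Tataru2001} and used as a black box, so there is no ``paper's own proof'' to compare against.

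That said, your sketch follows the strategy Tataru himself uses. The structure --- compute the Schwartz kernel of $T_\lambda A_\lambda$ against coherent states, Taylor expand $a$ at the phase-space center $(\mathrm{Re}\,z,-\mathrm{Im}\,z)$, identify the first terms with $\tilde a^s_\lambda T_\lambda$, and bound the remainder using the Gaussian localization at scale $\lambda^{-1/2}$ together with a dyadic decomposition of $a$ in the spatial frequency --- is exactly the argument in \cite[\S2]{Tataru2001}. A few details in your write-up are imprecise: the $y$-integral does not literally produce a delta in $\xi$ but rather a Gaussian of width $\lambda^{1/2}$ centered at $-\lambda\,\mathrm{Im}\,w$, which after combination with the remaining Gaussians gives the off-diagonal decay $e^{-c\lambda|z-w|^2}$ you state; and the identity you invoke for $\partial_\xi-\lambda\xi$ should be phrased as the fact that this operator annihilates the range of $T_\lambda$ (holomorphy), so that $(\partial_\xi-\lambda\xi)R^s_{\lambda,a}=(\partial_\xi-\lambda\xi)T_\lambda A_\lambda$ and one differentiates the kernel directly. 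With those corrections the argument goes through as in Tataru's paper; the paradifferential splitting at $\mu\sim\lambda^{1/2}$ is precisely what makes the $C^s$ estimate work without integer regularity.
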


\subsection{The characteristic set depending on the permittivity}
\label{subsection:CharacteristicSet}

In this section we summarize the characteristic set of Maxwell equations depending on the number of different eigenvalues of $\varepsilon=\text{diag}(\varepsilon_1,\varepsilon_2,\varepsilon_3)$ and $\mu = \text{diag}(\mu_1,\mu_2,\mu_3)$. For this discussion suppose that $\varepsilon$ and $\mu$ are homogeneous. The partially anisotropic case (and isotropic case as special case) was detailed in \cite{Schippa2022} and the fully anisotropic case was analyzed in \cite{MandelSchippa2022}.

\subsubsection{Isotropic case}
For $\mu$ and $\varepsilon$ proportional to the unit matrix we can diagonalize the principal symbol to the diagonal matrix as will be carried out in Section \ref{section:StrichartzEstimatesIsotropic}:
\begin{equation*}
d(x,\xi) = i \text{diag}(\xi_0,\xi_0,\xi_0-(\varepsilon \mu)^{-\frac{1}{2}} \| \xi' \|, \xi_0 + (\varepsilon \mu)^{-\frac{1}{2}} \| \xi' \|, \xi_0-(\varepsilon \mu)^{-\frac{1}{2}} \| \xi' \|, \xi_0 + (\varepsilon \mu)^{-\frac{1}{2}} \| \xi' \|).
\end{equation*}
This shows that the characteristic set, without the contribution of the charges, is given by
\begin{equation*}
\{ \xi_0^2 - (\varepsilon \mu)^{-1} \| \xi' \|^2 = 0 \}
\end{equation*}
with multiplicity two.

\subsubsection{Partially anisotropic case} In the case $\varepsilon = \text{diag}(\varepsilon_1,\varepsilon_2,\varepsilon_2)$, $\varepsilon_1 \neq \varepsilon_2$, $\mu = \mu 1_{3 \times 3}$ the diagonalization in the constant-coefficient case with $L^p$-bounded multipliers is still possible. We obtain the diagonal matrix:
\begin{equation*}
d(x,\xi) = i \text{diag}(\xi_0,\xi_0,\xi_0 - \varepsilon_2^{-\frac{1}{2}} \| \xi' \|, \xi_0 + \varepsilon_2^{-\frac{1}{2}} \| \xi' \|, \xi_0 - \| \xi' \|_\varepsilon, \xi_0 + \| \xi' \|_\varepsilon)
\end{equation*}
with $\| \xi' \|_\varepsilon = (\varepsilon_2^{-1} \xi_1^2 + \varepsilon_1^{-1} \xi_2^2 + \varepsilon_1^{-1} \xi_3^2)^{\frac{1}{2}}$. Clearly,  we have 
\begin{equation*}
\varepsilon_2^{-\frac{1}{2}} \| \xi' \| = \| \xi' \|_\varepsilon \; \Leftrightarrow \; \xi_2' = \xi_3' = 0.
\end{equation*}
The characteristic set is given by the sphere
\begin{equation*}
(\xi_0^2 - \| \xi' \|_\varepsilon^2) (\xi_0^2 - \varepsilon_2^{-1} \| \xi' \|^2) = 0
\end{equation*}
and describes two ellipsoids, which are smoothly intersecting at the $\xi_1$-axis.


\subsubsection{Fully anisotropic case}
To find the characteristic set in the fully anisotropic case, we symmetrize
\begin{equation*}
\begin{pmatrix}
i \xi_0 & - i \mathcal{C}(\xi') \mu^{-1} \\
i \mathcal{C}(\xi') \varepsilon^{-1} & i \xi_0
\end{pmatrix}
\begin{pmatrix}
\hat{\mathcal{D}} \\ \hat{\mathcal{B}}
\end{pmatrix}
= 0
\end{equation*}
by multiplying with the matrix (cf. \cite[Proposition~1.3,~p.~1835]{MandelSchippa2022})
\begin{equation*}
\begin{pmatrix}
i \xi_0 & i \mathcal{C}(\xi') \mu^{-1} \\
-i \mathcal{C}(\xi') \varepsilon^{-1} & i \xi_0
\end{pmatrix}
\end{equation*}
to find
\begin{equation*}
\begin{pmatrix}
-\xi_0^2 - \mathcal{C}(\xi') \mu^{-1} \mathcal{C}(\xi') \varepsilon^{-1} & 0 \\
0 & -\xi_0^2 - \mathcal{C}(\xi') \varepsilon^{-1} \mathcal{C}(\xi') \mu^{-1}
\end{pmatrix}
\begin{pmatrix}
\hat{\mathcal{D}} \\ \hat{\mathcal{H}}
\end{pmatrix}
= 0.
\end{equation*}
We compute
\begin{equation*}
\begin{split}
p(\xi) &= \det ( -\xi_0^2 - \mathcal{C}(\xi') \mu^{-1} \mathcal{C}(\xi') \varepsilon^{-1}) \\
&= \det (-\xi_0^2 - \mathcal{C}(\xi') \varepsilon^{-1} \mathcal{C}(\xi') \mu^{-1}) = -\xi_0^2 (\xi_0^4 -\xi_0^2 q_0(\xi) + q_1(\xi))
\end{split}
\end{equation*}
with
\begin{equation*}
\begin{split}
q_0(\xi) &= \xi_1^2 \big( \frac{1}{\varepsilon_2 \mu_3} + \frac{1}{\mu_2 \varepsilon_3} \big) + \xi_2^2 \big( \frac{1}{\varepsilon_1 \mu_3} + \frac{1}{\varepsilon_3 \mu_1} \big) + \xi_3^2 \big( \frac{1}{\varepsilon_2 \mu_1} + \frac{1}{\varepsilon_1 \mu_2} \big), \\
q_1(\xi) &= \frac{1}{\varepsilon_1 \varepsilon_2 \varepsilon_3 \mu_1 \mu_2 \mu_3} (\varepsilon_1 \xi_1^2 + \varepsilon_2 \xi_2^2 + \varepsilon_3 \xi_3^2) (\mu_1 \xi_1^2 + \mu_2 \xi_2^2 + \mu_3 \xi_3^2).
\end{split}
\end{equation*}

 It \cite[Section~3]{MandelSchippa2022} was proved that the condition for full anisotropy $\varepsilon = \text{diag}(\varepsilon_1,\varepsilon_2,\varepsilon_3)$ and $\mu = \text{diag}(\mu_1,\mu_2,\mu_3)$ is given by
\begin{equation}
\label{eq:FullAnisotropy} 
 \frac{\varepsilon_1}{\mu_1} \neq \frac{\varepsilon_2}{\mu_2} \neq \frac{\varepsilon_3}{\mu_3} \neq \frac{\varepsilon_1}{\mu_1}.
\end{equation}
 If this fails, then the characteristic set will be like in the isotropic or partially anisotropic case.

If \eqref{eq:FullAnisotropy} holds, then the characteristic set ceases to be smooth and becomes the Fresnel wave surface with conical singularities.
       \begin{figure}
      \includegraphics[scale=0.3]{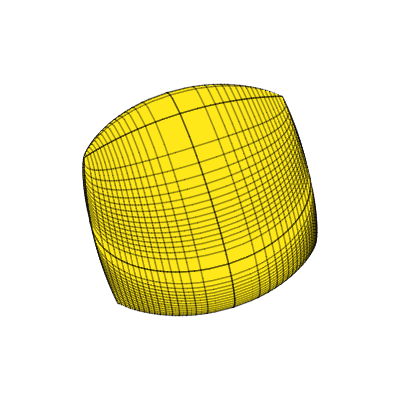}
      \includegraphics[scale=0.2]{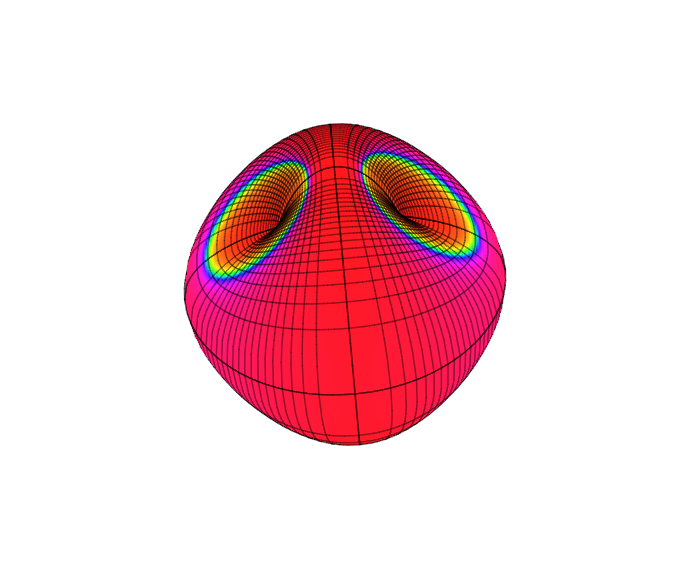}
      \caption{Fresnel surface: inner sheet (left) and outer
      sheet (right) for $\varepsilon_1 = 1$, $\varepsilon_2 = 3$, $\varepsilon_3 = 9$.
      The colours on the outer sheet indicate the Gaussian curvature. The Hamiltonian
      circles with vanishing Gaussian curvature encase the singular points. The images were created with MAPLE\textsuperscript{\texttrademark} with the working sheet \cite{MapleSheet}.
      }
      \label{fig:Fresnel}
    \end{figure}   

It can be conceived as union of three components:
\begin{itemize}
\item a smooth and regular component with two principal curvatures bounded from below,
\item one-dimensional submanifolds with vanishing Gaussian curvature and one principal curvature,
\item neighbourhoods of (four) conical singularities.
\end{itemize}
This is classical and was already pointed out by Darboux \cite{Darboux1993}. The curvature was precisely quantified in \cite{MandelSchippa2022}. Dispersive properties in the constant-coefficient case were first analyzed by Liess \cite{Liess1991}. The conical singularities lead to the dispersive properties of the \emph{2d} wave equation. We prove Strichartz estimates for rough coefficients in the companion paper \cite{SchippaSchnaubeltFullyAnisotropic}.

\section{Strichartz estimates in the isotropic case}
\label{section:StrichartzEstimatesIsotropic}
The purpose of this section is to reduce \eqref{eq:Maxwell3d} to half-wave equations in the isotropic case. The Strichartz estimates then follow from Proposition \ref{prop:HalfWaveEstimates}. The key point is to diagonalize the principal symbol of
\begin{equation*}
P(x,D) = 
\begin{pmatrix}
\partial_t 1_{3 \times 3} & - \mathcal{C}(D) \mu^{-1} \\
\mathcal{C}(D) \varepsilon^{-1} & \partial_t 1_{3 \times 3}
\end{pmatrix}
.
\end{equation*}
The diagonalization argument follows the two-dimensional case, but is more involved. The eigenpairs in the partially anisotropic case had been computed in case of constant coefficients in \cite{Schippa2022}. This suffices for constant-coefficients, but for variable coefficients this diagonalization appears to lose regularity. However, in the isotropic case, we can find a regular diagonalization after an additional microlocalization.

Further reductions are standard, i.e., reduction to high frequencies and localization to a cube of size $1$, reduction to dyadic estimates, truncating frequencies of the coefficients. We start with diagonalizing the principal symbol:

\subsection{Diagonalizing the principal symbol in the isotropic case}
\label{subsection:IsotropicDiagonalization}
We begin with the isotropic case $\varepsilon = \text{diag}(\varepsilon_1,\varepsilon_1,\varepsilon_1)$ and $\mu = \text{diag}(\mu_1,\mu_1,\mu_1)$. In the following we abuse notation and write $\varepsilon = \varepsilon_1$ and $\mu = \mu_1$ for sake of brevity.
In the isotropic case the diagonalization is as regular as in two dimensions after an additional localization in phase space. It turns out we have to distinguish one non-degenerate direction to find non-degenerate eigenvectors.

 We use the block matrix structure to find eigenvectors of $p/i$.
We have
\begin{equation*}
p/i = 
\begin{pmatrix}
\xi_0 & - \mathcal{C}(\xi') \mu^{-1} \\
\mathcal{C}(\xi') \varepsilon^{-1} & \xi_0
\end{pmatrix}
\text{ with } \mathcal{C}(\xi')_{ij} = - i \varepsilon_{ijk} \xi_k,
\end{equation*}
where $\varepsilon_{ijk}$ denotes the Levi-Civita symbol.

We find eigenvectors $v = (v_1,v_2) \in \C^3 \times \C^3$ by using the block matrix structure of $p/i$. Let $\lambda \in \R$ (note that $p/i$ is symmetric, which yields real eigenvalues) such that
\begin{equation*}
\begin{pmatrix}
\xi_0 & - \mathcal{C}(\xi') \mu^{-1} \\
\mathcal{C}(\xi') \varepsilon^{-1} & \xi_0
\end{pmatrix}
\begin{pmatrix}
v_1 \\ v_2
\end{pmatrix}
= \lambda
\begin{pmatrix}
v_1 \\ v_2
\end{pmatrix}
.
\end{equation*} 
Let $\lambda' = \lambda - \xi_0$. Then, we find the system of equations
\begin{align}
- \mathcal{C}(\xi') \mu^{-1} v_2 &= \lambda' v_1, \label{eq:EigenvectorA} \\
\mathcal{C}(\xi') \varepsilon^{-1} v_1 &= \lambda' v_2 \label{eq:EigenvectorB}.
\end{align}
In the following let $\xi' \neq 0$ because $p$ is already diagonal for $\xi' = 0$. Denote $\xi_i^* = \xi_i / |\xi'|$. Clearly, for $\lambda = \xi_0$ we find two eigenvectors
\begin{equation*}
\begin{pmatrix}
\xi_1^* & \xi_2^* & \xi_3^* & 0 & 0 & 0
\end{pmatrix}^t
, \qquad
\begin{pmatrix}
0 & 0 & 0 & \xi_1^* & \xi_2^* & \xi_3^*
\end{pmatrix}^t
\end{equation*}
because $\text{span} (\xi') = \ker(\mathcal{C}(\xi'))$. So, in the following suppose that $\lambda \neq \xi_0$, and let $\lambda' = \lambda - \xi_0$. Iterating \eqref{eq:EigenvectorA} and \eqref{eq:EigenvectorB}, we find that $v_1$ and $v_2$ solve the eigenpair equations:
\begin{align}
\label{eq:EigenvectorA1}
-\mathcal{C}(\xi') \mu^{-1} \mathcal{C}(\xi') \varepsilon^{-1} v_1 &= {\lambda'}^2 v_1, \\
-\mathcal{C}(\xi') \varepsilon^{-1} \mathcal{C}(\xi') \mu^{-1} v_2 &= {\lambda'}^2 v_2. \label{eq:EigenvectorB1}
\end{align}
Since $\varepsilon$ and $\mu$ are isotropic and elliptic, we can define
\begin{equation*}
\lambda^* = (\varepsilon \mu)^{\frac{1}{2}} \lambda',
\end{equation*} 
and write \eqref{eq:EigenvectorA1} and \eqref{eq:EigenvectorB1} as
\begin{equation*}
- \mathcal{C}^2(\xi') v_i = {\lambda^*}^2 v_i, \quad i =1,2.
\end{equation*}
We have $\mathcal{C}^2(\xi') = -\| \xi' \|^2 1_{3 \times 3} + \xi' (\xi')^t$. We note that $\langle v_i, \xi' \rangle = 0$, which follows from projecting \eqref{eq:EigenvectorA1} and \eqref{eq:EigenvectorB1} to $\xi'$ and supposing that $\lambda' \neq 0$. We obtain $\lambda^* \in \{ - \| \xi' \|; \| \xi' \| \}$.
We shall construct eigenvectors depending on the non-degenerate direction of $\xi^* = \frac{\xi'}{\| \xi' \|} \in \mathbb{S}^2$. Clearly, there is $i \in \{1,2,3\}$ such that $(\xi_i^*)^2 \geq \frac{1}{3}$. We introduce the notation $\xi_{ij}^2 = \xi_i^2 + \xi_j^2 $ for $i,j \in \{1,2,3\}$.

\medskip

\textbf{Eigenvectors for $|\xi_1^*| \gtrsim 1$:} 
We let
\begin{equation*}
v_1^{(1)} =
\begin{pmatrix}
\frac{\xi_2}{\xi_{12}} \\ - \frac{\xi_1}{\xi_{12}} \\ 0
\end{pmatrix}, \quad
v_2^{(1)} = \frac{\mathcal{C}(\xi') \varepsilon^{-1}}{\lambda'} v_1^{(1)} = \pm \big( \frac{\mu}{\varepsilon} \big)^{\frac{1}{2}} \begin{pmatrix} \frac{\xi_1 \xi_3}{\xi_{12} \| \xi' \|} \\ \frac{\xi_2 \xi_3}{\xi_{12} \| \xi' \|} \\ - \frac{\xi_{12}}{\| \xi' \|} \end{pmatrix}
.
\end{equation*}
The choice of $v_2^{(1)}$ satisfies \eqref{eq:EigenvectorB} and is orthogonal to $\xi^*$. \eqref{eq:EigenvectorA} is satisfied for
\begin{equation*}
\lambda' \in \{ \frac{\| \xi' \|}{(\varepsilon \mu)^{\frac{1}{2}}}, -\frac{\| \xi' \|}{(\varepsilon \mu)^{\frac{1}{2}}} \}.
\end{equation*}
Secondly, we let
\begin{equation*}
v_1^{(2)} =
\begin{pmatrix}
\frac{\xi_3}{\xi_{13}} \\ 0 \\ - \frac{\xi_1 }{\xi_{13}}
\end{pmatrix}
, \quad
v_2^{(2)} = \frac{\mathcal{C}(\xi') \varepsilon^{-1}}{\lambda'} v_1^{(2)} = \pm \big( \frac{\mu}{\varepsilon} \big)^{\frac{1}{2}} \begin{pmatrix}
\frac{- \xi_1 \xi_2}{\xi_{13} \| \xi' \|} \\ \frac{\xi_{13} }{\| \xi' \|} \\ \frac{- \xi_2 \xi_3} { \xi_{13} \| \xi' \| }
\end{pmatrix}
.
\end{equation*} 
$v_1^{(1)}$ and $v_1^{(2)}$ are linearly independent. For the diagonal matrix
\begin{equation}
\label{eq:DiagonalMatrix}
d(x,\xi) = i \text{diag}(\xi_0,\xi_0,\xi_0 - \frac{\| \xi' \|}{(\varepsilon \mu)^{\frac{1}{2}}}, \xi_0 + \frac{\| \xi' \|}{(\varepsilon \mu)^{\frac{1}{2}}}, \xi_0 - \frac{\| \xi' \|}{(\varepsilon \mu)^{\frac{1}{2}}}, \xi_0 + \frac{\| \xi' \|}{(\varepsilon \mu)^{\frac{1}{2}}})
\end{equation}
we have the conjugation matrix of eigenvectors:
\begin{equation*}
m^{(1)}(\xi) =
\begin{pmatrix}
\xi_1^* & 0 & \frac{\xi_2}{\xi_{12}} & \frac{\xi_2}{\xi_{12}} & \frac{\xi_3}{\xi_{13}} & \frac{\xi_3}{\xi_{13}} \\
\xi_2^* & 0 & - \frac{\xi_1}{\xi_{12}} & - \frac{\xi_1}{\xi_{12}} & 0 & 0 \\
\xi_3^* & 0 & 0							& 0							& - \frac{\xi_1}{\xi_{13}} &- \frac{\xi_1}{\xi_{13}} \\
0 & \xi_1^* & \big( \frac{\mu}{\varepsilon} \big)^{\frac{1}{2}} \frac{\xi_1 \xi_3}{\xi_{12} \| \xi' \|} & \big( \frac{\mu}{\varepsilon} \big)^{\frac{1}{2}} \big( \frac{- \xi_1 \xi_3}{\xi_{12} \| \xi' \|} \big) & \big( \frac{\mu}{\varepsilon} \big)^{\frac{1}{2}} \big( \frac{ - \xi_{1} \xi_2}{\xi_{13} \| \xi' \|} \big) & \big( \frac{\mu}{\varepsilon} \big)^{\frac{1}{2}} \frac{ \xi_{1} \xi_2}{\xi_{13} \| \xi' \|} \\
0 & \xi_2^* & \big( \frac{\mu}{\varepsilon} \big)^{\frac{1}{2}} \frac{\xi_2 \xi_3}{\xi_{12} \| \xi' \|} & \big( \frac{\mu}{\varepsilon} \big)^{\frac{1}{2}} \big( \frac{- \xi_2 \xi_3}{\xi_{12} \| \xi' \|} \big) & \big( \frac{\mu}{\varepsilon} \big)^{\frac{1}{2}} \frac{\xi_{13} }{\| \xi' \|} & \big( \frac{\mu}{\varepsilon} \big)^{\frac{1}{2}} \big( \frac{- \xi_{13} }{\| \xi' \|} \big) \\
0 & \xi_3^* & \big( \frac{\mu}{\varepsilon} \big)^{\frac{1}{2}} \big( \frac{- \xi_{12}}{\| \xi' \|} \big) & \big( \frac{\mu}{\varepsilon} \big)^{\frac{1}{2}} \frac{\xi_{12}}{\| \xi' \|} & \big( \frac{\mu}{\varepsilon} \big)^{\frac{1}{2}} \big( \frac{-\xi_2 \xi_3}{\xi_{13} \| \xi' \|} \big) & \big( \frac{\mu}{\varepsilon} \big)^{\frac{1}{2}} \frac{ \xi_2 \xi_3}{\xi_{13} \| \xi' \|}
\end{pmatrix}
.
\end{equation*}
By elimination and using the block matrix structure, the determinant is computed as
\begin{equation*}
|\det m^{(1)}(\xi)| = |  
\begin{vmatrix}
\xi_1^* & \frac{\xi_2}{\xi_{12}} & \frac{\xi_3}{\xi_{13}} \\
\xi_2^* & - \frac{\xi_1}{\xi_{12}} & 0 \\
\xi_3^* & 0 & - \frac{\xi_1}{\xi_{13}}
\end{vmatrix}
\begin{vmatrix}
\xi_1^* & \big( \frac{\mu}{\varepsilon} \big)^{\frac{1}{2}} \frac{\xi_1 \xi_3}{\xi_{12} \| \xi' \|} & \big( \frac{\mu}{\varepsilon} \big)^{\frac{1}{2}} \frac{- \xi_1 \xi_2}{\xi_{13} \| \xi' \|} \\
\xi_2^* & \big( \frac{\mu}{\varepsilon} \big)^{\frac{1}{2}} \frac{\xi_2 \xi_3}{\xi_{12} \| \xi' \|} & \big( \frac{\mu}{\varepsilon} \big)^{\frac{1}{2}} \frac{\xi_{13}}{\| \xi' \|} \\
\xi_3^* & \big( \frac{\mu}{\varepsilon} \big)^{\frac{1}{2}} \frac{-\xi_{12}}{\| \xi' \|} & \big( \frac{\mu}{\varepsilon} \big)^{\frac{1}{2}} \frac{- \xi_2 \xi_3}{\xi_{13} \| \xi' \|}
\end{vmatrix}
|
= M_1 \cdot M_2.
\end{equation*}
For the first determinant we find
\begin{equation*}
\begin{split}
M_1 &=
\begin{vmatrix}
\xi_1^* & \frac{\xi_2}{\xi_{12}} & \frac{\xi_3}{\xi_{13}} \\
\xi_2^* & - \frac{\xi_1}{\xi_{12}} & 0 \\
\xi_3^* & 0 & - \frac{\xi_1}{\xi_{13}}
\end{vmatrix} 
= \frac{1}{\| \xi' \| \xi_{12} \xi_{13}} \begin{vmatrix}
\xi_1 & \xi_2 & \xi_3 \\
\xi_2 & - \xi_1 & 0 \\
\xi_3 & 0 & - \xi_1
\end{vmatrix} \\
&= \frac{1}{\| \xi' \| \xi_{12} \xi_{13}} (\xi_1^3 + \xi_1 \xi_3^2 + \xi_1 \xi_2^2) 
= \frac{\xi_1 \| \xi' \|}{\xi_{12}	\xi_{13}}.
\end{split}
\end{equation*}
For the second determinant, we compute
\begin{equation*}
\begin{split}
M_2 &= 
\frac{\mu}{\varepsilon} \frac{1}{\| \xi' \|^3} 
\begin{vmatrix}
\xi_1 & \frac{\xi_1 \xi_3}{\xi_{12}} & - \frac{\xi_1 \xi_2}{\xi_{13}} \\
\xi_2 & \frac{\xi_2 \xi_3}{\xi_{12}} & \xi_{13} \\
\xi_3 & - \xi_{12} & - \xi_2 \xi_3
\end{vmatrix}
=
\frac{\mu}{\varepsilon} \frac{1}{\| \xi' \|^3 \xi_{12} \xi_{13}} 
\begin{vmatrix}
\xi_1 & 0 & 0 \\
\xi_2 & 0 & \| \xi' \|^2 \\
\xi_3 & - \| \xi' \|^2 & 0
\end{vmatrix}
= \frac{\mu}{\varepsilon}  \frac{\| \xi' \| \xi_1}{\xi_{12} \xi_{13}}.
\end{split}
\end{equation*}
The intermediate equation follows from multiplying the first column with $\xi_3$ and subtracting from the second and multiplying it with $\xi_2$ and adding it to the third column.
We observe that the inverse matrix takes the following form:
\begin{equation*}
{m^{(1)}}^{-1}(x,\xi) =
\begin{pmatrix}
\xi_1^* & \xi_2^* & \xi_3^* & 0 & 0 & 0 \\
0 & 0 & 0 & \xi_1^* & \xi_2^* & \xi_3^* \\
w^{(1)}_{31} & w^{(1)}_{32} & w^{(1)}_{33} & w^{(1)}_{34} & w^{(1)}_{35} & w^{(1)}_{36} \\
w^{(1)}_{41} & w^{(1)}_{42} & w^{(1)}_{43} & w^{(1)}_{44} & w^{(1)}_{45} & w^{(1)}_{46} \\
w^{(1)}_{51} & w^{(1)}_{52} & w^{(1)}_{53} & w^{(1)}_{54} & w^{(1)}_{55} & w^{(1)}_{56} \\
w^{(1)}_{61} & w^{(1)}_{62} & w^{(1)}_{63} & w^{(1)}_{64} & w^{(1)}_{65} & w^{(1)}_{66}
\end{pmatrix}
.
\end{equation*}
By Cramer's rule, the components $w^{(1)}_{ij}$, $3 \leq i \leq 6$, $1 \leq j \leq 6$ are polynomials in the entries of $m^{(1)}_{ij}$ up to the determinant. Hence, for $|\xi_1| \gtrsim 1$, the components of $m^{(1)}$ and $(m^{(1)})^{-1}$ are smooth and zero homogeneous.

\medskip

\textbf{Eigenvectors for $|\xi_2^*| \gtrsim 1$:}
We let like above
\begin{equation*}
v_1^{(1)} =
\begin{pmatrix}
\frac{\xi_2}{\xi_{12}} \\ - \frac{\xi_1}{\xi_{12}} \\ 0
\end{pmatrix}, \quad
v_2^{(1)} = \frac{\mathcal{C}(\xi') \varepsilon^{-1}}{\lambda'} v_1^{(1)} = \pm \big( \frac{\mu}{\varepsilon} \big)^{\frac{1}{2}} \begin{pmatrix} \frac{\xi_1 \xi_3}{\xi_{12} \| \xi' \|} \\ \frac{\xi_2 \xi_3}{\xi_{12} \| \xi' \|} \\ - \frac{\xi_{12}}{\| \xi' \|} \end{pmatrix}
,
\end{equation*}
and
\begin{equation*}
v_1^{(2)} =
\begin{pmatrix}
0 \\ \frac{\xi_3}{\xi_{23}} \\ - \frac{\xi_2}{\xi_{23}}
\end{pmatrix}
, \quad
v_2^{(2)} =
\frac{\mathcal{C}(\xi') \varepsilon^{-1}}{\lambda'} v_1^{(2)} = \pm \big( \frac{\mu}{\varepsilon} \big)^{\frac{1}{2}}
\begin{pmatrix}
- \frac{\xi_{23}}{\| \xi' \|} \\ \frac{\xi_1 \xi_2}{\xi_{23} \| \xi' \|} \\ \frac{\xi_1 \xi_3}{\xi_{23} \| \xi' \|}
\end{pmatrix}
.
\end{equation*}
For $d$ like in \eqref{eq:DiagonalMatrix} we obtain the conjugation matrix of eigenvectors:
\begin{equation*}
m^{(2)}(x,\xi) =
\begin{pmatrix}
\xi_1^* & 0 & \frac{\xi_2}{\xi_{12}} & \frac{\xi_2}{\xi_{12}} & 0 & 0 \\
\xi_2^* & 0 & - \frac{\xi_1}{\xi_{12}} & - \frac{\xi_1}{\xi_{12}} & \frac{\xi_3}{\xi_{23}} & \frac{\xi_3}{\xi_{23}} \\
\xi_3^* & 0 & 0							& 0							& - \frac{\xi_2}{\xi_{23}} &- \frac{\xi_2}{\xi_{23}} \\
0 & \xi_1^* & \big( \frac{\mu}{\varepsilon} \big)^{\frac{1}{2}} \frac{\xi_1 \xi_3}{\xi_{12} \| \xi' \|} & \big( \frac{\mu}{\varepsilon} \big)^{\frac{1}{2}} \big( \frac{- \xi_1 \xi_3}{\xi_{12} \| \xi' \|} \big) & \big( \frac{\mu}{\varepsilon} \big)^{\frac{1}{2}} \big( \frac{ - \xi_{23} }{\| \xi' \|} \big) & \big( \frac{\mu}{\varepsilon} \big)^{\frac{1}{2}} \frac{ \xi_{23} }{\| \xi' \|} \\
0 & \xi_2^* & \big( \frac{\mu}{\varepsilon} \big)^{\frac{1}{2}} \frac{\xi_2 \xi_3}{\xi_{12} \| \xi' \|} & \big( \frac{\mu}{\varepsilon} \big)^{\frac{1}{2}} \big( \frac{- \xi_2 \xi_3}{\xi_{12} \| \xi' \|} \big) & \big( \frac{\mu}{\varepsilon} \big)^{\frac{1}{2}} \frac{\xi_{1} \xi_2}{\xi_{23} \| \xi' \|} & \big( \frac{\mu}{\varepsilon} \big)^{\frac{1}{2}} \frac{\xi_{1} \xi_2}{\xi_{23} \| \xi' \|} \\
0 & \xi_3^* & \big( \frac{\mu}{\varepsilon} \big)^{\frac{1}{2}} \big( \frac{- \xi_{12}}{\| \xi' \|} \big) & \big( \frac{\mu}{\varepsilon} \big)^{\frac{1}{2}} \frac{\xi_{12}}{\| \xi' \|} & \big( \frac{\mu}{\varepsilon} \big)^{\frac{1}{2}} \frac{\xi_1 \xi_3}{\xi_{23} \| \xi' \|} & \big( \frac{\mu}{\varepsilon} \big)^{\frac{1}{2}} \big( \frac{-\xi_1 \xi_3}{\xi_{23} \| \xi' \|} \big)
\end{pmatrix}
.
\end{equation*}
Like above we compute the determinant by using the block matrix structure:
\begin{equation*}
\det m(x,\xi) =
\begin{vmatrix}
\xi_1^* & \frac{\xi_2}{\xi_{12}} & 0 \\
\xi_2^* & - \frac{\xi_1}{\xi_{12}} & \frac{\xi_3}{\xi_{23}} \\
\xi_3^* & 0 & - \frac{\xi_2}{\xi_{23}}
\end{vmatrix}
\begin{vmatrix}
\xi_1^* & \big( \frac{\mu}{\varepsilon} \big)^{\frac{1}{2}} \frac{\xi_1 \xi_3}{\xi_{12} \| \xi' \|} & - \big( \frac{\mu}{\varepsilon} \big)^{\frac{1}{2}} \frac{\xi_{23}}{\| \xi' \|} \\
\xi_2^* & \big( \frac{\mu}{\varepsilon} \big)^{\frac{1}{2}} \frac{\xi_2 \xi_3}{\xi_{12} \| \xi' \|} & \big( \frac{\mu}{\varepsilon} \big)^{\frac{1}{2}} \frac{\xi_1 \xi_2}{\xi_{23} \| \xi' \|} \\
\xi_3^* & - \big( \frac{\mu}{\varepsilon} \big)^{\frac{1}{2}} \frac{\xi_{12}}{\| \xi' \|} & \big( \frac{\mu}{\varepsilon} \big)^{\frac{1}{2}} \frac{\xi_1 \xi_3}{\xi_{23} \| \xi' \|}
\end{vmatrix}
= M_1 \cdot M_2
.
\end{equation*}
The first determinant is computed to be
\begin{equation*}
M_1 = \frac{\xi_2}{\| \xi' \| \xi_{12} \xi_{23}} (\xi_1^2 + \xi_2^2 + \xi_3^2) = \frac{\xi_2 \| \xi' \|}{\xi_{12} \xi_{23}}.
\end{equation*}
We find for the second determinant:
\begin{equation*}
M_2 = \frac{\mu}{\varepsilon \| \xi' \|^3 \xi_{12} \xi_{23}}
\begin{vmatrix}
\xi_1 & \xi_1 \xi_3 & - \xi_{23}^2 \\
\xi_2 & \xi_2 \xi_3 & \xi_1 \xi_2 \\
\xi_3 & - \xi_{12}^2 & \xi_1 \xi_3
\end{vmatrix}
= \frac{\mu}{\varepsilon \| \xi' \|^3 \xi_{12} \xi_{23}}
\begin{vmatrix}
\xi_1 & 0 & - \| \xi' \|^2 \\
\xi_2 & 0 & 0 \\
\xi_3 & -\| \xi' \|^2 & 0
\end{vmatrix}
= \frac{\mu}{\varepsilon} \frac{\xi_2 \| \xi' \|}{\xi_{12} \xi_{23}}.
\end{equation*}

\medskip

\textbf{Eigenvectors for $|\xi_3^*| \gtrsim 1$:}
We choose
\begin{equation*}
v_1^{(1)} =
\begin{pmatrix}
\frac{\xi_3}{\xi_{13}} \\ 0 \\ -\frac{\xi_1}{\xi_{13}}
\end{pmatrix}
, \quad v_2^{(1)} = 
\frac{\mathcal{C}(\xi') \varepsilon^{-1}}{\lambda'} v_1^{(1)} = \pm \big( \frac{\mu}{\varepsilon} \big)^{\frac{1}{2}} 
\begin{pmatrix}
- \frac{\xi_1 \xi_2}{\xi_{13} \| \xi' \|} \\ \frac{\xi_{13}}{\| \xi' \|} \\ \frac{- \xi_2 \xi_3}{\xi_{13} \| \xi' \|}
\end{pmatrix}
\end{equation*}
and let
\begin{equation*}
v_1^{(2)} =
\begin{pmatrix}
 0 \\ \frac{\xi_3}{\xi_{23}} \\ - \frac{\xi_2}{\xi_{23}}
\end{pmatrix}, \quad
v_2^{(2)} = \frac{\mathcal{C}(\xi') \varepsilon^{-1}}{\lambda'} v_1^{(2)} = \pm \big( \frac{\mu}{\varepsilon} \big)^{\frac{1}{2}} 
\begin{pmatrix}
\frac{- \xi_{23}}{\| \xi' \|} \\ \frac{\xi_1 \xi_2}{\xi_{23} \| \xi' \|} \\ \frac{\xi_1 \xi_3}{\xi_{23} \| \xi' \|}
\end{pmatrix}
.
\end{equation*}
The conjugation matrix of eigenvectors for $d$ like in \eqref{eq:DiagonalMatrix} is given by
\begin{equation*}
m^{(3)}(\xi) = 
\begin{pmatrix}
\xi_1^* & 0 & \frac{\xi_3}{\xi_{13}} & \frac{\xi_3}{\xi_{13}} & 0 & 0 \\
\xi_2^* & 0 & 0 & 0 & \frac{\xi_3}{\xi_{23}} & \frac{\xi_3}{\xi_{23}} \\
\xi_3^* & 0 & - \frac{\xi_1}{\xi_{13}} & - \frac{\xi_1}{\xi_{13}} & - \frac{\xi_2}{\xi_{23}} & - \frac{\xi_2}{\xi_{23}} \\
0 & \xi_1^* & \big( \frac{\mu}{\varepsilon} \big)^{\frac{1}{2}} \big( \frac{- \xi_1 \xi_2}{\xi_{13} \| \xi' \|} \big) & \big( \frac{\mu}{\varepsilon} \big)^{\frac{1}{2}} \frac{\xi_1 \xi_2}{\xi_{13} \| \xi' \|} & \big( \frac{\mu}{\varepsilon} \big)^{\frac{1}{2}} \big( \frac{-\xi_{23}}{\| \xi' \|} \big) & \big( \frac{\mu}{\varepsilon} \big)^{\frac{1}{2}} \big( \frac{\xi_{23 }}{ \| \xi' \|} \big) \\
0 & \xi_2^* & \big( \frac{\mu}{\varepsilon} \big)^{\frac{1}{2}} \frac{\xi_{13}}{\| \xi' \|} & \big( \frac{\mu}{\varepsilon} \big)^{\frac{1}{2}} \big( \frac{- \xi_{13}}{\| \xi' \|} \big) & \big( \frac{\mu}{\varepsilon} \big)^{\frac{1}{2}} \frac{\xi_1 \xi_2}{\xi_{23} \| \xi' \|} & \big( \frac{\mu}{\varepsilon} \big)^{\frac{1}{2}} \big( \frac{- \xi_1 \xi_2}{\xi_{23} \| \xi' \|} \big) \\
0 & \xi_3^* & \big( \frac{\mu}{\varepsilon} \big)^{\frac{1}{2}} \big( \frac{- \xi_2 \xi_3}{\xi_{13} \| \xi' \|} \big) & \big( \frac{\mu}{\varepsilon} \big)^{\frac{1}{2}} \frac{\xi_2 \xi_3}{\xi_{13} \| \xi' \|} & \big( \frac{\mu}{\varepsilon} \big)^{\frac{1}{2}} \frac{\xi_1 \xi_3}{\xi_{23} \| \xi' \|} & \big( \frac{\mu}{\varepsilon} \big)^{\frac{1}{2}} \big( \frac{- \xi_1 \xi_3}{\xi_{23} \| \xi' \|} \big)
\end{pmatrix}
.
\end{equation*}
In this case we compute the determinant to be
\begin{equation*}
| \det m(\xi) | =
\begin{vmatrix}
\xi_1^* & \frac{\xi_3}{\xi_{13}} & 0 \\
\xi_2^* & 0 & \frac{\xi_3}{\xi_{23}} \\
\xi_3^* & -\frac{\xi_1}{\xi_{13}} & - \frac{\xi_2}{\xi_{23}}
\end{vmatrix}
\begin{vmatrix}
\xi_1^* & \big( \frac{\mu}{\varepsilon} \big)^{\frac{1}{2}} \big( \frac{- \xi_1 \xi_2}{\xi_{13} \| \xi' \|} \big) & \big( \frac{\mu}{\varepsilon} \big)^{\frac{1}{2}} \big( \frac{-\xi_{23}}{\| \xi' \|} \big) \\
\xi_2^* & \big( \frac{\mu}{\varepsilon} \big)^{\frac{1}{2}} \frac{\xi_{13}}{\| \xi' \|} & \big( \frac{\mu}{\varepsilon} \big)^{\frac{1}{2}} \frac{\xi_1 \xi_2}{\xi_{23} \| \xi' \|} \\
\xi_3^* & \big( \frac{\mu}{\varepsilon} \big)^{\frac{1}{2}} \big( \frac{- \xi_2 \xi_3}{\xi_{13} \| \xi' \|} \big) & \big( \frac{\mu}{\varepsilon} \big)^{\frac{1}{2}} \frac{\xi_1 \xi_3}{\xi_{23} \| \xi' \|}
\end{vmatrix}
.
\end{equation*}
For the first determinant we find
\begin{equation*}
M_1 = \frac{\xi_3^3}{\xi_{13} \xi_{23} \| \xi' \|} + \frac{\xi_1^2 \xi_3}{\xi_{13} \xi_{23} \| \xi' \|} + \frac{\xi_2^2 \xi_3}{\xi_{13} \xi_{23} \| \xi' \|} = \frac{\xi_3 \| \xi' \|}{\xi_{13} \xi_{23}}.
\end{equation*}
For the second determinant we compute
\begin{equation*}
M_2 = \frac{\mu}{\varepsilon} \frac{1}{\| \xi' \|^3 \xi_{13} \xi_{23}} 
\begin{vmatrix}
\xi_1 & - \xi_1 \xi_2 & -\xi_{23}^2 \\
\xi_2 & \xi_{13}^2 & \xi_1 \xi_2 \\
\xi_3 & - \xi_2 \xi_3 & \xi_1 \xi_3
\end{vmatrix}
= \frac{\mu}{\varepsilon} \frac{\xi_3 \| \xi' \|}{\xi_{13} \xi_{23}}.
\end{equation*}
For $i=1,2,3$, we summarize that ${m^{(i)}}^{-1}$ takes the form
\begin{equation}
\label{eq:InverseConjugationMatrix}
{m^{(i)}}^{-1}(x,\xi) =
\begin{pmatrix}
\xi_1^* & \xi_2^* & \xi_3^* & 0 & 0 & 0 \\
0 & 0 & 0 & \xi_1^* & \xi_2^* & \xi_3^* \\
w^{(i)}_{31} & w^{(i)}_{32} & w^{(i)}_{33} & w^{(i)}_{34} & w^{(i)}_{35} & w^{(i)}_{36} \\
w^{(i)}_{41} & w^{(i)}_{42} & w^{(i)}_{43} & w^{(i)}_{44} & w^{(i)}_{45} & w^{(i)}_{46} \\
w^{(i)}_{51} & w^{(i)}_{52} & w^{(i)}_{53} & w^{(i)}_{54} & w^{(i)}_{55} & w^{(i)}_{56} \\
w^{(i)}_{61} & w^{(i)}_{62} & w^{(i)}_{63} & w^{(i)}_{64} & w^{(i)}_{65} & w^{(i)}_{66}
\end{pmatrix}
\end{equation}
with $w^{(i)}_{mn}$ zero-homogeneous and smooth in $\xi'$ for $|\xi_i| \gtrsim 1$.

\subsection{Reductions for $C^2$-coefficients}
\label{subsection:ReductionsC2Isotropic}
Next, we carry out reductions as in \cite{SchippaSchnaubelt2021} for the proof of Theorem \ref{thm:IsotropicStrichartz}. Precisely, we apply the following:
\begin{itemize}
\item Reduction to high frequencies and localization to a cube of size $1$,
\item Reduction to dyadic estimates,
\item Truncating the coefficients of $P$ at frequency $\lambda^{\frac{1}{2}}$,
\item Reduction to half-wave equations.
\end{itemize}
To begin with, by scaling we suppose that $\| \partial_x^2 \varepsilon \|_{L^\infty} \leq 1$, $\| \partial_x^2 \mu \|_{L^\infty} \leq 1$, and $\nu = 1$. Note that the ellipticity constraint \eqref{eq:UniformEllipticity} implies by the Gagliardo--Nirenberg inequality
\begin{equation*}
\| \partial_x \varepsilon \|_{L^\infty} + \| \partial_x \mu \|_{L^\infty} \lesssim 1.
\end{equation*}
\subsubsection{Reduction to high frequencies and localization to a cube of size $1$}
Let $\beta \in C^\infty_c$ like in \eqref{eq:LPBeta}, and let $s(\xi) = \beta(\| \xi \|)$ denote a symbol supported in $B(0,2) \backslash B(0,1/2)$ such that
\begin{equation*}
\sum_{j \in \Z} s(2^{-j} \xi) = 1, \qquad \xi \in \R^4 \backslash \{ 0 \}.
\end{equation*}
For $\lambda \in 2^{\mathbb{N}_0}$, let $S_\lambda = S(D/\lambda)$ be the Littlewood-Paley multiplier and $S_{\lesssim 1} = 1 - \sum_{j \geq 0} S_{2^j}$. Let $u = S_{\lesssim 1} u + (1-S_{\lesssim 1}) u$. 
We estimate the low frequencies as follows:
Write
\begin{equation*}
S_{\lesssim 1} = \sum_{K, L \leq 8} S^\tau_L S^{\xi'}_K S_{\lesssim 1}
\end{equation*}
with $S^\tau_M$, $S^{\xi'}_N$ denoting Littlewood-Paley projectors only in $\tau$ or $\xi'$. We use Bernstein's and Minkowski's inequality to find
\begin{equation*}
\| |D|^{-\rho} S_{\lesssim 1} u \|_{L^p L^q}^2 \leq \sum_{K, L \leq 8} \| |D|^{-\rho} S^\tau_L S^{\xi'}_K u \|_{L^p L^q}^2.
\end{equation*}
For $K \leq L$, we have by Bernstein's inequality and Plancherel's theorem
\begin{equation*}
\begin{split}
\sum_{K \leq L \leq 8} \| |D|^{-\rho} S^\tau_L S^{\xi'}_K u \|^2_{L^p L^q} &\lesssim \sum_{K \leq L \leq 1} L^{-2 \rho - \frac{1}{2}} \| S^\tau_L S^{\xi'}_K u \|^2_{L^p L^q} \\
&\lesssim \sum_{K \leq L \leq 8} L^{-2 \rho - \frac{1}{2}} L^{2 \big(\frac{1}{2}-\frac{1}{p} \big)} K^{6 \big( \frac{1}{2} - \frac{1}{q} \big)} \| S^\tau_L S^{\xi'}_K u \|^2_{L^2} \\
&\lesssim \| u \|_{L^2}^2.
\end{split}
\end{equation*}
The estimate for $L \leq K$ follows \emph{mutatis mutandis}.

\medskip

It remains to prove the claim for the inhomogeneous norm for the high frequencies:
\begin{equation*}
\| \langle D \rangle^{-\rho} u \|_{L^p L^q} \lesssim \| u \|_{L^2} + \| P u \|_{L^2} + \| \langle D' \rangle^{-\frac{1}{2}} \rho_{e m} \|_{L^2}
\end{equation*}
with $\langle D \rangle = OP((1+\|\xi\|^2)^{\frac{1}{2}})$ and $\langle D' \rangle = OP((1+\| \xi' \|^2)^{\frac{1}{2}})$. To localize $u$ to the unit cube, we introduce a smooth partition of unity in space-time:
\begin{equation*}
 1 = \sum_{j \in \mathbb{Z}^{4}} \chi_j(x), \quad \chi_j(x) = \chi(x-j), \quad \text{supp} \chi \subseteq B(0,2).
\end{equation*}
Let 
\begin{align*}
\rho_{ej} = \partial_1 (\chi_j u_1) + \partial_2 (\chi_j u_2) + \partial_3 (\chi_j u_3), \quad
\rho_{mj} = \partial_1 (\chi_j u_4) + \partial_2 (\chi_j u_5) + \partial_3 (\chi_j u_6).
\end{align*}
By commutator estimates, we find
\begin{equation*}
\sum_j \| \chi_j u \|^2_{L^2} + \| P (\chi_j u) \|_{L^2}^2 \lesssim \| u \|^2_{L^2} + \| P u \|^2_{L^2}.
\end{equation*}
Moreover, as proved in \cite[Eq.~(36),~(37)]{SchippaSchnaubelt2021}, we have
\begin{align*}
\| \langle D \rangle^{-\rho} u \|^2_{L^p L^q} &\lesssim \sum_j \| \langle D \rangle^{-\rho} \chi_j u \|^2_{L^p L^q}, \\
\sum_j \| \langle D \rangle^{-\frac{1}{2}} (\rho_{ej}, \rho_{mj}) \|_{L^2}^2 &\lesssim \| \langle D \rangle^{-\frac{1}{2}} \rho_{em} \|_{L^2}^2.
\end{align*}
This concludes the reduction to $u$ being supported in the unit cube.

\subsubsection{Reduction to dyadic estimates}
We shall see that it is enough to prove
\begin{equation}
\label{eq:FrequencyLocalizedStrichartz}
\lambda^{-\rho} \| S_\lambda u \|_{L^p L^q} \lesssim \| S_\lambda u \|_{L^2} + \| P S_\lambda u \|_{L^2} + \lambda^{-\frac{1}{2}} \| S_\lambda \rho_{e m} \|_{L^2}.
\end{equation}
We can assume that $2 \leq p,q < \infty$ because it is enough to prove the claim for sharp Strichartz exponents. The point $(p,q)=(\infty,2)$ is covered by the energy estimate.

By Littlewood-Paley theory (here we use that $2 \leq p,q < \infty$), we can estimate
\begin{equation*}
\| u \|_{L^p L^q} \lesssim \big\| \big( \sum_{\lambda \in 2^{\N_0}} |S_\lambda u |^2 \big)^{\frac{1}{2}} \big\|_{L^p L^q} \lesssim \big( \sum_{\lambda \in 2^{\N_0}} \| S_\lambda u \|^2_{L^p L^q} \big)^{\frac{1}{2}}.
\end{equation*}
To carry out the square sum over the right hand-side, we require the commutator estimate
\begin{equation}
\label{eq:CommutatorEstimate}
\big( \sum_{\lambda \in 2^{\N_0}} \| [P, S_\lambda] u \|_{L^2_x}^2 \big)^{\frac{1}{2}} \lesssim \| u \|_{L^2}.
\end{equation}
The square sum over the remaining terms is straightforward. \eqref{eq:CommutatorEstimate} is proved on \cite[p.~21]{SchippaSchnaubelt2021}.

\subsubsection{Truncating the coefficients of $P$ at frequency $\lambda^{\frac{1}{2}}$}
Finally, we reduce \eqref{eq:FrequencyLocalizedStrichartz} to $\varepsilon$ and $\mu$ having Fourier transform supported in $\{ |\xi| \leq \lambda^{\frac{1}{2}} \}$. Note that for $\lambda \gg 1$, $\varepsilon_{\lambda^{\frac{1}{2}}}$, $\mu_{\lambda^{\frac{1}{2}}}$ denoting the Fourier truncated coefficients, is still uniformly elliptic because
\begin{equation*}
\| \varepsilon - \varepsilon_{\lambda^{\frac{1}{2}}} \|_{L^\infty} \lesssim \lambda^{-1} \| \partial \varepsilon_{ \geq \lambda^{\frac{1}{2}}} \|_{L^\infty}.
\end{equation*}

It is enough to show
\begin{equation}
\label{eq:FrequencyLocalizedTruncatedStrichartz}
\lambda^{-\rho} \| S_\lambda u \|_{L^p L^q} \lesssim \| S_\lambda u \|_{L^2} + \| P_\lambda S_\lambda u \|_{L^2} + \lambda^{-\frac{1}{2}} \| S_\lambda \rho_{e m} \|_{L^2},
\end{equation}
where\footnote{We first truncate the frequencies and then take the inverse.}
\begin{equation}
\label{eq:TruncatedMaxwell}
P_\lambda = \begin{pmatrix}
\partial_t 1_{3 \times 3} & - \mathcal{C}(D) \mu^{-1}_{\lambda^{\frac{1}{2}}} \\
\mathcal{C}(D) \varepsilon_{\lambda^{\frac{1}{2}}}^{-1} & \partial_t 1_{3 \times 3}
\end{pmatrix}
.
\end{equation}
Let $u^{(1)} = (u_1,u_2,u_3)$ and $u^{(2)} = (u_4,u_5,u_6)$. The error estimate follows from
\begin{equation*}
\begin{split}
&\quad \| (P - P_\lambda) S_\lambda u \|_{L^2} \\
 &\leq \| \nabla \times ((\mu^{-1} - (\mu_{\leq \lambda^{\frac{1}{2}}})^{-1}) S_\lambda u^{(2)}) \|_{L^2} + \| \nabla \times ((\varepsilon^{-1} - (\varepsilon_{\leq \lambda^{\frac{1}{2}}})^{-1}) S_\lambda u^{(1)}) \|_{L^2} \\
&\lesssim (\| \partial_x \mu \|_{L^\infty} + \| \partial_x \varepsilon \|_{L^\infty}) \| S_\lambda u \|_{L^2} + \| (\mu^{-1} - (\mu_{\leq \lambda^{\frac{1}{2}}})^{-1}) \nabla \times S_\lambda u^{(1)} \|_{L^2} \\
&\qquad + \| (\varepsilon^{-1} - (\varepsilon_{\leq \lambda^{\frac{1}{2}}})^{-1})) \nabla \times S_\lambda u^{(2)} \|_{L^2} \\
&\lesssim \| S_\lambda u \|_{L^2} + \lambda ( \| \varepsilon_{\geq \lambda^{\frac{1}{2}}} \|_{L^\infty} + \| \mu_{\geq \lambda^{\frac{1}{2}}} \|_{L^\infty} ) \| S_\lambda u \|_{L^2} \\
&\lesssim (1+ \| \partial_x^2 \varepsilon \|_{L^\infty} + \| \partial_x^2 \mu \|_{L^\infty}) \| S_\lambda u \|_{L^2}.
\end{split}
\end{equation*}
We used that
\begin{equation*}
\| \varepsilon^{-1} - (\varepsilon_{\leq \lambda^{\frac{1}{2}}})^{-1} \|_{L^\infty} \leq \frac{\| \varepsilon - \varepsilon_{\leq \lambda^{\frac{1}{2}}} \|_{L^\infty}}{ \inf_{x \in \R^4} | \varepsilon(x) \varepsilon_{\leq \lambda^{\frac{1}{2}}(x)} |} \lesssim 
\| \varepsilon_{\geq \lambda^{\frac{1}{2}}} \|_{L^\infty}
\end{equation*}
and
\begin{equation*}
\lambda \| \varepsilon_{\geq \lambda^{\frac{1}{2}}} \|_{L^\infty} \lesssim \| \partial_x^2 \varepsilon \|_{L^\infty}.
\end{equation*}
The corresponding estimates also hold for $\mu$.

\subsubsection{Diagonalizing the Maxwell operator}
After truncating the coefficients, we obtain
\begin{equation*}
p(x,\xi) \chi_\lambda(\xi) = i
\begin{pmatrix}
\xi_0 & -\mathcal{C}(\xi') \mu_{\leq \lambda^{\frac{1}{2}}}^{-1} \\
\mathcal{C}(\xi') \varepsilon_{\leq \lambda^{\frac{1}{2}}}^{-1} & \xi_0
\end{pmatrix}
s(\xi / \lambda) \in S^1_{1,\frac{1}{2}}.
\end{equation*}
By microlocal analysis, we extend the formal diagonalization from Section \ref{subsection:IsotropicDiagonalization} to pseudo-differential operators diagonalizing the symbol.
\begin{proposition}
\label{prop:Diagonalization}
Let $\varepsilon, \mu \in C^1$. For $\lambda \gg 1$, there are operators $\mathcal{M}_\lambda^{(i)}$, $\mathcal{N}_\lambda^{(i)}$, $\mathcal{D}_\lambda$ and $S_{\lambda i}$ for $i \in \{1,2,3\}$ such that $S_\lambda S'_\lambda = S_{\lambda 1} + S_{\lambda 2} + S_{\lambda 3}$ and
\begin{equation*}
P_\lambda S_{\lambda i} = \mathcal{M}_\lambda^{(i)} \mathcal{D}_\lambda \mathcal{N}_\lambda^{(i)} S_{\lambda i } + E_\lambda^{(i)}
\end{equation*}
for $i \in \{1,2,3\}$ with $\| E_\lambda^{(i)} \|_{L^2 \to L^2} \lesssim 1$.
\end{proposition}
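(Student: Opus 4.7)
The plan is to implement the algebraic block diagonalization of Section \ref{subsection:IsotropicDiagonalization} at the operator level on the three angular sectors $\{|\xi_i^*| \gtrsim 1\}$ where the eigenvector matrices $m^{(i)}$ and their inverses are smooth and zero-homogeneous. First I would fix a smooth partition of unity $1 = \sum_{i=1}^3 \chi_i(\xi^*)$ on $S^2$ with $\operatorname{supp}(\chi_i) \subseteq \{(\xi_i^*)^2 \geq 1/5\}$, extend each $\chi_i$ to a zero-homogeneous function on $\R^3 \setminus \{0\}$, and define $S_{\lambda i}$ as the Fourier multiplier with symbol $\chi_i(\xi^*) s(\xi/\lambda)\beta^\prime(\lambda^{-1}|\xi'|)$. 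This yields $S_{\lambda 1}+S_{\lambda 2}+S_{\lambda 3} = S_\lambda S_\lambda^\prime$ on the relevant frequency support.

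Second I would define the conjugation operators by quantization: set $\mathcal{M}^{(i)}_\lambda = (m^{(i)}(\xi)\chi_i(\xi^*))(x,D)$ and $\mathcal{N}^{(i)}_\lambda = ((m^{(i)})^{-1}(\xi)\chi_i(\xi^*))(x,D)$, which lie in $OPS^0_{1,0}$ by the explicit computations of Section \ref{subsection:IsotropicDiagonalization}. Let $\mathcal{D}_\lambda$ be the quantization of $d(x,\xi)$ from \eqref{eq:DiagonalMatrix}, evaluated on the Fourier-truncated coefficients $\varepsilon_{\leq \lambda^{1/2}}$, $\mu_{\leq \lambda^{1/2}}$. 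The Bernstein bounds $|\partial_x^\alpha (\varepsilon_{\leq \lambda^{1/2}}, \mu_{\leq \lambda^{1/2}})| \lesssim \lambda^{\max(0,|\alpha|-2)/2}$ combined with the homogeneity in $\xi$ place $d \in S^1_{1,1/2}$ on $|\xi|\sim \lambda$.

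Third I would apply the Kohn--Nirenberg composition theorem (Theorem \ref{thm:KohnNirenberg}) twice to the triple product $\mathcal{M}^{(i)}_\lambda \mathcal{D}_\lambda \mathcal{N}^{(i)}_\lambda$. Because $m^{(i)}$ and $(m^{(i)})^{-1}$ are $x$-independent, the only nontrivial subleading terms of the asymptotic expansion pair $\partial_x^\alpha d$ with $\partial_\xi^\alpha$ acting on the flanking matrices. At the principal level, the pointwise matrix identity
\[
m^{(i)}(\xi)\, d(x,\xi)\, (m^{(i)})^{-1}(\xi) \chi_i(\xi^*)^2 = p_\lambda(x,\xi)\chi_i(\xi^*)^2
\]
established in Section \ref{subsection:IsotropicDiagonalization} reproduces $P_\lambda S_{\lambda i}$ modulo the lower-order terms absorbed in $E_\lambda^{(i)}$.

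The main obstacle is bounding the remainder $E^{(i)}_\lambda$ uniformly in $\lambda$. Schematically, the subleading term at order $|\alpha|\geq 1$ is $\partial_\xi^\alpha m^{(i)} \cdot \partial_x^\alpha d$; on the annulus $|\xi|\sim \lambda$ this symbol has order $-|\alpha| + 1 + \max(0, |\alpha|-2)/2$, which equals $0$ for $|\alpha|=1$ and is $\leq -1/2$ for $|\alpha|\geq 2$. Hence each term lies in $S^0_{1,1/2}$, and the Calder\'on--Vaillancourt theorem gives its $L^2$-boundedness with a constant independent of $\lambda$. Truncating the Kohn--Nirenberg expansion at a sufficiently high order $N$, the tail $R_N$ is controlled by Lemma \ref{lem:LpLqBoundsPseudos} exactly as in \cite{SchippaSchnaubelt2021}, yielding $\|E^{(i)}_\lambda\|_{L^2 \to L^2} \lesssim 1$.
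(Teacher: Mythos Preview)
Your overall strategy matches the paper's, but there is a genuine oversight: the conjugation matrices $m^{(i)}$ and $(m^{(i)})^{-1}$ are \emph{not} $x$-independent. Inspecting the explicit formulas in Section~\ref{subsection:IsotropicDiagonalization}, half of the columns of $m^{(i)}$ carry the factor $(\mu/\varepsilon)^{1/2}$, and by Cramer's rule the entries $w^{(i)}_{mn}$ of $(m^{(i)})^{-1}$ inherit this $x$-dependence as well. Consequently your claim that $\mathcal{M}^{(i)}_\lambda, \mathcal{N}^{(i)}_\lambda \in OPS^0_{1,0}$ is incorrect---after frequency truncation of the coefficients at $\lambda^{1/2}$ these symbols lie only in $S^0_{1,1/2}$---and your remainder analysis is incomplete: the Kohn--Nirenberg expansion produces, in addition to the $\partial_\xi^\alpha m^{(i)}\cdot \partial_x^\alpha d$ terms you identify, also terms of the form $\partial_\xi^\alpha d\cdot \partial_x^\alpha (m^{(i)})^{-1}$ and $\partial_\xi^\alpha m^{(i)}\cdot \partial_x^\alpha\bigl(d\,(m^{(i)})^{-1}\bigr)$, which you have omitted.

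The fix is straightforward and uses the same mechanism you already invoke. Since $\varepsilon,\mu\in C^1$ (note: the hypothesis is $C^1$, so the correct Bernstein bound is $|\partial_x^\alpha(\varepsilon_{\leq\lambda^{1/2}},\mu_{\leq\lambda^{1/2}})|\lesssim \lambda^{(|\alpha|-1)_+/2}$, not $\lambda^{(|\alpha|-2)_+/2}$), the first $x$-derivative of $(\mu/\varepsilon)^{1/2}_{\leq\lambda^{1/2}}$ is bounded, and higher derivatives each cost $\lambda^{1/2}$. Hence every missed term is likewise in $S^0_{1,1/2}$ and is $L^2$-bounded by Calder\'on--Vaillancourt. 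This is exactly the point the paper emphasizes: the reason the error symbol $e_\lambda$ lands in $S^0_{1,1/2}$ rather than the naive $S^{1/2}_{1,1/2}$ is precisely the $C^1$ regularity of the coefficients, which buys one extra half-power in the leading correction.
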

\begin{proof}
We quantize the diagonalization carried out in Subsection \ref{subsection:IsotropicDiagonalization}. To this end, we decompose $\mathbb{S}^2$ with a smooth partition of unity
\begin{equation*}
1 = \sum_{i=1}^3 s_i(\theta), \qquad s_i \in C^\infty_c(\mathbb{S}^2;\R_{\geq 0})
\end{equation*}
such that $s_i(\theta) = 1$ for $|\theta_i| \gtrsim 1$. Note that $s_{\lambda i}(\xi) = s(\xi/\lambda) s_i(\xi'/\| \xi' \|) \beta(\| \xi' \|/\lambda) \in S^0_{1,0}$. We let $S_{\lambda i} = OP(s_{\lambda i}(\xi))$ and note that $m_i^{-1}(x,\xi) s_{\lambda i}(\xi) \in S^0_{1,\frac{1}{2}}$, $m_i(x,\xi) s_{\lambda i}(\xi) \in S^0_{1,\frac{1}{2}}$, $d(x,\xi) s_\lambda(\xi) \in S^1_{1,\frac{1}{2}}$.

 We quantize
\begin{equation*}
\begin{split}
\mathcal{N}^{(i)}(x,D) &= OP(m^{-1}_i(x,\xi) s_{\lambda i}(\xi)), \quad \mathcal{M}^{(i)}(x,D) = OP(m_i(x,\xi) s_{\lambda i}(\xi)), \\ \mathcal{D}(x,D) &= OP(i \xi_0, i \xi_0, i \xi_0 + \frac{\| \xi' \|}{(\varepsilon \mu)^{\frac{1}{2}}}, i \xi_0 - \frac{\| \xi' \|}{(\varepsilon \mu)^{\frac{1}{2}}}, i \xi_0 + \frac{\| \xi' \|}{(\varepsilon \mu)^{\frac{1}{2}}}, i \xi_0 - \frac{\| \xi' \|}{(\varepsilon \mu)^{\frac{1}{2}}}).
\end{split}
\end{equation*}
Symbol composition holds by Theorem \ref{thm:KohnNirenberg} and the asymptotic expansion gives
\begin{equation*}
OP(m_i^{-1}(x,\xi) s_{\lambda i}(\xi)) = \tilde{S}_{\lambda i} OP (m_i^{-1}(x,\xi) s_{\lambda i}(\xi)) + O_{L^2} (\lambda^{-\infty}),
\end{equation*}
where $\tilde{S}_{\lambda i} = OP( \tilde{s}_{\lambda i}(\xi))$ and $\tilde{s}_{\lambda i}$ denotes a function like $s_{\lambda i}(\xi)$ with mildly enlarged support. Likewise we find that $\mathcal{M}_\lambda$ and $\mathcal{D}_\lambda$ do not significantly change frequency localization. Hence, to compute compositions, we can harmlessly insert additional frequency projections
\begin{equation*}
\mathcal{M}_\lambda^{(i)} \tilde{S}_{\lambda i} \mathcal{D}_\lambda \tilde{S}_{\lambda i} \mathcal{N}^{(i)}_\lambda S_{\lambda i } + O_{L^2}(\lambda^{-\infty}).
\end{equation*}
Now we use Theorem \ref{thm:KohnNirenberg} to argue that 
\begin{equation*}
\mathcal{M}_\lambda^{(i)} \tilde{S}_{\lambda i } \mathcal{D}_\lambda \tilde{S}_{\lambda i } \mathcal{N}_\lambda^{(i)} S_{\lambda i} + E_\lambda
\end{equation*}
with $E_\lambda = OP(e_\lambda)$ with $e_\lambda \in S^0_{1,\frac{1}{2}}$. The reason for $e_\lambda$ being better than suggested by Theorem \ref{thm:KohnNirenberg} is that the coefficients are $C^1$. Hence, inspecting the asymptotic expansion from Theorem \ref{thm:KohnNirenberg} reveals that the leading order term is in $S^0_{1,\frac{1}{2}}$. By \cite[Theorem~6.3]{Taylor1974} this is $L^2$-bounded.
\end{proof}

\subsubsection{Reduction to half-wave equations}
We consider the two regions $\{| \xi_0 | \gg \|\xi' \| \}$ and $\{ |\xi_0| \lesssim \|\xi' \| \}$. The first region is away from the characteristic surface. Hence, $P$ is elliptic in this region. The contribution can be estimated by Sobolev embedding. To make the argument precise, we use the FBI transform. By applying Theorem \ref{thm:ApproximationTheorem}, we find
\begin{equation*}
\| T_\lambda \big( \frac{P(x,D)}{\lambda} S_\lambda u \big) - p_\lambda(x,\xi) T_\lambda S_\lambda u \|_{L^2_\Phi} \lesssim \lambda^{-\frac{1}{2}} \| S_\lambda u \|_{L^2}.
\end{equation*}
Denote $v_\lambda = T_\lambda S_\lambda u$, and we observe for $|\xi_0| \gg \| \xi' \|$ that
\begin{equation*}
\| p(x,\xi) v_\lambda \|_{L^2_\Phi} \gtrsim \| v_\lambda \|_{L^2_\Phi}.
\end{equation*}
This is argued as follows. Write $v_\lambda = (v_1,v_2)$. Indeed, for $\| v_1 \|_{L^2_\Phi} \gtrsim \| v_2 \|_{L^2_\Phi}$, we find for some $c_0 \ll 1$
\begin{equation*}
\| \xi_0 v_1 - \mathcal{C}(\xi') \mu^{-1} v_2 \|_{L^2_\Phi} \geq \xi_0 ( \| v_1 \|_{L^2_\Phi} - c_0 \| v_2 \|_{L^2_\Phi} ) \gtrsim \| v_1 \|_{L^2_\Phi} \gtrsim \| v \|_{L^2_\Phi}.
\end{equation*}
If $\| v_2 \|_{L^2_\Phi} \gtrsim \| v_1 \|_{L^2_\Phi}$, then
\begin{equation*}
\| \xi_0 v_2 + \mathcal{C}(\xi') \varepsilon^{-1} v_1 \|_{L^2_\Phi} \geq \xi_0 ( \| v_2 \|_{L^2_\Phi} - c_0 \| v_1 \|_{L^2_\Phi} ) \gtrsim \| v_2 \|_{L^2_\Phi} \gtrsim \| v \|_{L^2_\Phi}.
\end{equation*}
Let $S_\lambda \tilde{u}$ denote the part of $S_\lambda u$ with Fourier transform in $\{ |\xi_0| \gg \| \xi' \| \}$.
By non-stationary phase, $T_\lambda S_\lambda \tilde{u}$ is essentially supported in $\{1 \sim |\xi_0| \gg \| \xi' \| \}$ up to arbitrary high gain of derivatives. We can write $T_\lambda S_\lambda \tilde{u} = p_\lambda^{-1}(x,\xi) p_\lambda(x,\xi) T_\lambda S_\lambda \tilde{u}$ because $p_\lambda(x,\xi)$ is invertible in $\{1 \sim |\xi_0| \gg \| \xi' \|\}$
 and conclude by $L^2$-$L^2_\Phi$-isometry of $T_\lambda$ and Theorem \ref{thm:ApproximationTheorem}:
\begin{equation*}
\begin{split}
\| T_\lambda S_\lambda \tilde{u} \|_{L^2_\Phi} &\lesssim \| p_\lambda(x,\xi) T_\lambda S_\lambda \tilde{u} \|_{L^2_\Phi} \\
&\lesssim \big\| \big( T_\lambda \frac{P_\lambda(x,D)}{\lambda} - p_\lambda(x,\xi) T_\lambda \big) S_\lambda \tilde{u} \|_{L^2_\Phi} + \big\| \frac{T_\lambda P_\lambda(x,D)}{\lambda} S_\lambda \tilde{u} \big\|_{L^2_\Phi} \\
&\lesssim \lambda^{-\frac{1}{2}} \| S_\lambda \tilde{u} \|_{L^2} + \lambda^{-1} \| P_\lambda(x,D) S_\lambda \tilde{u} \|_{L^2}
.
\end{split}
\end{equation*}

We handle the main contribution coming from $\{|\xi_0| \lesssim \| \xi' \| \}$ by invoking the diagonalization from Proposition \ref{prop:Diagonalization}. In the following assume that the space-time Fourier transform of $u$ is supported in $\{|\xi_0| \lesssim \| \xi' \| \}$. We treat the regions $\{ |\xi_i^* | \gtrsim 1 \}$ separately. We start with the proof of
\begin{equation}
\label{eq:DiagonalStrichartzEstimate}
\lambda^{-\rho} \| S_\lambda w \|_{L_t^p L_{x'}^q} \lesssim \| S_\lambda u \|_{L_x^2} + \| \mathcal{D}_\lambda S_\lambda w \|_{L_x^2} + \lambda^{-\frac{1}{2}} \| S_\lambda \rho_{e m} \|_{L^2},
\end{equation}
where\footnote{We omit the frequency truncation in the coefficients to lighten the notation.}
\begin{equation*}
\mathcal{D} = \text{diag}(\partial_t, \partial_t, \partial_t - i \frac{1}{(\varepsilon \mu)^{\frac{1}{2}}} |D'|, \partial_t + i \frac{1}{(\varepsilon \mu)^{\frac{1}{2}}} |D'|, \partial_t - i \frac{1}{(\varepsilon \mu)^{\frac{1}{2}}} |D'|, \partial_t + i  \frac{1}{(\varepsilon \mu)^{\frac{1}{2}}} |D'|)
\end{equation*}
and $w = \tilde{S}_{\lambda i} \mathcal{N}_\lambda^{(i)} S_{\lambda i} u$. $\tilde{S}_\lambda = \sum_{|j| \leq 2} S_{2^j \lambda}$ denotes a mildly enlarged version of $S_{\lambda i}$.

The first and second component are estimated by Sobolev embedding and the definition of $\mathcal{N}_\lambda^{(i)}$:
\begin{equation*}
\begin{split}
\| \tilde{S}_{\lambda i} w_1 \|_{L_t^p L_{x'}^q} &\lesssim \lambda^{\rho + \frac{1}{2}} \| \tilde{S}_{\lambda i} w_1 \|_{L_t^p L_{x'}^q} \lesssim \lambda^{\rho + \frac{1}{2}} \| \frac{1}{|D'|} \tilde{S}_{\lambda i} (\partial_1 u_1 + \partial_2 u_2 + \partial_3 u_3) \|_{L^2_x} \\
&\lesssim \lambda^{\rho - \frac{1}{2}} \| \rho_e \|_{L^2_x}.
\end{split}
\end{equation*}
The estimate of the second component in terms of $\rho_m$ follows \emph{mutatis mutandis}.
The third to sixth component are estimated by Proposition \ref{prop:HalfWaveEstimates}. This finishes the proof of \eqref{eq:DiagonalStrichartzEstimate}. To conclude the proof of Theorem \ref{thm:IsotropicStrichartz}, we show the following lemma:
\begin{lemma}
\label{lem:AuxDiagonalizationIsotropic}
With notations like above, we find the following estimates to hold:
\begin{align}
\label{eq:DiagonalizationA}
\lambda^{-\rho} \| S_{\lambda i } v \|_{L^p L^q} &\lesssim \lambda^{-\rho} \| \tilde{S}_{\lambda i} \mathcal{N}_\lambda^{(i)} S_{\lambda i} v \|_{L^p L^q} + \| S_{\lambda i } v \|_{L^2_x}, \\
\| S_{\lambda i} v \|_{L^2} &\lesssim \| \mathcal{M}_\lambda^{(i)} S_{\lambda i} v \|_{L^2_x}. \label{eq:DiagonalizationB}
\end{align}
\end{lemma}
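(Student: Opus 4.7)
The plan is to establish both inequalities by invoking that $\mathcal{M}_\lambda^{(i)}$ and $\mathcal{N}_\lambda^{(i)}$ compose to essentially the identity on the range of $S_{\lambda i}$. First, I would apply Theorem \ref{thm:KohnNirenberg} to the matrix-valued symbols $m_i(x,\xi) s_{\lambda i}(\xi)$ and $m_i^{-1}(x,\xi) s_{\lambda i}(\xi)$ in $S^0_{1,1/2}$, using the pointwise identities $m_i m_i^{-1} = m_i^{-1} m_i = 1_{6\times 6}$ and the fact that $\tilde{s}_{\lambda i} \equiv 1$ on $\text{supp}\, s_{\lambda i}$. This should yield composition identities of the form
\begin{equation*}
\mathcal{M}_\lambda^{(i)} \tilde{S}_{\lambda i} \mathcal{N}_\lambda^{(i)} S_{\lambda i} = S_{\lambda i} + R_\lambda^{(i)}, \qquad \mathcal{N}_\lambda^{(i)} \mathcal{M}_\lambda^{(i)} S_{\lambda i} = S_{\lambda i} + \tilde{R}_\lambda^{(i)},
\end{equation*}
with $R_\lambda^{(i)}, \tilde{R}_\lambda^{(i)} \in OPS^{-1/2}_{1,1/2}$, hence $L^2 \to L^2$-bounded with norm $O(\lambda^{-1/2})$ by Calder\'on--Vaillancourt. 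The $\lambda^{-1/2}$ gain arises from the first correction $D_\xi P \cdot \partial_x Q$ in the expansion, where $\partial_x m_i$ costs at most $\lambda^{1/2}$ while $\partial_\xi$ applied to $m_i^{-1}$ or $s_{\lambda i}$ gains $\lambda^{-1}$; the $C^1$-regularity of $\varepsilon, \mu$ is sufficient.

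For \eqref{eq:DiagonalizationA}, I would note that the symbol of $\mathcal{M}_\lambda^{(i)}$ is smooth in $\xi$ and compactly supported at scale $\lambda$, so after rescaling $\xi \mapsto \lambda \xi$ Lemma \ref{lem:LpLqBoundsPseudos} delivers $\|\mathcal{M}_\lambda^{(i)} f\|_{L^p L^q} \lesssim \|f\|_{L^p L^q}$ uniformly in $\lambda$. Combined with the first composition identity,
\begin{equation*}
\|S_{\lambda i} v\|_{L^p L^q} \leq \|\mathcal{M}_\lambda^{(i)}(\tilde{S}_{\lambda i} \mathcal{N}_\lambda^{(i)} S_{\lambda i} v)\|_{L^p L^q} + \|R_\lambda^{(i)} v\|_{L^p L^q} \lesssim \|\tilde{S}_{\lambda i} \mathcal{N}_\lambda^{(i)} S_{\lambda i} v\|_{L^p L^q} + \|R_\lambda^{(i)} v\|_{L^p L^q}.
\end{equation*}
Since $R_\lambda^{(i)} v$ is essentially frequency-localized in space-time at scale $\lambda$, the space-time Bernstein inequality gives
\begin{equation*}
\|R_\lambda^{(i)} v\|_{L^p L^q} \lesssim \lambda^{(1/2 - 1/p) + 3(1/2 - 1/q)} \|R_\lambda^{(i)} v\|_{L^2} = \lambda^{\rho + 1/2} \|R_\lambda^{(i)} v\|_{L^2} \lesssim \lambda^{\rho} \|S_{\lambda i} v\|_{L^2},
\end{equation*}
and dividing by $\lambda^\rho$ closes \eqref{eq:DiagonalizationA}.

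For \eqref{eq:DiagonalizationB}, $\mathcal{N}_\lambda^{(i)}$ has symbol in $S^0_{1,1/2}$ and is therefore $L^2 \to L^2$-bounded by Calder\'on--Vaillancourt. Applying it to the second composition identity,
\begin{equation*}
\|S_{\lambda i} v\|_{L^2} \leq \|\mathcal{N}_\lambda^{(i)} \mathcal{M}_\lambda^{(i)} S_{\lambda i} v\|_{L^2} + \|\tilde{R}_\lambda^{(i)} S_{\lambda i} v\|_{L^2} \lesssim \|\mathcal{M}_\lambda^{(i)} S_{\lambda i} v\|_{L^2} + \lambda^{-1/2}\|S_{\lambda i} v\|_{L^2},
\end{equation*}
and the last term is absorbed into the left for $\lambda \gg 1$.

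The main obstacle will be verifying that the residuals $R_\lambda^{(i)}, \tilde{R}_\lambda^{(i)}$ genuinely lie in $OPS^{-1/2}_{1,1/2}$, since the class $S_{1,1/2}$ is borderline for the Kohn--Nirenberg calculus and Calder\'on--Vaillancourt. The explicit structure \eqref{eq:InverseConjugationMatrix}, in which the rough $x$-dependence enters only through the scalar factor $(\mu/\varepsilon)^{1/2}$ while the angular dependence is smooth and zero-homogeneous on $|\xi_i| \gtrsim 1$, should make the derivative accounting in the expansion clean enough that the $\lambda^{-1/2}$ gain is transparent term by term.
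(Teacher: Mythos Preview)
Your proposal is correct and follows essentially the same route as the paper: both arguments hinge on the composition identities $\mathcal{M}_\lambda^{(i)}\tilde S_{\lambda i}\mathcal{N}_\lambda^{(i)}S_{\lambda i}=S_{\lambda i}+R_\lambda^{(i)}$ and $\mathcal{N}_\lambda^{(i)}\tilde S_{\lambda i}\mathcal{M}_\lambda^{(i)}S_{\lambda i}=S_{\lambda i}+\tilde R_\lambda^{(i)}$, use Lemma~\ref{lem:LpLqBoundsPseudos} for the $L^pL^q$-boundedness of $\mathcal{M}_\lambda^{(i)}$, handle the remainder in \eqref{eq:DiagonalizationA} by Sobolev embedding back to $L^2$, and absorb the remainder in \eqref{eq:DiagonalizationB}. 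One minor sharpening: since the coefficients are genuinely $C^1$, the first spatial derivative $\partial_x m_i^{\pm 1}$ is bounded uniformly rather than costing $\lambda^{1/2}$, so the leading correction in the Kohn--Nirenberg expansion actually yields a $\lambda^{-1}$ gain (as the paper uses for \eqref{eq:DiagonalizationB}); your $\lambda^{-1/2}$ is what the abstract $S^0_{1,1/2}$-calculus gives and is still sufficient.
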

\begin{proof}
We begin with the proof of \eqref{eq:DiagonalizationA}. Write by symbol composition
\begin{equation*}
\mathcal{M}_\lambda^{(i)} \tilde{S}_{\lambda i} \mathcal{N}_\lambda^{(i)} S_{\lambda i} v = (1+ R^{(i)}_\lambda) S_{\lambda i} v
\end{equation*}
with $\| R_{\lambda}^{(i)} S_{\lambda i} v \|_{L^2_x} \lesssim \lambda^{-\frac{1}{2}} \| S_{\lambda i} v \|_{L^2_x}$. By Sobolev embedding and Minkowski's inequality, we find
\begin{equation*}
\begin{split}
\lambda^{-\rho} \| S_{\lambda i } v \|_{L^p L^q} &\leq \lambda^{-\rho} \| \mathcal{M}_\lambda^{(i)} \tilde{S}_{\lambda i} \mathcal{N}_\lambda^{(i)} v \|_{L^p L^q} + \lambda^{-\rho} \| R_\lambda^{(i)} S_{\lambda i} v \|_{L^p L^q} \\
&\lesssim \lambda^{-\rho} \| \mathcal{N}_\lambda^{(i)} S_{\lambda i} v \|_{L^p L^q} + \| S_{\lambda i} v \|_{L^2}.
\end{split}
\end{equation*}
In the ultimate estimate we use boundedness of $\mathcal{M}_\lambda^{(i)} \tilde{S}_{\lambda i}$ due to Lemma \ref{lem:LpLqBoundsPseudos}. 

For the proof of \eqref{eq:DiagonalizationB} we write
\begin{equation*}
\mathcal{N}_\lambda^{(i)} \tilde{S}_{\lambda i} \mathcal{M}_\lambda^{(i)} S_{\lambda i} v = S_{\lambda i} v + R_\lambda^{(i)} S_{\lambda i} v
\end{equation*}
with $\| R_\lambda^{(i)} S_{\lambda i} v \|_{L^2} \lesssim \lambda^{-1} \| S_{\lambda i} v \|_{L^2}$. Therefore,
\begin{equation*}
\begin{split}
\| S_{\lambda i} v \|_{L^2} &\leq \| \mathcal{N}_\lambda^{(i)} \tilde{S}_{\lambda i} \mathcal{M}_\lambda^{(i)} S_{\lambda i} v \|_{L^2} + \| R^{(i)}_{\lambda} S_{\lambda i} v \|_{L^2} \\
&\leq \| \mathcal{N}_\lambda^{(i)} \tilde{S}_{\lambda i} \mathcal{M}_\lambda^{(i)} S_{\lambda i} v \|_{L^2} + C \lambda^{-1} \| S_{\lambda i} v \|_{L^2}.
\end{split}
\end{equation*}
For $\lambda$ large enough, we can absorb the error term into the left hand-side to find
\begin{equation*}
\| S_{\lambda i} v \|_{L^2} \lesssim \| \mathcal{N}_\lambda^{(i)} \tilde{S}_{\lambda i} \mathcal{M}_\lambda^{(i)} S_{\lambda i} v \|_{L^2} \lesssim \| \mathcal{M}_\lambda^{(i)} S_{\lambda i} v \|_{L^2}.
\end{equation*}
For the ultimate estimate, we invoke Lemma \ref{lem:LpLqBoundsPseudos} to bound $\mathcal{N}_\lambda^{(i)} \tilde{S}_{\lambda i}$ on $L^2$.
\end{proof}

We are ready to conclude the proof of Theorem \ref{thm:IsotropicStrichartz}. By \eqref{eq:DiagonalizationA} and \eqref{eq:DiagonalStrichartzEstimate}, we find
\begin{equation*}
\begin{split}
\lambda^{-\rho} \| S_{\lambda i} u \|_{L^p L^q} &\lesssim \lambda^{-\rho} \| \tilde{S}_{\lambda i} \mathcal{N}_{\lambda}^{(i)} S_{\lambda i} u \|_{L^p L^q} \\
&\lesssim \| \mathcal{N}_{\lambda i} S_{\lambda i} u \|_{L^2} + \| \tilde{S}_{\lambda i} \mathcal{D}_{\lambda} \tilde{S}_{\lambda i} \mathcal{N}_{\lambda i} S_{\lambda i} u \|_{L^2} + \lambda^{-\frac{1}{2}} \| \rho_{em} \|_{L^2_x}.
\end{split}
\end{equation*}
We can bound $\mathcal{N}_{\lambda i} S_{\lambda i}$ in the first term by appealing to Lemma \ref{lem:LpLqBoundsPseudos}.
We further apply \eqref{eq:DiagonalizationB} to the second term to find
\begin{equation*}
\begin{split}
&\quad \| \mathcal{N}_{\lambda i} S_{\lambda i} u \|_{L^2} + \| \tilde{S}_{\lambda i} \mathcal{D}_{\lambda} \tilde{S}_{\lambda i} \mathcal{N}_{\lambda i} S_{\lambda i} u \|_{L^2} + \lambda^{-\frac{1}{2}} \| \rho_{em} \|_{L^2_x} \\
&\lesssim \| S_{\lambda i} u \|_{L^2} + \| \mathcal{M}_\lambda^{(i)} \tilde{S}_{\lambda i} \mathcal{D}_{\lambda} \tilde{S}_{\lambda i} \mathcal{N}_{\lambda i} S_{\lambda i} u \|_{L^2} + \lambda^{-\frac{1}{2}} \| \rho_{em} \|_{L^2_x} \\
&\lesssim \| S_{\lambda i} u \|_{L^2} + \| P_\lambda S_{\lambda i} u \|_{L^2} + \| E_\lambda^{(i)} S_{\lambda i} u \|_{L^2}+ 
\lambda^{-\frac{1}{2}} \| \rho_{em} \|_{L^2_x}.
\end{split}
\end{equation*}
In the ultimate estimate we invoked Proposition \ref{prop:Diagonalization}, which further allows us to bound $E_\lambda^{(i)}$ in $L^2$. The proof of Theorem \ref{thm:IsotropicStrichartz} is complete. $\hfill \Box$

\subsection{Proof of Theorem \ref{thm:StrichartzEstimatesL1LinfCoefficientsIsotropic}}

We carry out the following steps to reduce Theorem \ref{thm:StrichartzEstimatesL1LinfCoefficientsIsotropic} to the dyadic estimates
\begin{equation}
\label{eq:DyadicEstimateL1Linfty}
\lambda^{-\rho} \| S_\lambda u \|_{L^p L^q} \lesssim \| S_\lambda u \|_{L^\infty L^2} + \| P_\lambda S_\lambda u \|_{L^1 L^2} + \lambda^{-1+\frac{1}{p}} \| S_\lambda \rho_{e m} \|_{L^\infty_t L_{x'}^2}
\end{equation}
where $\lambda \gg 1$, the Fourier support of $\varepsilon$ and $\mu$ is contained in $\{ |\xi| \leq \lambda^{\frac{1}{2}} \}$ and $u$ is essentially supported in the unit cube with space-time Fourier transform supported in $\{ |\xi_0| \lesssim \| \xi' \| \}$. For this purpose, we carry out the following steps:
\begin{itemize}
\item reduction to the case $\nu = 1$,
\item confining the support of $u$ to the unit cube and the frequency support to large frequencies,
\item estimate away from the characteristic surface,
\item reduction to dyadic estimates, 
\item truncating the coefficients at frequency $\lambda^{\frac{1}{2}}$.
\end{itemize}
This can be accomplished like in \cite[Subsection~3.4]{SchippaSchnaubelt2021} and in the previous paragraph. We omit the details to avoid repetition.

The dyadic estimate is again proved via diagonalization. Like above, we carry out an additional microlocalization $\{ |\xi_i^*| \gtrsim 1 \}$ in the region $\{ |\xi_0| \lesssim \| \xi' \| \}$. The estimate
\begin{equation}
\label{eq:DiagonalStrichartzIsotropicL1Linf}
\begin{split}
\lambda^{-\rho} \| \tilde{S}_{\lambda i} \mathcal{N}_\lambda^{(i)} S_{\lambda i} u \|_{L_t^p L_{x'}^q} &\lesssim \| \tilde{S}_{\lambda i} \mathcal{N}_\lambda^{(i)} S_{\lambda i} u \|_{L_t^{\infty} L_{x'}^2} + \| \mathcal{D}_\lambda \tilde{S}_{\lambda i} \mathcal{N}_\lambda^{(i)} S_{\lambda i} u \|_{L^2_x} \\
&\quad + \lambda^{-1 + \frac{1}{p}} (\| S_\lambda' \rho_{em}(0) \|_{L^2_{x'}} + \| \partial_t S_\lambda' \rho_{em} \|_{L_t^1 L_{x'}^2})
\end{split}
\end{equation}
is proved component-wise. For the components $[\tilde{S}_{\lambda i} \mathcal{N}_\lambda^{(i)} S_{\lambda i} u ]_j$, $j=3,\ldots,6$, we find by invoking Proposition \ref{prop:HalfWaveEstimates}:
\begin{equation*}
\lambda^{-\rho} \| [\tilde{S}_{\lambda i} \mathcal{N}_\lambda^{(i)} S_{\lambda i} u ]_j \|_{L_t^p L_{x'}^q} \lesssim \| [\tilde{S}_{\lambda i} \mathcal{N}_\lambda^{(i)} S_{\lambda i} u]_j \|_{L_t^\infty L_{x'}^2} + \| [\mathcal{D}_\lambda \tilde{S}_{\lambda i} \mathcal{N}_\lambda^{(i)} S_{\lambda i} u ]_j \|_{L^2_x}.
\end{equation*}
The first and second component are estimated by Sobolev embedding and H\"older's inequality:
\begin{equation*}
\begin{split}
\lambda^{-\rho} \| [\tilde{S}_{\lambda i} \mathcal{N}_\lambda^{(i)} S_{\lambda i} u ]_1 \|_{L_t^p L_{x'}^q} &= \| \frac{1}{|D'|} S_{\lambda i} \rho_e \|_{L_t^p L_{x'}^q} \\
&\lesssim \lambda^{-1 + \frac{1}{p}} \| S_{\lambda i} \rho_e \|_{L_t^\infty L_{x'}^2} \lesssim \lambda^{-1 + \frac{1}{p}} \| S'_\lambda \rho_{em} \|_{L_t^\infty L_{x'}^2}.
\end{split}
\end{equation*}
By the fundamental theorem and Minkowski's inequality, we find
\begin{equation*}
\| S'_\lambda \rho_{em} \|_{L_t^\infty L_{x'}^2} \lesssim \| S_\lambda' \rho_e(0) \|_{L^2_{x'}} + \| S_\lambda' \partial_t \rho_e \|_{L_t^1 L_{x'}^2}.
\end{equation*}
For the second component, we obtain similarly
\begin{equation*}
\lambda^{-\rho} \| [\tilde{S}_{\lambda i} \mathcal{N}_\lambda^{(i)} S_{\lambda i} u ]_2 \|_{L_t^p L_{x'}^q} \lesssim \lambda^{-1 + \frac{1}{p}} \big( \| S_\lambda' \rho_m(0) \|_{L^2_{x'}} + \| \partial_t S_\lambda' \rho_m \|_{L_t^1 L_{x'}^2} \big).
\end{equation*}
This yields \eqref{eq:DiagonalStrichartzIsotropicL1Linf}. The sequence of estimates
\begin{equation*}
\begin{split}
\lambda^{-\rho} \| S_{\lambda i} u \|_{L^p L^q} &\lesssim \lambda^{-\rho} \| \tilde{S}_{\lambda i} \mathcal{N}_\lambda^{(i)} S_{\lambda i} u \|_{L^p L^q} \\
&\lesssim \| \tilde{S}_{\lambda i} \mathcal{N}_\lambda^{(i)} S_{\lambda i} u \|_{L_t^\infty L_{x'}^2} + \| \mathcal{D}_\lambda \tilde{S}_{\lambda i} \mathcal{N}_\lambda^{(i)} S_{\lambda i} u \|_{L^2_{t,x}} \\
&\quad + \lambda^{-1 + \frac{1}{p}} ( \| S'_\lambda \rho_{em}(0) \|_{L^2_{x'}} + \| \partial_t S'_\lambda \rho_{em} \|_{L_t^1 L_{x'}^2} ) \\
&\lesssim \| S_{\lambda i} u \|_{L_t^\infty L_{x'}^2} + \| \mathcal{M}_\lambda^{(i)} \tilde{S}_{\lambda i} \mathcal{D}_\lambda \tilde{S}_{\lambda i} \mathcal{N}_\lambda^{(i)} S_{\lambda i} u \|_{L^2_{t,x}} \\
&\quad + \lambda^{-1 + \frac{1}{p}} ( \| S'_\lambda \rho_{em}(0) \|_{L^2_{x'}} + \| \partial_t S'_\lambda \rho_{em} \|_{L_t^1 L_{x'}^2} ) \\
&\lesssim \| S_{\lambda i} u \|_{L_t^\infty L_{x'}^2} + \| P_\lambda S_{\lambda i} u \|_{L^2_x} \\
&\quad + \lambda^{-1+\frac{1}{p}} ( \| S'_\lambda \rho_{em}(0) \|_{L^2_{x'}} + \| \partial_t S'_\lambda \rho_{em} \|_{L_t^1 L_{x'}^2}
\end{split}
\end{equation*}
follows like at the end of Section \ref{subsection:ReductionsC2Isotropic}. The proof of Theorem \ref{thm:StrichartzEstimatesL1LinfCoefficientsIsotropic} is complete. $\hfill \Box$

\section{Strichartz estimates in the partially anisotropic case}
\label{section:PartiallyAnisotropicCase}
We turn to the partially anisotropic case. The conjugation matrices take a more difficult form because the additional microlocalization to regions $\{|\xi_i^*| \gtrsim 1 \}$ does not allow to choose eigenvectors with improved regularity. In the constant-coefficient case we can argue that we have $L^p$-bounded Fourier multipliers nonetheless by the H\"ormander-Mikhlin theorem. For variable coefficients, this does not appear to be possible in the general case, but only under additional structural assumptions.
\subsection{Diagonalizing the principal symbol}
 Like in the previous section, we begin with diagonalizing the principal symbol. 
Recall that this is given by 
\begin{equation*}
\tilde{p}(x,\xi)/i =
\begin{pmatrix}
 \xi_0 1_{3 \times 3} & -\mathcal{C}(\xi') \\
\mathcal{C}(\xi') \varepsilon^{-1}(x) & \xi_0 1_{3 \times 3}
\end{pmatrix}
, \quad \mathcal{C}(\xi') = 
\begin{pmatrix}
0 & - \xi_3 & \xi_2 \\
\xi_3 & 0 & - \xi_1 \\
- \xi_2 & \xi_1 & 0
\end{pmatrix}
.
\end{equation*}
The diagonalization in the constant-coefficient case was previously computed in \cite{Schippa2022}.

We suppose that $\varepsilon^{-1} = \text{diag}(a,b,b)$. Let 
\begin{align*}
\| \xi' \|^2 &= \xi_1^2 + \xi_2^2 + \xi_3^2, \quad \| \xi' \|_\varepsilon^2 = b(x) \xi_1^2 + a(x) \xi_2^2 + a(x) \xi_3^2, \\
\xi_i^* &= \xi_i / \| \xi \|, \qquad \tilde{\xi}_i = \xi_i / \| \xi \|_\varepsilon, \qquad i = 1,2,3.
\end{align*}

The eigenvalues of $\tilde{p}(x,\xi)$ are
\begin{align*}
\lambda_{1,2} = i \xi_0, \quad \lambda_{3,4} = i \xi_0 \mp i \sqrt{b(x)} \| \xi' \|, \quad
\lambda_{5,6} = i \xi_0 \mp i \| \xi' \|_\varepsilon.
\end{align*}
Let
\begin{equation}
\label{eq:DiagonalMatrixPartiallyAnisotropic}
d(x,\xi) = i \text{diag} (\xi_0,\xi_0,\xi_0 - \sqrt{b(x)} \| \xi' \|, \xi_0 + \sqrt{b(x)} \| \xi' \|, \xi_0 - \| \xi' \|_{\varepsilon}, \xi_0 + \| \xi' \|_{\varepsilon}).
\end{equation}
We find the following corresponding eigenvectors, which are normalized to zero-homogeneous entries. Eigenvectors to $i \xi_0$ are
\begin{align*}
v_1^t &= \big(0,0,0, \xi_1^* , \xi_2^*, \xi_3^* \big), \\
v_2^t &= \big(\frac{\tilde{\xi}_1}{a }, \frac{\tilde{\xi}_2}{b }, \frac{\tilde{\xi}_3}{b }, 0, 0, 0 \big).
\end{align*}
Eigenvectors to $i\xi_0 \pm i \sqrt{b(x)} \| \xi \|$ are given by
\begin{align*}
v_3^t &= \big(0,- \frac{\xi_3^* }{\sqrt{b}}, \frac{\xi_2^* }{\sqrt{b}}, - ({\xi_2^*}^2 + {\xi_3^*}^2), \xi_1^* \xi_2^*, \xi_1^* \xi_3^* \big), \\
v_4^t &= \big(0, \frac{\xi_3^* }{\sqrt{b} }, - \frac{\xi_2^* }{\sqrt{b}}, - ({\xi_2^*}^2 + {\xi_3^*}^2), \xi_1^* \xi_2^*, \xi_1^* \xi_3^* \big).
\end{align*}
Eigenvectors to $i \xi_0 \pm i \| \xi \|_{\varepsilon}$ are given by
\begin{align*}
v_5^t &= \big( \tilde{\xi}_2^2 + \tilde{\xi}_3^2, - \tilde{\xi}_1 \tilde{\xi}_2, - \tilde{\xi}_1 \tilde{\xi}_3, 0 , - \tilde{\xi}_3, \tilde{\xi}_2 \big),\\
v_6^t &= \big(- (\tilde{\xi}_2^2+ \tilde{\xi}_3^2), \tilde{\xi}_1 \tilde{\xi}_2, \tilde{\xi}_1 \tilde{\xi}_3, 0, -\tilde{\xi}_3, \tilde{\xi}_2 \big).
\end{align*}
Set
\begin{equation*}
m(x,\xi) = (v_1, \ldots, v_6).
\end{equation*}
We find
\begin{equation*}
\begin{split}
&m^{-1}(x,\xi) = \\
&\begin{pmatrix}
0 & 0 & 0 &  \xi_1^* & \xi_2^* & \xi_3^* \\
ab \tilde{\xi}_1 & ab \tilde{\xi}_2 & ab \tilde{\xi}_3 & 0 & 0 & 0 \\
0 & - \frac{\sqrt{b} \| \xi \|}{2 \| \xi \|_\varepsilon} \frac{\tilde{\xi}_3}{\tilde{\xi}_2^2 + \tilde{\xi}_3^2} &  
\frac{\sqrt{b} \| \xi \|}{2 \| \xi \|_\varepsilon} \frac{\tilde{\xi}_2}{\tilde{\xi}_2^2 + \tilde{\xi}_3^2} & - \frac{1}{2} & \frac{\xi_1^* \xi_2^*}{2({\xi_2^*}^2 + {\xi_3^*}^2)} & \frac{\xi_1^* \xi_3^*}{2({\xi_2^*}^2 + {\xi_3^*}^2)} \\
0 & \frac{\sqrt{b}\| \xi \| }{2 \| \xi \|_\varepsilon} \frac{\tilde{\xi}_3}{\tilde{\xi}_2^2 + \tilde{\xi}_3^2} & - \frac{\sqrt{b} \| \xi \| }{2 \| \xi \|_\varepsilon} \frac{\tilde{\xi}_2}{\tilde{\xi}_2^2 + \tilde{\xi}_3^2} & - \frac{1}{2} & \frac{\xi_1^* \xi_2^*}{2({\xi_2^*}^2 + {\xi_3^*}^2)} & \frac{\xi_1^* \xi_3^*}{2({\xi_2^*}^2 + {\xi_3^*}^2)} \\
\frac{a}{2} & - \frac{b \tilde{\xi}_1 \tilde{\xi}_2}{2(\tilde{\xi}_2^2 + \tilde{\xi}_3^2)} & - \frac{b \tilde{\xi}_1 \tilde{\xi}_3}{2(\tilde{\xi}_2^2 + \tilde{\xi}_3^2)} & 0 & - \frac{\xi_3^* \| \xi \|_\varepsilon}{2 \| \xi \| ({\xi_2^*}^2 + {\xi_3^*}^2)} & \frac{\| \xi \|_\varepsilon \xi_2^* }{2 \| \xi \| ({\xi_2^*}^2 + {\xi_3^*}^2)} \\
-\frac{a}{2} & \frac{b \tilde{\xi}_1 \tilde{\xi}_2}{2(\tilde{\xi}_2^2 + \tilde{\xi}_3^2)} &  \frac{b \tilde{\xi}_1 \tilde{\xi}_3}{2(\tilde{\xi}_2	^2 + \tilde{\xi}_3^2)} & 0 & - \frac{\| \xi \|_\varepsilon}{2 \| \xi \|} \frac{\xi_3^*}{({\xi_2^*}^2 + {\xi_3^*}^2)} & \frac{\| \xi \|_\varepsilon \xi_2^*}{2 \| \xi \| ({\xi_2^*}^2 + {\xi_3^*}^2)}
\end{pmatrix}
.
\end{split}
\end{equation*}
In the constant-coefficient case, Lucente--Ziliotti \cite{LucenteZiliotti2000} used a similar argument, but did not give the eigenvectors. It turns out that these have to be normalized carefully to find uniformly $L^p$-bounded conjugation operators. More precisely, note that the matrix becomes singular for $|\xi_2| + |\xi_3| \to 0$. The remedy is to renormalize $v_3,\ldots,v_6$ with 
\begin{equation}
\label{eq:Factor}
\alpha(x,\xi) = \frac{(\xi_2^2 + \xi_3^2)^{\frac{1}{2}}}{(\| \xi' \| \| \xi' \|_\varepsilon)^{\frac{1}{2}}}.
\end{equation}
In fact, we find by elementary matrix operations, that is adding and subtracting the third and fourth, and fifth and sixth eigenvector, that
\begin{equation*}
| \det m(x,\xi) | \sim_\varepsilon
\begin{vmatrix}
0 & 0 & 0 & 1 & 0 & 0 \\
0 & 0 & 0 & 0 & \xi_2^* & \xi_3^* \\
0 & 0 & 0 & 0 & -\tilde{\xi}_3 & \tilde{\xi}_2 \\
1 & 0 & 0 & 0 & 0 & 0 \\
0 & \xi_3^* & - \xi_2^* & 0 & 0 & 0 \\
0 & \tilde{\xi}_2 & \tilde{\xi}_3 & 0 & 0 & 0
\end{vmatrix}
\sim (\xi_2^* \tilde{\xi}_2 + \xi_3^* \tilde{\xi}_3 )^2 = \alpha^4(x,\xi).
\end{equation*}
This suggests renormalizing the eigenvectors from above with \eqref{eq:Factor}, as for the associated eigenvectors of $v_3/\alpha(x,\xi),\ldots,v_6/\alpha(x,\xi)$ we can verify $L^p L^q$-boundedness.
We give the details. Let $\delta = \| \xi \|/\| \xi \|_\varepsilon$. Note that
\begin{equation*}
\alpha(x,\xi) = \frac{(\xi_2^2 + \xi_3^2)^{\frac{1}{2}}}{(\| \xi \| \| \xi \|_\varepsilon)^{\frac{1}{2}}} = \frac{(\tilde{\xi}_2^2 + \tilde{\xi}_3^2)^{\frac{1}{2}}}{\delta^{\frac{1}{2}}} = (\delta(\xi_2'^2 + \xi_3'^2))^{\frac{1}{2}}. 
\end{equation*}
We find
\begin{equation}
\label{eq:ConjugationMatrixPartiallyAnisotropic}
\begin{split}
&\tilde{m}(x,\xi) = \\
&\begin{pmatrix}
0 & \frac{\tilde{\xi}_1}{a} & 0 & 0 & (\delta (\tilde{\xi}_2^2 + \tilde{\xi}_3^2))^{\frac{1}{2}} & - (\delta(\tilde{\xi}_2^2 + \tilde{\xi}_3^2))^{\frac{1}{2}} \\
0 & \frac{\tilde{\xi}_2}{b} & - \frac{\xi_3'}{\sqrt{b} (\delta( \xi_2'^2 + \xi_3'^2))^{\frac{1}{2}}} & \frac{\xi_3'}{\sqrt{b} (\delta(\xi_2'^2 + \xi_3'^2))^{\frac{1}{2}}} & - \frac{\delta^{\frac{1}{2}} \tilde{\xi}_1 \tilde{\xi}_2}{(\tilde{\xi}_2^2 + \tilde{\xi}_3^2)^{1/2}} & \frac{\delta^{\frac{1}{2}} \tilde{\xi}_1 \tilde{\xi}_2}{(\tilde{\xi}_2^2 + \tilde{\xi}_3^2)^{\frac{1}{2}}} \\
0 & \frac{\tilde{\xi}_3}{b} & \frac{\xi_2'}{\sqrt{b} (\delta (\xi_2'^2 + \xi_3'^2))^{\frac{1}{2}}} & - \frac{\xi_2'}{\sqrt{b} (\delta(\xi_2'^2 + \xi_3'^2))^{\frac{1}{2}}} & - \frac{\delta^{\frac{1}{2}} \tilde{\xi}_1 \tilde{\xi}_3}{(\tilde{\xi}_2^2 + \tilde{\xi}_3^2)^{\frac{1}{2}}} & \frac{\delta^{\frac{1}{2}} \tilde{\xi}_1 \tilde{\xi}_3}{(\tilde{\xi}_2^2 + \tilde{\xi}_3^2)^{\frac{1}{2}}} \\
\xi_1' & 0 &- \frac{(\xi_2'^2 + \xi_3'^2)^{\frac{1}{2}}}{\delta^{\frac{1}{2}}} & - \frac{(\xi_2'^2 + \xi_3'^2)^{\frac{1}{2}}}{\delta^{\frac{1}{2}}} & 0 & 0 \\
\xi_2' & 0 & \frac{\xi_1' \xi_2' }{(\delta(\xi_2'^2 + \xi_3'^2))^{\frac{1}{2}}} & \frac{\xi_1' \xi_2' }{(\delta (\xi_2'^2 + \xi_3'^2))^{\frac{1}{2}}} & - \frac{\delta^{\frac{1}{2}} \tilde{\xi}_3}{(\tilde{\xi}_2^2 + \tilde{\xi}_3^2)^{\frac{1}{2}}} & - \frac{\delta^{\frac{1}{2}} \tilde{\xi}_3}{(\tilde{\xi}_2^2 + \tilde{\xi}_3^2)^{\frac{1}{2}}} \\
\xi_3' & 0 & \frac{\xi_1' \xi_3'}{(\delta( \xi_2'^2 + \xi_3'^2))^{\frac{1}{2}}} & \frac{\xi_1' \xi_3' }{ (\delta(\xi_2'^2 + \xi_3'^2))^{\frac{1}{2}}} & \frac{\tilde{\xi}_2 \delta^{\frac{1}{2}}}{(\tilde{\xi}_2^2 + \tilde{\xi}_3^2)^{\frac{1}{2}}} & \frac{\tilde{\xi}_2 \delta^{\frac{1}{2}}}{(\tilde{\xi}_2^2 + \tilde{\xi}_3^2)^{\frac{1}{2}}}
\end{pmatrix}.
\end{split}
\end{equation}

By Cramer's rule, we find $\tilde{m}(x,\xi)^{-1}$ from $m^{-1}(x,\xi)$ by modifying the rows 3-6:
\begin{equation}
\label{eq:InverseConjugationMatrixPartiallyAnisotropic}
\begin{split}
&\tilde{m}^{-1}(x,\xi) = \\
&\begin{pmatrix}
0 & 0 & 0 & \xi_1' & \xi_2' & \xi_3' \\
ab \tilde{\xi}_1 & ab \tilde{\xi}_2 & ab \tilde{\xi}_3 & 0 & 0 & 0 \\
0 & - \frac{\sqrt{b} \delta^{\frac{1}{2}} \tilde{\xi}_3}{2 (\tilde{\xi}_2^2+ \tilde{\xi}_3^2)^{\frac{1}{2}}} &  \frac{\sqrt{b} \delta^{\frac{1}{2}} \tilde{\xi}_2}{2 (\tilde{\xi}_2^2 + \tilde{\xi}_3^2)^{\frac{1}{2}}} & - \frac{ (\tilde{\xi}_2^2 + \tilde{\xi}_3^2)^{\frac{1}{2}}}{2 \delta^{\frac{1}{2}}} & \frac{\xi_1' \xi_2' \delta^{\frac{1}{2}}}{2(\xi_2'^2 + \xi_3'^2)^{\frac{1}{2}}} & \frac{\xi_1' \xi_3' \delta^{\frac{1}{2}}}{2(\xi_2'^2 + \xi_3'^2)^{\frac{1}{2}}} \\
0 &  \frac{\sqrt{b} \delta^{\frac{1}{2}} \tilde{\xi}_3}{2 (\tilde{\xi}_2^2+\tilde{\xi}_3^2)^{\frac{1}{2}}} & - \frac{\sqrt{b} \delta^{\frac{1}{2}} \tilde{\xi}_2}{2 (\tilde{\xi}_2^2 + \tilde{\xi}_3^2)^{1/2}} & - \frac{ (\tilde{\xi}_2^2 + \tilde{\xi}_3^2)^{\frac{1}{2}}}{2 \delta^{\frac{1}{2}}} & \frac{\delta^{\frac{1}{2}} \xi_1' \xi_2'}{2(\xi_2'^2 + \xi_3'^2)^{\frac{1}{2}}} & \frac{\delta^{\frac{1}{2}} \xi_1' \xi_3'}{2(\xi_2'^2 + \xi_3'^2)^{\frac{1}{2}}} \\
\frac{a (\tilde{\xi}_2 + \tilde{\xi}_3^2)^\frac{1}{2}}{2 \delta^{\frac{1}{2}}} & - \frac{ b \tilde{\xi}_1 \tilde{\xi}_2}{2 (\delta(\tilde{\xi}_2^2 + \tilde{\xi}_3^2))^{\frac{1}{2}}} & - \frac{b \tilde{\xi}_1 \tilde{\xi}_3}{ 2 (\delta(\tilde{\xi}_2^2 + \tilde{\xi}_3^2))^{\frac{1}{2}}} & 0 & - \frac{\xi_3'}{2(\delta(\xi_2'^2 + \xi_3'^2))^{\frac{1}{2}}} & \frac{\xi_2'}{2  (\delta(\xi_2'^2 + \xi_3'^2))^{\frac{1}{2}}} \\
- \frac{a (\tilde{\xi}_2 + \tilde{\xi}_3^2)^\frac{1}{2}}{2 \delta^{\frac{1}{2}}} & \frac{b \tilde{\xi}_1 \tilde{\xi}_2}{2 (\delta(\tilde{\xi}_2^2 + \tilde{\xi}_3^2))^{\frac{1}{2}}} & \frac{b \tilde{\xi}_1 \tilde{\xi}_3}{2 (\delta(\tilde{\xi}_2^2 + \tilde{\xi}_3^2))^{\frac{1}{2}}} & 0 & - \frac{\xi_3'}{2(\delta (\xi_2'^2 + \xi_3'^2))^{\frac{1}{2}}} & \frac{\xi_2'}{2(\delta( \xi_2'^2 + \xi_3'^2))^{\frac{1}{2}}}
\end{pmatrix}
.
\end{split}
\end{equation}
In conclusion, we find
\begin{equation*}
\tilde{p}(x,\xi) = \tilde{m}(x,\xi) d(x,\xi) \tilde{m}^{-1}(x,\xi).
\end{equation*}

In the following we associate pseudo-differential operators with the symbols. To obtain admissible symbols, we localize frequencies $\{\| \xi' \| \sim \lambda \}$ away from the $\xi_1$-axis to the region $\{|(\xi_2,\xi_3)| \gtrsim \lambda^{\alpha }$ with $\frac{1}{2} \leq \alpha < 1$. The contribution of $|(\xi_2,\xi_3)| \lesssim \lambda^{\alpha}$ can be estimated directly via Bernstein's inequality. Since we shall truncate the coefficients to frequencies of size $\lambda^{\beta}$, $\beta \leq \alpha$, this leads to symbols $S^m_{\alpha,\beta}$. For $m=0$, these are bounded in $L^2$ by the Calderon--Vaillancourt theorem. To compute bounds in $L^p L^q$, we use symbol composition to write it as composition of Riesz transforms and pseudo-differential operators, which allow a straight-forward estimate in $L^2$. The error terms are sufficiently smoothing to be estimated via Sobolev embedding in $L^2$. The choice of $\alpha$ and $\beta$ depends on the regularity of the coefficients:
\begin{itemize}
\item In the case of structured coefficients like in Theorem \ref{thm:StructuredStrichartzPartiallyAnisotropic} we choose $\alpha = \beta = \frac{1}{2}$. This allows for the proof of Strichartz estimates with same derivative loss like in the free case.
\item In the case of coefficients, which satisfy $\partial \varepsilon \in L_T^2 L_{x'}^\infty$ we choose $\alpha = \frac{3}{4}$ and $\beta = \frac{1}{2}$. This proves Strichartz estimates with $1/4+\varepsilon$ additional derivative loss close to the forbidden endpoint $(p,q) = (2,\infty)$ compared to Euclidean Strichartz estimates.
\item In the case of Lipschitz coefficients, we choose $\alpha = \frac{2}{3}+\varepsilon$ and $\beta = \frac{2}{3}$. This proves Strichartz estimates with $1/6+\varepsilon$ additional derivative loss close to the forbidden endpoint $(p,q) = (2,\infty)$. This is up to $\varepsilon$ the same additional derivative loss like for scalar wave equations with Lipschitz coefficients close to the forbidden endpoint $(p,q) = (2,\infty)$.
\end{itemize}

A direct estimate in $L^p L^q$ is unclear because $L^p_x$-boundedness of $Op(S^0_{\rho,0})$ for $0 \leq \rho < 1$ fails in general (cf. \cite[Chapter~XI]{Taylor1974}).

\subsection{Proof of Theorem \ref{thm:StructuredStrichartzPartiallyAnisotropic}}
\label{subsection:StructuredAnisotropic}

In this section we prove Strichartz estimates under structural assumptions on the coefficients by conjugating the Maxwell operator to half-wave equations. The first reductions are like in Section \ref{subsection:ReductionsC2Isotropic} and the details are omitted.
\subsubsection{Localization arguments}
 By scaling we can suppose that $\| \partial^2 \varepsilon \|_{L^\infty} \leq 1$, $\nu =1$. We carry out the following reductions like in Section \ref{subsection:ReductionsC2Isotropic}:
\begin{itemize}
\item Reduction to high frequencies and localization to a cube of size $1$,
\item Reduction to dyadic estimates,
\item Truncating the coefficients of $P$ at frequency $\lambda^{\frac{1}{2}}$,
\item Microlocal estimate away from the characteristic surface.
\end{itemize}

After these steps, it suffices to prove the following dyadic estimate:
\begin{equation}
\label{eq:DyadicStrichartzEstimatesPartiallyAnisotropicStructured}
\lambda^{-\rho} \| S_\lambda u \|_{L^p L^q} \lesssim \| S_\lambda u \|_{L^2_x} + \| P_\lambda S_\lambda u \|_{L^2_x} + \lambda^{-\frac{1}{2}} \| S_\lambda' \rho_{em} \|_{L^2_x}
\end{equation}
for $\lambda \gg 1$, $u$ having Fourier support in $\{|\xi_0| \lesssim \| \xi' \| \sim \lambda \}$ and being essentially supported in a space-time unit cube. $P_\lambda$ denotes the time-dependent Maxwell operator with coefficients truncated at frequencies $\lesssim \lambda^{\frac{1}{2}}$.
\subsubsection{Estimate without diagonalization}
We estimate directly the contribution of the spatial frequencies $\{ |(\xi_2,\xi_3)| \lesssim \lambda^{\frac{1}{2}} \}$ by Bernstein's inequality. Let $\chi_A(\xi')$ denote a smooth version of the indicator function of
\begin{equation*}
A = \{ \| \xi' \| \sim \lambda \} \cap \{ |(\xi_2,\xi_3)| \lesssim \lambda^{\frac{1}{2}} \}.
\end{equation*}
We estimate $|A| \lesssim \lambda^{2 }$. Hence,
\begin{equation*}
\lambda^{-\rho} \| S_\lambda S_A u \|_{L_t^2 L_{x'}^\infty} \lesssim \| S_\lambda S_A u \|_{L^2_x}.
\end{equation*}
By interpolation with the energy estimate, the contribution of frequencies in $A$ is estimated.
\subsubsection{Estimate via diagonalization of Maxwell operator}
Let $S_B$ denote the smooth frequency projection to
\begin{equation*}
\{ |\xi_0| \lesssim \| \xi' \| \sim \lambda \} \cap \{ |(\xi_2,\xi_3)| \gg \lambda^{\frac{1}{2}} \}
\end{equation*}
with symbol $s_B(\xi)$. Then $P_{\lambda} S_B$ admits diagonalization by quantizing $\tilde{m}(x,\xi) s_B(\xi)$, $d(x,\xi) s_B(\xi)$, and $\tilde{m}^{-1}(x,\xi) s_B(\xi)$ as given in \eqref{eq:ConjugationMatrixPartiallyAnisotropic}, \eqref{eq:DiagonalMatrixPartiallyAnisotropic}, and \eqref{eq:InverseConjugationMatrixPartiallyAnisotropic}. For this purpose note that $\tilde{m}(x,\xi) s_B(\xi)$ and $\tilde{m}^{-1}(x,\xi) s_B(\xi) \in S^0_{\frac{1}{2},\frac{1}{2}}$, and $d(x,\xi) \in S^1_{1,\frac{1}{2}}$.
We shall prove the dyadic estimate
\begin{equation*}
\lambda^{-\rho} \| S_B u \|_{L_t^p L_{x'}^q} \lesssim \| S_\lambda u \|_{L^2_x} + \| P_\lambda S_B u \|_{L^2_x} + \lambda^{-\frac{1}{2}} \| S'_\lambda \rho_{em} \|_{L^2_x}
\end{equation*}
with $u$ having the same properties like in \eqref{eq:DyadicStrichartzEstimatesPartiallyAnisotropicStructured}. This reduction requires an additional commutator estimate for the localization $S_B$. Note that $\| P_\lambda S_B u \|_{L^2_x} \lesssim \| P_\lambda S_B u \|_{L^2_x}$ because the projection on $\xi_2$ and $\xi_3$ commutes with $P_\lambda$.

 Symbol composition (Theorem \ref{thm:KohnNirenberg}) holds to first order because the coefficients are Lipschitz. We shall see that we have the following improved error estimate compared to standard symbol composition:
\begin{proposition}
\label{prop:StructuredDiagonalizationPartiallyAnisotropic}
With above notations, let
\begin{equation}
\label{eq:QuantizationPartiallyAnisotropic}
\mathcal{M}_\lambda = OP(\tilde{m}(x,\xi) \tilde{\chi}_B(\xi)), \; \mathcal{D}_\lambda = OP(d(x,\xi) \tilde{\chi}_B(\xi)), \; \mathcal{N}_\lambda = OP(\tilde{m}^{-1}(x,\xi) \chi_B(\xi)).
\end{equation}
Then, we find the following identity to hold:
\begin{equation}
\label{eq:StructuredDiagonalizationPartiallyAnisotropic}
P_\lambda S_B = \mathcal{M}_\lambda \tilde{S}_B \mathcal{D}_\lambda \tilde{S}_B \mathcal{N}_\lambda S_B + E_\lambda
\end{equation}
with $\| E_\lambda \|_{L^2 \to L^2} \lesssim 1$.
\end{proposition}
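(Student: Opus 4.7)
The plan is to verify the identity \eqref{eq:StructuredDiagonalizationPartiallyAnisotropic} by directly composing the three pseudo-differential factors on the right via the Kohn--Nirenberg formula (Theorem \ref{thm:KohnNirenberg}) and showing that the leading symbol reproduces $\tilde{p}(x,\xi) \chi_B(\xi)$ while the correction terms assemble into an operator bounded on $L^2$ uniformly in $\lambda$. First I would fix the relevant symbol classes: after the frequency cutoff $|(\xi_2,\xi_3)| \gtrsim \lambda^{1/2}$ enforced by $\chi_B$, every $\xi$-derivative of the angular quantities $\xi_i^*, \tilde{\xi}_i, \alpha^{\pm 1}$ gains $|(\xi_2,\xi_3)|^{-1} \lesssim \lambda^{-1/2}$, while every $x$-derivative hits $a(x_0, x_1)$ or $b(x_0, x_1)$ truncated to frequencies $\lesssim \lambda^{1/2}$ and so costs $\lambda^{1/2}$. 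Therefore $\tilde{m}(x,\xi)\tilde{\chi}_B(\xi)$ and $\tilde{m}^{-1}(x,\xi)\chi_B(\xi)$ lie in $S^0_{1/2, 1/2}$, and $d(x,\xi)\tilde{\chi}_B(\xi) \in S^1_{1, 1/2}$; Theorem \ref{thm:KohnNirenberg} is then applicable.

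Next, the Kohn--Nirenberg expansion gives
\begin{equation*}
\mathcal{M}_\lambda \tilde{S}_B \mathcal{D}_\lambda \tilde{S}_B \mathcal{N}_\lambda S_B = OP(\tilde{m} d \tilde{m}^{-1} \chi_B) + OP(r_\lambda) + R_N,
\end{equation*}
where the principal part equals $\tilde{p}(x,\xi) \chi_B(\xi)$, matching the symbol of $P_\lambda S_B$; $r_\lambda$ collects first-order correction terms of the form $D_\xi^\alpha \tilde{m} \cdot \partial_x^\alpha(d \tilde{m}^{-1})$ and $D_\xi^\alpha d \cdot \partial_x^\alpha \tilde{m}^{-1}$ with $|\alpha|=1$, and $R_N$ collects higher-order remainders. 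A single $\xi$-derivative lowers the symbol order by $1/2$ and a single $x$-derivative raises it by $1/2$, so each term in $r_\lambda$ lies in $S^0_{1/2, 1/2}$ and is bounded on $L^2$ uniformly in $\lambda$ by the Calderon--Vaillancourt theorem. The higher-order remainders $R_N$ are smoothing enough for $N$ sufficiently large to be controlled by Lemma \ref{lem:LpLqBoundsPseudos}.

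The structural hypothesis $\varepsilon_i(x_0,x') = e_i(x_0, x_1)$ is essential to the above error analysis: since $\tilde{m}, \tilde{m}^{-1}, d$ depend on $x$ only through $a(x_0, x_1)$ and $b(x_0, x_1)$, the derivatives $\partial_{x_2}$ and $\partial_{x_3}$ annihilate them entirely, so only $\partial_{x_0}, \partial_{x_1}$ terms survive in the composition expansion. In the general partially anisotropic case, the missing terms would pair $\partial_{x_j} \tilde{m}^{-1}$ for $j \in \{2,3\}$ (which produce singular factors through the denominators $(\xi_2^2 + \xi_3^2)^{-1/2}$ once $b$ is differentiated) with $D_{\xi_j} d$, generating precisely the obstruction responsible for the derivative loss in Theorem \ref{thm:PartiallyAnisotropicStrichartz}. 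In the structured case those terms vanish, and the remaining error is $L^2$-bounded as above.

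The main obstacle is the careful bookkeeping of symbol classes and remainder orders: verifying that the explicit entries of $\tilde{m}$ and $\tilde{m}^{-1}$ in \eqref{eq:ConjugationMatrixPartiallyAnisotropic}--\eqref{eq:InverseConjugationMatrixPartiallyAnisotropic}, together with $\chi_B$, genuinely satisfy the uniform $S^0_{1/2, 1/2}$ bounds, and that at the order $|\alpha|=1$ of the Kohn--Nirenberg expansion the compositions already close the symbolic identity modulo $S^0_{1/2, 1/2}$-operators. Once these points are settled, collecting the contributions yields \eqref{eq:StructuredDiagonalizationPartiallyAnisotropic} with $E_\lambda$ bounded on $L^2$ uniformly in $\lambda$.
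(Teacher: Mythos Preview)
Your overall strategy matches the paper's: apply the Kohn--Nirenberg expansion, identify the principal symbol as $\tilde{p}(x,\xi)\chi_B(\xi)$, and bound the remainder via Calderon--Vaillancourt. You also correctly identify that the structural hypothesis $\varepsilon_i = e_i(x_0,x_1)$ kills the $\partial_{x_2}, \partial_{x_3}$ terms in the expansion. However, there is a genuine gap in your error analysis.

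Your claim that ``a single $\xi$-derivative lowers the symbol order by $1/2$ and a single $x$-derivative raises it by $1/2$, so each term in $r_\lambda$ lies in $S^0_{1/2,1/2}$'' is a miscalculation: the leading symbol has order $0+1+0=1$, and the generic $S_{1/2,1/2}$ arithmetic you quote gives $1 - \tfrac{1}{2} + \tfrac{1}{2} = 1$ for the first-order correction, not $0$. With only this reasoning the remainder would be an order-$1$ operator and the proof would fail. Consequently, when you later write that under the structural hypothesis ``the remaining error is $L^2$-bounded as above'', the ``as above'' points back to a computation that does not actually close.

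The mechanism you are missing is \emph{anisotropic} in the frequency variable. The symbols $\tilde m$, $\tilde m^{-1}$, $d$ and the cutoff $\chi_B$ have their only $\lambda^{1/2}$-scale roughness in the $(\xi_2,\xi_3)$ directions (through the factor $(\xi_2^2+\xi_3^2)^{1/2}$ and the cutoff $|(\xi_2,\xi_3)|\gtrsim\lambda^{1/2}$); in the $\xi_0,\xi_1$ directions they are smooth at the full scale $\|\xi'\|\sim\lambda$, so $D_{\xi_0}$ and $D_{\xi_1}$ gain a full $\lambda^{-1}$, not $\lambda^{-1/2}$. The structural hypothesis forces every surviving term in the expansion to carry only $D_{\xi_0}^{\alpha_0}D_{\xi_1}^{\alpha_1}$, and pairing this $\lambda^{-|\alpha|}$ gain with $\partial_{x_0}^{\alpha_0}\partial_{x_1}^{\alpha_1}$ acting on the (Lipschitz, in fact $C^2$) truncated coefficients --- whose first derivative is bounded rather than of size $\lambda^{1/2}$ --- is what drives the error into $S^0_{1/2,1/2}$. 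This is precisely the content of the paper's proof, and without it the $L^2$-bound on $E_\lambda$ is not justified. Your remark about ``singular factors through the denominators $(\xi_2^2+\xi_3^2)^{-1/2}$ once $b$ is differentiated'' is not the obstruction in the unstructured case: the obstruction is that $D_{\xi_2},D_{\xi_3}$ only gain $\lambda^{-1/2}$, so the expansion does not improve there.
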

\begin{proof}
We inspect the asymptotic expansion to obtain the error estimate. First order symbol composition gives
\begin{equation*}
\mathcal{M}_\lambda \tilde{S}_B \mathcal{D}_\lambda \tilde{S}_B \mathcal{N}_\lambda S_B = P_\lambda S_B + E_\lambda
\end{equation*}
with $E_\lambda = OP(e_\lambda)$, $e_\lambda \in S^{\frac{1}{2}}_{\frac{1}{2},\frac{1}{2}}$.

To improve on the bounds for $E_\lambda$, we first consider the composition of $\mathcal{D}_\lambda \tilde{S}_B$ and $\mathcal{N}_\lambda S_B$. We obtain by Theorem \ref{thm:KohnNirenberg}
\begin{equation*}
\mathcal{D}_\lambda \tilde{S}_B \mathcal{N}_\lambda \tilde{S}_B = OP(d(x,\xi) \tilde{m}^{-1}(x,\xi) \chi_B(\xi)) + E_\lambda 
\end{equation*}
with asymptotic expansion of the symbol of $E_\lambda = OP(e_\lambda)$ given by 
\begin{equation*}
e_\lambda = \sum_{|\alpha| \geq 1} \frac{1}{\alpha !} OP((D_\xi^\alpha (d(x,\xi) \tilde{\chi}_B(\xi)) (\partial_x^\alpha \tilde{m}^{-1}(x,\xi) \chi_B(\xi)).
\end{equation*}
We shall see that the asymptotic expansion converges although $d(x,\xi) \tilde{\chi}_B(\xi) \in S^1_{\frac{1}{2},\frac{1}{2}}$, $\tilde{m}(x,\xi) \chi_B(\xi) \in S^0_{\frac{1}{2},\frac{1}{2}}$.

Let $\alpha = (\alpha_0,\alpha_1,\alpha_2,\alpha_3)$. By the structural assumptions, the terms with $|\alpha_2| + |\alpha_3| > 0$ are vanishing. But the derivatives in $\xi_0$ and $\xi_1$ applied to $\tilde{m}_{ij}(x,\xi) \tilde{\chi}_B(\xi)$ gain factors of $\lambda^{-1}$. Hence, we obtain
\begin{equation*}
\mathcal{D}_\lambda \tilde{S}_B \mathcal{N}_\lambda S_B = OP(d(x,\xi) \tilde{m}^{-1}(x,\xi)  \tilde{\chi}^2(\xi)) + E_\lambda
\end{equation*}
with $E_\lambda = OP(e_\lambda)$, $e_\lambda \in S^0_{\frac{1}{2},\frac{1}{2}}$.

By the similar argument, we obtain
\begin{equation*}
\mathcal{M}_\lambda \tilde{S}_B \mathcal{D}_\lambda \tilde{S}_B \mathcal{N}_\lambda S_B = OP(\tilde{m}(x,\xi) d(x,\xi) \tilde{m}^{-1}(x,\xi) \chi_B(\xi)) + E_\lambda = P_\lambda S_B + E_\lambda
\end{equation*}
with $E_\lambda \in OP S^0_{\frac{1}{2},\frac{1}{2}}$. $E_\lambda$ is bounded in $L^2$ by the Calderon--Vaillancourt theorem. The proof is complete.
\end{proof}

To conclude the proof of Theorem \ref{thm:StructuredStrichartzPartiallyAnisotropic} by using the diagonalization, we argue like at the end of Section \ref{section:StrichartzEstimatesIsotropic}. The symbol composition is more delicate in the present case. We can still show the following lemma:
\begin{lemma}
\label{lem:AuxDiagonalizationStructuredAnisotropic}
With above notations, we find the following estimates to hold:
\begin{align}
\label{eq:AuxDiagonalizationI}
\lambda^{-\rho} \| S_B u \|_{L^p L^q} &\lesssim \lambda^{-\rho} \| \mathcal{N}_\lambda S_B u \|_{L^p L^q} + \| S_B u \|_{L^2_x}, \\
\label{eq:AuxDiagonalizationII}
\lambda^{-\rho} \| \mathcal{N}_\lambda S_B u \|_{L^p L^q} &\lesssim \| \mathcal{D}_\lambda \tilde{S}_B \mathcal{N}_\lambda S_B u \|_{L^2_x} + \| \mathcal{N}_\lambda S_B u \|_{L^2_x} + \lambda^{-\frac{1}{2}} \| S'_\lambda \rho_{em} \|_{L^2_x}, \\
\label{eq:AuxDiagonalizationIII}
\| S_B v \|_{L^2_x} &\lesssim \| \mathcal{M}_\lambda S_B v \|_{L^2_x}.
\end{align}
\end{lemma}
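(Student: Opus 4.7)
The plan is to mirror the proof of Lemma \ref{lem:AuxDiagonalizationIsotropic}, adapted to the symbol class $S^0_{\frac{1}{2},\frac{1}{2}}$ that arises from the partially anisotropic diagonalization. The crucial structural input is that under the hypothesis $\varepsilon_i(x) = e_i(x_0, x_1)$, both $\tilde{m}$ and $\tilde{m}^{-1}$ depend on $x$ only through $(x_0, x_1)$. Hence in the Kohn--Nirenberg asymptotic \eqref{eq:ExpansionPseudoComposition} for $\mathcal{M}_\lambda \tilde{S}_B \mathcal{N}_\lambda S_B$ and $\mathcal{N}_\lambda \tilde{S}_B \mathcal{M}_\lambda S_B$, only multi-indices $\alpha$ with $\alpha_2 = \alpha_3 = 0$ survive; each such $\xi_0$- or $\xi_1$-derivative of $\tilde{m}\tilde{\chi}_B$ gains $\lambda^{-1}$ (as already exploited in the proof of Proposition \ref{prop:StructuredDiagonalizationPartiallyAnisotropic}), while each $x$-derivative of the $\lambda^{\frac{1}{2}}$-truncated coefficients loses at most $\lambda^{\frac{1}{2}}$. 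Thus the leading terms are the identity on $\operatorname{supp}(\chi_B)$, and the remainders $R_\lambda$, $R'_\lambda$ in
\begin{equation*}
\mathcal{M}_\lambda \tilde{S}_B \mathcal{N}_\lambda S_B = S_B + R_\lambda, \qquad \mathcal{N}_\lambda \tilde{S}_B \mathcal{M}_\lambda S_B = S_B + R'_\lambda,
\end{equation*}
satisfy $\|R_\lambda f\|_{L^2} + \|R'_\lambda f\|_{L^2} \lesssim \lambda^{-\frac{1}{2}}\|f\|_{L^2}$ for $f$ frequency-localized in $\operatorname{supp}(\chi_B)$.

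For \eqref{eq:AuxDiagonalizationI}, I would decompose $S_B u = \mathcal{M}_\lambda \tilde{S}_B \mathcal{N}_\lambda S_B u - R_\lambda S_B u$. Lemma \ref{lem:LpLqBoundsPseudos} applied to the rescaled symbol $\tilde{m}(x,\xi)\tilde{\chi}_B(\xi)$ bounds $\mathcal{M}_\lambda \tilde{S}_B$ on $L^p L^q$, handling the first summand. The remainder obeys $\|R_\lambda S_B u\|_{L^p L^q} \lesssim \lambda^{\rho+\frac{1}{2}}\|R_\lambda S_B u\|_{L^2} \lesssim \lambda^{\rho}\|S_B u\|_{L^2}$ by Bernstein's inequality combined with the $\lambda^{-\frac{1}{2}}$ gain of $R_\lambda$. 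Estimate \eqref{eq:AuxDiagonalizationIII} is proved analogously: expand $S_B v$ via $\mathcal{N}_\lambda \tilde{S}_B \mathcal{M}_\lambda S_B v = S_B v + R'_\lambda S_B v$, bound $\mathcal{N}_\lambda \tilde{S}_B$ on $L^2$ by Lemma \ref{lem:LpLqBoundsPseudos}, and absorb the $\lambda^{-\frac{1}{2}}$-small remainder into the left-hand side for $\lambda$ sufficiently large.

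For \eqref{eq:AuxDiagonalizationII}, I would decompose $\mathcal{N}_\lambda S_B u$ into its six scalar components according to the rows of $\tilde{m}^{-1}$ in \eqref{eq:InverseConjugationMatrixPartiallyAnisotropic}. The first two rows implement the divergence operator up to bounded Fourier multipliers and a factor $|D'|^{-1}$ (respectively a smooth multiplier of order $-1$ in $\|\xi'\|_\varepsilon$), so components $j=1,2$ reduce to $\rho_m$ and $\rho_e$, and Bernstein combined with the frequency localization gives $\lambda^{-\rho}\|[\mathcal{N}_\lambda S_B u]_j\|_{L^p L^q} \lesssim \lambda^{-\frac{1}{2}}\|S'_\lambda \rho_{em}\|_{L^2_x}$. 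The four remaining components are governed, up to the identification of $\mathcal{D}_\lambda \tilde{S}_B \mathcal{N}_\lambda S_B u$ as the source, by half-wave operators with symbols $\xi_0 \mp \sqrt{b(x)}\|\xi'\|$ and $\xi_0 \mp \|\xi'\|_\varepsilon$, both fitting the hypotheses of Proposition \ref{prop:HalfWaveEstimates} with a uniformly elliptic $C^2$ metric; this delivers the Strichartz estimate directly.

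The main obstacle is the rigorous justification of the symbol composition error bounds underlying the whole scheme: since $\tilde{m}\tilde{\chi}_B \in S^0_{\frac{1}{2},\frac{1}{2}}$ only, the standard Kohn--Nirenberg expansion does not converge without additional input. The structural assumption $\varepsilon = \varepsilon(x_0, x_1)$ is decisive here, as it kills every $(x_2, x_3)$-derivative of $\tilde{m}$ and $\tilde{m}^{-1}$ and restricts the expansion to $\alpha_2 = \alpha_3 = 0$ terms, which then gain in $\lambda$ exactly as in the isotropic regime of Subsection \ref{subsection:IsotropicDiagonalization}. Most of the detailed write-up should therefore be spent adapting the argument of Proposition \ref{prop:StructuredDiagonalizationPartiallyAnisotropic} to the pure compositions $\mathcal{M}_\lambda \mathcal{N}_\lambda$ and $\mathcal{N}_\lambda \mathcal{M}_\lambda$ (no $\mathcal{D}_\lambda$ in the middle); once this is established, the deductions of \eqref{eq:AuxDiagonalizationI}--\eqref{eq:AuxDiagonalizationIII} follow essentially verbatim from the isotropic case.
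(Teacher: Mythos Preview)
Your overall strategy mirrors the paper's proof closely: the treatment of the remainders $R_\lambda$, $R'_\lambda$ via the structural vanishing of $\partial_{x_2}, \partial_{x_3}$ derivatives, the absorption argument for \eqref{eq:AuxDiagonalizationIII}, and the component-wise proof of \eqref{eq:AuxDiagonalizationII} are all exactly what the paper does.

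There is, however, one genuine gap in your argument for \eqref{eq:AuxDiagonalizationI}. You write that Lemma \ref{lem:LpLqBoundsPseudos} applied to the rescaled symbol $\tilde{m}(x,\xi)\tilde{\chi}_B(\xi)$ bounds $\mathcal{M}_\lambda \tilde{S}_B$ on $L^pL^q$. This fails. Several entries of $\tilde{m}$ contain Riesz-type factors such as $\xi_3/(\xi_2^2+\xi_3^2)^{1/2}$, which are $0$-homogeneous but singular along $\xi_2=\xi_3=0$. After the rescaling $\xi \mapsto \lambda\xi$ the support of $\chi_B$ becomes $\{\|\xi'\|\sim 1,\ |(\xi_2,\xi_3)|\gtrsim \lambda^{-1/2}\}$, and $k$ derivatives in $\xi_2,\xi_3$ of such a factor are of size $|(\xi_2,\xi_3)|^{-k}$. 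Integrating over this support gives $\int_{\lambda^{-1/2}}^{1} r^{-k}\,r\,dr \sim \lambda^{(k-2)/2}$ for $k>2$, so the constant $C$ in Lemma \ref{lem:LpLqBoundsPseudos} (which requires control of $m+1=5$ derivatives in $\R^4$) blows up like a positive power of $\lambda$. In short, $\tilde{m}\tilde{\chi}_B \in S^0_{\frac{1}{2},\frac{1}{2}}$ only, and Lemma \ref{lem:LpLqBoundsPseudos} does not apply uniformly.

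The paper's remedy is to decompose each entry $[\mathcal{M}_\lambda]_{ij}$ as a composition of three kinds of operators: genuine Riesz transforms $\partial_i/D_{23}$ (bounded on $L^q$, $1<q<\infty$, by H\"ormander--Mikhlin), multiplication by the coefficients, and operators with symbols in $S^0_{1,\frac{1}{2}}$ such as $\|\xi'\|_\varepsilon^{1/2}/\|\xi'\|^{1/2}\,\chi_B$ (these \emph{are} amenable to Lemma \ref{lem:LpLqBoundsPseudos}). The commutator errors produced by this splitting lie in $\lambda^{-1/2}\,OP(S^0_{\frac{1}{2},\frac{1}{2}})$ thanks to the Lipschitz regularity of the coefficients, and are then handled by Sobolev embedding plus Calder\'on--Vaillancourt --- exactly the mechanism you already invoke for $R_\lambda$. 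The Appendix of the paper records the component-by-component decomposition needed for this step. Once you insert this decomposition argument in place of the direct appeal to Lemma \ref{lem:LpLqBoundsPseudos}, the rest of your proof goes through as written.
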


Before we turn to the proof of Lemma \ref{lem:AuxDiagonalizationStructuredAnisotropic}, we shall see how to conclude the proof of Theorem \ref{thm:StructuredStrichartzPartiallyAnisotropic} at its disposal. 
\begin{proof}[Conclusion of the proof of Theorem \ref{thm:StructuredStrichartzPartiallyAnisotropic}]
By appealing to \eqref{eq:AuxDiagonalizationI} and \eqref{eq:AuxDiagonalizationII}, we find
\begin{equation*}
\begin{split}
\lambda^{-\rho} \| S_B u \|_{L^p L^q} &\lesssim \lambda^{-\rho} \| \mathcal{N}_\lambda S_B u \|_{L^p L^q} + \| S_B u \|_{L^2_x} \\
&\lesssim \| \mathcal{D}_\lambda \tilde{S}_B \mathcal{N}_\lambda S_B u \|_{L^2_x} + \lambda^{-\frac{1}{2}} \| S'_\lambda \rho_{em} \|_{L^2_x} + \| \mathcal{N}_\lambda S_B u \|_{L^2_x}.
\end{split}
\end{equation*}
We apply \eqref{eq:AuxDiagonalizationI} to the first term and since $\mathcal{N}_\lambda S_B \in OP S^0_{\frac{1}{2},\frac{1}{2}}$, this is bounded in $L^2_x$ by the Calderon--Vaillancourt theorem:
\begin{equation*}
\begin{split}
&\lesssim \| \mathcal{M}_\lambda \tilde{S}_B \mathcal{D}_\lambda \tilde{S}_B \mathcal{N}_\lambda S_B u \|_{L^2_x} + \lambda^{-\frac{1}{2}} \| S'_\lambda \rho_{em} \|_{L^2_{x}} + \| S_\lambda u \|_{L^2_x} \\
&\lesssim \| P_\lambda S_B u \|_{L^2_x} + \| S_\lambda u \|_{L^2_x} + \lambda^{-\frac{1}{2}} \| S'_\lambda \rho_{em} \|_{L^2_{x}}.
\end{split}
\end{equation*}
The ultimate estimate is a consequence of Proposition \ref{prop:StructuredDiagonalizationPartiallyAnisotropic}.
\end{proof}

We turn to the proof of Lemma \ref{lem:AuxDiagonalizationStructuredAnisotropic}.
\begin{proof}[Proof of Lemma \ref{lem:AuxDiagonalizationStructuredAnisotropic}]
For the proof of \eqref{eq:AuxDiagonalizationI}, we use symbol composition to write
\begin{equation*}
\mathcal{M}_\lambda \tilde{S}_B \mathcal{N}_\lambda S_B = S_B + E_\lambda
\end{equation*}
with $E_\lambda = OP(e_\lambda)$ and $e_\lambda$ given by the asymptotic expansion
\begin{equation*}
e_\lambda = \frac{1}{\alpha !} \sum_{|\alpha| \geq 1} D_\xi^\alpha (\tilde{m}(x,\xi) \tilde{\chi}_B(\xi)) \partial_x^\alpha (\tilde{m}^{-1}(x,\xi) \chi_B(\xi)).
\end{equation*}
This converges for similar reasons as in the proof of Proposition \ref{prop:StructuredDiagonalizationPartiallyAnisotropic}. Derivatives $\partial_x^\alpha \tilde{m}^{-1}(x,\xi)$ vanish for $\alpha = (\alpha_0,\alpha_1,\alpha_2,\alpha_3)$ with $|\alpha_2| + |\alpha_3| > 0$. We have
\begin{equation*}
|D_\xi^\alpha (\tilde{m}(x,\xi) \tilde{\chi}_B(\xi))| \lesssim \lambda^{-|\alpha|} \text{ for } |\alpha_2| = |\alpha_3| = 0
\end{equation*}
and moreover,
\begin{equation*}
|\partial_x^\alpha \tilde{m}^{-1}(x,\xi)| \lesssim \lambda^{\frac{|\alpha|}{2}}.
\end{equation*}
We have for the leading order term
\begin{equation*}
\sum_{|\alpha| = 1} (D_\xi^\alpha \tilde{m}(x,\xi) \tilde{\chi}_B(\xi)) (\partial_x^\alpha \tilde{m}^{-1}(x,\xi) \chi_B(\xi)) \in S^{-1}_{\frac{1}{2},\frac{1}{2}}
\end{equation*}
because the coefficients are Lipschitz. Hence, $e_\lambda \in S^{-1}_{\frac{1}{2},\frac{1}{2}}$, and we obtain
\begin{equation*}
\lambda^{-\rho} \| S_B u \|_{L^p L^q} \lesssim \lambda^{-\rho} \| \mathcal{M}_\lambda \tilde{S}_B \mathcal{N}_\lambda S_B u \|_{L^p L^q} + \lambda^{-\rho} \| E_\lambda u \|_{L^p L^q}.
\end{equation*} 
By Sobolev embedding and the Calderon--Vaillancourt theorem, we have
\begin{equation*}
\lambda^{-\rho} \| E_\lambda u \|_{L^p L^q} \lesssim \lambda^{\frac{1}{2}} \| E_\lambda u \|_{L^2_x} \lesssim \lambda^{-\frac{1}{2}} \| S_\lambda u \|_{L^2_x}.
\end{equation*}
We still have to estimate
\begin{equation*}
\lambda^{-\rho} \| \mathcal{M}_\lambda \tilde{S}_B \mathcal{N}_\lambda S_B u \|_{L^p L^q} \lesssim \lambda^{-\rho} \| \mathcal{N}_\lambda S_B u \|_{L^p L^q} + \| S_B u \|_{L^2_x}.
\end{equation*}
For the proof of $L^p L^q$-bounds for $[\mathcal{M}_\lambda]_{ij}$ we write the components as composition of operators, for which $L^p L^q$-bounds are straight-forward because these are differential operators, Riesz transforms, or amenable to Lemma \ref{lem:LpLqBoundsPseudos}. The error terms, however, gain a factor $\lambda^{-\frac{1}{2}}$, and therefore can be estimated by Sobolev embedding. We note that the components of $\mathcal{M}_{ij}$for $(i,j) \in \{1,\ldots,6\}^2 \backslash \{(1,5),(1,6),(4,3),(4,4) \}$ can be written as linear combinations of products of symbols in $S^0_{1,\frac{1}{2}}$ and $\frac{\partial_i}{D_{23}} S_B$ for $i=2,3$. E.g.,
\begin{equation*}
\mathcal{M}_{23} S_B = \frac{i}{D^{\frac{1}{2}}} D_\varepsilon^{\frac{1}{2}} \frac{\partial_3}{D_{23}} \big( \frac{1}{\sqrt{b_{\leq \lambda^{\frac{1}{2}}}}} \cdot \big) S_B = OP(a_1) OP(a_2) OP(a_3) + E_1
\end{equation*}
with
\begin{equation*}
a_1 = i \frac{\| \xi' \|^{\frac{1}{2}}_\varepsilon(x)}{\| \xi'\|^{\frac{1}{2}} } \chi_B(\xi'), \quad a_2 = \frac{i \xi_3}{\sqrt{\xi_2^2 + \xi_3^2}} \chi_B(\xi'), \quad a_3 = \frac{1}{\sqrt{b_{\leq \lambda^{\frac{1}{2}}}}} \chi_B(\xi').
\end{equation*}
The boundedness of $a_1$ in $L_t^p L_{x'}^q$ follows from Lemma \ref{lem:LpLqBoundsPseudos}. For $a_2$ this follows from boundedness of Riesz transforms in $L^p$ for $1<p<\infty$ and for $a_3$ this is trivial.

The error terms obtained in $E_1$ are of the form $\lambda^{-\frac{1}{2}} OP(e_1)$ with $e_1 \in S^0_{\frac{1}{2},\frac{1}{2}}$. The additional factor $\lambda^{-\frac{1}{2}}$ comes from the coefficients being Lipschitz. Therefore, we may estimate
\begin{equation*}
\| OP(a_1) OP(a_2) OP(a_3) f \|_{L_t^p L_{x'}^q} \lesssim \| f \|_{L_t^p L_{x'}^q}
\end{equation*}
and for the error term
\begin{equation*}
\lambda^{-\rho} \| E_1 f \|_{L_t^p L_{x'}^q} \lesssim \| f \|_{L^2_x}.
\end{equation*}
This shows
\begin{equation*}
\lambda^{-\rho} \| \mathcal{M}_\lambda \tilde{S}_B \mathcal{N}_\lambda S_B u \|_{L_t^p L_{x'}^q} \lesssim \lambda^{-\rho} \| \mathcal{N}_\lambda S_B u \|_{L_t^p L_{x'}^q} + \| S_B u \|_{L^2_x}.
\end{equation*}

We turn to the proof of \eqref{eq:AuxDiagonalizationII}. This is shown component-wise. For the first and second component $i=1,2$ we compute by Sobolev embedding
\begin{equation}
\label{eq:ChargeEstimate}
\lambda^{-\rho} \| [\mathcal{N}_\lambda S_B u]_i \|_{L_t^p L_{x'}^q} \lesssim \lambda^{\frac{1}{2}} \| [\mathcal{N}_\lambda S_B u]_i \|_{L^2_x} \lesssim \lambda^{-\frac{1}{2}} \| \rho_{em} \|_{L^2_x}.
\end{equation}
The ultimate estimate follows from $[\mathcal{N}_\lambda S_B u]_1 = \frac{1}{|D'|} \nabla_{x'} \cdot S'_\lambda S_B \mathcal{D}$ and $[\mathcal{N}_\lambda S_B u]_2 = \frac{1}{|D'_\varepsilon|} \nabla_{x'} \cdot S'_\lambda S_B \mathcal{H}$. The estimate for $j=3,\ldots,6$ is a consequence of Proposition \ref{prop:HalfWaveEstimates}:
\begin{equation}
\label{eq:NondegenerateEstimate}
\lambda^{-\rho} \| [\mathcal{N}_\lambda S_B u]_j \|_{L_t^p L_{x'}^q} \lesssim \| [ \mathcal{D}_\lambda \tilde{S}_B \mathcal{N}_\lambda S_B u]_j \|_{L^2_x} + \| [\mathcal{N}_\lambda S_B u]_j \|_{L^2_x}.
\end{equation}
Taking \eqref{eq:ChargeEstimate} and \eqref{eq:NondegenerateEstimate} together yields \eqref{eq:AuxDiagonalizationII}.

Finally, we show \eqref{eq:AuxDiagonalizationIII}. By a similar argument as in the proof of \eqref{eq:AuxDiagonalizationI}, we find
\begin{equation*}
\mathcal{N}_\lambda \tilde{S}_B \mathcal{M}_\lambda S_B = S_B + E_\lambda \text{ with } \| E_\lambda \|_{L^2 \to L^2} \lesssim \lambda^{-1}.
\end{equation*}
Since $\mathcal{N}_\lambda \tilde{S}_B \in OP S^0_{\frac{1}{2},\frac{1}{2}}$, we can apply the Calderon--Vaillancourt theorem to find
\begin{equation*}
\| S_B v \|_{L^2_x} \lesssim \| \mathcal{N}_\lambda \tilde{S}_B \mathcal{M}_\lambda S_B v \|_{L^2_x} + \| E_\lambda S_B v \|_{L^2_x} \lesssim \| \mathcal{M}_\lambda S_B v \|_{L^2_x} + \lambda^{-1} \| S_B v \|_{L^2_x}.
\end{equation*}
Absorbing $\lambda^{-1} \| S_B v \|_{L^2_x}$ into the left hand-side finishes the proof.
\end{proof}

\subsection{Proof of Theorem \ref{thm:PartiallyAnisotropicStrichartz}}

This subsection is devoted to the proof of Theorem \ref{thm:PartiallyAnisotropicStrichartz}.

\begin{proof}[Proof~of~Theorem~\ref{thm:PartiallyAnisotropicStrichartz}]
We carry out the following steps to reduce to a dyadic estimate:
\begin{itemize}
\item Reduction to high frequencies,
\item Microlocal estimate away from the characteristic surface,
\item Reduction to dyadic estimate and truncating the frequencies of the coefficients to $\lambda^{\frac{1}{2}}$.
\end{itemize}
We first handle the more difficult case $\partial \varepsilon \in L_T^2 L_{x'}^\infty$ and then turn to $\partial \varepsilon \in L_T^\infty L_{x'}^\infty$.
\subsubsection{$\partial \varepsilon \in L_T^2 L_{x'}^\infty$}
\label{subsubsection:IntegrableCoefficients}
$\quad$

\emph{Reduction to dyadic estimate and frequency truncation.}

\noindent It suffices to prove estimates close to the forbidden endpoint $(p,q) = (2,\infty)$, $\delta = 0$:
\begin{equation}
\label{eq:DyadicEstimateAnisotropic}
\lambda^{-\frac{5}{4} - \delta} \| S_\lambda S'_\lambda u \|_{L_T^p L^q_{x'}} \lesssim_{T,\delta} \| S_\lambda S'_\lambda u \|_{L^\infty_T L^2_{x'}} + \lambda^{-\frac{1}{2}} \| P_{\lambda} S_\lambda S'_\lambda u \|_{L_T^2 L^2_{x'}} + \lambda^{-\frac{3}{4}} \| S'_\lambda \rho_{em} \|_{L_t^\infty L_{x'}^2}.
\end{equation}
In the above display the coefficients of $P$ are truncated at frequencies $\lesssim \lambda^{\frac{1}{2}}$, $(p,q)$ denotes a sharp Strichartz pair and $\delta > 0$.

To see how \eqref{eq:DyadicEstimateAnisotropic} implies \eqref{eq:StrichartzPartiallyAnisotropicL2Lipschitz}, we note that
\begin{equation*}
\tilde{S}_\lambda \tilde{S}'_\lambda P_{\lambda} S_\lambda S'_\lambda u = P_\lambda S_\lambda S'_\lambda u.
\end{equation*}
By $\tilde{S}_{\lambda}$ the mildly enlarged frequency projection is denoted, likewise for $S'_\lambda$. Now we write
\begin{equation*}
\tilde{S}_\lambda \tilde{S}'_\lambda P_\lambda S_\lambda S'_\lambda = \tilde{S}_\lambda \tilde{S}'_\lambda P S_\lambda S'_\lambda  - \tilde{S}_\lambda \tilde{S}'_\lambda P_{\lambda^{\frac{1}{2}} \lesssim \cdot \lesssim \lambda} S_\lambda S'_\lambda.
\end{equation*}
The contribution of the second term is estimated by
\begin{equation*}
\begin{split}
\| \nabla \times (\varepsilon^{-1}_{\lambda^{\frac{1}{2}} \lesssim \cdot \lesssim \lambda} S_\lambda S'_\lambda \mathcal{D})) \|_{L^2_x} &\lesssim \| \partial \varepsilon^{-1} \|_{L_t^2 L_{x'}^\infty} \| S_\lambda S'_\lambda u \|_{L_t^\infty L_{x'}^2} + \lambda \| \varepsilon^{-1}_{\gtrsim \lambda^{\frac{1}{2}}} \|_{L_t^2 L_{x'}^\infty} \| S_\lambda S'_\lambda u \|_{L_t^\infty L_{x'}^2} \\
&\lesssim \lambda^{\frac{1}{2}} (1+ \| \partial \varepsilon \|_{L_t^2 L_{x'}^\infty} ) \| S_\lambda S'_\lambda u \|_{L_t^\infty L_{x'}^2}.
\end{split}
\end{equation*}
For the first term, we note that
\begin{equation*}
\begin{split}
\| \tilde{S}_\lambda \tilde{S}'_\lambda P S_\lambda S'_\lambda u \|_{L^2_x} &\leq \| \tilde{S}_\lambda \tilde{S}'_\lambda P u \|_{L^2_x} + \| \tilde{S}_\lambda \tilde{S}'_\lambda \nabla \times ([\varepsilon^{-1},S_\lambda S'_\lambda] \mathcal{D}) \|_{L^2_x} \\
&\lesssim \| \tilde{S}_\lambda \tilde{S}'_\lambda P u \|_{L^2_x} + \| u \|_{L_t^\infty L_{x'}^2}.
\end{split}
\end{equation*}
The second estimate follows from
\begin{equation*}
\| \tilde{S}_\lambda \tilde{S}'_\lambda \nabla \times ([\varepsilon^{-1},S_\lambda S'_\lambda] \mathcal{D}) \|_{L^2_x} \lesssim \lambda \| [\varepsilon^{-1},S_\lambda S'_\lambda] \mathcal{D} \|_{L^2_x} \lesssim \| \mathcal{D} \|_{L_t^\infty L_{x'}^2},
\end{equation*}
which is based on the commutator estimate
\begin{equation*}
\| [\varepsilon^{-1} , S_\lambda S'_\lambda ] \|_{L_t^\infty L_{x'}^2 \to L^2_x} \lesssim \lambda^{-1} \| \partial \varepsilon^{-1} \|_{L_t^2 L_{x'}^\infty}
\end{equation*}
as a consequence of the kernel estimate (\cite[Lemma~2.3]{Tataru2000}). Hence, taking the supremum gives
\begin{equation*}
\sup_{\lambda \geq 1} \big( \lambda^{-\frac{5}{4}-\delta} \| S'_\lambda S_\lambda u \|_{L_T^p L_{x'}^q} \big) \lesssim \| u \|_{L_T^\infty L_{x'}^2} + \| |D|^{-\frac{1}{2}} P u \|_{L^2_x} + \| \langle D' \rangle^{-\frac{3}{4}} \rho_{em} \|_{L_T^\infty L_{x'}^2}.
\end{equation*}
Recall the estimate for the contribution away from the characteristic surface:
\begin{equation*}
\| \langle D' \rangle^{-\rho} S_{ |\tau| \gg \| \xi' \|} u \|_{L_T^p L_{x'}^q} \lesssim \| u \|_{L_T^\infty L_{x'}^2} + \| Pu \|_{L_T^2 L_{x'}^2}.
\end{equation*}
Since $\delta > 0$ was arbitrary, we find
\begin{equation*}
\| \langle D' \rangle^{-\frac{5}{4}-\delta} S_{|\tau| \lesssim \| \xi' \|} u \|_{L_T^p L_{x'}^q} \lesssim \| u \|_{L_T^\infty L_{x'}^2} + \| |D|^{-\frac{1}{2}} P u \|_{L_T^2 L_{x'}^2} + \| \langle D' \rangle^{-\frac{3}{4}} \rho_{em} \|_{L_T^\infty L_{x'}^2}
\end{equation*}
Applying this to homogeneous solutions (together with the better estimate away from the characteristic surface), we find
\begin{equation*}
\| \langle D' \rangle^{-\frac{5}{4}-\delta} u \|_{L_T^p L_{x'}^q} \lesssim \| u(0) \|_{L_{x'}^2} + \| \langle D' \rangle^{-\frac{3}{4}} \rho_{em}(0) \|_{L_{x'}^2}.
\end{equation*}
By Duhamel's formula and Minkowski's inequality, we find
\begin{equation*}
\begin{split}
\| \langle D' \rangle^{-\frac{5}{4}-\delta} u \|_{L_T^p L_{x'}^q} &\lesssim_{T,\delta} \| u \|_{L_T^\infty L_{x'}^2} + \| P u \|_{L_T^1 L^2_{x'}} \\
&\quad + \| \langle D' \rangle^{-\frac{3}{4}} \rho_{em}(0) \|_{L_{x'}^2} + \| \langle D' \rangle^{-\frac{3}{4}} \partial_t \rho_{em} \|_{L_T^1 L_{x'}^2}.
\end{split}
\end{equation*}

We interpolate the above display for the sharp Strichartz exponents with $p=2+\varepsilon$ with the energy estimate
\begin{equation*}
\| u \|_{L_T^\infty L_{x'}^2} \lesssim \| u(0) \|_{L_{x'}^2} + \| P u \|_{L_T^1 L_{x'}^2}
\end{equation*}
to find
\begin{equation*}
\begin{split}
\| \langle D' \rangle^{-\rho-\frac{1}{2p} -\delta} u \|_{L_T^p L_{x'}^q} &\lesssim \| u \|_{L_T^\infty L_{x'}^2} + \| P u \|_{L_T^1 L^2_{x'}} \\
&\quad + \| \langle D' \rangle^{-\frac{3}{4}} \rho_{em}(0) \|_{L_{x'}^2} + \| \langle D' \rangle^{-\frac{3}{4}} \partial_t \rho_{em} \|_{L_T^1 L_{x'}^2}.
\end{split}
\end{equation*}

\emph{Estimate without diagonalization.} The diagonalization becomes singular for \\
$|(\xi_2,\xi_3)| \ll \lambda$. We estimate the contribution of $A = \{ \xi' \in \R^3 : \| \xi' \| \sim \lambda, \quad |(\xi_2,\xi_3)| \lesssim \lambda^{\frac{3}{4}} \}$ directly by Bernstein's inequality. The volume of $A$ is given by $|A| \lesssim \lambda^{\frac{5}{2}}$. Let $S_A$ denote the corresponding smooth projection in Fourier space. Applying Bernstein's inequality gives
\begin{equation*}
\lambda^{- \frac{5}{4} - \delta} \| S_A S_\lambda S'_\lambda u \|_{L_T^2 L_{x'}^\infty} \lesssim \lambda^{-\frac{5}{4}-\delta} \lambda^{\frac{5}{4}} \| S_A S_\lambda S'_\lambda u \|_{L^2_x}.
\end{equation*}
We suppose in the following that $\mathcal{F}_{x} u$ is supported in $A^c \cap \{ \| \xi' \| \sim \lambda \} \cap \{ |\xi_0| \lesssim \lambda \}$.

\vspace*{0.3cm}

\emph{Estimate with diagonalization.} We denote by $S_B$ the frequency projection to $\{ |\xi_0| \lesssim \| \xi' \| \sim \lambda \} \cap \{|(\xi_2,\xi_3)| \gtrsim \lambda^{\frac{3}{4}} \}$. It suffices to show
\begin{equation}
\label{eq:SecondLocalization}
\lambda^{- \frac{5}{4} - \delta} \| S_B u \|_{L_T^p L_{x'}^q} \lesssim_{T,\delta} \| S_B u \|_{L_T^\infty L^2_{x'}} + \lambda^{-\frac{1}{2}} \| P_{\lambda} S_B u \|_{L^2_T L^2_{x'}} + \lambda^{-\frac{3}{4}} \| \rho_{em} \|_{L_T^\infty L_{x'}^2}.
\end{equation}
This requires an additional commutator estimate for $S_{|(\xi_2,\xi_3)| \gtrsim \lambda^{\frac{3}{4}}} = S''_{\gtrsim \lambda^{\frac{3}{4}}}$ with $\varepsilon$. Write $S''_{\gtrsim \lambda^{\frac{3}{4}}} = Id - S''_{\lesssim \lambda^{\frac{3}{4}}}$. This way we find
\begin{equation*}
\begin{split}
\| [\varepsilon, S''_{\lesssim \lambda^{\frac{3}{4}}}] f \|_{L^2_x} &= \| \| [\varepsilon(t,\cdot),S''_{\lesssim \lambda^{\frac{3}{4}}}] f(t,\cdot) \|_{L^2_{x'}} \|_{L_t^2} \\
&\lesssim \| \| \partial \varepsilon(t) \|_{L_{x'}^\infty} \lambda^{-\frac{3}{4}} \| f(t,\cdot) \|_{L^2_{x'}} \|_{L_t^2} \\
&\lesssim \lambda^{-\frac{3}{4}} \| \partial \varepsilon \|_{L_t^2 L_{x'}^\infty} \| f \|_{L_t^\infty L_{x'}^2}.
\end{split}
\end{equation*}
With the extra smoothing of $\lambda^{-\frac{1}{2}}$ of $Pu$ we see that it suffices to prove \eqref{eq:SecondLocalization}.

Due to this frequency truncation and localization away from the singular set, we can use the diagonalization because
\begin{equation*}
\tilde{m}(x,\xi) \chi_B(\xi), \quad \tilde{m}^{-1}(x,\xi) \chi_B(\xi) \in S^0_{\frac{3}{4},\frac{1}{2}}.
\end{equation*}
Indeed, taking derivatives in $\xi_2$ and $\xi_3$ gains factors of $\lambda^{-\frac{3}{4}}$ (derivatives in $\xi_1$ are better behaved and gain factors of $\lambda^{-1}$) and derivatives in $x$ yield factors of $\lambda^{\frac{1}{2}}$ because
\begin{equation*}
\| \partial^\alpha \varepsilon_{\leq \lambda^{\frac{1}{2}}} \|_{L^\infty_x} \lesssim \lambda^{\frac{|\alpha|}{2}} \| \varepsilon_{\leq \lambda^{\frac{1}{2}}} \|_{L^\infty_x}.
\end{equation*}
It is important to realize that for the first derivative we find by Bernstein's inequality the better estimate
\begin{equation}
\label{eq:ImprovedDerivativeEstimate}
\| \partial \varepsilon_{\leq \lambda^{\frac{1}{2}}} \|_{L^\infty_x} \lesssim \lambda^{\frac{1}{4}} \| \partial \varepsilon_{\leq \lambda^{\frac{1}{2}}} \|_{L^2_t L^\infty_{x'}}.
\end{equation}

We want to apply the diagonalization for $|(\xi_2,\xi_3)| \gtrsim \lambda^{\frac{3}{4}}$. For this purpose, we show the following lemma:
\begin{lemma}
\label{lem:AuxDiagonalizationL2Lipschitz}
With the notations from above, we find the following estimates to hold:
\begin{align}
\label{eq:DiagonalizationAnisotropicI}
\lambda^{-\frac{5}{4}-\delta} \| S_B u \|_{L^p_t L^q_{x'}} &\lesssim \lambda^{-\frac{5}{4}-\delta} \| \tilde{S}_B \mathcal{N}_\lambda S_B u \|_{L^p_t L^q_{x'}} + \| S_B u \|_{L^2_x}, \\
\label{eq:DiagonalizationAnisotropicII}
\lambda^{-\frac{5}{4}-\delta} \| \tilde{S}_B \mathcal{N}_\lambda S_B u \|_{L_t^p L_{x'}^q} &\lesssim \| S_B u \|_{L_t^\infty L_{x'}^2} + \lambda^{-\frac{1}{2}} \| \mathcal{D}_\lambda \tilde{S}_B \mathcal{N}_\lambda S_B u \|_{L^2_x} \\
&\quad + \lambda^{-\frac{3}{4}} \| S'_\lambda \rho_{em} \|_{L^\infty_t L^2_{x'}}, \nonumber \\
\label{eq:DiagonalizationAnisotropicIII}
\lambda^{-\frac{1}{2}} \| \mathcal{D}_{\lambda} \tilde{S}_B \mathcal{N}_\lambda S_B u \|_{L^2_x} &\lesssim \lambda^{-\frac{1}{2}} \| P_{\lambda} S_B u \|_{L^2_x} + \| S_B u \|_{L^\infty_t L^2_{x'}}.
\end{align}
\end{lemma}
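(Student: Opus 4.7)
The overall strategy mirrors the proof of Lemma \ref{lem:AuxDiagonalizationStructuredAnisotropic}, but now the conjugation symbols $\tilde m(x,\xi)\chi_B(\xi)$ and $\tilde m^{-1}(x,\xi)\chi_B(\xi)$ lie in $S^0_{3/4,1/2}$ rather than $S^0_{1/2,1/2}$, because on $\operatorname{supp}\chi_B$ differentiation in $(\xi_2,\xi_3)$ gains only $\lambda^{-3/4}$. The Calder\'on--Vaillancourt theorem still yields $L^2$-boundedness. The crucial new ingredient is the Bernstein improvement \eqref{eq:ImprovedDerivativeEstimate}, which turns one $\partial_x$ on the truncated coefficient into a $\lambda^{1/4}$-factor with the $L^2_tL^\infty_{x'}$-norm of $\partial\varepsilon$ in front, and is what distinguishes this case from the Lipschitz case treated in the next subsection.

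For \eqref{eq:DiagonalizationAnisotropicI}, I would use symbol composition (Theorem \ref{thm:KohnNirenberg}) to write
\[
\mathcal{M}_\lambda \tilde{S}_B \mathcal{N}_\lambda S_B = S_B + E_\lambda,
\]
where the asymptotic expansion of the error symbol $e_\lambda$ begins at $|\alpha|=1$. Each derivative in $\xi$ falls on the tangential directions and gains $\lambda^{-3/4}$, while the spatial derivative of $\tilde m^{-1}_{\le \lambda^{1/2}}$ is controlled by \eqref{eq:ImprovedDerivativeEstimate} with a $\lambda^{1/4}$-loss. The net effect is that the leading term of $e_\lambda$ gains $\lambda^{-1/2}$, so $E_\lambda$ can be absorbed via Sobolev embedding into $\|S_B u\|_{L^2_x}$. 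The main term $\lambda^{-5/4-\delta}\|\mathcal{M}_\lambda\tilde S_B\mathcal{N}_\lambda S_B u\|_{L^pL^q}$ is reduced to $\lambda^{-5/4-\delta}\|\mathcal{N}_\lambda S_B u\|_{L^pL^q}$ by decomposing each entry of $\mathcal{M}_\lambda\tilde S_B$ as a product of differential operators, Riesz-type multipliers (bounded on $L^pL^q$ for $1<p,q<\infty$), and symbols covered by Lemma \ref{lem:LpLqBoundsPseudos}, exactly as in the proof of \eqref{eq:AuxDiagonalizationI}; the resulting error terms again gain $\lambda^{-1/2}$ via \eqref{eq:ImprovedDerivativeEstimate} and are swept into $\|S_B u\|_{L^2_x}$ by Sobolev embedding.

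For \eqref{eq:DiagonalizationAnisotropicII} I would argue componentwise. The first two components of $\mathcal{N}_\lambda S_B u$ encode the divergence constraint: $[\mathcal{N}_\lambda S_B u]_1 = |D'|^{-1}\nabla_{x'}\!\cdot S'_\lambda S_B\mathcal{D}$ and $[\mathcal{N}_\lambda S_B u]_2 = |D'_\varepsilon|^{-1}\nabla_{x'}\!\cdot S'_\lambda S_B\mathcal{H}$, so Bernstein in space and H\"older in time give the required $\lambda^{-3/4}\|S'_\lambda\rho_{em}\|_{L^\infty_tL^2_{x'}}$ bound, matching the loss $\lambda^{-5/4-\delta}\cdot\lambda^{5/4}\lesssim\lambda^{-3/4+1/2}$ at the sharp endpoint. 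For the third through sixth components Proposition \ref{prop:HalfWaveEstimates}, applied to the truncated coefficient $\sqrt{b_{\le\lambda^{1/2}}}$ rescaled to have $\|\partial^2\cdot\|_{L^1L^\infty}\le 1$, yields \eqref{eq:StrichartzEstimatesHalfWaveEquationL1LinfCoefficients} for each half-wave component at the shifted derivative loss $\lambda^{-5/4-\delta}=\lambda^{-\rho-1/(2p)-\delta}$; this is precisely where the additional $\lambda^{-1/2p}$-loss relative to the sharp Strichartz range is absorbed.

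For \eqref{eq:DiagonalizationAnisotropicIII} I would establish the analogue of Proposition \ref{prop:StructuredDiagonalizationPartiallyAnisotropic} in the present rougher setting: by symbol composition,
\[
\mathcal{M}_\lambda\tilde S_B\mathcal{D}_\lambda\tilde S_B\mathcal{N}_\lambda S_B = P_\lambda S_B + E_\lambda,
\]
where $E_\lambda$ is an $OPS^{1/2}_{3/4,1/2}$-remainder whose leading term gains $\lambda^{-1/2}$ thanks to \eqref{eq:ImprovedDerivativeEstimate}. Then
\[
\lambda^{-1/2}\|\mathcal{D}_\lambda\tilde S_B\mathcal{N}_\lambda S_B u\|_{L^2_x}\lesssim \lambda^{-1/2}\|\mathcal{M}_\lambda\tilde S_B\mathcal{D}_\lambda\tilde S_B\mathcal{N}_\lambda S_B u\|_{L^2_x}+\|S_B u\|_{L^\infty_tL^2_{x'}}
\]
follows from the $S^0_{3/4,1/2}$-inverse of $\mathcal{M}_\lambda$ (Calder\'on--Vaillancourt), and the claim results after replacing $\mathcal{M}_\lambda\tilde S_B\mathcal{D}_\lambda\tilde S_B\mathcal{N}_\lambda S_B$ by $P_\lambda S_B$ up to the $E_\lambda$-error.

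The main obstacle is the symbol composition analysis in the rougher class $S^0_{3/4,1/2}$: the standard truncations of \eqref{eq:ExpansionPseudoComposition} do not converge quantitatively in this class, and one must exploit the Bernstein gain \eqref{eq:ImprovedDerivativeEstimate} term by term, together with the fact that only tangential $\xi$-derivatives occur in the leading order of the expansion, to produce enough smoothing of the errors so that Sobolev embedding at frequency $\lambda$ can absorb them into the $L^2$-norm on the right-hand side.
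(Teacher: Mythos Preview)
Your proof plan is correct and follows essentially the same route as the paper: symbol composition in $S^0_{3/4,1/2}$ for the conjugation operators, the Bernstein improvement \eqref{eq:ImprovedDerivativeEstimate} to squeeze out the extra smoothing, Sobolev embedding plus Calder\'on--Vaillancourt to absorb the error terms, and a componentwise treatment of \eqref{eq:DiagonalizationAnisotropicII} separating the charge components from the half-wave components. Two small remarks: (i) for the half-wave components the paper invokes \cite[Theorem~5]{Tataru2000} directly rather than rescaling Proposition~\ref{prop:HalfWaveEstimates}, which is cleaner since the hypotheses there match the $\partial\varepsilon\in L^2_TL^\infty_{x'}$ setting without needing to verify that the rescaling produces exactly the $\lambda^{-1/(2p)}$ loss; (ii) in the $L^pL^q$-boundedness of $\mathcal M_\lambda$ the four entries $\mathcal M_{15},\mathcal M_{16},\mathcal M_{43},\mathcal M_{44}$ involve a positive power of $D_{23}$ and do not factor through Riesz transforms alone---the paper treats these separately by splitting the symbol into a $\lambda^{-1}$- and a $\lambda^{+1}$-piece, which you should flag explicitly rather than folding into ``exactly as in the proof of \eqref{eq:AuxDiagonalizationI}''.
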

The lemma is the analog of Lemma \ref{lem:AuxDiagonalizationStructuredAnisotropic}. The present asymptotic expansions are worse compared to Section \ref{subsection:StructuredAnisotropic}, which is mitigated by the additional smoothing factor of $\lambda^{-\frac{1}{4}}$ on the left hand side and $\lambda^{-\frac{1}{2}}$ for the forcing term.
\begin{proof}
For the proof of \eqref{eq:DiagonalizationAnisotropicI} we use symbol composition and the asymptotic expansion of $\mathcal{M}_\lambda \tilde{S}_B \mathcal{N}_\lambda S_B = S_B + \tilde{E}_\lambda$. We have $\mathcal{M}_\lambda \tilde{S}_B \in Op(S^0_{\frac{3}{4},\frac{1}{2}})$, $\mathcal{N}_\lambda S_B \in Op(S^0_{\frac{3}{4},\frac{1}{2}})$. Hence, symbol composition holds and we compute for the leading order term
\begin{equation*}
\tilde{E}_\lambda = \lambda^{-\frac{1}{2}} E_\lambda \text{ with } E_\lambda \in Op(S^0_{\frac{3}{4},\frac{1}{2}}).
\end{equation*}
The additional gain comes from \eqref{eq:ImprovedDerivativeEstimate}. Thus, the error term can be estimated by Sobolev embedding:
\begin{equation*}
\lambda^{-\frac{5}{4}-\delta} \| \tilde{E}_\lambda S_B u \|_{L^p_t L^q_{x'}} \lesssim \lambda^{-\delta} \| E_\lambda S_B u \|_{L^2_x} \lesssim \| S_B u \|_{L^2_x}.
\end{equation*}
The ultimate estimate follows from the Calderon--Vaillancourt theorem.

Thus, for the proof of \eqref{eq:DiagonalizationAnisotropicI} we have yet to show
\begin{equation*}
\lambda^{-\frac{5}{4}-\delta} \| \mathcal{M}_\lambda \tilde{S}_B \mathcal{N}_\lambda S_B u \|_{L_t^p L_{x'}^q} \lesssim \lambda^{-\frac{5}{4}-\delta} \| \mathcal{N}_\lambda S_B u \|_{L_t^p L_{x'}^q} + \| S_B u \|_{L^2_x}.
\end{equation*}
To this end, we write $\mathcal{M}_\lambda S_B$ as composition of pseudo-differential operators, which can be bounded on $L_t^p L_{x'}^q$ for $2 \leq p,q < \infty$ because these are Riesz transforms, differential operators, or by Lemma \ref{lem:LpLqBoundsPseudos}. The error terms arising in symbol composition gain $\frac{1}{4}$ derivatives (again essentially due to \eqref{eq:ImprovedDerivativeEstimate}). We note that the components of $\mathcal{M}_{ij}$ for $(i,j) \in \{1,\ldots,6\}^2 \backslash \{(1,5),(1,6),(4,3),(4,4) \}$ can be written as linear combinations of products of symbols in $S^0_{1,\frac{1}{2}}$ and $\frac{\partial_i}{D_{23}} S_B$ for $i=2,3$. An appropriate splitting of components of $\mathcal{M}$ and $\mathcal{N}$ for this argument is provided in the Appendix. E.g.,
\begin{equation*}
\mathcal{M}_{23} S_B = \frac{i}{D^{\frac{1}{2}}} D_\varepsilon^{\frac{1}{2}} \frac{\partial_3}{D_{23}} \big( \frac{1}{\sqrt{b_{\leq \lambda^{\frac{1}{2}}}}} \cdot \big) S_B = OP(a_1) OP(a_2) OP(a_3) + E_1
\end{equation*}
with
\begin{equation*}
a_1 = i \frac{\| \xi' \|_{\varepsilon(x)}}{\| \xi'\|} \chi_\lambda(\xi_0) \chi_\lambda(\xi'), \quad a_2 = \frac{i \xi_3}{\sqrt{\xi_2^2 + \xi_3^2}} \chi_B(\xi'), \quad a_3 = \frac{1}{\sqrt{b_{\leq \lambda^{\frac{1}{2}}}}} \chi_B(\xi').
\end{equation*}
The boundedness of $a_1$ in $L_t^p L_{x'}^q$ follows from Lemma \ref{lem:LpLqBoundsPseudos}. For $a_2$ this follows from boundedness of Riesz transforms in $L^q$ for $1<q<\infty$ and for $a_3$ this is trivial.

The error terms obtained in $E_1$ are of the form $\lambda^{-\frac{1}{4}} OP(e_1)$ with $e_1 \in S^0_{\frac{3}{4},\frac{1}{2}}$. The additional $\lambda^{-\frac{1}{4}}$ gain follows from \eqref{eq:ImprovedDerivativeEstimate} and derivatives in $\xi'$ at least yield factors $\lambda^{-\frac{3}{4}}$. Therefore, we may estimate
\begin{equation*} 
\| OP(a_1) OP(a_2) OP(a_3) f \|_{L_t^p L_{x'}^q} \lesssim \| f \|_{L_t^p L_{x'}^q}
\end{equation*}
and by Sobolev embedding and the Calderon--Vaillancourt theorem we find
\begin{equation*}
\lambda^{-\frac{5}{4}} \| E_1 f \|_{L_t^p L_{x'}^q} \lesssim \| e_1 f \|_{L^2_x} \lesssim \| f \|_{L^2_x}.
\end{equation*}
It remains to check the contributions of $\mathcal{M}_{15}$, $\mathcal{M}_{16}$, $\mathcal{M}_{43}$, $\mathcal{M}_{44}$. Here we write
\begin{equation*}
\mathcal{M}_{15} S_B = OP(a_1 \tilde{\chi}_B(\xi) a_2 \chi_B(\xi)) = OP(a_1 \tilde{\chi}_B) OP(a_2 \chi_B) + \lambda^{-\frac{1}{4}} OP(e_1)
\end{equation*}
with $e_1 \in S^0_{\frac{3}{4},\frac{1}{2}}$. Clearly,
\begin{equation*}
\| OP(a_1 \tilde{\chi}_B) \|_{L_t^p L_{x'}^q \to L_t^p L_{x'}^q} \lesssim \lambda^{-1}, \quad \| OP(a_2 \tilde{\chi}_B) \|_{L_t^p L_{x'}^q \to L_t^p L_{x'}^q} \lesssim \lambda,
\end{equation*}
which estimates the leading order term. The error term is estimated like above via Sobolev embedding and the Calderon--Vaillancourt theorem. The estimates of $\mathcal{M}_{16}$, $\mathcal{M}_{43}$, $\mathcal{M}_{44}$ follow likewise.

We turn to the proof of \eqref{eq:DiagonalizationAnisotropicIII}:
We use symbol composition to write
\begin{equation*}
S_B v_\lambda = \mathcal{N}_\lambda \tilde{S}_B \mathcal{M}_\lambda S_B v_\lambda + E_\lambda S_B v_\lambda
\end{equation*}
with $E_\lambda = \lambda^{-\frac{1}{2}} Op(e_\lambda)$, $e_\lambda \in S^0_{\frac{3}{4},\frac{1}{2}}$. Hence, we can estimate by Minkowski's inequality and the Calderon--Vaillancourt theorem:
\begin{equation*}
\begin{split}
\| S_B v_\lambda \|_{L^2_x} &\lesssim \| \mathcal{N}_\lambda \tilde{S}_B \mathcal{M}_\lambda S_B v_\lambda \|_{L^2_x} + \| E_\lambda S_B v_\lambda \|_{L^2_x} \\
&\lesssim \| \mathcal{N}_\lambda \tilde{S}_B \mathcal{M}_\lambda S_B v_\lambda \|_{L^2}+ \lambda^{-\frac{1}{2}} \| S_B v_\lambda \|_{L^2_x}.
\end{split}
\end{equation*}

We absorb the error term into the left hand-side to find
\begin{equation*}
\| S_B v_\lambda \|_{L^2_x} \lesssim \| \mathcal{N}_\lambda \tilde{S}_B \mathcal{M}_\lambda S_B v_\lambda \|_{L^2_x}.
\end{equation*}
$L^2$-boundedness of $\mathcal{N}_\lambda \tilde{S}_B$ follows because its symbol is in $S^0_{\frac{3}{4},\frac{1}{2}}$.
We have argued that
\begin{equation*}
\lambda^{-\frac{1}{2}} \| \mathcal{D}_\lambda \tilde{S}_B \mathcal{N}_\lambda S_B u \|_{L^2_x} \lesssim \lambda^{-\frac{1}{2}} \| \mathcal{M}_\lambda \tilde{S}_B \mathcal{D}_\lambda \tilde{S}_B \mathcal{N}_\lambda S_B u \|_{L^2_x} + \| S_B u \|_{L^2_x}.
\end{equation*}
To conclude, we shall show that 
\begin{equation*}
\mathcal{M}_\lambda \tilde{S}_B \mathcal{D}_\lambda \tilde{S}_B \mathcal{N}_\lambda S_B = P_{\lambda} S_B + E_\lambda
\end{equation*}
with $\| E_\lambda \|_{L^2 \to L^2} \lesssim \lambda^{\frac{1}{2}}$.

We apply symbol composition by Theorem \ref{thm:KohnNirenberg} to find that $E_\lambda = Op(\lambda^{-\frac{1}{4}} e_\lambda)$ with $e_\lambda \in S^\frac{3}{4}_{\frac{3}{4},\frac{1}{2}}$. The additional gain of $\lambda^{-\frac{1}{4}}$ stems from \eqref{eq:ImprovedDerivativeEstimate}. Since $\lambda^{-\frac{1}{2}} E_\lambda \in S^0_{\frac{3}{4},\frac{1}{2}}$, we can finish the proof by appealing to the Calderon--Vaillancourt theorem.

We turn to the proof of \eqref{eq:DiagonalizationAnisotropicII}:
\begin{equation*}
\begin{split}
\lambda^{-\frac{5}{4}-\delta} \| \tilde{S}_B \mathcal{N}_\lambda S_B u \|_{L_t^2 L_{x'}^\infty} &\lesssim \| \tilde{S}_B \mathcal{N}_\lambda S_B u \|_{L_t^\infty L^2_{x'}} + \lambda^{-\frac{1}{2}} \| \tilde{S}_B \mathcal{D}_\lambda \tilde{S}_B \mathcal{N}_\lambda S_B u \|_{L^2_x} \\
&\quad + \lambda^{-\frac{3}{4}} \| S'_\lambda \rho_{em} \|_{L^\infty_t L^2_{x'}}.
\end{split}
\end{equation*}
The first two components are estimated by Sobolev embedding and H\"older's inequality in time like in the isotropic case. For the remaining four components we shall prove
\begin{equation*}
\lambda^{-1-\delta} \| [\tilde{S}_B \mathcal{N}_\lambda S_B u]_j \|_{L_t^2 L_{x'}^q} \lesssim \lambda^{-\frac{1}{4}} \| [\tilde{S}_B \mathcal{N}_\lambda S_B u]_j \|_{L_t^\infty L_{x'}^2} + \lambda^{-\frac{1}{4}} \| [\tilde{S}_B \mathcal{D}_\lambda S_B \mathcal{N}_\lambda S_B u]_j \|_{L^2_x}.
\end{equation*}
For this purpose, we apply \cite[Theorem~5]{Tataru2000} on the level of half-wave equations.

The proof is complete.
\end{proof}
With the lemma at hand, we can conclude the proof of Theorem \ref{thm:PartiallyAnisotropicStrichartz} for $\partial \varepsilon \in L_T^2 L_{x'}^\infty$ in the similar spirit as for Theorems \ref{thm:IsotropicStrichartz} and \ref{thm:StructuredStrichartzPartiallyAnisotropic}. We omit the details to avoid repetition.

\subsubsection{Lipschitz coefficients}

Now we shall see how to modify the above argument to deal with Lipschitz coefficients and show estimates with slightly less derivative loss. After the usual reductions, we shall prove the dyadic estimate
\begin{equation}
\label{eq:DyadicEstimateLipschitzCoefficients}
\lambda^{-\frac{7}{6}-\delta} \| S_\lambda S'_\lambda u \|_{L_T^p L_{x'}^q} \lesssim_T \| S_\lambda S'_\lambda u \|_{L_T^2 L_{x'}^2} + \lambda^{-\frac{1}{3}} \| P_{\lambda^{\frac{2}{3}}} S_\lambda S'_\lambda u \|_{L_T^2 L_{x'}^2} + \lambda^{-\frac{2}{3}} \| S'_\lambda \rho_{em} \|_{L_T^\infty L_{x'}^2}.
\end{equation}
The coefficients of $P$ are truncated at frequencies $\lambda^{\frac{2}{3}}$. The frequency truncation at $\lambda^{\frac{2}{3}}$ will not be emphasized in the following anymore, and we simply write $P_\lambda$. We observe like above
\begin{equation*}
\tilde{S}_\lambda \tilde{S}'_\lambda P_\lambda S_\lambda S'_\lambda u = P_\lambda S_\lambda S'_\lambda u
\end{equation*}
and write
\begin{equation*}
\tilde{S}_\lambda \tilde{S}'_\lambda P_\lambda S_\lambda S'_\lambda = \tilde{S}_\lambda \tilde{S}'_\lambda P S_\lambda S'_\lambda - \tilde{S}_\lambda \tilde{S}'_\lambda P_{\lambda^{\frac{2}{3}}\lesssim \cdot \lesssim \lambda} S_\lambda S'_\lambda.
\end{equation*}

We estimate the contribution of the second term by
\begin{equation*}
\begin{split}
\| \nabla \times (\varepsilon^{-1}_{\lambda^{\frac{2}{3}} \lesssim \cdot \lesssim \lambda} S_\lambda S'_\lambda \mathcal{D}) \|_{L^2_x} &\lesssim \| \partial \varepsilon^{-1} \|_{L^\infty_x} \| S_\lambda S'_\lambda u \|_{L^2_x} \\
&\quad + \lambda \| \varepsilon^{-1}_{\geq \lambda^{\frac{2}{3}}} \|_{L^\infty_{x}} \| S_\lambda S'_\lambda u \|_{L^2_x} \\
&\lesssim \lambda^{\frac{1}{3}} ( 1 + \| \partial \varepsilon \|_{L_{T,x'}^\infty} ) \| S_\lambda S'_\lambda u \|_{L^2_x}.
\end{split}
\end{equation*}
For the first term we use a commutator argument
\begin{equation*}
\begin{split}
\| \tilde{S}_\lambda \tilde{S}'_\lambda P S_\lambda S'_\lambda u \|_{L^2_x} &\leq \| \tilde{S}_\lambda \tilde{S}'_\lambda P u \|_{L^2_x} + \| \tilde{S}_\lambda \tilde{S}'_\lambda \nabla \times ([\varepsilon^{-1}, S_\lambda S'_\lambda]) \mathcal{D} )\|_{L^2_x}
\\
&\lesssim \| \tilde{S}_\lambda \tilde{S}_\lambda' P u \|_{L^2_x} + \| u \|_{L^2_x}.
\end{split}
\end{equation*}
The second estimate follows by
\begin{equation*}
\| \tilde{S}_\lambda \tilde{S}'_\lambda \nabla \times ([\varepsilon^{-1}, S_\lambda S'_\lambda] \D) \|_{L^2_x} \lesssim \lambda \| [\varepsilon^{-1}, S_\lambda S'_\lambda] \D \|_{L^2_x} \lesssim \| \D \|_{L^2_{x}},
\end{equation*}
which is a consequence of
\begin{equation*}
\| [\varepsilon^{-1} , S_\lambda S'_\lambda ] \|_{L^2_x \to L^2_x} \lesssim \lambda^{-1} \| \partial \varepsilon \|_{L^\infty_x}
\end{equation*}
from a kernel estimate (cf. \cite[Eq.~(3.21)]{Tataru2000}). Hence, we find like in the beginning of Subsection \ref{subsubsection:IntegrableCoefficients}:
\begin{equation*}
\| \langle D' \rangle^{-\frac{7}{6}-\delta} S_{|\tau| \lesssim \| \xi' \|} u \|_{L_T^p L_{x'}^q} \lesssim_{T,\delta} \| u \|_{L_T^2 L_{x'}} + \| |D|^{-\frac{1}{3}} P u \|_{L_T^2 L_{x'}^2} + \| \langle D' \rangle^{-\frac{3}{4}} \rho_{em} \|_{L_T^\infty L_{x'}^2}.
\end{equation*}
Together with the better estimate for the contribution away from the characteristic surface, we find like in Subsection \ref{subsubsection:IntegrableCoefficients}:
\begin{equation*}
\begin{split}
\| \langle D' \rangle^{-\rho-\frac{1}{2p}-\delta} u \|_{L_T^p L_{x'}^q} &\lesssim \| u(0) \|_{L_{x'}^2} + \| P u \|_{L_T^1 L_{x'}^2} \\
&\quad + \| \langle D' \rangle^{-\frac{2}{3}} \rho_{em}(0) \|_{L_{x'}^2} + \| \langle D' \rangle^{-\frac{2}{3}} \partial_t \rho_{em} \|_{L_T^1 L_{x'}^2}.
\end{split}
\end{equation*}

\emph{Estimate without diagonalization}.  The decreased additional smoothing of $\lambda^{-\frac{1}{6}}$ compared to $\lambda^{-\frac{1}{4}}$ compared to the previous case allows only to estimate the contribution
\begin{equation*}
A = \{ \xi' \in \R^3 : \| \xi' \| \sim \lambda, \; |(\xi_2,\xi_3)| \lesssim \lambda^{\frac{2}{3}+\varepsilon} \}
\end{equation*}
directly by Bernstein's inequality.
We suppose henceforth that $\mathcal{F}_x u$ is supported in $\{|(\xi_2,\xi_3)| \gtrsim \lambda^{\frac{2}{3}+\varepsilon}, \; \| \xi' \| \sim \lambda, \; |\xi_0| \lesssim \lambda \}$.

\emph{Estimate with diagonalization.} We denote by $S_B$ the frequency projection to $\{|\xi_0| \lesssim \| \xi' \| \sim \lambda \} \cap \{ |(\xi_2,\xi_3)| \gtrsim \lambda^{\frac{2}{3}+\varepsilon} \}$. It suffices to show
\begin{equation*}
\lambda^{-\frac{7}{6}-\delta} \| S_B u \|_{L_T^p L_{x'}^q} \lesssim \| S_B u \|_{L_{T,x}^2} + \lambda^{-\frac{1}{3}} \| P_\lambda S_B u \|_{L^2_T L_{x'}^2} + \lambda^{-\frac{2}{3}} \| S'_\lambda \rho_{em} \|_{L_T^\infty L_{x'}^2}.
\end{equation*}
The additional commutator estimate for $S''_{\gtrsim \lambda^{\frac{2}{3}}}$ gains $\lambda^{-\frac{2}{3}}$. By the frequency truncation and localization away from the singular set, we can use the diagonalization because
\begin{equation*}
\tilde{m}(x,\xi) \chi_B(\xi), \qquad \tilde{m}^{-1}(x,\xi) \chi_B(\xi) \in S^{0}_{\frac{2}{3}+\varepsilon,\frac{2}{3}}.
\end{equation*}
For Lipschitz coefficients we have the bound
\begin{equation}
\label{eq:ImprovedDerivativeBoundLipschitz}
\| \partial^\alpha \varepsilon_{\leq \lambda^{\frac{2}{3}}} \|_{L_x^\infty} \lesssim \lambda^{\frac{2}{3} (|\alpha| -1)_+} \| \varepsilon_{\leq \lambda^{\frac{2}{3}}} \|_{L_x^\infty}.
\end{equation}
Roughly speaking, error terms arising in first order symbol composition give smoothing factors of $\lambda^{-\frac{2}{3}}$. Together with the weight $\lambda^{-\frac{1}{3}}$ this allows to recover the whole derivative. The counterpart of Lemma \ref{lem:AuxDiagonalizationStructuredAnisotropic} reads as follows:
\begin{lemma}
\label{lem:AuxDiagonalizationLipschitz}
With the notations from above, we find the following estimate to hold:
\begin{align}
\label{eq:AuxDiagonalizationLipschitzI}
\lambda^{-\frac{7}{6}-\delta} \| S_B u \|_{L_T^p L_{x'}^q} &\lesssim \lambda^{-\frac{7}{6}-\delta} \| \tilde{S}_B \mathcal{N}_\lambda S_B u \|_{L_t^p L_{x'}^q} + \| S_B u \|_{L^2_x}, \\
\label{eq:AuxDiagonalizationLipschitzII}
\lambda^{-\frac{7}{6}-\delta} \| \tilde{S}_B \mathcal{N}_\lambda S_B u \|_{L_T^p L_{x'}^q} &\lesssim \| S_B u \|_{L_x^2} + \lambda^{-\frac{1}{3}} \| \mathcal{D}_\lambda \tilde{S}_B \mathcal{N}_\lambda S_B u \|_{L^2_x} \\
&\quad + \lambda^{-\frac{2}{3}} \| S'_\lambda \rho_{em} \|_{L_t^\infty L_{x'}^2}, \nonumber \\
\label{eq:AuxDiagonalizationLipschitzIII}
\lambda^{-\frac{1}{3}} \| \D_\lambda \tilde{S}_B \mathcal{N}_\lambda S_B u \|_{L^2_x} &\lesssim \lambda^{-\frac{1}{3}} \| P_\lambda S_B u \|_{L^2_x} + \| S_B u \|_{L^2_x}.
\end{align}
\end{lemma}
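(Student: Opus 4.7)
The proof follows the template of Lemma \ref{lem:AuxDiagonalizationL2Lipschitz}, now with the symbol class exponents $(\alpha,\beta)=(2/3+\varepsilon,2/3)$ and the improved derivative bound \eqref{eq:ImprovedDerivativeBoundLipschitz} in place of \eqref{eq:ImprovedDerivativeEstimate}. Because of the microlocalization to $\{|(\xi_2,\xi_3)|\gtrsim \lambda^{2/3+\varepsilon}\}$ together with the coefficient truncation at frequency $\lambda^{2/3}$, we have $\tilde m(x,\xi)\chi_B(\xi)$ and $\tilde m^{-1}(x,\xi)\chi_B(\xi)\in S^0_{2/3+\varepsilon,\,2/3}$ and $d(x,\xi)\tilde\chi_B(\xi)\in S^1_{1,\,2/3}$. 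Since $\beta<\alpha$, Theorem \ref{thm:KohnNirenberg} and the Calder\'on--Vaillancourt theorem both apply.

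For \eqref{eq:AuxDiagonalizationLipschitzI}, symbol composition yields $\mathcal{M}_\lambda\tilde S_B\mathcal{N}_\lambda S_B=S_B+\tilde E_\lambda$ where the leading order term $D_\xi \tilde m\cdot\partial_x\tilde m^{-1}\chi_B$ lies in $S^{-(2/3+\varepsilon)}_{2/3+\varepsilon,\,2/3}$ by the Lipschitz hypothesis (the first $x$-derivative costs no factor of $\lambda$ in \eqref{eq:ImprovedDerivativeBoundLipschitz}), and higher order terms gain additional factors $\lambda^{-\varepsilon}$. Combining Sobolev embedding with Calder\'on--Vaillancourt, we obtain
\begin{equation*}
\lambda^{-\frac{7}{6}-\delta}\|\tilde E_\lambda S_B u\|_{L^p_t L^q_{x'}}\lesssim \lambda^{-\frac{2}{3}-\varepsilon}\|S_B u\|_{L^2_x}\lesssim \|S_B u\|_{L^2_x}.
\end{equation*}
The bound for the main piece $\lambda^{-\frac{7}{6}-\delta}\|\mathcal{M}_\lambda\tilde S_B\mathcal{N}_\lambda S_B u\|_{L^p_tL^q_{x'}}\lesssim\lambda^{-\frac{7}{6}-\delta}\|\mathcal{N}_\lambda S_B u\|_{L^p_tL^q_{x'}}+\|S_B u\|_{L^2_x}$ is proved by writing each entry of $\mathcal{M}_\lambda$ as a composition of a Riesz transform, a differential operator, and a factor amenable to Lemma \ref{lem:LpLqBoundsPseudos}, exactly as in the proof of \eqref{eq:DiagonalizationAnisotropicI}; the commutator errors gain $\lambda^{-\varepsilon}$ and are absorbed via Sobolev embedding.

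For \eqref{eq:AuxDiagonalizationLipschitzII}, we argue componentwise. The first two (degenerate) components of $\mathcal{N}_\lambda S_B u$ equal $|D'|^{-1}\nabla_{x'}\!\cdot S'_\lambda S_B\mathcal{D}$ and $|D'_\varepsilon|^{-1}\nabla_{x'}\!\cdot S'_\lambda S_B\mathcal{H}$, so Bernstein's inequality together with H\"older in time produces the charge contribution $\lambda^{-\frac{2}{3}}\|S'_\lambda\rho_{em}\|_{L^\infty_tL^2_{x'}}$. For the four non-degenerate half-wave components we apply Tataru's scalar Strichartz estimate for half-wave equations with $C^s$ coefficients ($s=1$, cf.\ \cite{Tataru2000,Tataru2001}): with $\sigma=(2-s)/(2+s)=1/3$ the additional loss near the forbidden endpoint is $\sigma/(2p)\le 1/6+\delta$, and the factor $\lambda^{-\sigma}=\lambda^{-1/3}$ appears on the forcing term, which is exactly the shape of \eqref{eq:AuxDiagonalizationLipschitzII}.

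For \eqref{eq:AuxDiagonalizationLipschitzIII}, applying Theorem \ref{thm:KohnNirenberg} twice gives $\mathcal{M}_\lambda\tilde S_B\mathcal{D}_\lambda\tilde S_B\mathcal{N}_\lambda S_B=P_\lambda S_B+E_\lambda$; the leading error $D_\xi\tilde m\cdot\partial_x(d\,\tilde m^{-1})\chi_B$ has order $\lambda^{-(2/3+\varepsilon)}\cdot\lambda=\lambda^{1/3-\varepsilon}$, so $\lambda^{-1/3}E_\lambda\in OPS^{-\varepsilon}_{2/3+\varepsilon,\,2/3}$ is $L^2$-bounded by Calder\'on--Vaillancourt. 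Writing $\mathcal{N}_\lambda\tilde S_B\mathcal{M}_\lambda S_B=S_B+E'_\lambda$ with $\|E'_\lambda\|_{L^2\to L^2}\lesssim \lambda^{-\varepsilon}$, absorbing $E'_\lambda$, and then replacing $P_\lambda S_B$ by $\mathcal{M}_\lambda\tilde S_B\mathcal{D}_\lambda\tilde S_B\mathcal{N}_\lambda S_B-E_\lambda$ yields the desired bound. The main obstacle is bookkeeping: verifying that the exponent choices $(\alpha,\beta)=(2/3+\varepsilon,\,2/3)$ precisely balance the derivative loss $\tfrac{7}{6}+\delta$ on the left, the $\lambda^{-1/3}$ smoothing on the forcing, and the Calder\'on--Vaillancourt bounds on every symbol-composition error so that the argument closes using only Lipschitz regularity of $\varepsilon$.
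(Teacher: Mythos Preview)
Your argument follows the paper's own proof, which is explicitly a reprise of Lemma~\ref{lem:AuxDiagonalizationL2Lipschitz} with parameters $(\alpha,\beta)=(\tfrac{2}{3}+\varepsilon,\tfrac{2}{3})$ and the improved derivative bound \eqref{eq:ImprovedDerivativeBoundLipschitz}.

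There is one quantitative slip you should fix. In the step where you bound $\mathcal{M}_\lambda\tilde S_B$ on $L^p_tL^q_{x'}$ by decomposing each entry into Riesz transforms and factors amenable to Lemma~\ref{lem:LpLqBoundsPseudos}, you claim the composition errors gain only $\lambda^{-\varepsilon}$. That would not close: Sobolev embedding costs up to $\lambda^{3/2}$ at the sharp endpoint, so $\lambda^{-7/6-\delta}\cdot\lambda^{3/2}\cdot\lambda^{-\varepsilon}=\lambda^{1/3-\delta-\varepsilon}$ is not bounded. The actual gain is $\lambda^{-(2/3+\varepsilon)}$: one $\xi$-derivative contributes $\lambda^{-(2/3+\varepsilon)}$ from the microlocalization, and the first $x$-derivative costs nothing by \eqref{eq:ImprovedDerivativeBoundLipschitz}. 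This is the same mechanism you correctly invoked for $\tilde E_\lambda$ a few lines earlier, and it is exactly what the paper uses (``error terms arising in symbol composition gain $\tfrac{2}{3}+\varepsilon$ derivatives''). With that correction your argument is complete and matches the paper.
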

\begin{proof}
This is a reprise of the proof of Lemma \ref{lem:AuxDiagonalizationL2Lipschitz}. For the proof of \eqref{eq:AuxDiagonalizationLipschitzI} we again use symbol composition and the asymptotic expansion of $\mathcal{M}_\lambda \tilde{S}_B \mathcal{N}_\lambda S_B = S_B + \tilde{E}_\lambda $. We have $\mathcal{M}_\lambda \tilde{S}_B \in OPS^0_{\frac{2}{3}+\varepsilon,\frac{2}{3}}$, $\mathcal{N}_\lambda S_B \in OPS^0_{\frac{2}{3}+\varepsilon,\frac{2}{3}}$. Hence, symbol composition holds, and we compute for the leading order term $\tilde{E}_\lambda = \lambda^{-\frac{2}{3}-\varepsilon} E_\lambda$ with $E_\lambda \in OPS^0_{\frac{2}{3}+\varepsilon,\frac{2}{3}}$. The additional gain stems from \eqref{eq:ImprovedDerivativeBoundLipschitz}. Thus, the error term can be estimated by
\begin{equation}
\label{eq:ErrorTermEstimateLipschitzDiagonalization}
\lambda^{-\frac{7}{6}-\delta} \| \tilde{E}_\lambda S_B u \|_{L_t^p L_{x'}^q} \lesssim \lambda^{-\delta - \varepsilon} \| E_\lambda S_B u \|_{L^2_x} \lesssim \| S_B u \|_{L^2_x}.
\end{equation}
Note that for $\frac{2}{p} + \frac{2}{q} = 1$ there are at most $\frac{3}{2}$ derivatives required:
\begin{equation*}
\| S_\lambda u \|_{L_t^p L_{x'}^q} \lesssim \lambda^{\frac{3}{2}} \| S_\lambda u \|_{L^2_x}.
\end{equation*}
The ultimate estimate in \eqref{eq:ErrorTermEstimateLipschitzDiagonalization} follows from the Calderon--Vaillancourt therom. For the proof of \eqref{eq:AuxDiagonalizationLipschitzI} we have to show
\begin{equation*}
\lambda^{-\frac{7}{6}-\delta} \| \mathcal{M}_\lambda \tilde{S}_B \mathcal{N}_\lambda S_B u \|_{L_t^p L_{x'}^q} \lesssim \lambda^{-\frac{7}{6}-\delta} \| \mathcal{N}_\lambda S_B u \|_{L_t^p L_{x'}^q} + \| S_B u \|_{L^2_x}.
\end{equation*}
Like in the proof of \eqref{eq:DiagonalizationAnisotropicI}, we write $\mathcal{M}_\lambda S_B$ as composition of operators, which can be bounded on $L_t^p L_{x'}^q$ in a straight-forward way. The error terms arising in symbol composition gain $\frac{2}{3}+\varepsilon$ derivatives, which then suffices to estimate the remainder by Sobolev embedding. This finishes the proof of \eqref{eq:AuxDiagonalizationLipschitzI}.

We turn to the proof of \eqref{eq:AuxDiagonalizationLipschitzIII}. We write by symbol composition
\begin{equation*}
S_B v_\lambda  = \mathcal{N}_\lambda \tilde{S}_B \mathcal{M}_\lambda S_B v_\lambda + E_\lambda S_B v_\lambda
\end{equation*}
with $E_\lambda = \lambda^{-\frac{2}{3}-\varepsilon} Op(e_\lambda)$, $e_\lambda \in S^0_{\frac{2}{3}+\varepsilon,\frac{2}{3}}$. Like in the proof of \eqref{eq:DiagonalizationAnisotropicIII}, we can absorb the error term into the left hand-side to find
\begin{equation*}
\| S_B v_\lambda \|_{L^2_x} \lesssim \| \mathcal{N}_\lambda \tilde{S}_B \mathcal{M}_\lambda S_B v_\lambda \|_{L^2_x}.
\end{equation*}
$L^2$-boundedness of $\mathcal{N}_\lambda \tilde{S}_B$ follows again by the Calderon--Vaillancourt theorem. We have proved
\begin{equation*}
\lambda^{-\frac{1}{3}} \| \mathcal{D}_\lambda \tilde{S}_B \mathcal{N}_\lambda S_B u \|_{L^2_x} \lesssim \lambda^{-\frac{1}{3}} \| \mathcal{M}_\lambda \tilde{S}_B \mathcal{D}_\lambda \tilde{S}_B \mathcal{N}_\lambda S_B u \|_{L^2_x} + \| S_B u_\lambda \|_{L^2_x}.
\end{equation*}
We still have to show that
\begin{equation*}
\mathcal{M}_\lambda \tilde{S}_B \mathcal{D}_\lambda \tilde{S}_B \mathcal{N}_\lambda S_B = P_\lambda S_B + E_\lambda
\end{equation*}
with $\| E_\lambda \|_{L^2_x \to L^2_x} \lesssim \lambda^{\frac{1}{3}}$.
We apply symbol composition to find that $E_\lambda = \lambda^{-\frac{2}{3}-\varepsilon} OP(e_\lambda)$ with $e_\lambda \in S^{1-\varepsilon}_{\frac{2}{3}+\varepsilon,\frac{2}{3}}$. Hence, the proof is concluded by applying the Calderon--Vaillancourt theorem.
\end{proof}
With Lemma \ref{lem:AuxDiagonalizationLipschitz} at hand, we can prove \eqref{eq:DyadicEstimateLipschitzCoefficients} by similar means like in Subsection \ref{subsubsection:IntegrableCoefficients}. This concludes the proof of Theorem \ref{thm:PartiallyAnisotropicStrichartz}.
\end{proof}

\section{Improved local well-posedness for quasilinear Maxwell equations}
\label{section:LocalWellposednessQuasilinearMaxwell}
\subsection{The simplified Kerr model}
\label{subsection:SimplifiedKerr}
In the following we shall analyze the system of equations:
\begin{equation}
\label{eq:SimplifiedKerrComponent}
\partial_t^2 u + \nabla \times (\varepsilon(u) \nabla \times u) = 0, \quad \nabla_{x'} \cdot u  = 0
\end{equation}
for $u: \R \times \R^3 \to \R^3$ and $\varepsilon \in C^\infty(\R^3;\R_{>0})$.

In the first step we modify the proof of the Strichartz estimates for the first order system in case of isotropic permittivity to show the following:
\begin{theorem}
\label{thm:StrichartzEstimatesSimplifiedKerr}
Let $\varepsilon: \R \times \R^3 \to \R_{>0}$, $\varepsilon \in C^1(\R \times \R^3)$. Suppose there are $\Lambda_1, \Lambda_2 > 0$  such that for any $x \in \R^4$ we have $\Lambda_1 \leq \varepsilon (x) \leq \Lambda_2$. Let $P(x,D) = \partial_t^2 + \nabla \times (\varepsilon \nabla \times \cdot)$ with $u=(u_1,u_2,u_3):\R \times \R^3 \to \R^3$ and $\nabla_{x'} \cdot u = \rho_e$. Then, the following Strichartz estimates hold:
\begin{equation*}
\begin{split}
\| |D'|^{-\rho} \nabla_x u \|_{L_T^p L_{x'}^q} &\lesssim \nu^{\frac{1}{p}} \| \nabla_x u \|_{L^\infty L_{x'}^2} + \nu^{-\frac{1}{p'}} \| P(x,D) u \|_{L_T^1 L_{x'}^2} \\
&\quad + T^{\frac{1}{p}} ( \| |D'|^{\frac{1}{p}} \rho_e \|_{L_T^\infty L_{x'}^2} + \| |D'|^{\frac{1}{p}} \partial_t \rho_e \|_{L_T^1 L_{x'}^2})
\end{split}
\end{equation*}
provided that $(\rho,p,q,3)$ is Strichartz admissible, $\nu \geq 1$, and $T \| \partial_x^2 \varepsilon \|_{L_t^1 L_{x'}^\infty} \leq \nu^2$.
\end{theorem}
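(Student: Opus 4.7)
The plan is to reduce the second-order equation for $u$ to the first-order Maxwell system of Theorem \ref{thm:StrichartzEstimatesL1LinfCoefficientsIsotropic} and then translate the resulting estimate into the target form. Given $u$ with $P(x,D) u = f$ and $\nabla_{x'} \cdot u = \rho_e$, I introduce the auxiliary Maxwell pair $(\D, \mathcal{H}) := (-\partial_t u, \varepsilon \nabla \times u)$. A direct Leibniz computation using $Pu = f$ shows that $(\D, \mathcal{H})$ satisfies a first-order Maxwell system with permittivity $1/\varepsilon$, permeability $1$, electric current $\J_e = f$, and magnetic current $\J_m = (\nabla \varepsilon) \times \partial_t u - (\partial_t \varepsilon) \nabla \times u$, while the divergences are $\nabla \cdot \D = -\partial_t \rho_e$ and $\nabla \cdot \mathcal{H} = \nabla \varepsilon \cdot \nabla \times u$. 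Ellipticity of $1/\varepsilon$ is inherited from that of $\varepsilon$, and combining the chain rule with the Gagliardo--Nirenberg bound
\begin{equation*}
\|\partial \varepsilon\|_{L^2_T L^\infty_{x'}}^2 \lesssim \|\varepsilon\|_{L^\infty} \|\partial^2 \varepsilon\|_{L^1_T L^\infty_{x'}} \lesssim \nu^2/T
\end{equation*}
yields $T \|\partial^2 (1/\varepsilon)\|_{L^1_T L^\infty_{x'}} \lesssim \nu^2$, so Theorem \ref{thm:StrichartzEstimatesL1LinfCoefficientsIsotropic} applies to $(\D, \mathcal{H})$.

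Applying the first-order estimate produces control of $\| |D'|^{-\rho} (\partial_t u, \varepsilon \nabla \times u) \|_{L^p L^q}$. The temporal component is one piece of $\nabla_x u$. For the spatial derivative I would use the vector identity $-\Delta u = \nabla \times \nabla \times u - \nabla_{x'} (\nabla_{x'} \cdot u)$ to split $\nabla_{x'} u$ into a divergence-free part controlled by $\nabla \times u$ (via bounded Riesz transforms) plus a gradient contribution $\nabla_{x'}^2 \Delta^{-1} \rho_e$. The latter is handled by Sobolev embedding: since $3(\tfrac{1}{2} - \tfrac{1}{q}) = \rho + \tfrac{1}{p}$, one obtains $\| |D'|^{-\rho} \nabla_{x'}^2 \Delta^{-1} \rho_e \|_{L^q_{x'}} \lesssim \| |D'|^{1/p} \rho_e \|_{L^2_{x'}}$, and a H\"older step in time yields the target $T^{1/p} \| |D'|^{1/p} \rho_e \|_{L^\infty L^2}$ contribution.

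For the RHS, the energy $\| (\D, \mathcal{H}) \|_{L^\infty L^2}$ is comparable to $\| \nabla_x u \|_{L^\infty L^2}$ by ellipticity, and $\| \J_e \|_{L^1 L^2} = \| P u \|_{L^1 L^2}$ is already in the required form. The Gagliardo--Nirenberg bound
\begin{equation*}
\|\partial \varepsilon\|_{L^1_T L^\infty_{x'}} \lesssim T^{1/2} \|\partial^2 \varepsilon\|_{L^1_T L^\infty_{x'}}^{1/2} \lesssim \nu
\end{equation*}
gives $\| \J_m \|_{L^1 L^2} \lesssim \nu \| \nabla_x u \|_{L^\infty L^2}$, hence $\nu^{-1/p'} \| \J_m \|_{L^1 L^2} \lesssim \nu^{1/p} \| \nabla_x u \|_{L^\infty L^2}$; the divergence correction $\nabla \cdot \mathcal{H} = \nabla \varepsilon \cdot \nabla \times u$ is absorbed analogously. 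Finally, the first-order theorem's $|D'|^{-1+1/p}$ weights on $\nabla \cdot \D = -\partial_t \rho_e$ and on $\partial_t (\nabla \cdot \D) = -\partial_t^2 \rho_e$ translate into the target's $|D'|^{1/p}$ weights on $\rho_e$ and $\partial_t \rho_e$ via the identity $\partial_t^2 \rho_e = \nabla_{x'} \cdot P u$ (obtained by taking the divergence of $Pu$ and invoking $\nabla \cdot \nabla \times = 0$) together with the fundamental theorem of calculus in $t$.

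The principal obstacle is the bookkeeping of derivative weights: the one-derivative mismatch between $|D'|^{-1+1/p}$ in the first-order estimate and $|D'|^{1/p}$ in the target is compensated by the extra $\partial_t$ carried by $\D = -\partial_t u$, but verifying this rigorously, together with absorbing every lower-order perturbation generated by $\nabla \varepsilon$ and $\partial_t \varepsilon$ into the correct norm with the correct power of $\nu$, constitutes the bulk of the technical work.
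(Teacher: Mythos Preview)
Your approach is genuinely different from the paper's. The paper does \emph{not} reduce to Theorem~\ref{thm:StrichartzEstimatesL1LinfCoefficientsIsotropic}; instead it diagonalizes the second-order principal symbol
\[
p(x,\xi)=-\xi_0^2+\varepsilon(x)\bigl[\|\xi'\|^2 1_{3\times3}-\xi'\otimes\xi'\bigr]
\]
directly. After the usual dyadic/paradifferential reductions one has to prove \eqref{eq:DyadicEstimateSimplifiedKerr}. The key observation is that the eigenspace decomposition of $\|\xi'\|^2 1_{3\times3}-\xi'\otimes\xi'$ is independent of $\varepsilon$, so the conjugation matrices $m^{(i)}(\xi)$ are pure Fourier multipliers (Riesz transforms) with no $x$-dependence. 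This makes the commutators in Proposition~\ref{prop:SimplifiedKerr} harmless and, crucially, makes the first row of $(m^{(i)})^{-1}$ equal to $\xi^*$, so the degenerate component is exactly $|D'|^{-1}\rho_e$; Sobolev embedding then yields the stated $|D'|^{1/p}\rho_e$ term.

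Your reduction is appealing but has a real gap in the charge bookkeeping. With $(\D,\mathcal H)=(-\partial_t u,\varepsilon\nabla\times u)$ the electric charge is $\nabla\cdot\D=-\partial_t\rho_e$, and Theorem~\ref{thm:StrichartzEstimatesL1LinfCoefficientsIsotropic} produces the term
\[
T^{1/p}\bigl\||D'|^{-1+1/p}\partial_t(\nabla\cdot\D)\bigr\|_{L^1L^2}
=T^{1/p}\bigl\||D'|^{-1+1/p}\partial_t^2\rho_e\bigr\|_{L^1L^2}
=T^{1/p}\bigl\||D'|^{-1+1/p}\nabla_{x'}\!\cdot Pu\bigr\|_{L^1L^2}.
\]
Since $|D'|^{-1}\nabla_{x'}\cdot$ is a bounded Riesz-type multiplier, this is $\lesssim T^{1/p}\||D'|^{1/p}Pu\|_{L^1L^2}$, which is \emph{not} controlled by $\nu^{-1/p'}\|Pu\|_{L^1L^2}$ for high frequencies. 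The identity $\partial_t^2\rho_e=\nabla\cdot Pu$ together with the fundamental theorem of calculus does not remove this extra $|D'|^{1/p}$ on the forcing; the time derivative on $\rho_e$ cannot be traded for a spatial one. A similar issue arises for the initial-time term $\||D'|^{-1+1/p}\partial_t\rho_e(0)\|_{L^2}$, which is incomparable with the target charge norms. In the divergence-free case $\rho_e=0$ (the only case used downstream) all of these terms vanish and your reduction goes through; but as a proof of the theorem as stated, the charge translation fails. The paper's direct second-order diagonalization avoids this precisely because the degenerate eigenvector picks up $\rho_e$ itself rather than $\partial_t\rho_e$.
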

This gives the following corollary for coefficients in $L_T^2 L_{x'}^\infty$ via the usual paradifferential decomposition (cf. \cite{Tataru2002}). The proof is omitted.
\begin{corollary}
Assume that $\partial \varepsilon \in L_T^2 L_{x'}^\infty$ and $(\rho,p,q,3)$ be Strichartz admissible. Then, the following estimated holds for $T,\delta > 0$:
\begin{equation*}
\begin{split}
\| \langle D' \rangle^{-\rho - \frac{1}{3p}-\delta} \nabla_x u \|_{L_t^p(0,T; L^q)} &\lesssim_{T,\delta} \| \nabla_x u \|_{L_T^\infty L_{x'}^2} + \| P(x,D) u \|_{L_T^1 L_{x'}^2} \\
&\quad + \| \langle D' \rangle^{\frac{1}{p}} \rho_e \|_{L_T^\infty L_{x'}^2} + \| \langle D' \rangle^{\frac{1}{p}} \partial_t \rho_e \|_{L_T^1 L_{x'}^2}.
\end{split}
\end{equation*}
\end{corollary}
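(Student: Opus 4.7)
My plan is to adapt the proof scheme of Theorem~\ref{thm:StrichartzEstimatesL1LinfCoefficientsIsotropic} to the second-order operator $P$ by recasting it as a first-order diagonalizable system in the variables $W := (|D'|u,\partial_t u)$ and reducing to half-wave equations. After the standard reductions from Section~\ref{subsection:ReductionsC2Isotropic}---scaling to $\nu=1$, localization to high frequencies and to a space-time unit cube, dyadic Littlewood--Paley decomposition, Fourier truncation of $\varepsilon$ at scale $\lambda^{1/2}$, and the FBI-based off-characteristic estimate for $|\xi_0|\gg\|\xi'\|$---the argument reduces to a dyadic Strichartz estimate for $\nabla_x S_\lambda u$ with $u$ supported near the characteristic cone $\{|\xi_0|\lesssim\|\xi'\|\sim\lambda\}$ and $\varepsilon$ replaced by its Fourier-truncated version.

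The first-order system for $W$ has principal symbol
\begin{equation*}
M(x,\xi') = \begin{pmatrix} 0 & |\xi'| I_3 \\ -|\xi'|^{-1}\varepsilon(x)\bigl(|\xi'|^2 I_3 - \xi'(\xi')^t\bigr) & 0\end{pmatrix},
\end{equation*}
whose eigenvalues are $\pm i\sqrt{\varepsilon(x)}|\xi'|$ on the transverse subspace (each with multiplicity $2$) and $0$ on the two-dimensional longitudinal subspace. This block structure exactly mirrors the isotropic Maxwell symbol of Subsection~\ref{subsection:IsotropicDiagonalization}, so the three-fold microlocal partition $\{|\xi_i^*|\gtrsim 1\}$ of $\mathbb{S}^2$ from that subsection yields smooth conjugation matrices $\mathcal{M}^{(i)}_\lambda,\mathcal{N}^{(i)}_\lambda\in OPS^0_{1,1/2}$, and Theorem~\ref{thm:KohnNirenberg} combined with the Calder\'on--Vaillancourt theorem delivers the pseudo-differential diagonalization up to $L^2$-bounded remainders, as in Proposition~\ref{prop:Diagonalization}. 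The four wave modes then reduce to scalar half-wave equations with symbol $-\xi_0\pm\bigl(\varepsilon_{\leq\lambda^{1/2}}(x)|\xi'|^2\bigr)^{1/2}$, and Proposition~\ref{prop:HalfWaveEstimates} supplies the required Strichartz bound.

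The two longitudinal (static) modes are handled directly through the divergence constraint $\nabla_{x'}\cdot u=\rho_e$: one has $u_\parallel=\nabla'\Delta^{-1}\rho_e$, and differentiating the second-order equation reveals that $\partial_t u_\parallel$ is likewise determined by $\partial_t\rho_e$ via $\partial_t u_\parallel=\nabla'\Delta^{-1}\partial_t\rho_e$, since the transverse projection of $\nabla\times(\varepsilon\nabla\times u)$ kills any contribution of $Pu$ to the longitudinal evolution. At a dyadic scale $\lambda$, Bernstein's inequality together with H\"older in time converts these identities into contributions bounded by $T^{1/p}\lambda^{1/p}\|S_\lambda\rho_e\|_{L^\infty L^2}$ and $T^{1/p}\lambda^{1/p}\|S_\lambda\partial_t\rho_e\|_{L^1 L^2}$, which after dyadic summation yield precisely the charge norms on the right-hand side of the theorem. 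The passage from the frequency-localized estimate to the full inhomogeneous bound is standard and invokes the commutator estimate \eqref{eq:CommutatorEstimate}.

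The main obstacle is adapting the diagonalization of Subsection~\ref{subsection:IsotropicDiagonalization} to the symbol $M$, which is obtained from the quadratic principal symbol $-\tau^2 I+\varepsilon(|\xi'|^2 I-\xi'(\xi')^t)$ after normalizing by $|D'|$: one must verify that the symbol-composition remainders produced by Theorem~\ref{thm:KohnNirenberg} lie in $OPS^0_{1,1/2}$ under the hypothesis $T\|\partial_x^2\varepsilon\|_{L^1 L^\infty}\leq\nu^2$, exactly as in Proposition~\ref{prop:Diagonalization}. Since the transverse--longitudinal block structure of $M$ matches the isotropic Maxwell one in all relevant respects, these estimates close by the same composition arguments already used in Section~\ref{section:StrichartzEstimatesIsotropic}.
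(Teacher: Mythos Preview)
Your proposal does not prove the Corollary; it reproves Theorem~\ref{thm:StrichartzEstimatesSimplifiedKerr}. Throughout you work under the hypothesis $T\|\partial_x^2\varepsilon\|_{L^1 L^\infty}\leq\nu^2$, you truncate coefficients at the standard scale $\lambda^{1/2}$, and you invoke Proposition~\ref{prop:HalfWaveEstimates}, whose applicability requires $\|\partial_x^2\varepsilon\|_{L^1 L^\infty}\leq 1$. None of this is available under the Corollary's sole assumption $\partial\varepsilon\in L_T^2 L_{x'}^\infty$. Correspondingly, your argument never produces the extra loss $\tfrac{1}{3p}+\delta$; that loss is not a by-product of the diagonalization but is precisely the signature of the missing reduction step.

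What is missing is the paradifferential decomposition of \cite{Tataru2002}: given $\partial\varepsilon\in L_T^2 L_{x'}^\infty$, one truncates the coefficients at scale $\lambda^{\frac{2}{s+2}}$ (here $s=1$, so $\lambda^{2/3}$), checks that the truncated coefficients satisfy the $\partial^2\varepsilon\in L^1L^\infty$ bound needed to invoke Theorem~\ref{thm:StrichartzEstimatesSimplifiedKerr} on each dyadic block, and balances the resulting frequency-dependent constants against the truncation error. This is exactly how Theorems~\ref{thm:IsotropicStrichartzEstimatesCsCoefficients} and~\ref{thm:StrichartzEstimatesXsCoefficients} are deduced from Theorems~\ref{thm:IsotropicStrichartz} and~\ref{thm:StrichartzEstimatesL1LinfCoefficientsIsotropic}, and it is the step the paper has in mind when it says the Corollary follows from Theorem~\ref{thm:StrichartzEstimatesSimplifiedKerr} ``via the usual paradifferential decomposition''. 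Your sketch should take Theorem~\ref{thm:StrichartzEstimatesSimplifiedKerr} as a black box and carry out this reduction.

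As a secondary remark, even for the part that overlaps with Theorem~\ref{thm:StrichartzEstimatesSimplifiedKerr}, the paper's route is simpler than yours: it diagonalizes the second-order symbol $-\xi_0^2+\varepsilon(x)(\|\xi'\|^2 I-\xi'\otimes\xi')$ directly with $3\times 3$ conjugation matrices $m^{(i)}(\xi)$ that are \emph{pure Fourier multipliers} (no $x$-dependence), landing on the scalar wave operator $-\xi_0^2+\varepsilon(x)\|\xi'\|^2$ for the two transverse modes and invoking \cite[Theorem~1.1]{Tataru2002} rather than the half-wave Proposition~\ref{prop:HalfWaveEstimates}. Your $6\times 6$ first-order reformulation is workable but introduces unnecessary $x$-dependence into the conjugators and requires keeping track of more commutator terms.
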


\begin{proof}[Proof~of~Theorem~\ref{thm:StrichartzEstimatesSimplifiedKerr}]
By the arguments of \cite{Tataru2002}, which apply for the coupled system of wave equations as well, we can reduce to the dyadic estimate
\begin{equation}
\label{eq:DyadicEstimateSimplifiedKerr}
\lambda^{1-\rho} \| S_\lambda S'_\lambda u \|_{L^p L^q} \lesssim \lambda \| S_\lambda S'_\lambda u \|_{L^\infty L^2} + \| P_\lambda(x,D) S_\lambda S'_\lambda u \|_{L^2} + \lambda^{\frac{1}{p}} \| S'_\lambda \rho_e \|_{L_T^\infty L_{x'}^2},
\end{equation}
where $P_\lambda$ denotes the operator with frequency truncated coefficients at $\lambda^{\frac{1}{2}}$, $\| \partial^2 \varepsilon \|_{L^1 L^\infty} \leq 1$, $T \leq 1$. The principal symbol of $P$ (the frequency truncation is omitted in the following to lighten the notation) is given by
\begin{equation*}
p(x,\xi) = -\xi_0^2 + \varepsilon(x) [ \| \xi' \|^2 1_{3 \times 3} - \xi' \otimes \xi'].
\end{equation*}

In the following we diagonalize the principal symbol like we did for first order Maxwell equations in the isotropic case. To this end, let $\xi_i^* = \xi_i / \| \xi' \|$ and $\xi_{ij}^2 = \xi_i^2 + \xi_j^2$ for $i,j=1,2,3$. Fix smooth functions $\phi_{i}: \mathbb{S}^{2} \to \R_{\geq 0}$ such that $\phi_{1} + \phi_2 + \phi_3 = 1$ and $\phi_i$ is supported in $|\xi_i^*| \gtrsim 1$. We define $s_{\lambda i}(\xi) = s_{\leq \lambda}(\xi) \beta( \| \xi' \| / \lambda) \phi_i(\xi^*)$ with $\beta$ like in \eqref{eq:LPBeta}. A variant of the analysis of Section \ref{section:StrichartzEstimatesIsotropic} yields:
\begin{lemma}
For $i=1,2,3$, there are invertible matrices $m^{i}(\xi)$ such that
\begin{equation*}
p(x,\xi) s_{\lambda i}(\xi) = m^{(i)}(\xi) d(x,\xi) (m^{(i)})^{-1}(\xi) s_{\lambda i}(\xi)
\end{equation*}
with $d(x,\xi) = \text{diag}(-\xi_0^2, - \xi_0^2 + \varepsilon(x) \|\xi' \|^2, - \xi_0^2 + \varepsilon(x) \|\xi' \|^2)$.
\end{lemma}
\begin{proof}
We let as first eigenvector (independently of $i$) $v_1 = \xi^*$.

\vspace*{0.3cm}
$\bullet |\xi_1^*| \gtrsim 1$: We let as second and third eigenvector perpendicular to $\xi^*$:
\begin{equation*}
v_2 = 
\begin{pmatrix}
\frac{\xi_2}{\xi_{12}} \\ - \frac{\xi_1}{\xi_{12}} \\ 0
\end{pmatrix}
, \quad v_3 = 
\begin{pmatrix}
\frac{\xi_3}{\xi_{13}} \\ 0 \\ -\frac{\xi_1}{\xi_{13}}
\end{pmatrix}
.
\end{equation*}
We set as conjugation matrix
\begin{equation*}
m^{(1)}(\xi) = 
\begin{pmatrix}
\xi_1^* &  \frac{\xi_2}{\xi_{12}} & \frac{\xi_3}{\xi_{13}} \\
\xi_2^* &  - \frac{\xi_1}{\xi_{12}} &0 \\
\xi_3^* & 0& - \frac{\xi_1}{\xi_{13}} 
\end{pmatrix}
\end{equation*}
and compute $\det m^{(1)}(\xi) = \frac{\xi_1 \| \xi' \|}{\xi_{12} \xi_{13}}$.

\vspace*{0.3cm}
\noindent $\bullet |\xi_2^*| \gtrsim 1$: We let as second and third eigenvector
\begin{equation*}
v_2 = 
\begin{pmatrix}
\frac{\xi_2}{\xi_{12}} \\ - \frac{\xi_1}{\xi_{12}} \\ 0
\end{pmatrix}, \quad
v_3 = 
\begin{pmatrix}
0 \\ \frac{\xi_3}{\xi_{23}} \\ - \frac{\xi_2}{\xi_{23}}
\end{pmatrix}
.
\end{equation*}
As second conjugation matrix we set
\begin{equation*}
m^{(2)}(\xi) = 
\begin{pmatrix}
\xi_1^* & \frac{\xi_2}{\xi_{12}} & 0 \\
\xi_2^* & - \frac{\xi_1}{\xi_{12}} & \frac{\xi_3}{\xi_{23}} \\
\xi_3^* & 0 & -\frac{\xi_2}{\xi_{23}}
\end{pmatrix}
\end{equation*}
and have $\det m^{(2)}(\xi) = \frac{\xi_2 \| \xi' \|}{\xi_{12} \xi_{23}}$.
\vspace*{0.3cm}

\noindent $\bullet |\xi_3^*| \gtrsim 1$: We choose the second and third eigenvector as
\begin{equation*}
v_2 = 
\begin{pmatrix}
\frac{\xi_3}{\xi_{13}} \\ 0 \\ - \frac{\xi_1}{\xi_{13}}
\end{pmatrix}
, \quad v_3= 
\begin{pmatrix}
0 \\ \frac{\xi_3}{\xi_{23}} \\ - \frac{\xi_2}{\xi_{23}}
\end{pmatrix}
.
\end{equation*}
We set
\begin{equation*}
m^{(3)}(\xi) = 
\begin{pmatrix}
\xi_1^* & \frac{\xi_3}{\xi_{13}} & 0 \\
\xi_2^* & 0 & \frac{\xi_3}{\xi_{23}} \\
\xi_3^* & - \frac{\xi_1}{\xi_{13}} & - \frac{\xi_2}{\xi_{23}}
\end{pmatrix}
\end{equation*}
and $\det m^{(3)}(\xi) = \frac{\xi_3 \| \xi' \|}{\xi_{13} \xi_{23}}$.
\end{proof}
We remark that the entries of $m^{(i)}$ are $L^p$-bounded Fourier multipliers because these are Riesz transforms in two or three variables. By Cramer's rule, so are the entries of $(m^{(i)})^{-1}$ because the determinant is an $L^p$-bounded multiplier within the support of $s_{\lambda i}$. The latter is a straight-forward consequence of the H\"ormander--Mikhlin theorem. For future reference, $(m^{(i)})^{-1}$ take the form
\begin{equation}
\label{eq:InverseConjugationMatricesSimplifiedKerr}
(m^{(i)})^{-1} (\xi) = 
\begin{pmatrix}
\xi_1^* & \xi_2^* & \xi_3^* \\
w_{21}^{(i)} & w_{22}^{(i)} & w_{23}^{(i)} \\
w_{31}^{(i)} & w_{32}^{(i)} & w_{33}^{(i)}
\end{pmatrix}
.
\end{equation}

 Since $m^{(i)}(\xi) s_{\lambda i}(\xi)$ and $(m^{(i)})^{-1}(\xi) s_{\lambda i}(\xi) \in S^0_{1,0}$\footnote{Note that these are just Fourier multiplier.}, we can quantize
\begin{equation*}
\mathcal{M}_\lambda^{(i)} = OP(m^{(i)}(\xi) \chi_{\lambda i}(\xi)), \quad \mathcal{N}_\lambda^{(i)} = OP((m^{(i)})^{-1}(\xi) \chi_{\lambda i}(\xi)),
\end{equation*}
and
\begin{equation*}
\mathcal{D}_\lambda = \text{diag}(\partial_t^2 , \partial_t^2 - \nabla_{x'} \cdot (\varepsilon(x) \nabla_{x'}), \partial_t^2 - \nabla_{x'} \cdot (\varepsilon(x) \nabla_{x'})).
\end{equation*}
Note that the dependence on $\lambda$ comes for $\mathcal{D}_\lambda$ from the frequency truncation of $\varepsilon$, which is suppressed in notation. We have the following proposition on diagonalization:

\begin{proposition}
\label{prop:SimplifiedKerr}
For $i \in \{1,2,3\}$ and $\lambda \in 2^{\N_0}$, $\lambda \gg 1$, we have the following decomposition:
\begin{equation}
P_\lambda S_{\lambda i} = \mathcal{M}_\lambda^{(i)} \mathcal{D}_\lambda \mathcal{N}_\lambda^{(i)} S_{\lambda i} + E_{\lambda i}
\end{equation}
with $\| E_{\lambda i} \|_{L^2 \to L^2} \lesssim \lambda$.
\end{proposition}

\begin{proof}
First we observe that we can write $\mathcal{D}_\lambda \tilde{S}_{\lambda i} = OP(d(x,\xi)) \tilde{S}_{\lambda i} + E_D \tilde{S}_{\lambda i}$ with $\| E_D \tilde{S}_{\lambda i} \|_{L^2 \to L^2} \lesssim \lambda$. By symbol composition and $\| \mathcal{M}^{(i)}_\lambda E^i_D \mathcal{N}^{(i)}_\lambda \|_{L^2 \to L^2} \lesssim \lambda$, we find
\begin{equation*}
\mathcal{M}^{(i)}_\lambda \mathcal{D} \mathcal{N}_\lambda^{(i)} S_{\lambda i} = P_\lambda S_{\lambda i} + R \tilde{S}_{\lambda i}.
\end{equation*}
We have to show that $\| R \tilde{S}_{\lambda i} \|_{L^2 \to L^2} \lesssim \lambda$. For this purpose we use symbol composition and the asymptotic expansion of
\begin{equation*}
\mathcal{M}^{(i)}_\lambda \mathcal{D} \mathcal{N}^{(i)}_\lambda S_{\lambda i} = P S_{\lambda i} + \lambda^2 O( \partial \varepsilon \frac{\partial a}{\partial \xi} \chi_{\lambda i}(\xi))
\end{equation*}
with $a$ denoting a component of $m^{(i)}$. Similar to Proposition \ref{prop:Diagonalization}, we verify that the leading order error term satisfies the bound
\begin{equation*}
\| O( \partial \varepsilon \frac{\partial a}{\partial \xi} \chi_{\lambda i}(\xi)) \|_{L^2 \to L^2} \lesssim \lambda^{-1}.
\end{equation*}
The reason is that the coefficients of $\varepsilon$ are still Lipschitz, so we have the bound for the truncated coefficients
\begin{equation*}
\| \partial_x^{\alpha} \varepsilon \|_{L^\infty} \lesssim \lambda^{\frac{(|\alpha|-1)_+}{2}}, \quad \alpha \in \N_0^4.
\end{equation*}
\end{proof}

To conclude the proof of Theorem \ref{thm:StrichartzEstimatesSimplifiedKerr} like in Section \ref{section:StrichartzEstimatesIsotropic}, we need the following estimates:
\begin{lemma}
Let $\mathcal{M}^{(i)}_\lambda$, $\mathcal{D}_\lambda$, and $\mathcal{N}^{(i)}_\lambda$ like above for $\lambda \in 2^{\N_0}$. The following estimates are true:
\begin{align}
\label{eq:SimplifiedKerrAuxI}
\lambda^{1-\rho} \| S_{\lambda i} u \|_{L_t^p L_{x'}^q} &\lesssim \lambda^{1-\rho} \| \tilde{S}_{\lambda i} \mathcal{N}_\lambda^{(i)} S_{\lambda i} u \|_{L_t^p L_{x'}^q} + \lambda \| S_{\lambda i} u \|_{L^2_x}, \\
\label{eq:SimplifiedKerrAuxII}
\lambda^{1-\rho} \| \tilde{S}_{\lambda i} \mathcal{N}^{(i)}_\lambda S_{\lambda i} u \|_{L_t^p L_{x'}^q} &\lesssim \lambda \| S_{\lambda i} u \|_{L_t^\infty L_{x'}^2} + \| \tilde{S}_{\lambda i} \mathcal{D}_\lambda \mathcal{N}_\lambda S_{\lambda i} u \|_{L^2_x} \\
&\quad + \lambda^{\frac{1}{p}} \| S'_\lambda \rho_e \|_{L_t^\infty L_{x'}^2}, \nonumber \\
\label{eq:SimplifiedKerrAuxIII}
\| \tilde{S}_{\lambda i} \mathcal{D}_\lambda \mathcal{N}_\lambda S_{\lambda i} u \|_{L^2_x} &\lesssim \| P_\lambda S_{\lambda i} u \|_{L^2_x} + \lambda \| S_{\lambda i} u \|_{L^2_x}.
\end{align}
\end{lemma}
\begin{proof}
For the proof of \eqref{eq:SimplifiedKerrAuxI} and \eqref{eq:SimplifiedKerrAuxIII} one can use the arguments of the proof of Lemma \ref{lem:AuxDiagonalizationIsotropic}. For the proof of \eqref{eq:SimplifiedKerrAuxII}, we use Sobolev embedding for the first component as several times above. For the second and third component, we apply \cite[Theorem~1.1]{Tataru2002}.
\end{proof}

We finish the proof of Theorem \ref{thm:StrichartzEstimatesSimplifiedKerr} by following the arguments of the proof of Theorem \ref{thm:IsotropicStrichartz} in Section \ref{section:StrichartzEstimatesIsotropic}.

\end{proof}
Now we turn to the proof of Theorem \ref{thm:LocalWellposednessSimplifiedKerr}: We prove local well-posedness in three steps via the strategy explained in detail in the survey by Ifrim--Tataru \cite{IfrimTataru2020} (see also \cite{SchippaSchnaubelt2021}):
\begin{itemize}
\item[1.] Energy estimates for solutions: For a suitably defined functional and a solution $u$ to  \eqref{eq:SimplifiedKerrComponent} we prove
\begin{equation}
\label{eq:EnergyEstimateSimplifiedKerr}
E^s[u](t) \lesssim_{\| u \|_{L_x^\infty}} e^{c(\| u \|_{L_x^\infty}) \int_0^t (1+ \| \nabla_x u(s) \|_{L_{x'}^\infty} ) ds} E^s[u](0).
\end{equation}
\item[2.] Lipschitz-continuous dependence for differences of solutions: For smooth solutions $u$ and $v$ to \eqref{eq:SimplifiedKerrComponent} we have the following estimate
\begin{equation}
\label{eq:LipschitzBound}
\| \nabla_x (u-v) \|_{L_t^\infty(0,T;L^2_{x'})} \leq c ( \| u \|_{L_T^\infty H^s_{x'}}, \| v \|_{L_T^\infty H^s_{x'}}, \| \nabla_x u \|_{L_T^2 L_{x'}^\infty}, \| \nabla_x v \|_{L_T^2 L_{x'}^\infty}) \| \nabla_x (u-v)(0) \|_{L^2_{x'}}.
\end{equation}
\item[3.] Continuous dependence via frequency envelopes.
\end{itemize}
\vspace*{0.3cm}

 We define the energy functional by
 \begin{equation*}
 E^s[u](t) = \| \langle D' \rangle^{s-1} u(t) \|_{L^2}^2 + \| \langle D' \rangle^{s-1} \partial_t u(t) \|_{L^2}^2 + \langle \langle D' \rangle^{s-1} \nabla \times u(t) , \varepsilon(u) \langle D' \rangle^{s-1} \nabla \times u(t) \rangle
 \end{equation*}
for $s \geq 1$. Observe by Helmholtz decomposition that $\| \nabla \times u(t) \|_{L^2}^2 = \| u(t) \|_{\dot{H}_{x'}^1}^2$ because we require $\nabla_{x'} \cdot u(t) = 0$. 
\begin{lemma}
Let $u$ be a smooth solution to \eqref{eq:SimplifiedKerrComponent}. The estimate \eqref{eq:EnergyEstimateSimplifiedKerr} holds true.
\end{lemma}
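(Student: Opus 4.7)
The plan is to apply $\langle D'\rangle^{s-1}$ to \eqref{eq:SimplifiedKerrComponent} in order to derive a perturbed wave equation for $v := \langle D'\rangle^{s-1} u$, then differentiate $E^s[u]$ in time, cancel the top-order contributions via the skew-adjointness of $\nabla \times$, and reduce the remaining commutator term to something controlled by $(1+\|\nabla_x u\|_{L_{x'}^\infty}) E^s[u]$ via Kato--Ponce commutator and Moser estimates, closing the argument with Gr\"onwall's inequality. First, since $\nabla_{x'}\cdot u = 0$ we also have $\nabla_{x'}\cdot v = 0$, so $\|\nabla \times v\|_{L^2}\sim \|\nabla_{x'} v\|_{L^2}$; together with uniform ellipticity of $\varepsilon(u)$ (which follows from $\varepsilon\in C^\infty(\R^3;\R_{>0})$ and a bound on $\|u\|_{L_{x'}^\infty}$), this shows that $E^s[u]$ is equivalent to $\|u\|_{H^s}^2+\|\partial_t u\|_{H^{s-1}}^2$ with constants depending on $\|u\|_{L_{x'}^\infty}$.

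Applying $\langle D'\rangle^{s-1}$ to \eqref{eq:SimplifiedKerrComponent} yields
\begin{equation*}
\partial_t^2 v + \nabla \times(\varepsilon(u)\,\nabla \times v) = -\nabla \times F', \qquad F' := [\langle D'\rangle^{s-1},\varepsilon(u)]\,\nabla \times u.
\end{equation*}
Differentiating $E^s[u]$ in time and substituting the equation above, the skew-adjointness of $\nabla \times$ (used to integrate by parts in the cross term coming from $2\langle \partial_t v,\partial_t^2 v\rangle$) cancels the top-order pieces, leaving
\begin{equation*}
\partial_t E^s[u] = 2\langle v,\partial_t v\rangle + \int \varepsilon'(u)\,\partial_t u\,|\nabla \times v|^2\,dx' - 2\langle \partial_t v,\nabla \times F'\rangle.
\end{equation*}
The first two terms are bounded directly by $C(\|u\|_{L_{x'}^\infty})(1+\|\partial_t u\|_{L_{x'}^\infty})\,E^s[u]$ using $\|\nabla \times v\|_{L^2}^2\lesssim E^s[u]$.

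The main obstacle is the commutator term, which I estimate by placing the curl on $F'$:
\begin{equation*}
|\langle \partial_t v,\nabla \times F'\rangle|\leq \|\partial_t v\|_{L^2}\|\nabla_{x'} F'\|_{L^2}\lesssim \sqrt{E^s[u]}\,\|\langle D'\rangle F'\|_{L^2}.
\end{equation*}
Using the identity
\begin{equation*}
\langle D'\rangle F' = [\langle D'\rangle^s,\varepsilon(u)]\,\nabla \times u \;-\; [\langle D'\rangle,\varepsilon(u)]\,\langle D'\rangle^{s-1}\nabla \times u,
\end{equation*}
the Kato--Ponce commutator estimate yields
\begin{equation*}
\|[\langle D'\rangle^s,\varepsilon(u)]\nabla \times u\|_{L^2} \lesssim \|\nabla_{x'}\varepsilon(u)\|_{L_{x'}^\infty}\|\langle D'\rangle^{s-1}\nabla \times u\|_{L^2} + \|\varepsilon(u)\|_{H^s}\|\nabla \times u\|_{L_{x'}^\infty},
\end{equation*}
while the second commutator (of order zero) is handled analogously by $\lesssim \|\nabla_{x'}\varepsilon(u)\|_{L_{x'}^\infty}\|\langle D'\rangle^{s-1}\nabla \times u\|_{L^2}$. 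Combining these with the Moser estimate $\|\varepsilon(u)\|_{H^s}\lesssim C(\|u\|_{L_{x'}^\infty})\|u\|_{H^s}\lesssim C(\|u\|_{L_{x'}^\infty})\sqrt{E^s[u]}$, one obtains $\|\langle D'\rangle F'\|_{L^2}\lesssim C(\|u\|_{L_{x'}^\infty})(1+\|\nabla_x u\|_{L_{x'}^\infty})\sqrt{E^s[u]}$.

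Assembling all estimates gives $\partial_t E^s[u]\lesssim C(\|u\|_{L_{x'}^\infty})(1+\|\nabla_x u\|_{L_{x'}^\infty})\,E^s[u]$, and Gr\"onwall's inequality then yields \eqref{eq:EnergyEstimateSimplifiedKerr}.
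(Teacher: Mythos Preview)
Your proof is correct and follows essentially the same approach as the paper: differentiate the energy, cancel the top-order terms via the skew-adjointness of $\nabla\times$, bound the remaining commutator by Kato--Ponce and Moser estimates, and close with Gr\"onwall. The only difference is organizational---the paper expands $\nabla\times(\varepsilon(u)\nabla\times u)$ via the divergence-free condition to reduce to $-\varepsilon(u)\Delta u$ plus a lower-order term and then applies Kato--Ponce to $[\varepsilon(u),\partial_i\langle D'\rangle^{s-1}]\partial_i u_j$, whereas you keep the curl form and use the algebraic identity $\langle D'\rangle F' = [\langle D'\rangle^{s},\varepsilon(u)]\nabla\times u - [\langle D'\rangle,\varepsilon(u)]\langle D'\rangle^{s-1}\nabla\times u$; both routes yield the same bound.
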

\begin{proof}
We compute
\begin{equation*}
\begin{split}
\frac{d}{dt} E^s[u] &= 2 \langle \langle D' \rangle^{s-1} \partial_t u, \langle D' \rangle^{s-1} u \rangle + 2 \langle \langle D' \rangle^{s-1} \partial_t^2 u , \langle D' \rangle^{s-1} \partial_t u \rangle \\
&\quad + 2 \langle \langle D' \rangle^{s-1} \nabla \times \partial_t u, \varepsilon(u) \langle D' \rangle^{s-1} \nabla \times u \rangle + \langle \langle D' \rangle^{s-1} \nabla \times u , (\partial_t \varepsilon(u)) \langle D' \rangle^{s-1} \nabla \times u \rangle \\
&= I + II + III + IV.
\end{split}
\end{equation*}
The first and last term are estimated by Cauchy-Schwarz and H\"older's inequality as
\begin{equation*}
\begin{split}
|\langle \langle D' \rangle^{s-1} \partial_t u, \langle D' \rangle^{s-1} u \rangle | &\lesssim E^s[u], \\
| \langle \langle D' \rangle^{s-1} \nabla \times u, (\partial_t \varepsilon) \langle D' \rangle^{s-1} \nabla \times u \rangle | &\lesssim \| \partial_t u \|_{L_{x'}^\infty} E^s[u].
\end{split}
\end{equation*}
We summarize the second and third component as
\begin{equation*}
(II + III)/2 = \langle \nabla \times (\varepsilon(u) \langle D' \rangle^{s-1} \nabla \times u) + \langle D' \rangle^{s-1} \nabla \times (\varepsilon(u) \nabla \times u, \langle D' \rangle^{s-1} \partial_t u \rangle.
\end{equation*}
Before we compute the commutator, we note that
\begin{equation*}
\nabla \times (\varepsilon(u) \nabla \times u) = O (\partial_{x'} \varepsilon) \nabla \times u - \varepsilon(u) \Delta u.
\end{equation*}
The first term is lower order because by the fractional Leibniz rule we find
\begin{equation}
\label{eq:FractionalLeibnizMoser}
\begin{split}
\| \langle D' \rangle^{s-1} (\partial_{x'} \varepsilon) (\nabla \times u) \|_{L_{x'}^2} &\lesssim \| \partial \varepsilon \|_{L_{x'}^\infty} \| \langle D' \rangle^{s-1} \nabla \times u \|_{L_{x'}^2} + \| \nabla \times u \|_{L_{x'}^\infty} \| \langle D' \rangle^{s-1} \partial \varepsilon \|_{L^2_{x'}} \\
&\lesssim \| \nabla_x u \|_{L_{x'}^\infty} \| \langle D' \rangle^{s-1} \partial u \|_{L^2_{x'}}.
\end{split}
\end{equation}
The ultimate inequality is a consequence of Moser estimates (assuming a priori bounds on $\| u \|_{L_{x'}^\infty}$). This reduces us to estimate
\begin{equation*}
[\varepsilon(u), \langle D' \rangle^{s-1} ] \Delta u_j = [\varepsilon(u), \partial_i \langle D' \rangle^{s-1} ] \partial_i u_j -  [ \partial_i \varepsilon(u), \langle D' \rangle^{s-1} ]\partial_i u_j.
\end{equation*}
The second term can be argued to be lower order like in \eqref{eq:FractionalLeibnizMoser}. For the first term by the Kato-Ponce commutator estimate and another application of Moser's inequality, we find
\begin{equation*}
\| [\varepsilon(u), \partial_i \langle D' \rangle^{s-1} ] \partial_i u_j \|_{L^2_{x'}} \lesssim \| \nabla u \|_{L_{x'}^\infty} \| u \|_{H^s_{x'}} \lesssim \| \nabla u \|_{L_{x'}^\infty} E^s[u].
\end{equation*}
The ultimate estimate follows from $E^s[u] \approx_{\| u \|_{L_{x'}^\infty}} \| u \|_{H^s_{x'}}$ for divergence-free functions. Applying Gr\"onwall's inequality yields
\begin{equation*}
E^s[u](t) \lesssim e^{C \int_0^t (1+ \| \nabla_x u \|_{L_{x'}^\infty}) ds} E^s[u](0).
\end{equation*}
\end{proof}
Combining the energy estimate with Strichartz estimates, we can show a priori estimates for $s>\frac{13}{6}$:
\begin{lemma}
Let $s>\frac{13}{6}$ and $u$ be a smooth solution to \eqref{eq:SimplifiedKerrComponent}. Then, there is a lower semicontinuous $T=T(\| u_0 \|_{H^s_{x'}})$ such that the following estimate holds:
\begin{equation*}
\sup_{t \in [0,T]} \| u(t) \|_{H^s_{x'}} \lesssim \| u(0) \|_{H^s_{x'}}.
\end{equation*}
\end{lemma}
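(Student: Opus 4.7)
The plan is the standard bootstrap combining the energy estimate of the previous lemma with the Strichartz estimate from the corollary above. Fix $s>\tfrac{13}{6}$ and write $R=\|u_0\|_{H^s_{x'}}$. Choose Strichartz exponents $(\rho,p,q,3)$ close to the forbidden endpoint $(1,2,\infty,3)$, namely $p=2+\eta$ and $q$ very large so that $W^{\epsilon',q}_{x'}\hookrightarrow L^\infty_{x'}$ for a small $\epsilon'>0$. The constraint is that
\[
\tfrac{13}{6} \;<\; 1 + \rho + \tfrac{1}{3p} + \delta + \epsilon' \;<\; s
\]
remains achievable as $\eta,\delta,\epsilon'\to 0^+$; this is the precise origin of the threshold $s>13/6$. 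I would then impose the bootstrap hypothesis $\sup_{[0,T_\star]}\|u(t)\|_{H^s_{x'}}\le 2R$ and $\|\nabla_x u\|_{L^2(0,T_\star;L^\infty_{x'})}\le M$ on some interval $[0,T_\star]$, where $M=M(R)$ will be determined below.

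First I would verify the hypothesis of the Strichartz corollary. Since $s>\tfrac32$ gives $\|u\|_{L^\infty_{T_\star,x'}}\lesssim 2R$ by Sobolev embedding, differentiating $\varepsilon(u)=\psi(|u|^2)$ yields
\[
\|\partial\varepsilon(u)\|_{L^2_{T_\star}L^\infty_{x'}} \;\lesssim\; C(R)\,\|\nabla_x u\|_{L^2_{T_\star}L^\infty_{x'}} \;\le\; C(R)\,M.
\]
Hence the Strichartz corollary applies on $[0,T_\star]$; the implicit constant depends on $T_\star$, $R$, $M$. Since $Pu=0$ and $\rho_e=\nabla_{x'}\!\cdot u=0$, the corollary yields
\[
\|\langle D'\rangle^{-\rho-\frac{1}{3p}-\delta}\nabla_x u\|_{L^p(0,T_\star;L^q_{x'})} \;\lesssim_{T_\star,R,M}\; \|\nabla_x u\|_{L^\infty_{T_\star}L^2_{x'}} \;\lesssim\; 2R.
\]
Combining with Sobolev embedding in $x'$ and the parameter choice above gives $\|\nabla_x u\|_{L^p(0,T_\star;L^\infty_{x'})} \lesssim_{T_\star,R,M} R$, and then H\"older in time gives
\[
\|\nabla_x u\|_{L^2(0,T_\star;L^\infty_{x'})} \;\lesssim\; T_\star^{\,\alpha}\cdot C(R,M), \qquad \|\nabla_x u\|_{L^1(0,T_\star;L^\infty_{x'})} \;\lesssim\; T_\star^{\,\alpha'}\cdot C(R,M)
\]
with positive exponents $\alpha,\alpha'$.

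Feeding the second estimate into the energy bound \eqref{eq:EnergyEstimateSimplifiedKerr} produces
\[
\sup_{t\in[0,T_\star]} E^s[u](t) \;\lesssim\; e^{\,c(R)\,(T_\star\,+\,T_\star^{\alpha'}C(R,M))}\,E^s[u](0),
\]
and using the first estimate we can close the Strichartz bootstrap as well. Choosing $T=T(R)$ small enough (after first fixing $M=M(R)$ large compared to $C(R)$ and then shrinking $T_\star$), both bootstrap quantities are strictly improved, which by continuity extends $T_\star$ up to $T$. The main obstacle is keeping the circular dependence under control: the Strichartz constant depends on $\|\partial\varepsilon\|_{L^2L^\infty}$, which itself is controlled by the Strichartz output through $\nabla u$. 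This is why one must first fix $M$ and then shrink $T$, rather than the reverse; once this ordering is respected the continuity argument closes and yields the claimed a priori bound $\sup_{[0,T]}\|u(t)\|_{H^s_{x'}}\lesssim\|u_0\|_{H^s_{x'}}$.
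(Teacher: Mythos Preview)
Your bootstrap scheme is the right architecture, but the Strichartz step as written does not yield what you claim. You apply the corollary to $u$ itself and obtain
\[
\|\langle D'\rangle^{-\rho-\frac{1}{3p}-\delta}\nabla_x u\|_{L^p_{T_\star}L^q_{x'}} \;\lesssim\; \|\nabla_x u\|_{L^\infty_{T_\star}L^2_{x'}} \;\lesssim\; 2R,
\]
and then assert that Sobolev embedding upgrades this to $\|\nabla_x u\|_{L^p_{T_\star}L^\infty_{x'}}\lesssim R$. That implication is false: the left-hand side carries a \emph{negative} weight $\langle D'\rangle^{-\rho-\frac{1}{3p}-\delta}$, so you are missing roughly $\rho+\tfrac{1}{3p}+\delta+\epsilon'\approx\tfrac{7}{6}$ derivatives, and nothing on the right provides them. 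Your constraint $1+\rho+\tfrac{1}{3p}+\delta+\epsilon'<s$ shows you know the derivative count, but the argument never spends the available $H^s$ regularity.

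The paper closes this gap by applying the Strichartz estimate not to $u$ but to $v=\langle D'\rangle^{s-1}u$. Then the left side controls $\|\langle D'\rangle^{s-1-\rho-\frac{1}{3p}-\delta}\nabla_x u\|_{L^p L^q}$, which by Sobolev embedding dominates $\|\nabla_x u\|_{L^p L^\infty}$ precisely when $s>\tfrac{13}{6}$. The price is that $Pv=[P,\langle D'\rangle^{s-1}]u\neq 0$; this commutator is estimated in $L^1_T L^2_{x'}$ by Kato--Ponce (as in the energy lemma), producing a factor $T^{1/2}\|\nabla_x u\|_{L^2_T L^\infty_{x'}}\cdot\|u\|_{L^\infty_T H^s_{x'}}$, which is then bootstrapped together with the energy estimate. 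The divergence term is harmless since $\nabla_{x'}\!\cdot v=\langle D'\rangle^{s-1}(\nabla_{x'}\!\cdot u)=0$. Your discussion of the circular dependence and the order of choosing $M$ before $T$ is otherwise sound, but you need to insert this shift to $v$ and the accompanying commutator bound.
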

\begin{proof}
Since $u$ is divergence free and for $s>\frac{13}{6}$ we have $\| u(t) \|_{L^\infty_{x'}} \lesssim \| u(t) \|_{H^s_{x'}}$, it suffices to show an a priori estimate for the energy functional $E^s[u]$. To this end, we control $\| \nabla_x u \|_{L_t^2 L_{x'}^\infty}$ by Strichartz estimates.

We define the auxiliary function $v = \langle D' \rangle^{s-1} u$ and apply Strichartz estimates to find
\begin{equation}
\label{eq:EndpointStrichartz}
\| \langle D' \rangle^{-\rho - \frac{1}{6 p} - \varepsilon} \nabla_x v \|_{L_t^2 L_{x'}^\infty} \lesssim \| \nabla_x v \|_{L_t^\infty L_{x'}^2} + \| P(x,u,D) v \|_{L_t^1 L_{x'}^2}.
\end{equation}
Since $P(x,u,D) v = [P(x,u,D), \langle D' \rangle^{s-1} ] u$, \eqref{eq:EndpointStrichartz} yields
\begin{equation*}
\| \nabla_x u \|_{L_T^2 L_{x'}^\infty} \lesssim_T \| \langle D' \rangle^{s-1} \nabla_x u \|_{L_t^\infty L_{x'}^2} + \| [P(x,u,D), \langle D' \rangle^{s-1}] u \|_{L^2_{x'}}.
\end{equation*}
By the commutator estimate from above, we find
\begin{equation}
\label{eq:StrichartzEstimateAPriori}
\| \nabla_x u \|_{L_T^2 L_{x'}^\infty} \lesssim_T (1 + T^{\frac{1}{2}} \| \nabla u \|_{L_T^2 L_{x'}^\infty} ) \| \langle D' \rangle^{s-1} \nabla_x u \|_{L_t^\infty L_{x'}^2}.
\end{equation}
Moreover, the energy estimate gives
\begin{equation}
\label{eq:EnergyEstimateAPriori}
E^s[u](t) \lesssim_{\| u \|_{L_x^\infty}} e^{C (t+ \int_0^t \| \nabla_x u(s) \|_{L_{x'}^\infty} ds)} E^s[u](0).
\end{equation}
\eqref{eq:StrichartzEstimateAPriori} and \eqref{eq:EnergyEstimateAPriori} can be bootstrapped for $T=T(\| u_0 \|_{H^s})$ and $s>\frac{13}{6}$. The proof is complete.
\end{proof}

We turn to estimates for differences of solutions in $L^2_x$. Here we follow the argument of \cite[Lemma~4.2]{Tataru2000}.
\begin{lemma}
Let $u,v$ be smooth solutions to \eqref{eq:SimplifiedKerrComponent} on $[0,T]$. Then the following estimate holds:
\begin{equation}
\label{eq:DifferenceEstimate}
\| \nabla_x (u-v) \|_{L_t^\infty(0,T;L^2_{x'})} \leq c ( \| u \|_{L_T^\infty H^s_{x'}}, \| v \|_{L_T^\infty H^s_{x'}}, \| \nabla_x u \|_{L_T^2 L_{x'}^\infty}, \| \nabla_x v \|_{L_T^2 L_{x'}^\infty}) \| \nabla_x (u-v)(0) \|_{L^2_{x'}}.
\end{equation}
\end{lemma}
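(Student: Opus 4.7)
Subtracting the equations satisfied by $u$ and $v$ and rearranging, the difference $w=u-v$ solves
\begin{equation*}
\partial_t^2 w + \nabla\times(\varepsilon(u)\nabla\times w) = -\nabla\times\bigl((\varepsilon(u)-\varepsilon(v))\nabla\times v\bigr), \qquad \nabla\cdot w = 0.
\end{equation*}
The key structural observation is the factorization
\begin{equation*}
\varepsilon(u)-\varepsilon(v) = g(u,v)\cdot w, \qquad g(u,v) = \int_0^1 \nabla\varepsilon\bigl(v+\tau w\bigr)\,d\tau,
\end{equation*}
so that the source is linear in $w$ with smooth, uniformly bounded coefficient $g$ (using the a priori $L^\infty$-bound on $u,v$ from the $H^s$-control). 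The plan is then to run an $H^1\times L^2$ energy estimate for the linear wave equation $P_u:=\partial_t^2+\nabla\times(\varepsilon(u)\nabla\times\,\cdot\,)$ and absorb the factor of $w$ on the right by Gronwall.

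The naive energy $E(t)=\|\partial_t w\|_{L^2}^2+\langle \varepsilon(u)\nabla\times w,\nabla\times w\rangle$ produces, upon pairing with $\partial_t w$, a source term $-\langle (g\cdot w)\nabla\times v,\,\partial_t\nabla\times w\rangle$ in which $\partial_t\nabla\times w$ does not belong to the natural energy. I would remedy this by the standard curl-form trick of integrating by parts in $t$, building the modified functional
\begin{equation*}
\widetilde E(t) := \|w\|_{L^2}^2 + \|\partial_t w\|_{L^2}^2 + \langle\varepsilon(u)\nabla\times w,\nabla\times w\rangle + 2\langle (g\cdot w)\nabla\times v,\nabla\times w\rangle.
\end{equation*}
The cross term is bounded by $\|w\|_{L^2}\|\nabla v\|_{L^\infty}\|\nabla\times w\|_{L^2}$, so after a weight depending on $\|\nabla v\|_{L^2_TL^\infty_{x'}}$ (which is finite by the Strichartz hypothesis) $\widetilde E$ is equivalent to $\|w\|_{L^2}^2+\|\partial_tw\|_{L^2}^2+\|\nabla w\|_{L^2}^2$, using $\|\nabla\times w\|_{L^2}=\|\nabla w\|_{L^2}$ for divergence-free $w$. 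Differentiation yields
\begin{equation*}
\tfrac{d}{dt}\widetilde E = 2\langle w,\partial_t w\rangle + \langle \partial_t\varepsilon(u)\nabla\times w,\nabla\times w\rangle - 2\langle \partial_t\bigl((g\cdot w)\nabla\times v\bigr),\,\nabla\times w\rangle.
\end{equation*}

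The first two terms are $\lesssim (1+\|\partial_tu\|_{L^\infty_{x'}})\widetilde E$, integrable in time by Strichartz. The third term expands as $\partial_tg\cdot w\cdot\nabla\times v + g\cdot\partial_t w\cdot\nabla\times v + g\cdot w\cdot\partial_t\nabla\times v$; the first two pair with $\nabla\times w$ to produce quantities bounded by $(\|\partial_tu\|_{L^\infty}+\|\partial_tv\|_{L^\infty})\|\nabla v\|_{L^\infty}\widetilde E$ and $\|\nabla v\|_{L^\infty}\widetilde E$, both with $L^1_T$-weights by Cauchy--Schwarz in the Strichartz norms. The main obstacle is the last piece $\langle g\,w\,\partial_t\nabla\times v,\nabla\times w\rangle$, since $\partial_t\nabla\times v$ is second-order in $v$ and only lies in $L^\infty_TL^2_{x'}$ (via $\partial_tv\in L^\infty_TH^{s-1}_{x'}$, guaranteed by the energy estimate for $v$ together with $s>13/6>2$); it cannot be paired with $\nabla\times w\in L^\infty_TL^2$ by a single H\"older split. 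I would resolve this by integrating the curl by parts,
\begin{equation*}
\langle g\,w\,\partial_t\nabla\times v,\nabla\times w\rangle = \langle \partial_t v,\,\nabla\times (g\,w\,\nabla\times w)\rangle,
\end{equation*}
and expanding $\nabla\times(g w \nabla\times w)=\nabla(gw)\times(\nabla\times w)+gw\nabla\times\nabla\times w$, using $\nabla\times\nabla\times w=-\Delta w$ and then a further spatial integration by parts on the $\Delta w$-piece to obtain bounds of the form $\|\partial_tv\|_{L^\infty_{x'}}(\|\nabla w\|_{L^2}^2+(\|\nabla u\|_{L^\infty}+\|\nabla v\|_{L^\infty})\|w\|_{L^2}\|\nabla w\|_{L^2})$, again integrable in $t$ by the Strichartz assumption $\nabla_xu,\nabla_xv\in L^2_TL^\infty_{x'}$. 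Gronwall then gives $\widetilde E(t)\leq c\,\widetilde E(0)$ with $c$ of the required functional form, and the coercivity of $\widetilde E$ converts this into the claimed Lipschitz bound.
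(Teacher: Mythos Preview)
Your energy-only approach has a genuine gap in the step you flag as ``the main obstacle''. After the curl integration by parts and the spatial integration by parts on the $\Delta w$-piece, you do \emph{not} obtain the bounds you claim. Writing it out,
\[
-\langle (g\cdot w)\,\partial_t v,\Delta w\rangle
= \sum_{j,k}\int \partial_k(g\cdot w)\,\partial_t v_j\,\partial_k w_j
+ \sum_{j,k}\int (g\cdot w)\,\partial_k\partial_t v_j\,\partial_k w_j.
\]
The first sum is harmless, but the second still carries the second-order quantity $\nabla\partial_t v$; it cannot be bounded by $\|\partial_t v\|_{L^\infty_{x'}}\widetilde E$ as you assert. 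With only $H^1$-control on $w$ you can place $w$ in $L^6_{x'}$, so closing $\int |w|\,|\nabla\partial_t v|\,|\nabla w|$ against $\nabla w\in L^2_{x'}$ forces $\nabla\partial_t v\in L^3_{x'}$. Since $\partial_t v\in H^{s-1}_{x'}$, this needs $H^{s-2}\hookrightarrow L^3(\R^3)$, i.e.\ $s\ge 5/2$, which is strictly stronger than the target $s>13/6$. Interpolating with the Strichartz information on $v$ does not help either: placing $\langle D'\rangle\nabla v$ in $L^p_TL^q_{x'}$ between $\nabla v\in L^2_TL^\infty_{x'}$ and $\langle D'\rangle^{s-1}\nabla v\in L^\infty_TL^2_{x'}$ gives $q=3$ precisely at $s=5/2$.

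The paper closes the estimate by a different mechanism: it uses Strichartz estimates for the difference $w$ itself. One writes $P(x,u,D)w=A_0 w+A_1\nabla w$ with $A_0$ linear in $D'\nabla_{x'}v$ (plus a quadratic first-order piece) and $A_1$ linear in $(\nabla_{x'}u,\nabla_{x'}v)$, and runs a contraction in the norm
\[
\|w\|_S=\|\nabla_x w\|_{L^\infty_TL^2_{x'}}+\|\langle D'\rangle^{-s}\nabla_x w\|_{L^2_TL^\infty_{x'}}.
\]
By interpolation, $\langle D'\rangle\nabla_x v\in L^{p_1}_TL^{q_1}_{x'}$ along the line through $(0,\tfrac12,0)$ and $(s-1,0,\tfrac12)$, while the Strichartz component of $\|w\|_S$ places $w$ in the dual space $L^{p_2}_TL^{q_2}_{x'}$ with $\tfrac{1}{p_1}+\tfrac{1}{p_2}=\tfrac{1}{q_1}+\tfrac{1}{q_2}=\tfrac12$; hence $A_0 w\in L^2_TL^2_{x'}$, and the inhomogeneous Strichartz estimate for $P(x,u,D)$ closes the loop. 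The point is that without a dispersive norm on $w$ one cannot beat $s=5/2$ by a pure $\dot H^1\times L^2$ energy argument in three dimensions; the Strichartz control on $w$ is what buys the extra third of a derivative.
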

\begin{proof}
The difference $w = u-v$ solves the equation
\begin{equation*}
\begin{split}
P(x,u,D) w &= \partial_t^2 (u-v) + \nabla \times [ \varepsilon(u) \nabla \times u - \varepsilon(u) \nabla \times v] \\
&= - [\varepsilon(v) - \varepsilon(u) ] \Delta v + O(\partial(\varepsilon(v) - \varepsilon(u))) \nabla \times v \\
&= A_0 w + A_1 \nabla w.
\end{split}
\end{equation*}
with 
\begin{equation*}
A_0 = B_1(u,v) D' \nabla_{x'} v + B_2(u,v) (\nabla_{x'} u, \nabla_{x'} v)^2, \quad A_1 = B_3(u,v) (\nabla_{x'} u , \nabla_{x'} v).
\end{equation*}
The above notation means that $A_1$ is linear in $(\nabla_{x'}u , \nabla_{x'} v)$ and $A_0$ is linear in $D'_x \nabla_{x'} v$ and quadratic in $ \nabla_{x'} u$ and $\nabla_{x'} v$. To prove \eqref{eq:DifferenceEstimate}, we shall carry out a fixed point argument for the Strichartz norm $\| w \|_S = \| \nabla_x w \|_{L_T^\infty L_{x'}^2} + \| \langle D' \rangle^{-s} \nabla_x w \|_{L_T^2 L_{x'}^\infty}$ for some $s>\frac{13}{6}$. To this end, we shall prove that 
\begin{equation*}
P(x,u,D) w = f \in L_T^2 L_{x'}^2.
\end{equation*}

 By Strichartz estimates and energy estimates as argued above, we have $\| \nabla_{x'} u \|_{L_T^2 L_{x'}^\infty} + \| \nabla_{x'} v \|_{L_T^2 L_{x'}^\infty} < \infty$. Therefore, $A_1 \in L_T^2 L_{x'}^\infty$.
For $A_2$ we use interpolation to bound $\langle D' \rangle \nabla_x v $. By Strichartz and energy estimates, we obtain $\nabla_x v \in L_T^2 L_{x'}^\infty$ and $\langle D' \rangle^{s-1} \nabla_x v \in L_T^\infty L_{x'}^2$. By interpolation, we find $\langle D' \rangle \nabla_x v \in L_{T}^{p_1} L_{x'}^{q_1}$ with $p_1$, $q_1$ chosen such that
\begin{equation}
\label{eq:CollinearI}
\begin{array}{ccc}
( 0 & \frac{1}{2} & 0 ) \\
(1 & \frac{1}{p_1} & \frac{1}{q_1} )\\ 
( s-1 & 0 & \frac{1}{2} )
\end{array}
\end{equation}
are collinear.

Secondly, we estimate $(\nabla_{x'} u, \nabla_{x'} v)^2 \in L_{T}^{p_1} L_{x'}^{q_1}$. Indeed, in the borderline case $s = \frac{13}{6}$ we obtain $q_1 = \frac{7}{3}$, $p_1 = 14$. This gives by H\"older and Sobolev embedding
\begin{equation*}
\begin{split}
\| (\nabla_{x'} u)^2 \|_{L_T^{14} L_{x'}^{\frac{7}{3}}} &\lesssim \| \nabla_{x'} u \|_{L_T^\infty L_{x'}^9} \| \nabla_{x'} u \|_{L_T^{14} L_{x'}^{\frac{63}{20}}} \\
&\lesssim \| \langle D' \rangle^{s-1} \nabla_{x'} u \|_{L_T^\infty L_{x'}^2} \| \langle D' \rangle^{s-1} u \|_{L_T^\infty L_{x'}^2}.
\end{split}
\end{equation*}
Other quadratic expressions $(\nabla_{x'} u, \nabla_{x'} v)$ and $(\nabla_{x'} v)^2$, which shows that $A_1 \in L_T^{p_1} L_{x'}^{q_1}$. Strichartz estimates for $f \in L_T^2 L_{x'}^2$ give
\begin{equation*}
\| w \|_S \lesssim \| (w(0),\dot{w}(0)) \|_{\dot{H}^1 \times L^2} + T^{\frac{1}{2}} \| f \|_{L_T^2 L_{x'}^2}.
\end{equation*}
In particular, we can estimate $\| w \|_{L_T^{p_2} L_{x'}^{q_2}}$ for collinear
\begin{equation}
\label{eq:CollinearII}
\begin{array}{ccc}
( 0  & 0 				& \frac{1}{2} ) \\
( -1 & \frac{1}{p_2}	& \frac{1}{q_2} ) \\
(1-s & \frac{1}{2} &	0 )
\end{array}
\end{equation}
\eqref{eq:CollinearI} and \eqref{eq:CollinearII} give
\begin{equation*}
\frac{1}{p_1} + \frac{1}{p_2} = \frac{1}{q_1} + \frac{1}{q_2} = \frac{1}{2}.
\end{equation*}
Therefore, for $\| w \|_S < \infty$, we have $f \in L_x^2$ and the claim follows from the estimate
\begin{equation*}
\| w \|_S \lesssim \| (w(0),\dot{w}(0)) \|_{H^1 \times L^2} + T^{\frac{1}{2}} \| w \|_S C(\| u \|_S, \| v \|_S, \| w \|_S).
\end{equation*}
\end{proof}
We conclude the argument by frequency envelopes:
\begin{definition}
$(c_N)_{N \in 2^{\N_0}} \in \ell^2$ is a frequency envelope for functions $(u_0,u_1) \in H^s_x \times H^{s-1}_x$ if we have the following two properties:
\begin{itemize}
\item[a)] Energy bound:
\begin{equation*}
\| S'_N (u_0,u_1) \|_{H^s \times H^{s-1}} \leq c_N,
\end{equation*}
\item[b)] Slowly varying property:
\begin{equation*}
\frac{c_N}{c_J} \lesssim \big[ \frac{N}{J} \big]^\delta.
\end{equation*}
We use the notation $\big[ \frac{N}{J} \big] = \min( \frac{N}{J}, \frac{J}{N} )$.
\end{itemize}
\end{definition}
$S'_N$ denote the spatial Littlewood-Paley projections. Envelopes are sharp if
\begin{equation*}
\| u \|^2_{H^s \times H^{s-1}} \approx \sum_N c_N^2.
\end{equation*}
To construct an envelope for $(u_0,u_1) \in H^s \times H^{s-1}$, we let
\begin{equation*}
\tilde{c}_N = \| S'_N (u_0,u_1) \|_{H^s} \text{ and } c_N = \sup_J \big[ \frac{N}{J} \big]^\delta c_J .
\end{equation*}
We use the following regularization: Let $(u_0,u_1) \in H^s \times H^{s-1}$ with size $L$ and let $(c_N)$ be a sharp frequency envelope. For $u_0$ we consider $(u_0,u_1)^M = S'_{\leq M} (u_0,u_1)$ frequency truncations as regularization. We note the following properties:
\begin{itemize}
\item Uniform bounds:
\begin{equation*}
\| S'_N (u_0^M,u_1^M) \|_{H^s \times H^{s-1}} \lesssim c_N,
\end{equation*}
\item High frequency bounds:
\begin{equation*}
\| (u_0^M,u_1^M) \|_{H^{s+j} \times H^{s+j-1}} \lesssim M^j c_M \quad (j \geq 0).
\end{equation*}
\item Difference bounds:
\begin{equation*}
\| (u_0,u_1)^{2M} - (u_0,u_1)^M \|_{H^1 \times L^2} \lesssim M^{-s} c_M.
\end{equation*}
\item Limit:
\begin{equation*}
(u_0,u_1) = \lim_{M \to \infty} (u_0^M,u_1^M) \text{ in } H^s \times H^{s-1}.
\end{equation*}
\end{itemize}
We obtain for the regularized initial data a family of smooth solutions. The existence depends only on $L = \| (u_0,u_1) \|_{H^s \times H^{s-1}}$. We have the following:
\begin{itemize}
\item[i)] A priori estimates at high regularity:
\begin{equation*}
\| u^M \|_{C(0,T;H^{s+j})} \lesssim M^j c_M, \quad j \geq 0,
\end{equation*}
\item[ii)] Difference bounds:
\begin{equation*}
\| u^{2M} - u^M \|_{C(0,T;H^1 \times L^2)} \lesssim M^{-s} c_M.
\end{equation*}
\end{itemize}
From the difference bounds and a telescoping sum argument, we have the convergence of $u^M$ as $M \to \infty$ in $C_T L^2$. Writing
\begin{equation*}
u-u^M = \sum_{K=M}^\infty u^{2K} - u^K
\end{equation*}
we can argue by estimates at higher regularity and difference bounds that $u^{2K} - u^K$ is essentially concentrated at frequencies $K$. This yields the estimate
\begin{equation*}
\| u - u^M \|_{C(0,T;H^s)} \lesssim c_{\geq M}
\end{equation*}
and convergence of $u^M$ in $C_T H^s$. A variant of the argument also gives continuity of the data-to-solution mapping. The proof of Theorem \ref{thm:LocalWellposednessSimplifiedKerr} is complete.
$ \hfill \Box$

\subsection{Partially anisotropic permittivity}

In this section we improve the local well-posedness for quasilinear Maxwell equations in the case of partially anisotropic permittivity $\varepsilon^{-1} = (\psi(|\D_1|^2),1,1)$. To prove energy estimates, we have to rewrite the Maxwell system
\begin{equation}
\label{eq:PartiallyAnisotropicMaxwellSection}
\left\{ \begin{array}{cl}
\partial_t \D &= \nabla \times \mathcal{H}, \\
\partial_t \mathcal{H} &= - \nabla \times (\varepsilon^{-1}(\D) \D)
\end{array} \right.
\end{equation}
into non-divergence form. We compute
\begin{equation*}
\nabla \times (\varepsilon^{-1}(\D) \D) =
\begin{pmatrix}
\partial_2 \D_3 - \partial_3 \D_2 \\
\big( \psi(|\D_1|^2) + 2 \psi'(|\D_1|^2) \D_1^2 \big) \partial_3 \D_1 - \partial_1 \D_3 \\
\partial_1 \D_2 - \big( \psi(|\D_1|^2) + 2 \psi'(|\D_1|^2) \D_1^2 \big) \partial_2 \D_1
\end{pmatrix}
.
\end{equation*}
This suggests to work with the modified permittivity 
\begin{equation}
\label{eq:ModifiedPermittivity}
\tilde{\varepsilon}^{-1}(\D) = (\psi(|\D_1|^2) + 2 \psi'(|\D_1|^2) \D_1^2, 1, 1),
\end{equation}
for which we prove Strichartz estimates in divergence form. It turns out that these yield suitable Strichartz estimates for the equation in non-divergence form.

We shall prove Theorem \ref{thm:PartiallyAnisotropicWellposedness} following the same steps like above.
We begin with a priori estimates for solutions for $s>9/4$: We consider the energy functional
\begin{equation}
\label{eq:EnergyFunctionalPartiallyAnisotropic}
E^s[u](t) = \langle \langle D' \rangle^s u(t), C(u) \langle D' \rangle^s u(t) \rangle \approx_{\| u \|_{L_x^\infty}} \| u(t) \|_{H^s_{x'}},
\end{equation}
for which we want to prove the estimate
\begin{equation}
\label{eq:EnergyEstimatePartiallyAnisotropic}
E^s[u](t) \leq C(\| u \|_{L^\infty_x}) e^{c(\| u \|_{L^\infty_x}) \int_0^t \| \nabla_{x'} u(s) \|_{L^\infty_{x'}} ds} E^s[u](0).
\end{equation}

To cancel the top-order terms, we define symmetric $C(u)$ such that we find the estimate
\begin{equation*}
\frac{d}{dt} E^s[u](t) \leq C(\| u \|_{L^\infty_x}) \| \nabla_x u \|_{L^\infty_{x'}} E^s[u](t)
\end{equation*}
to hold. To this end, we rewrite \eqref{eq:PartiallyAnisotropicMaxwellSection} as $\partial_t u = \mathcal{A}_j(u) \partial_j u$ and require
\begin{equation}
\label{eq:AnsatzPartiallyAnisotropicSymmetrization}
C(u) \mathcal{A}^j(u) = \mathcal{A}^j(u)^* C(u).
\end{equation}
The matrices $\mathcal{A}^j(u)$ take the form
\begin{equation*}
\mathcal{A}^j(u) = 
\begin{pmatrix}
0 & A^j_1(u) \\
A^j_2(u) & 0
\end{pmatrix}
\end{equation*}
and we have $(A_1^j)_{mn} = - \varepsilon_{jm n}$ with $\varepsilon$ denoting the Levi--Civita symbol. For $A_2^j$ we find $(A_2^1)_{mn} = \varepsilon_{1mn}$ and 
\begin{equation*}
A_2^2 = 
\begin{pmatrix}
0 & 0 & -1 \\
0 & 0 & 0 \\
\psi(|\D_1|^2) + 2 \psi'(|\D_1|^2) \D_1^2 & 0 & 0
\end{pmatrix}
, \quad A_2^3 = 
\begin{pmatrix}
0 & 1 & 0 \\
- \psi(|\D_1|^2) - 2 \psi'(|\D_1|^2) \D_1^2 & 0 & 0 \\
0 & 0 & 0
\end{pmatrix}
.
\end{equation*}
With the ansatz
\begin{equation*}
C(u) = 
\begin{pmatrix}
C_1(u) & 0 \\
0 & 1_{3 \times 3}
\end{pmatrix}
\end{equation*}
\eqref{eq:AnsatzPartiallyAnisotropicSymmetrization} becomes $A_2^j = (A_1^j)^t C_1(u)$. A straight-forward computation yields
\begin{equation*}
C_1(u) = 
\begin{pmatrix}
\psi(|\D_1|^2) + 2 \psi'(|\D_1|^2) \D_1^2 & 0 & 0\\
0 & 1 & 0 \\
0 & 0 & 1
\end{pmatrix}
.
\end{equation*}
We are ready for the proof of the following proposition:
\begin{proposition}
\label{prop:APrioriKerr}
Let $s \geq 0$ and $u=(\D,\mathcal{H})$ be a smooth solution to \eqref{eq:PartiallyAnisotropicMaxwellSection}. Then, \eqref{eq:EnergyEstimatePartiallyAnisotropic} holds true. For $s>9/4$, there is a time $T=T(\|u_0\|_{H^s})$, which is lower semicontinuous such that
\begin{equation}
\label{eq:APrioriPartiallyAnisotropic}
\sup_{t \in [0,T]} \| u(t) \|_{H^s_{x'}} \lesssim \| u_0 \|_{H^s_{x'}}.
\end{equation}
\end{proposition}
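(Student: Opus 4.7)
The plan is to establish the energy estimate \eqref{eq:EnergyEstimatePartiallyAnisotropic} directly from the symmetrization $C(u)\mathcal{A}^j(u) = \mathcal{A}^j(u)^\ast C(u)$, and then run a Strichartz bootstrap based on Theorem~\ref{thm:PartiallyAnisotropicStrichartz} to pass from $s>5/2$ (the pure energy threshold) down to $s>9/4$.

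\emph{Energy estimate.} Differentiate $E^s[u]$ in time. The derivative hitting $C(u)$ produces $\langle \langle D'\rangle^s u, (\partial_t C(u))\langle D'\rangle^s u\rangle$, which is $\lesssim c(\|u\|_{L^\infty_x})\|\partial_t \D_1\|_{L^\infty_{x'}} E^s[u]$; since $\partial_t\D_1 = (\nabla\times\mathcal{H})_1$, this is $\lesssim \|\nabla_x u\|_{L^\infty_{x'}} E^s[u]$. For the main term, substitute $\partial_t u = \mathcal{A}^j(u)\partial_j u$ and commute $\langle D'\rangle^s$ past $\mathcal{A}^j(u)$ via Kato--Ponce/Moser, picking up an error $\lesssim \|\nabla u\|_{L^\infty_{x'}} E^s[u]$. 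What remains is $2\langle \mathcal{A}^j(u)\partial_j\langle D'\rangle^s u, C(u)\langle D'\rangle^s u\rangle$; by \eqref{eq:AnsatzPartiallyAnisotropicSymmetrization} this equals $2\langle \partial_j\langle D'\rangle^s u, C(u)\mathcal{A}^j(u)\langle D'\rangle^s u\rangle$ with $C\mathcal{A}^j$ symmetric, and integration by parts converts it to $-\langle \langle D'\rangle^s u,\, \partial_j[C(u)\mathcal{A}^j(u)]\langle D'\rangle^s u\rangle$, again $O(\|\nabla u\|_{L^\infty_{x'}} E^s[u])$. Gr\"onwall yields \eqref{eq:EnergyEstimatePartiallyAnisotropic}, and the equivalence $E^s[u] \sim_{\|u\|_{L^\infty_x}} \|u\|^2_{H^s_{x'}}$ follows from uniform ellipticity of $C(u)$ together with the divergence-free condition.

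\emph{Strichartz bootstrap.} Freeze $a(x) := \varepsilon^{-1}(\D(x))$, so the quasilinear system becomes a linear Maxwell system with divergence-free data and $P(x,D)u = 0$. The coefficient verifies $\partial a \in L^2_T L^\infty_{x'}$ because $|\partial a| \lesssim \|\psi'\|_{L^\infty} |\D_1||\partial\D_1|$ and $\|\partial \D\|_{L^2_T L^\infty_{x'}}$ is precisely the quantity we bootstrap. Apply \eqref{eq:StrichartzPartiallyAnisotropicL2Lipschitz} to $v := \langle D'\rangle^\alpha u$ with $\alpha$ slightly below $s$: the forcing $Pv = [\langle D'\rangle^\alpha, P]u$ has $L^1_T L^2_{x'}$-norm bounded by $T^{1/2}\|\nabla u\|_{L^2_T L^\infty_{x'}}\|u\|_{L^\infty_T H^s}$ via a Kato--Ponce commutator estimate, and the charge terms vanish. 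Combining the Strichartz loss $\rho + \tfrac{1}{2p} + \delta$ for an admissible pair $(p,q)$ with Sobolev embedding $L_{x'}^q \hookrightarrow L_{x'}^\infty$ (cost $3/q + \eta$) and H\"older in time (factor $T^{1/2-1/p}$), the total Sobolev loss needed to recover $\|\nabla_x u\|_{L^2_T L^\infty_{x'}}$ is
\begin{equation*}
1 + \rho + \tfrac{1}{2p} + \tfrac{3}{q} + \delta + \eta \;=\; \tfrac{5}{2} - \tfrac{1}{2p} + \delta + \eta,
\end{equation*}
which at $p = 2^+$ equals $9/4 + \delta + \eta$. The $\partial_t u$-part of $\nabla_x u$ is handled by $\partial_t u = \mathcal{A}^j(u)\partial_j u$, reducing it to spatial derivatives.

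For $s>9/4$ and $T$ sufficiently small depending on $\|u_0\|_{H^s}$, the two estimates close jointly: Strichartz delivers $\|\nabla u\|_{L^2_T L^\infty_{x'}} \leq C(\|u_0\|_{H^s})$, which feeds Gr\"onwall in the energy estimate to give \eqref{eq:APrioriPartiallyAnisotropic}. The main delicate point is the self-consistency of the bootstrap: the coefficient regularity $\partial a \in L^2_T L^\infty_{x'}$ required to invoke Theorem~\ref{thm:PartiallyAnisotropicStrichartz} is exactly the norm being bootstrapped. This is standard once the Strichartz loss of $1/4 + \delta$ near the forbidden endpoint is available, and this is precisely what permits the improvement from the energy threshold $s>5/2$ down to $s>9/4$.
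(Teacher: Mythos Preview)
Your energy estimate is correct and matches the paper's argument closely. The Strichartz bootstrap also has the right architecture, and your choice of the $L^2_T L^\infty_{x'}$ Strichartz estimate \eqref{eq:StrichartzPartiallyAnisotropicL2Lipschitz} (loss $\tfrac{1}{2p}$) is consistent with the bootstrap hypothesis $\partial a \in L^2_T L^\infty_{x'}$ and yields exactly the threshold $s>9/4$.

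However, there is a genuine gap in your commutator bound. You freeze the original permittivity $a = \varepsilon^{-1}(\D)$ and work with the divergence-form operator $P$. Then the forcing for $v = \langle D'\rangle^\alpha u$ is
\[
[\langle D'\rangle^\alpha,P]u \;=\; \nabla\times\big([\langle D'\rangle^\alpha, a]\,u^{(1)}\big),
\]
with the curl sitting \emph{outside} the commutator. Expanding $\partial_j([\langle D'\rangle^\alpha,a]u_k) = [\langle D'\rangle^\alpha,\partial_j a]u_k + [\langle D'\rangle^\alpha,a]\partial_j u_k$, the first term forces either $\|\nabla^2 a\|_{L^\infty}$ or $\|\partial_j a\|_{H^\alpha}\sim \|u\|_{H^{\alpha+1}}$ via Kato--Ponce; neither is controlled when $\alpha$ is taken just below $s$. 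So the bound $\|Pv\|_{L^1_T L^2} \lesssim T^{1/2}\|\nabla u\|_{L^2_T L^\infty}\|u\|_{L^\infty_T H^s}$ does not follow as written.

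The paper resolves this by passing to the \emph{non-divergence} form $Qu = \partial_t u - \mathcal{A}^j(u)\partial_j u = 0$, which involves the \emph{modified} permittivity $\tilde{\varepsilon}^{-1}$ of \eqref{eq:ModifiedPermittivity}. In non-divergence form the commutator is $[\langle D'\rangle^\alpha,\mathcal{A}^j(u)]\partial_j u$, and standard Kato--Ponce gives $\lesssim \|\nabla u\|_{L^\infty}\|u\|_{H^\alpha}$ directly. One then applies Theorem~\ref{thm:PartiallyAnisotropicStrichartz} to the divergence-form operator $\tilde{P}$ built from $\tilde{\varepsilon}$ and recasts it as an estimate for $Q$ using $\|\partial\tilde{\varepsilon}\|_{L^1_T L^\infty}<\infty$ (the difference $\tilde{P}w - Qw$ is zeroth order). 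The paper explicitly flags this step: ``the Kato--Ponce commutator estimate (for which it is necessary to change to the non-divergence form).'' Once you make this modification, your argument goes through.
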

\begin{proof}
We compute
\begin{equation*}
\begin{split}
\frac{d}{dt} E^s[u](t) &= \langle \langle D' \rangle^s \sum_{j=1}^3 \mathcal{A}^j(u) \partial_j u, C(u) \langle D' \rangle^s u \rangle + \langle \langle D' \rangle^s u, ( \frac{d}{dt} C(u)) \langle D' \rangle^s u \rangle \\
&\quad + \langle \langle D' \rangle^s u, C(u) \langle D' \rangle^s \sum_{j=1}^3 \mathcal{A}^j(u) \partial_j u \rangle = I + II + III.
\end{split}
\end{equation*}
Clearly, by using the equation and H\"older's inequality, we find
\begin{equation*}
II \lesssim_{\| u \|_{L^\infty_x}} \| \nabla_{x'} u \|_{L^\infty} \| u \|^2_{H^s_{x'}}.
\end{equation*}
Via integration by parts, the Kato-Ponce commutator estimate, and Moser estimates, we find
\begin{equation*}
\begin{split}
|I + III| &\lesssim_{\| u \|_{L^\infty_x}} \| \nabla_{x'} u \|_{L^\infty_{x'}} \| u \|^2_{H^s_{x'}} \\
&\; + \big| \sum_{j=1}^3 (\langle \langle D' \rangle^s \partial_j u, \mathcal{A}^j(u) C(u) \langle D' \rangle^s u \rangle - \langle \langle D' \rangle^s \partial_j u, C(u) \mathcal{A}^j(u) \langle D' \rangle^s u \rangle \big|.
\end{split}
\end{equation*}
The term in the second line vanishes by \eqref{eq:AnsatzPartiallyAnisotropicSymmetrization}.
Taking the estimates together, we have
\begin{equation*}
\frac{d}{dt} E^s[u](t) \lesssim_{\| u \|_{L^\infty_x}} \| \nabla_{x'} u (t) \|_{L^\infty_{x'}} E^s[u](t)
\end{equation*}
and \eqref{eq:EnergyEstimatePartiallyAnisotropic} follows from Gr\o nwall's argument.

To prove a priori estimates for $s>9/4$, we use Strichartz estimates and a continuity argument. We require that $\| \nabla_{x'} u \|_{L^2(0,T_0;L^\infty_{x'})} \leq K$ for fixed $K > 0$ and a maximally defined $T_0 > 0$. Note that this gives
\begin{equation*}
\| \nabla_x \varepsilon \|_{L^2(0,T_0;L^\infty_{x'})} \lesssim_A K \text{ and } \| \nabla_x \varepsilon \|_{L^1(0,T_0;L^\infty_{x'})} \lesssim_A T_0^{\frac{1}{2}} K.
\end{equation*}
Hence, we have uniform constants in the energy and Strichartz estimates
\begin{equation*}
\| \langle D' \rangle^{-\alpha} w \|_{L^p(0,T;L^\infty)} \lesssim \| w \|_{L_t^\infty L_{x'}^2} + \| \tilde{P}(x,D) w \|_{L_t^1 L_{x'}^2}
\end{equation*}
for $\alpha > \rho + \frac{1}{3p}$ from Theorem \ref{thm:PartiallyAnisotropicStrichartz} in the charge-free case, where $\tilde{P}$ is the time-dependent Maxwell operator with permittivity $\tilde{\varepsilon}^{-1}$ as defined in \eqref{eq:ModifiedPermittivity}. This can be recast as (using $\| \partial \tilde{\varepsilon} \|_{L_T^1 L_{x'}^\infty} < \infty $)
\begin{equation}
\label{eq:StrichartzNonDivergenceForm}
\| \langle D' \rangle^{-\alpha} w \|_{L^p(0,T;L^q)} \lesssim \| w \|_{L_t^\infty L_{x'}^2} + \| Q(x,D) w \|_{L_t^1 L_{x'}^2}
\end{equation}
for $Q(x,D) = \partial_t 1_{6 \times 6} - \mathcal{A}^j(u) \partial_j$ denoting the time-dependent Maxwell operator in non-divergence form.

By applying this estimate to $w = \langle D' \rangle^{\alpha+1} u$ and the Kato--Ponce commutator estimate (for which it is necessary to change to the non-divergence form), we obtain the estimate
\begin{equation*}
\| \nabla_{x'} u \|_{L^4(0,T;L^\infty_{x'})} \lesssim_{\| u \|_{L^\infty_{x}}} (1+ T^{\frac{1}{2}} \| \nabla_{x'} u \|_{L^2_T L_{x'}^\infty}) \| u \|_{L_T^\infty H^s_{x'}}.
\end{equation*}
Together with \eqref{eq:EnergyEstimatePartiallyAnisotropic}, this can be bootstrapped to prove the claim.
\end{proof}
By similar arguments to \cite{SchippaSchnaubelt2021}, we prove the following $L^2$-bound for differences:
\begin{proposition}
Let $u^1$ and $u^2$ be two smooth solutions to \eqref{eq:PartiallyAnisotropicMaxwellSection}, and set $v = u^1 - u^2$. Then, the estimate
\begin{equation*}
\| v(t) \|_{2} \lesssim_A e^{c(A) \int_0^t B(s) ds} \| v(0) \|_{L^2}
\end{equation*}
holds true with $A= \| u^1 \|_{L^\infty_x} + \| u^2 \|_{L^\infty_x}$ and $B(t) = \| \nabla_{x'} u^1(t) \|_{L^\infty_{x'}} + \| \nabla_{x'} u^2(t) \|_{L^\infty_{x'}}$. For $s>9/4$, there is $T=T(\| u^i(0) \|_{H^s})$ such that $T$ is lower semicontinuous and
\begin{equation*}
\sup_{t \in [0,T]} \| v(t) \|_{L^2_{x'}} \lesssim_{\|u^i(0) \|_{H^s_{x'}}} \| v(0) \|_{L^2_{x'}}.
\end{equation*}
\end{proposition}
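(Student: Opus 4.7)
The plan is to carry out an $L^2$-energy estimate for $v = u^1 - u^2$ using the same symmetrizer that made Proposition \ref{prop:APrioriKerr} work. Writing both solutions in non-divergence form $\partial_t u^i = \mathcal{A}^j(u^i) \partial_j u^i$ (with $\mathcal{A}^j$ as constructed above), the difference satisfies
\begin{equation*}
\partial_t v = \mathcal{A}^j(u^1) \partial_j v + \big[ \mathcal{A}^j(u^1) - \mathcal{A}^j(u^2) \big] \partial_j u^2.
\end{equation*}
I define
\begin{equation*}
F[v](t) = \langle v(t), C(u^1(t)) v(t) \rangle,
\end{equation*}
with $C(\cdot)$ as in the proof of Proposition \ref{prop:APrioriKerr}. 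Uniform positive definiteness of $C(u^1)$ under the a priori bound $\| u^1 \|_{L^\infty_x} \leq A$ gives $F[v](t) \sim_A \| v(t) \|^2_{L^2_{x'}}$.

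The main calculation differentiates $F[v]$ and splits the result into three pieces: the principal part $2 \sum_j \langle \mathcal{A}^j(u^1) \partial_j v, C(u^1) v \rangle$, the inhomogeneous piece involving $\mathcal{A}^j(u^1) - \mathcal{A}^j(u^2)$, and the time-derivative of the symmetrizer $\langle v, (\partial_t C(u^1)) v \rangle$. By the symmetrization identity $C(u) \mathcal{A}^j(u) = \mathcal{A}^j(u)^* C(u)$ from \eqref{eq:AnsatzPartiallyAnisotropicSymmetrization}, integration by parts converts the principal part into $-\sum_j \langle v, \partial_j(C(u^1) \mathcal{A}^j(u^1)) v \rangle$, bounded by $c(A) \| \nabla_{x'} u^1 \|_{L^\infty_{x'}} \| v \|^2_{L^2}$. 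The inhomogeneous piece is handled by the Cauchy--Schwarz inequality together with the pointwise Lipschitz bound $|\mathcal{A}^j(u^1) - \mathcal{A}^j(u^2)| \leq c(A) |v|$, yielding a bound by $c(A) \| \nabla_{x'} u^2 \|_{L^\infty_{x'}} \| v \|^2_{L^2}$. The symmetrizer-derivative piece is rewritten using the equation for $u^1$ as an $O(\nabla_{x'} u^1)$ multiplier, contributing another $c(A) \| \nabla_{x'} u^1 \|_{L^\infty_{x'}} \| v \|^2_{L^2}$. Summing these estimates gives $\tfrac{d}{dt} F[v] \leq c(A) B(t) F[v]$, and Gr\"onwall's inequality yields the first displayed estimate.

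For the second estimate with $s > 9/4$: Proposition \ref{prop:APrioriKerr} provides a time $T = T(\| u^i(0) \|_{H^s})$ on which $\sup_{t \in [0,T]} \| u^i(t) \|_{H^s_{x'}} \lesssim \| u^i(0) \|_{H^s_{x'}}$, so $A$ is controlled via the Sobolev embedding $H^s_{x'} \hookrightarrow L^\infty_{x'}$. The Strichartz estimate \eqref{eq:StrichartzNonDivergenceForm} applied as in the bootstrap of Proposition \ref{prop:APrioriKerr} gives $\| \nabla_{x'} u^i \|_{L^4(0,T;L^\infty_{x'})} \lesssim \| u^i(0) \|_{H^s_{x'}}$. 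Hence by H\"older's inequality $\int_0^T B(s) \, ds \lesssim T^{3/4} \| u^i(0) \|_{H^s_{x'}}$ is finite, and the first estimate then collapses to the second.

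The main obstacle is mostly bookkeeping: the symmetrizer $C$ was designed for the non-divergence form with the modified permittivity $\tilde\varepsilon^{-1}$, so one has to check that the commutator terms generated when passing between the divergence form \eqref{eq:PartiallyAnisotropicMaxwellSection} and the $\mathcal{A}^j$-form absorb into the $c(A) B(t) \| v \|^2_{L^2}$ contribution without generating any derivative loss on $v$. Since $C$ depends smoothly on $u^1$ only through $\D_1^1$ and $\psi$ is smooth, this is a routine check that does not introduce genuine analytic difficulty beyond the argument already carried out in Proposition \ref{prop:APrioriKerr}.
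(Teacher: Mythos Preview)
Your proof is correct and follows essentially the same route as the paper: the energy functional $F[v] = \langle v, C(u^1) v\rangle$, the splitting into the principal part (handled via the symmetrization identity \eqref{eq:AnsatzPartiallyAnisotropicSymmetrization} and integration by parts), the inhomogeneous term (via the Lipschitz bound on $\mathcal{A}^j$), and the symmetrizer-derivative term all match, and the reduction of the second claim to Proposition~\ref{prop:APrioriKerr} and the Strichartz control of $\int_0^T B(s)\,ds$ is the intended argument. Your three-piece organization simply groups together what the paper writes as five terms $I$--$V$.
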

\begin{proof}
First we note
\begin{equation*}
\begin{split}
\frac{d}{dt} v(t) &= \sum_{j=1}^3 \mathcal{A}^j(u) \partial_j u^1 - \sum_{j=1}^3 \mathcal{A}^j(u^2) \partial_j u^2 \\
&= \sum_{j=1}^3 \mathcal{A}^j(u^1) \partial_j v + \sum_{j=1}^3 [\mathcal{A}^j(u^1) - \mathcal{A}^j(u^2)] \partial_j u^2 \\
&= \sum_{j=1}^3 \mathcal{A}^j(u^1) \partial_j v + \sum_{j=1}^3 \mathcal{B}^j(u^1,u^2) v \partial_j u^2.
\end{split}
\end{equation*}
Let $E^0[v](t) = \langle v(t), C(u^1) v(t) \rangle$ and compute
\begin{equation*}
\begin{split}
\frac{d}{dt} E^0[v](t) &= \langle \sum_{j=1}^3 \mathcal{A}^j(u^1) \partial_j v, C(u^1) v \rangle + \langle v, ( \frac{d}{dt} C(u^1)) v \rangle + \langle v, C(u^1) \sum_{j=1}^3 \mathcal{A}^j(u^1) \partial_j v \rangle \\
&\quad + \langle \sum_{j=1}^3 \mathcal{B}^j(u^1,u^2) v \partial_j u^2, C(u^1) v \rangle + \langle v, C(u^1) \sum_{j=1}^3 \mathcal{B}^j(u^1,u^2) v \partial_j u^2 \rangle \\
&= I + II + III + IV + V.
\end{split}
\end{equation*}
The main terms $I + III$ are estimated like in the proof of Proposition \ref{prop:APrioriKerr} via integration by parts and Moser estimates:
\begin{equation*}
|I + III | \lesssim_{\| u^1 \|_{L^\infty}} \| \nabla_{x'} u \|_{L^\infty_{x'}} \| v(t) \|_{L^2_{x'}}^2.
\end{equation*}
We find like above by H\"older's inequality
\begin{equation*}
|II| \lesssim_{\| u^1 \|_{L^\infty}} \| \nabla_{x'} u^1 \|_{L^\infty_{x'}} \| v(t) \|_{L^2_{x'}}^2,
\end{equation*}
and $IV$ and $V$ are directly estimated by H\"older's inequality:
\begin{equation*}
|IV| + |V| \lesssim_A \| \nabla_{x'} u^2 \|_{L^\infty_{x'}} \| v(t) \|^2_{L^2_{x'}}.
\end{equation*}
Taking the estimates together gives
\begin{equation*}
\frac{d}{dt} E^0[v](t) \lesssim_A B(t) E^0[v](t),
\end{equation*}
and the proof is concluded by Gr\o nwall's argument.
\end{proof}

The proof of Theorem \ref{thm:PartiallyAnisotropicWellposedness} is concluded with frequency envelopes. For the first order system, these are defined as follows:
\begin{definition}
$(c_N)_{N \in 2^{\N_0}} \in \ell^2$ is called a frequency enveolope for $u \in H^s_{x'}$ if it has the following properties:
\begin{itemize}
\item[a)] Energy bound:
\begin{equation*}
\| S'_N u \|_{H^s_{x'}} \leq c_N.
\end{equation*}
\item[b)] Slowly varying: There is $\delta > 0$ such that for all $N,J \in 2^{\N_0}$:
\begin{equation*}
\frac{c_N}{c_J} \lesssim \big[ \frac{N}{J} \big]^{-\delta}.
\end{equation*}
\end{itemize}
\end{definition}
The envelope is called sharp if $\| u \|^2_{H^s} \approx \sum_N c_N^2$. By regularizing the obvious choice $\tilde{c}_N = \| S'_N u \|_{H^s}$, one shows that envelopes always exist. With this definition the argument from \cite{SchippaSchnaubelt2021} can be followed verbatim up to the difference in regularity. This finishes the proof of Theorem \ref{thm:PartiallyAnisotropicWellposedness}.
$\hfill \Box$

\section*{Appendix: Quantization and decomposition of the conjugation matrices in the partially anisotropic case}

We give decompositions for the quantizations $\mathcal{M}$, $\mathcal{N}$ for the conjugation matrices $\tilde{m}(x,\xi)$ and $\tilde{m}(x,\xi)^{-1}$ defined in \eqref{eq:ConjugationMatrixPartiallyAnisotropic} and \eqref{eq:InverseConjugationMatrixPartiallyAnisotropic} up to acceptable error terms. Recall that we had localized frequencies $\{|\xi_0| \lesssim \| \xi ' \| \sim \lambda \}$ and $\{ |(\xi_2,\xi_3)| \gtrsim \lambda^{\beta} \}$ and truncated the coefficients $\varepsilon = \text{diag}(\varepsilon_1,\varepsilon_2,\varepsilon_2)$ to frequencies $\lambda^{\alpha}$ with $\beta \geq \alpha$. Therefore, up to a smoothing error, the frequency projection to $\{|\xi_0| \lesssim \| \xi ' \| \sim \lambda \}$ and $\{ |(\xi_2,\xi_3)| \gtrsim \lambda^{\beta} \}$ can be harmlessly included after every factor. This is implicit in the following like the error terms. The pseudo-differential operators, for which we have sharp $L^p_x$-bounds, will be separated with ``$\cdot$''.

Recall that the precise choice of $\beta$ and $\alpha$ depends on the regularity and structural assumptions (cf. Section \ref{section:PartiallyAnisotropicCase}). In the following we let 
\begin{equation*}
D_{ij} = Op((\xi_i^2 + \xi_j^2)^{\frac{1}{2}}), \quad D'= Op(\| \xi' \|), \quad D_\varepsilon = Op( (\xi_i \xi_j \varepsilon^{ij})^{\frac{1}{2}})
\end{equation*}
and denote $a = \varepsilon_{1}^{-1}$ and $b = \varepsilon_2^{-1}$.

We give the expressions for $\mathcal{M}$:
\small
\begin{align*}
&\mathcal{M}_{11} = 0, \quad \mathcal{M}_{12} = \frac{-i}{D_\varepsilon} \partial_1 (a^{-1} \cdot), \quad \mathcal{M}_{13} =0, \\
&\qquad \mathcal{M}_{14} = 0, \quad \mathcal{M}_{15} = \frac{1}{D_\varepsilon^{\frac{3}{2}}} D^{\frac{1}{2}} \cdot  D_{23}, \quad \mathcal{M}_{16} = - \frac{1}{D_\varepsilon^{\frac{3}{2}}} D^{\frac{1}{2}} \cdot  D_{23}, \\
&\mathcal{M}_{21} = 0, \quad \mathcal{M}_{22} = \frac{-i}{D_\varepsilon} \partial_2 (b^{-1} \cdot), \quad \mathcal{M}_{23} = \frac{i}{D^{\frac{1}{2}}} \cdot  D_\varepsilon^{\frac{1}{2}} \cdot \frac{\partial_3}{D_{23}} \cdot (\frac{1}{\sqrt{b}} \cdot), \\
&\qquad \mathcal{M}_{24} = - \frac{i}{D^{\frac{1}{2}}} \cdot D_\varepsilon^{\frac{1}{2}} \cdot  \frac{\partial_3}{D_{23}} \cdot  ( \frac{1}{\sqrt{b}} \cdot), \quad \mathcal{M}_{25} = \frac{1}{D_\varepsilon^{\frac{3}{2}}} \cdot D^{\frac{1}{2}} \partial_1 \cdot \frac{\partial_{2}}{D_{23}}, \quad \mathcal{M}_{26} = - \frac{1}{D_\varepsilon^{\frac{3}{2}}} \cdot D^{\frac{1}{2}} \partial_1 \cdot  \frac{\partial_{2}}{D_{23}}, \\
\end{align*}
\begin{align*}
&\mathcal{M}_{31} = 0, \quad \mathcal{M}_{32} = - \frac{i}{D_\varepsilon} \partial_3 \cdot  (b^{-1} \cdot), \quad \mathcal{M}_{33} = - \frac{i}{D^{\frac{1}{2}}} \cdot  D_\varepsilon^{\frac{1}{2}} \cdot  \frac{\partial_2}{D_{23}} \cdot  (\frac{1}{\sqrt{b}} \cdot), \\
&\qquad \mathcal{M}_{34} = \frac{i}{D^{\frac{1}{2}}} \cdot D_\varepsilon^{\frac{1}{2}} \cdot  \frac{\partial_2}{D_{23}} \cdot  (\frac{1}{\sqrt{b}} \cdot), \quad \mathcal{M}_{35} = \frac{1}{D_\varepsilon^{\frac{3}{2}}} \cdot  D^{\frac{1}{2}} \partial_1 \cdot \frac{\partial_{3}}{D_{23}}, \quad \mathcal{M}_{36} = - \frac{1}{D_\varepsilon^{\frac{3}{2}}} \cdot  D^{\frac{1}{2}} \partial_1 \cdot \frac{\partial_{3}}{D_{23}}, \\
&\mathcal{M}_{41} = - \frac{i}{D} \partial_1, \quad \mathcal{M}_{42} = 0, \quad \mathcal{M}_{43} = - D_\varepsilon^{\frac{1}{2}} \cdot \frac{1}{D^{\frac{3}{2}}} \cdot D_{23}, \\
&\qquad \mathcal{M}_{44} = - D_\varepsilon^{\frac{1}{2}} \cdot \frac{1}{D^{\frac{3}{2}}} \cdot D_{23}, \quad \mathcal{M}_{45} = \mathcal{M}_{46} = 0, \\
\end{align*}
\normalsize The remaining expressions are given by:
\small
\begin{align*}
&\mathcal{M}_{51} = - \frac{i}{D} \partial_2, \quad \mathcal{M}_{52} = 0, \quad \mathcal{M}_{53} = - D_\varepsilon^{\frac{1}{2}} \cdot \frac{\partial_1}{D^{\frac{3}{2}}} \cdot \frac{\partial_{2}}{ D_{23}}, \\
&\qquad \mathcal{M}_{54} = - D_\varepsilon^{\frac{1}{2}} \cdot \frac{\partial_1}{D^{\frac{3}{2}}} \cdot \frac{\partial_{2}}{ D_{23}}, \quad \mathcal{M}_{55} = \frac{i}{D_\varepsilon^{\frac{1}{2}}} \cdot D^{\frac{1}{2}} \cdot \frac{ \partial_3}{D_{23}}, \quad \mathcal{M}_{56} = \frac{i}{D_\varepsilon^{\frac{1}{2}}} \cdot D^{\frac{1}{2}} \cdot \frac{ \partial_3}{D_{23}}, \\
&\mathcal{M}_{61} = - \frac{i}{D} \partial_3, \quad \mathcal{M}_{62} = 0, \quad \mathcal{M}_{63} = - D_\varepsilon^{\frac{1}{2}} \cdot \frac{\partial_1}{D^{\frac{3}{2}}} \cdot \frac{\partial_{3}}{ D_{23}}, \\
&\qquad \mathcal{M}_{64} = - D_\varepsilon^{\frac{1}{2}} \cdot \frac{\partial_1}{D^{\frac{3}{2}}} \cdot \frac{\partial_{3}}{ D_{23}}, \quad \mathcal{M}_{65} = - \frac{i}{D_\varepsilon^{\frac{1}{2}}} \cdot D^{\frac{1}{2}} \cdot \frac{ \partial_2}{D_{23}}, \quad \mathcal{M}_{66} = - \frac{i}{D_\varepsilon^{\frac{1}{2}}} \cdot D^{\frac{1}{2}} \cdot \frac{\partial_2}{D_{23}}.
\end{align*}
\normalsize We associate operators to $\tilde{m}^{-1}$ as follows:
\small
\begin{align*}
&\mathcal{N}_{11} = \mathcal{N}_{12} = \mathcal{N}_{13} = 0, \\
&\quad \mathcal{N}_{14} = - i \partial_1 \frac{1}{D}, \quad \mathcal{N}_{15} = - i \partial_2 \frac{1}{D}, \quad \mathcal{N}_{16} = - i \partial_3 \frac{1}{D}, \\
&\mathcal{N}_{21} = - i ab \cdot \partial_1 \frac{1}{D_\varepsilon}, \quad \mathcal{N}_{22} = - iab \cdot \partial_2 \frac{1}{D_\varepsilon}, \quad \mathcal{N}_{23} = - i ab \cdot \partial_3 \frac{1}{D_\varepsilon}, \\
&\quad \mathcal{N}_{24} = \mathcal{N}_{25} = \mathcal{N}_{26} = 0, \\
&\mathcal{N}_{31} = 0, \quad \mathcal{N}_{32} = i \frac{\sqrt{b}}{2} \cdot \frac{\partial_3}{D_{23}} \cdot D^{\frac{1}{2}} \cdot \frac{1}{D_\varepsilon^{\frac{1}{2}}}, \quad \mathcal{N}_{33} = -i \frac{\sqrt{b}}{2} \cdot \frac{\partial_2}{ D_{23}} \cdot D^{\frac{1}{2}} \cdot \frac{1}{D_\varepsilon^{\frac{1}{2}}}, \\
&\quad \mathcal{N}_{34} = - D_{23} \cdot \frac{1}{2 D^{\frac{1}{2}}} \cdot \frac{1}{D_\varepsilon^{\frac{1}{2}}}, \quad \mathcal{N}_{35} = - \partial_1 \cdot \frac{\partial_{2}}{2 D_{23}} \cdot \frac{1}{D^{\frac{1}{2}}} \cdot \frac{1}{D_\varepsilon^{\frac{1}{2}}}, \quad \mathcal{N}_{36} = - \partial_1 \cdot \frac{\partial_{3}}{D_{23}} \cdot \frac{1}{D^{\frac{1}{2}}} \cdot \frac{1}{D_\varepsilon^{\frac{1}{2}}}.
\end{align*}
\normalsize The remaining expressions are given by
\small
\begin{align*}
&\mathcal{N}_{41} = 0, \quad \mathcal{N}_{42} = - i \frac{\sqrt{b}}{2} \cdot \frac{\partial_3}{D_{23}} D^{\frac{1}{2}} \cdot \frac{1}{D_\varepsilon^{\frac{1}{2}}}, \quad \mathcal{N}_{43} = i \frac{\sqrt{b}}{2}  \cdot \frac{\partial_2}{D_{23}} \cdot D^{\frac{1}{2}} \cdot \frac{1}{D_\varepsilon^{\frac{1}{2}}}, \\
&\quad \mathcal{N}_{44} = - D_{23} \cdot \frac{1}{2 D^{\frac{1}{2}}} \cdot \frac{1}{D_\varepsilon^{\frac{1}{2}}}, \quad \mathcal{N}_{45} = -\partial_1 \cdot \frac{\partial_{1}}{2 D_{23}} \cdot \frac{1}{D^{\frac{1}{2}}} \cdot \frac{1}{D_\varepsilon^{\frac{1}{2}}}, \quad \mathcal{N}_{46} = - \partial_1 \cdot \frac{\partial_{3}}{2 D_{23}} \cdot \frac{1}{D^{\frac{1}{2}}} \cdot \frac{1}{D_\varepsilon^{\frac{1}{2}}}, \\
&\mathcal{N}_{51} = \frac{a}{2} \cdot D_{23} \cdot \frac{1}{D^{\frac{1}{2}}} \cdot \frac{1}{D_\varepsilon^{\frac{1}{2}}}, \quad \mathcal{N}_{52} = \frac{b}{2} \cdot \partial_1 \cdot  \frac{\partial_{2}^2}{D_{23}} \cdot \frac{1}{D^{\frac{1}{2}} } \cdot \frac{1}{D_\varepsilon^{\frac{1}{2}}}, \quad \mathcal{N}_{53} = \frac{b}{2} \cdot \partial_1 \cdot  \frac{\partial_{2}^2}{D_{23}} \cdot \frac{1}{D^{\frac{1}{2}} } \cdot \frac{1}{D_\varepsilon^{\frac{1}{2}}}, \\
&\quad \mathcal{N}_{54} = 0, \quad \mathcal{N}_{55} = i \frac{\partial_3}{2 D_{23} } \cdot \frac{1}{D^{\frac{1}{2}}} \cdot D_\varepsilon^{\frac{1}{2}}, \quad \mathcal{N}_{56} = -i \frac{\partial_2}{2 D_{23}} \cdot \frac{1}{D^{\frac{1}{2}}} \cdot D_\varepsilon^{\frac{1}{2}}, \\
&\mathcal{N}_{61} = -\frac{a}{2} \cdot D_{23} \cdot \frac{1}{D^{\frac{1}{2}}} \cdot \frac{1}{D_\varepsilon^{\frac{1}{2}}}, \quad \mathcal{N}_{62} = -\frac{b}{2} \cdot \partial_1 \cdot \frac{\partial_{2}}{ D_{23}} \cdot \frac{1}{D^{\frac{1}{2}}} \cdot \frac{1}{D_\varepsilon^{\frac{1}{2}}}, \quad \mathcal{N}_{63} = - \frac{b}{2} \cdot \frac{\partial_1}{D^{\frac{1}{2}}} \cdot \frac{\partial_{3}}{ D_{23}} \cdot \frac{1}{D_\varepsilon^{\frac{1}{2}}},\\ 
&\quad \mathcal{N}_{64} = 0, \quad \mathcal{N}_{65} = i \frac{\partial_3}{2 D_{23} } \cdot \frac{1}{D^{\frac{1}{2}}} \cdot D_\varepsilon^{\frac{1}{2}}, \quad \mathcal{N}_{66}  = - i \frac{\partial_2}{2 D_{23} } \cdot \frac{1}{D^{\frac{1}{2}}} \cdot D_\varepsilon^{\frac{1}{2}}.
\end{align*}

\normalsize
\section*{Acknowledgements}

Funded by the Deutsche Forschungsgemeinschaft (DFG, German Research Foundation) -- Project-ID 258734477 -- SFB 1173. I would like to thank Rainer Mandel (KIT) and Roland Schnaubelt (KIT) for discussions on related topics.

\end{document}